\theoremstyle{plain}
\newtheorem{theorem}{Theorem}[section]
\newtheorem{lemma}[theorem]{Lemma}
\newtheorem{proposition}[theorem]{Proposition}
\newtheorem{corollary}[theorem]{Corollary}
\newtheorem{sublemma}{}[theorem]
\theoremstyle{definition}
\newtheorem{definition}[theorem]{Definition}
\newcommand{\mcal}[1]{\ensuremath{\mathcal{#1}}}
\newcommand{\mbb}[1]{\ensuremath{\mathbb{#1}}}
\newcommand{\M}[1]{\ensuremath{M^{(#1)}}}
\newcommand{\dash}{\nobreakdash-\hspace{0pt}}
\newcommand{\ba}{\backslash}
\newcommand{\cl}{\operatorname{cl}}
\newcommand{\si}{\operatorname{si}}
\newcommand{\core}{\operatorname{Core}}
\title{Fan-extensions in fragile matroids}
\author[Chun]{Carolyn Chun}
\address{School of Information Systems, Computing and Mathematics,
Brunel University, United Kingdom}
\email{carolyn.chun@brunel.ac.uk}
\author[Chun]{Deborah Chun}
\address{College of Engineering \& Sciences,
Institute of Technology, West Virginia University,
United States}
\email{deborah.chun@mail.wvu.edu}
\author[Mayhew]{Dillon Mayhew}
\address{School of Mathematics, Statistics and Operations Research,
Victoria University of Wellington,
New Zealand}
\email{dillon.mayhew@msor.vuw.ac.nz}
\author[Van Zwam]{Stefan H. M. van Zwam}
\address{Department of Mathematics,
Louisiana State University,
United States}
\email{svanzwam@math.lsu.edu}
\date{\today}
\begin{document}

\begin{abstract}
If \mcal{S} is a set of matroids, then the matroid $M$ is
\mcal{S}\dash fragile if, for every element $e\in E(M)$,
either $M\ba e$ or $M/e$ has no minor isomorphic to a member of
\mcal{S}.
Excluded-minor characterizations often depend,
implicitly or explicitly, on understanding classes of fragile matroids.
In certain cases, when \mcal{M} is a minor-closed class of
\mcal{S}\dash fragile matroids, and $N\in \mcal{M}$, the only
members of \mcal{M} that contain $N$ as a minor
are obtained from $N$ by increasing the length of fans.
We prove that if this is the case, then we can certify
it with a finite case-analysis.
The analysis involves examining matroids that are
at most two elements larger than $N$.
\end{abstract}

\maketitle

\section{Introduction}
\label{sect1}

Let \mcal{S} be a set of matroids.
When we say that a matroid has an
\emph{\mcal{S}\dash minor}, we mean that
it has a minor isomorphic to a member of \mcal{S}.
The matroid $M$ is \emph{\mcal{S}\dash fragile}
if, for every element $e\in E(M)$, either
$M\ba e$ or $M/e$ has no \mcal{S}\dash minor.
Note that every minor of an \mcal{S}\dash fragile
matroid is also \mcal{S}\dash fragile.
Fragility has been studied at various times under different names:
Oxley examined non-binary $\{U_{2,4}\}$\dash fragile
matroids \cite{Oxl90};
Truemper proved a constructive characterization of
a class of binary $\{F_{7},F_{7}^{*}\}$\dash fragile
matroids \cite{Tru92a}; and Kingan and Lemos
have made a study of binary
$\{F_{7},F_{7}^{*},M^{*}(K_{3,3}),M^{*}(K_{5})\}$\dash fragile
matroids \cite{KL02,KL12}.

Our study of fragile matroids is motivated by
the goal of finding new excluded-minor
characterizations.
The matroid $S$ is a \emph{strong stabilizer} for the
partial field \mbb{P}, if, roughly speaking,
every \mbb{P}\dash representation of $S$ extends
uniquely to a \mbb{P}\dash representation of
any \mbb{P}\dash representable matroid that contains
$S$ as a minor.
More information on strong stabilizers can be found
in \cite{GOVW98} or \cite{PZ10b}.
Understanding
\mcal{S}\dash fragile matroids,
where \mcal{S} is a set of strong stabilizers,
has been important in
excluded-minor characterizations.
For example, $U_{2,4}$ is a strong stabilizer for
both $\mathrm{GF}(4)$ and the near-regular partial field.
The excluded-minor characterizations
of $\mathrm{GF}(4)$\dash representable \cite{GGK00} and
near-regular matroids \cite{HMZ11} both implicitly
use the fact that a non-binary $3$\dash connected
$\mathrm{GF}(4)$\dash representable
matroid is $\{U_{2,4}\}$\dash fragile if and only if it is a whirl.
Geelen, Gerards, and Whittle
conjecture that, for any prime power $q$, and any
$\mathrm{GF}(q)$\dash representable matroid $N$,
there is an integer $k$ such that every
$\mathrm{GF}(q)$\dash representable
$\{N\}$\dash fragile matroid has branch width
at most $k$ \cite[Conjecture~5.9]{GGW07}.
In another application of the link between fragility and
excluded-minor results, Mayhew, Van Zwam, and Whittle \cite{MWZ12}
have shown that this conjecture implies that there are
only finitely many excluded minors for
$\mathrm{GF}(5)$\dash representability.

In our study of \mcal{S}\dash fragile matroids,
\mcal{S} will typically be a set of excluded minors for
representability over some partial field.
This allows us to assume certain properties of \mcal{S}.
In particular, we can assume that the members of \mcal{S} are
$3$\dash connected and contain at least four elements.
We say that the matroid $N$ is
\emph{$3$\dash connected up to series and parallel sets}
if it is connected, and
$\min\{r(X),r(Y)\}=1$ or
$\min\{r^{*}(X),r^{*}(Y)\}=1$ for every
$2$\dash separation $(X,Y)$ of $N$.
The next result follows immediately from
\cite[Proposition~3.1]{KL02}.

\begin{proposition}
\label{block}
Let \mcal{S} be a collection of $3$\dash connected matroids,
each of which has at least four elements.
Assume that $N$ is \mcal{S}\dash fragile and that $N$ has an
\mcal{S}\dash minor.
Then $N$ is $3$\dash connected up to series and parallel sets.
\end{proposition}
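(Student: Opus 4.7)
The plan is to argue by contradiction: if $N$ is disconnected or admits a $2$-separation $(X,Y)$ violating the conclusion, I will produce some $y\in E(N)$ such that both $N/y$ and $N\ba y$ retain an $\mathcal{S}$-minor, contradicting fragility. Fix $S\in\mathcal{S}$ that is a minor of $N$; note $S$ is connected because it is $3$-connected with $|E(S)|\geq 4$. For the disconnected case, if $N=N_{1}\oplus N_{2}$, then since $S$ is connected it is a minor of one summand, say $N_{1}$, and any $y\in E(N_{2})$ has $N/y$ and $N\ba y$ both retaining $N_{1}$ as a minor, so each has an $\mathcal{S}$-minor.

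Now assume $N$ is connected and a $2$-separation $(X,Y)$ with $\min\{r(X),r(Y),r^{*}(X),r^{*}(Y)\}\geq 2$ exists. Write $S=N/C\ba D$ for disjoint $C,D\subseteq E(N)$. The identity $r(Z)+r^{*}(Z)=|Z|+\lambda(Z)$ applied with $\lambda_{N}(Y)=1$ yields $|X|,|Y|\geq 3$. Because $\lambda$ is non-increasing under minors, $(X\cap E(S),\,Y\cap E(S))$ is a separation of $S$ of order at most $1$; the $3$-connectedness of $S$ together with $|E(S)|\geq 4$ therefore forces $\min\{|X\cap E(S)|,|Y\cap E(S)|\}\leq 1$. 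Relabel so that $|Y\cap E(S)|\leq 1$; then $|Y\cap(C\cup D)|\geq 2$.

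The core step is to produce an element $y\in Y\cap(C\cup D)$ for which both $N/y$ and $N\ba y$ have an $\mathcal{S}$-minor. The hypothesis $\min\{r(X),r^{*}(X),r(Y),r^{*}(Y)\}\geq 2$ lets us write $N$ as a $2$-sum $N_{X}\oplus_{2}N_{Y}$ along a basepoint $p$ that is neither a loop nor a coloop of $N_{Y}$. Because $|Y\cap E(S)|\leq 1$, the minor $S$ is realized from $N_{X}$ alone, with the operations performed on $Y$ merely dictating whether $p$ is kept, deleted, or contracted in $N_{X}$. Since $p$ lies in both a circuit and a cocircuit of $N_{Y}$, one has the flexibility to interchange the roles of at least one $y\in Y\cap(C\cup D)$ between contraction and deletion while still realizing $S$ as a minor of $N$. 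Executing this swap cleanly is the main obstacle; the most efficient route is to invoke \cite[Proposition~3.1]{KL02}, the excerpt already advertising our result as an immediate consequence of that structural lemma, which encodes precisely the required $2$-sum transfer.
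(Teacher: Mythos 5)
The paper offers no argument for this proposition at all: it is stated to follow ``immediately'' from Proposition~3.1 of Kingan--Lemos, and that citation is the entire proof. Your proposal, after some correct reductions, ends by invoking exactly the same result, so in substance you and the paper coincide. Your preliminary steps are sound: the disconnected case is handled correctly; the bound $|X|,|Y|\geq 3$ does follow from $|Z|=r(Z)+r^{*}(Z)-\lambda(Z)$ once the offending $2$\dash separation has all four of $r(X),r(Y),r^{*}(X),r^{*}(Y)\geq 2$; and confining $E(S)$ to one side up to one element is a correct use of the fact that $\lambda$ does not increase under minors together with the $3$\dash connectivity of $S$.

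If, however, the aim was a self-contained argument, then the step you yourself call ``the main obstacle'' is a genuine gap, and it is the only step carrying real content. The sentence ``since $p$ lies in both a circuit and a cocircuit of $N_{Y}$, one has the flexibility to interchange the roles of at least one $y$'' is an assertion, not a proof. Realizing $S$ from $N=N_{X}\oplus_{2}N_{Y}$ means resolving all of $N_{Y}$ (except possibly the single element of $Y\cap E(S)$), and the net effect on $N_{X}$ is one of three states for the basepoint $p$: deleted, contracted, or identified with the surviving element of $Y\cap E(S)$. For a \emph{given} $y\in Y\cap(C\cup D)$ it is not automatic that both $N\backslash y$ and $N/y$ can still drive $p$ into the required state: for instance, if $y$ is in series with $p$ in $N_{Y}$, deleting $y$ changes which states of $p$ are reachable, and dually if $y$ is parallel to $p$. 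One must therefore \emph{exhibit} a good $y$, which takes a short case analysis (or a connectivity argument) on the connected matroid $N_{Y}$ with $|E(N_{Y})|\geq 4$. That analysis is precisely the content of \cite[Proposition~3.1]{KL02}; your proof is complete only insofar as it leans on that reference --- which, to be fair, is exactly what the paper does.
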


It is a fairly easy exercise to show that wheels and
whirls are representable over every partial field, except that
whirls are not representable over $\mathrm{GF}(2)$ and the
regular partial field.
Therefore we will henceforth assume that \mcal{S} contains no
wheels or whirls.
This implies that any matroid with an \mcal{S}\dash minor is
neither a wheel nor a whirl.

A \emph{fan} is a sequence of elements, where consecutive
sets of three elements alternately form triads and triangles.
In some cases, the only way to construct \mbb{P}\dash representable
\mcal{S}\dash fragile matroids by building from a matroid $N$
is to increase the length of fans in $N$.
For example, \Cref{magic}(i) shows the rank\dash $6$
binary matroid $N_{12}$.
This matroid is obtained by gluing three copies of
$M(K_{4})$ to $F_{7}$ along three lines that contain a common
point $p$, and then deleting the points of intersection,
apart from three that lie in a common line avoiding $p$.
The matroid in \Cref{magic}(ii) is obtained from
$N_{12}$ by lengthening the fan $(u_{1},u_{2},u_{3},u_{4})$
to $(u_{1},a,b,u_{2},u_{3},u_{4})$, and by lengthening
$(v_{1},v_{2},v_{3},v_{4})$ to
$(v_{1},v_{2},v_{3},v_{4},c)$.
Any matroid obtained by lengthening fans in this way
is a \emph{fan-extension} of $N_{12}$.
(We delay the formal definition of fan-extensions until \Cref{sect3}.)
Any $3$\dash connected binary matroid that
is $\{F_{7},F_{7}^{*}\}$\dash fragile
and contains $N_{12}$ as a minor is obtained
from $N_{12}$ by lengthening the three
disjoint $4$\dash element fans.
The resulting family of matroids is essentially the same
as the family $\mcal{F}_{1}(m,n,r)$, as described in \cite{KL02}.
This result, and other applications of our main theorem,
will be described in \Cref{sect6}.
These applications bring the excluded-minor characterisations
for matroids representable over the partial fields $\mathbb{H}_{5}$
and $\mathbb{U}_{2}$ within reach.
\begin{figure}[htb]
\centering
\includegraphics{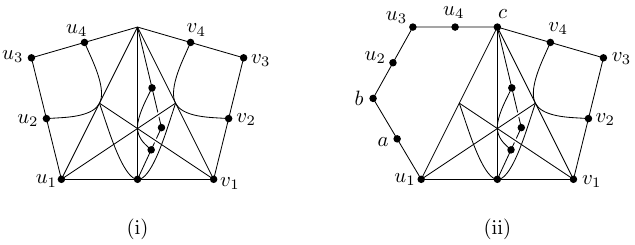}
\caption{$N_{12}$, and one of its fan-extensions.}
\label{magic}
\end{figure}

Suppose we are given a minor-closed class, \mcal{M}, and a matroid
$N\in \mcal{M}$.
We would like to know whether $M\in \mcal{M}$ being $3$\dash connected
with $N$ as a minor implies that $M$ is a fan-extension of $N$.
Our main theorem allows us to use a finite case-analysis to check
whether this implication is true.
From now on we make no mention of fragility.
\Cref{stage} instead uses the
conditions implied by \Cref{block}, and the assumption that
\mcal{S} does not contain any wheels or whirls.

\begin{theorem}
\label{stage}
Let \mcal{M} be a set of matroids closed under
isomorphism and minors.
Let $N\in \mcal{M}$ be a $3$\dash connected matroid
such that $|E(N)|\geq 4$ and $N$ is neither a wheel
nor a whirl.
Assume that any member of \mcal{M}
with $N$ as a minor is $3$\dash connected
up to series and parallel sets.
If there is a $3$\dash connected matroid in \mcal{M} with $N$
as a minor that is not a fan-extension of $N$, then
there exists such a matroid, $M$, satisfying $|E(M)|-|E(N)|\leq 2$.
\end{theorem}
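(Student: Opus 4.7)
The plan is a minimum-counterexample argument. Suppose for contradiction that the theorem fails: every $M' \in \mathcal{M}$ with $N$-minor and $|E(M')|-|E(N)|\leq 2$ is a fan-extension of $N$, yet some $M \in \mathcal{M}$ has $N$ as a minor but is not a fan-extension of $N$. Choose such an $M$ with $|E(M)|$ minimum, so that $|E(M)|-|E(N)|\geq 3$. Because $\mathcal{M}$ is minor-closed, any proper minor of $M$ that retains an $N$-minor lies in $\mathcal{M}$, is $3$-connected up to series and parallel sets by hypothesis, and is strictly smaller than $M$; by the minimality of $M$ it must therefore be a fan-extension of $N$. In particular, every single-element minor $M\ba e$ or $M/e$ with an $N$-minor is a fan-extension of $N$.

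The next step is to extract enough single-element minors to reconstruct the fan structure of $M$. I would split according to the local connectivity of $M$. If $M$ has a nontrivial parallel or series class, then, since $N$ is $3$-connected with $|E(N)|\geq 4$, any such class contributes at most one element to $E(N)$ when $N$ is realised as a minor; deleting a redundant parallel element, or contracting a redundant series element, yields a single-element minor that still contains $N$ and hence is a fan-extension of $N$. Otherwise $M$ is $3$-connected, and Seymour's Splitter Theorem (applied using the hypothesis that $N$ is neither a wheel nor a whirl and $|E(N)|\geq 4$) provides an element $e$ such that $M\ba e$ or $M/e$ is $3$-connected with an $N$-minor and hence a fan-extension of $N$; a standard iteration of the Splitter Theorem yields a second such element. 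Either way, $M$ has at least two single-element minors that are fan-extensions of $N$.

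The heart of the argument is to combine the fan-extension structures on these two minors into a single fan-extension structure on $M$, contradicting the choice of $M$. Each such minor identifies a family of fans lengthening fans of $N$; reinserting the removed element $e$ must either prolong one of those fans or attach $e$ in series or parallel with a fan-endpoint, for otherwise $M$ would violate the $3$-connected-up-to-series-and-parallel-sets hypothesis. Comparing the two fan-extensions on their common sub-minor (obtained by removing both elements) then forces a consistent global labelling that realises $M$ itself as a fan-extension of $N$. The main obstacle is this consistency step. The subtle points are that (i) deletion and contraction interchange triangles and triads, so the same fan of $M$ can look different in $M\ba e$ and $M/e$; (ii) a fan-endpoint can play either a triangle or a triad role, yielding genuine ambiguity in how a fan is lengthened; and (iii) one must rule out the possibility that some element of $M$ lies on a triangle or triad that disappears in every single-element minor, which is precisely where the exclusion of wheels and whirls, together with the assumption $|E(N)|\geq 4$, becomes essential in eliminating closed fan configurations. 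A careful but finite case-analysis along these lines should then close the argument.
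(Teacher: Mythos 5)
Your overall frame (minimal counterexample, produce well-behaved single-element removals of $M$, then reconstruct the fan structure of $M$ from them) matches the paper's strategy, but the proposal stops exactly where the proof begins, and the one substantive claim you make in the reconstruction step is false as stated. You assert that once two single-element minors of $M$ are known to be fan-extensions, comparing them on their common sub-minor "forces a consistent global labelling that realises $M$ itself as a fan-extension." The example following \Cref{stage} (see \Cref{beach}) refutes this: there $M=M(G)$ is not a fan-extension of $N$, $|E(M)|-|E(N)|=2$, and yet every member of $\mcal{M}$ with an $N$\dash minor that is at most one element larger than $N$ --- in particular each relevant single-element removal of $M$ --- is a fan-extension. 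So the combination step cannot be a formal consequence of having two good single-element minors; it must exploit $|E(M)|-|E(N)|\geq 3$ in an essential way (in the paper this enters through \Cref{fever}, where the matroid two elements smaller than $M$ is still a \emph{proper} fan-extension of $N$ whose covering family constrains $M$). Your sketch never indicates where that assumption is used, which is precisely the point that makes the bound $2$ rather than $1$.

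Two further gaps. First, being a fan-extension of $N$ relative to $\mcal{F}_{N}$ is a property of a labelled matroid, not of an isomorphism class: the Splitter Theorem only supplies an element $e$ with $M\ba e$ or $M/e$ $3$\dash connected containing a minor \emph{isomorphic} to $N$, and to use this one must relabel $M$ while certifying that the relabelled matroid still has the labelled $N$ as a minor and is still not a fan-extension. That certification is the content of \Cref{flint}, \Cref{flame}, and \Cref{bliss}, and it is absent from your argument. Second, the claim that reinserting $e$ "must either prolong one of those fans or attach $e$ in series or parallel with a fan-endpoint" is not justified; it is essentially the conclusion of the long chain \Cref{train}--\Cref{chaff}. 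Listing the obstacles (triangle/triad duality, endpoint ambiguity, closed fan configurations) and then asserting that "a careful but finite case-analysis should close the argument" defers the entire proof rather than giving one.
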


This is restated as \Cref{arrow} later in the paper.
The assumptions on $\mcal{M}$ and $N$ are justified if
$\mcal{M}$ is a minor-closed class of \mcal{S}\dash fragile
matroids, and $N$ is a member of \mcal{M} with an
\mcal{S}\dash minor, where we make the assumption that the
members of \mcal{S} are $3$\dash connected with at least
four elements, and \mcal{S} contains no wheel or whirl.

To see that the bound $|E(M)|-|E(N)|\leq 2$ is best possible,
we let $G$ be one of the graphs drawn schematically in
\Cref{beach}.
If $G$ is the graph on the left, let $N=M(G)\ba x\ba y$.
If $G$ is the graph on the right, let $N=M(G)/x\ba y$.
In either case, let $M=M(G)$.
Now set \mcal{M} to be the class containing all minors
of $M$ and their isomorphs.
We note that $M$ is not a fan-extension of $N$ relative
to the fans $(u_{1},u_{2},u_{3})$ and $(v_{1},v_{2},v_{3})$.
However, any member of \mcal{M} that has $N$ as a minor
and that is at most one element larger than $N$ is a fan-extension.
\begin{figure}[htb]
\centering
\includegraphics{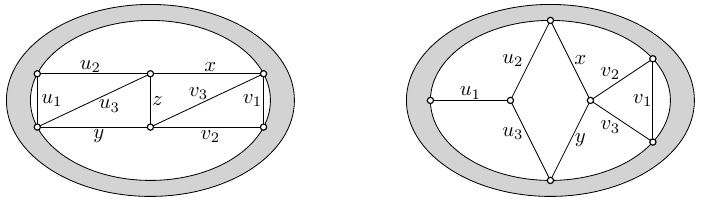}
\caption{Schematic drawings of two graphs.}
\label{beach}
\end{figure}

In \Cref{sect3} we state some definitions, including that of a fan-extension.
\Cref{sect5} is dedicated to an alternative formulation of fan-extensions,
based upon the idea of gluing wheels to a `core' matroid.
We use this alternative formulation in \Cref{sect6}, where we sketch some
applications of our theorem to binary $\{F_{7},F_{7}^{*}\}$\dash fragile
matroids and $\{U_{2,5},U_{3,5}\}$\dash fragile matroids that are
representable over the partial field $\mathbb{H}_{5}$.
The proof of the main theorem is contained in \Cref{sect2,sect4}.

\section{Fan-extensions}
\label{sect3}

Any unexplained terminology or notation that we use can be found in
Oxley \cite{Oxl11}.

\begin{definition}
\label{ebony}
Let $M$ be a matroid.
A \emph{fan} of $M$ is an
ordered sequence, $(e_{1},\ldots, e_{n})$,
of $n\geq 3$ distinct elements such that
\[
\{e_{1},e_{2},e_{3}\},
\{e_{2},e_{3},e_{4}\},\ldots,
\{e_{n-2},e_{n-1},e_{n}\}
\]
is an alternating sequence of triangles and triads.
\end{definition}

If $\{e_{1},e_{2},e_{3}\}$ is a triangle, then
the elements with odd indices in $(e_{1},\ldots, e_{n})$ are \emph{spoke}
elements, and elements with even indices are \emph{rim} elements.
These labels are reversed if $\{e_{1},e_{2},e_{3}\}$ is a triad.
We sometimes blur the distinction between ordered and unordered
sets where doing so creates no problems.
So for example, we may talk about fans being disjoint.
Note that $(e_{1},\ldots, e_{n})$ is a fan of $M$ if and only
if it is a fan of $M^{*}$.
If $1 < i < n$, then $e_{i}$ is an \emph{internal} element
of the fan, otherwise it is a \emph{terminal} element.
In a $3$\dash connected matroid with at least five elements, no
triangle can be a triad, so the partitioning into spoke and rim
elements is unambiguous.
In particular, in a $3$\dash connected matroid with
at least four elements that is not a whirl,
an element in a fan is either a spoke element or a rim element,
and not both.
We frequently replace the ordering $(e_{1},\ldots, e_{n})$
with $(e_{n},\ldots, e_{1})$.
We call this process \emph{reversing}.
A \emph{contiguous subsequence} of
$(e_{1},\ldots, e_{n})$ is a subsequence of the form
$(e_{s},e_{s+1},\ldots, e_{t-1},e_{t})$, where $1\leq s \leq t \leq n$.
If  $F=(e_{1},\ldots, e_{n})$ and $F'=(e_{1}',\ldots, e_{m}')$
are two sequences, then
we say that $F$ is \emph{consistent} with $F'$ if
$(e_{1},\ldots, e_{n})$ is a subsequence (not necessarily contiguous)
of either $(e_{1}',\ldots, e_{m}')$ or $(e_{m}',\ldots, e_{1}')$.
If $F=(e_{1},\ldots,e_{n})$ is an ordered sequence of
elements, and $X\subseteq \{e_{1},\ldots, e_{n}\}$, then
$F-X$ is the subsequence produced from $F$ by omitting all elements in $X$.
As usual, we abbreviate the singleton set $\{x\}$ to $x$.
We say that the fan $F=(e_{1},\ldots, e_{n})$ is \emph{maximal} if there is
no fan $(e_{1}',\ldots, e_{m}')$ such that
$\{e_{1}',\ldots, e_{m}'\}$ properly contains
$\{e_{1},\ldots, e_{n}\}$.

Now we formally define fan-extensions.
To avoid disrupting the exposition, we will relegate some technical lemmas
until later sections.
Let $M$ be a $3$\dash connected matroid with a fan
$(e_{1},\ldots, e_{n})$, where $n\geq 4$.
If $e_{1}$ is a spoke element, and $M\ba e_{1}$ is $3$\dash connected,
then clearly $(e_{2},\ldots, e_{n})$ is a fan of
$M\ba e_{1}$, and $M$ is said to be obtained from $M\ba e_{1}$
by a \emph{fan-lengthening} move on this fan.
Similarly, if $e_{1}$ is a rim element, and $M/e_{1}$ is
$3$\dash connected, then $M$ is obtained from $M/e_{1}$ by
a fan-lengthening move on $(e_{2},\ldots, e_{n})$.
If $n\geq 5$, and $e_{i}$ is a rim element, where $1\leq i\leq n-1$, and
$M/e_{i}\ba e_{i+1}$ is $3$\dash connected, then
\Cref{cider} shows $(e_{1},\ldots, e_{i-1},e_{i+2},\ldots,e_{n})$
is a fan of $M/e_{i}\ba e_{i+1}$, and $M$ is said to be obtained from
$M/e_{i}\ba e_{i+1}$ by a fan-lengthening move on this fan.
Note that $M$ is obtained by a fan-lengthening
move on $M'$ if and only if $M^{*}$ is obtained
by a fan-lengthening move on $(M')^{*}$ (applied to the
reversed fan, in the case that $|E(M)|=|E(M')|+2$).
Moreover, $M$ is necessarily $3$\dash connected.

Let $N$ be a $3$\dash connected matroid with at least
four elements.
Let $\mcal{F}_{N}$ be a collection of pairwise disjoint
fans in $N$.
Note that we do not require fans in $\mcal{F}_{N}$ to be maximal.
A fan in $N$ could potentially contain many fans in
$\mcal{F}_{N}$ as subsequences.
If $M'$ has $N$ as a minor, and $\mcal{F}'$ is a family of fans
of $M'$, then we say that $\mcal{F}'$ is a \emph{covering family}
of $M'$ (relative to $N$ and $\mcal{F}_{N}$) if the following
conditions are satisfied:
\begin{enumerate}[label=\textup{(\roman*)}]
\item the fans in $\mcal{F}'$ are pairwise disjoint,
\item $|\mcal{F}'|=|\mcal{F}_{N}|$,
\item for every $F_{N}\in \mcal{F}_{N}$, there is a
fan $F'\in \mcal{F}'$ such that $F_{N}$ is consistent with $F'$,
\item every element in $E(M')-E(N)$ is contained
in one of the fans in $\mcal{F}'$.
\end{enumerate}
Observe that the fan $F'$ in condition~(iii) may contain elements
of $E(N)$ that are not in $F_{N}$.
If we reverse any fan in a covering family, we obtain another
covering family.
Informally, a fan-extension of $N$ is obtained by finding a covering
family, applying a fan-lengthening move to one of the fans in that
family, and then reiterating this process.
More formally, we have:

\begin{definition}
\label{pinch}
We define $N$ to be a fan-extension of $N$.
We recursively define the set of fan-extensions of $N$
(relative to $\mcal{F}_{N}$) to be the smallest set satisfying
the following condition:
\begin{itemize}
\item if $M'$ is a fan-extension of $N$, and $\mcal{F}'$ is a
covering family of $M'$ containing the fan $F'$, then any matroid
obtained from $M'$ by a fan-lengthening move on $F'$ is a
fan-extension of $N$ (relative to $\mcal{F}_{N}$).
\end{itemize}
\end{definition}

If $M$ is obtained from $M'$ by lengthening $F'$ into the
fan $F$, then $(\mcal{F}'-\{F'\})\cup \{F\}$ is a covering family
of $M$.
Therefore the next result follows easily from the definition.

\begin{proposition}
\label{bunch}
Let $N$ be a $3$\dash connected matroid with at least
four elements and let $\mcal{F}_{N}$ be a collection of pairwise
disjoint fans in $N$.
If $M$ is a fan-extension of $N$, then $M$ is $3$\dash connected,
has $N$ as a minor, and has a covering family.
\end{proposition}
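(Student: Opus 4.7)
The natural approach is induction on the recursive definition of fan-extension (\Cref{pinch}).

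For the base case, we must show that $N$ itself satisfies the three conclusions. Since $N$ is 3-connected by hypothesis and has itself as a minor trivially, the only substantive point is exhibiting a covering family. The family $\mcal{F}_{N}$ itself serves: conditions~(i) and~(ii) of the definition of covering family hold by the assumption that $\mcal{F}_{N}$ is pairwise disjoint; condition~(iii) holds because each $F_{N}\in \mcal{F}_{N}$ is (trivially) consistent with itself; and condition~(iv) holds vacuously, since $E(N)-E(N)=\emptyset$.

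For the inductive step, suppose $M'$ is a fan-extension of $N$ already known to be 3-connected, to have $N$ as a minor, and to admit a covering family $\mcal{F}'$ containing a fan $F'$, and suppose $M$ is obtained from $M'$ by a fan-lengthening move on $F'$ producing the fan $F$. That $M$ is 3-connected is recorded in the remarks following the definition of fan-lengthening. That $M$ has $N$ as a minor follows immediately because $M'$ is a minor of $M$ (one recovers $M'$ from $M$ by the single deletion, or the contraction/deletion pair, used in defining the fan-lengthening move) and $N$ is in turn a minor of $M'$. As suggested by the remark preceding the statement, the candidate covering family of $M$ is $\mcal{F}=(\mcal{F}'-\{F'\})\cup\{F\}$.

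Verifying that $\mcal{F}$ is a covering family of $M$ is then bookkeeping against the four clauses. The new elements of $F$ lie in $E(M)-E(M')$ and hence are disjoint from every fan in $\mcal{F}'-\{F'\}$, which are supported on $E(M')$; together with $F'\cap F_{j}=\emptyset$ for $F_{j}\in\mcal{F}'-\{F'\}$, this gives~(i). Clause~(ii) is immediate since we replaced one fan by one fan. For~(iv), each element of $E(M')-E(N)$ already lay in some member of $\mcal{F}'$ (and, if it lay in $F'$, it now lies in $F\supseteq F'$), while the elements of $E(M)-E(M')$ all lie in $F$ by construction of the fan-lengthening move.

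The only clause that requires a moment's thought is~(iii): for each $F_{N}\in \mcal{F}_{N}$ we need a fan of $\mcal{F}$ with which $F_{N}$ is consistent. The fans $F_{N}$ witnessed by elements of $\mcal{F}'-\{F'\}$ are unchanged. For the fan $F_{N}\in\mcal{F}_{N}$ consistent with $F'$, I would argue that $F_{N}$ is consistent with $F$ as well, by inspecting the two kinds of fan-lengthening moves: in either case $F'$ is a subsequence of $F$ (with possibly one extra terminal element, or two extra internal elements), so any (possibly non-contiguous) subsequence of $F'$ or of its reverse is automatically a subsequence of $F$ or of its reverse. This preservation-of-consistency step is the only part that is not a one-line verification, and is the main obstacle to writing the proof tightly; everything else falls out of the definitions.
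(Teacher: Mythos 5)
Your proposal is correct and follows exactly the route the paper takes: the paper's entire justification is the remark preceding the statement that $(\mcal{F}'-\{F'\})\cup\{F\}$ is a covering family of $M$, after which the result ``follows easily from the definition'' by the same induction on the recursive construction that you spell out. Your verification of clause~(iii) -- that $F'$ is a subsequence of $F$ in both kinds of lengthening move, so consistency is inherited -- is precisely the detail the paper leaves implicit.
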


Note that $M$ is a fan-extension of $N$ relative to $\mcal{F}_{N}$
if and only if $M^{*}$ is a fan-extension of $N^{*}$ relative to
$\mcal{F}_{N}$.
The converse of \Cref{bunch} need not hold.
In \Cref{fever}, we essentially construct a $3$\dash connected
matroid that is not a fan-extension, although it does have $N$ as a minor
and a covering family.
However, we will have occasion to use the
partial converse in \Cref{court}.
If \mcal{F} is a covering family of $M$, then we say that \mcal{F} admits
a \emph{fan-shortening move} if $M$ is obtained from $M'$ by using a
fan-lengthening move on $F'\in \mcal{F}'$ to produce the fan $F$, where
$F\in \mcal{F}$ and $M'$ has $N$ as a minor.

\begin{proposition}
\label{burst}
Assume $N$ is neither a wheel nor a whirl, and
that no fan in $N$ contains two distinct fans in $\mcal{F}_{N}$
(considered as unordered sets).
Let $M$ be a $3$\dash connected matroid with $N$ as a minor,
and assume that every minor of $M$ that has $N$ as a minor is $3$\dash connected
up to series and parallel sets.
Let \mcal{F} be a covering family of $M$.
If $M\ne N$, then \mcal{F} admits a fan-shortening move.
\end{proposition}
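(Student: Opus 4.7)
The plan is to invoke Seymour's Splitter Theorem, extract a single-element deletion or contraction that is still $3$\dash connected and still has $N$ as a minor, and then argue that the removed element is forced to be a terminal element of a fan in $\mcal{F}$. Since $M$ and $N$ are both $3$\dash connected, $|E(N)|\geq 4$, $N$ is neither a wheel nor a whirl, and $M\neq N$, the Splitter Theorem supplies an element $e\in E(M)-E(N)$ such that $M':=M\ba e$ or $M':=M/e$ is $3$\dash connected with $N$ as a minor. By condition~(iv) of the covering-family definition, $e$ lies in some fan $F=(e_{1},\ldots,e_{n})$ of $\mcal{F}$.

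The central step is to show $e$ must be $e_{1}$ or $e_{n}$. First, $|F|\geq 4$: the hypothesis that no fan in $N$ contains two distinct fans in $\mcal{F}_{N}$, together with (ii) and (iii), produces a (unique) $F_{N}\in\mcal{F}_{N}$ consistent with $F$, and if $|F|=3$ then $|F_{N}|=3$ forces $F=F_{N}$ as element sets, so $F\subseteq E(N)$, contradicting $e\in F-E(N)$. Now suppose for contradiction that $e=e_{i}$ with $1<i<n$. Since $n\geq 4$, the element $e_{i}$ lies in two consecutive $3$-subsets of the alternating triangle/triad sequence of $F$, so $e_{i}$ belongs to a triangle $T$ and a triad $T^{*}$ of $M$. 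Because $M$ is $3$\dash connected it has no $2$-circuits, no $2$-cocircuits, and no loops or coloops, so the minimality test for circuits of $M/e_{i}$ and cocircuits of $M\ba e_{i}=(M^{*}/e_{i})^{*}$ shows that $T-e_{i}$ is a parallel pair of $M/e_{i}$ and $T^{*}-e_{i}$ is a series pair of $M\ba e_{i}$. Hence neither $M/e_{i}$ nor $M\ba e_{i}$ is $3$\dash connected, contradicting the choice of $e$.

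So $e\in\{e_{1},e_{n}\}$; by symmetry take $e=e_{1}$. If $e_{1}$ is a spoke of $F$, the triangle $\{e_{1},e_{2},e_{3}\}$ already gives $M/e_{1}$ a parallel pair, so $M'$ must be $M\ba e_{1}$; then $M$ is obtained from $M'=M\ba e_{1}$ by the spoke-terminal fan-lengthening move on $(e_{2},\ldots,e_{n})$, yielding the fan $F\in\mcal{F}$. The rim-terminal case is dual: $M\ba e_{1}$ contains a series pair arising from the triad $\{e_{1},e_{2},e_{3}\}$, forcing $M'=M/e_{1}$, and $M$ arises from $M'$ by the rim-terminal fan-lengthening. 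In either case $\mcal{F}$ admits a fan-shortening move, as required. The main obstacle I anticipate is the internal-element computation — particularly making sure the adjacent triangle/triad argument applies cleanly at the length-$4$ boundary positions $i=2$ and $i=n-1$ — together with a careful invocation of Seymour's Splitter Theorem ensuring $e\in E(M)-E(N)$ via its standard reduction sequence.
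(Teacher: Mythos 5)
Your argument is correct in substance but takes a genuinely different route from the paper. You invoke Seymour's Splitter Theorem to obtain, up front, an element $e\in E(M)-E(N)$ whose deletion or contraction is $3$\dash connected with $N$ as a minor; then internal elements of fans are excluded at once (they lie in both a triangle and a triad, so no single-element removal is $3$\dash connected), and the spoke/rim dichotomy at the terminal position determines which removal the Splitter Theorem certified, giving the shortening move directly. The paper instead starts from an arbitrary element of $E(M)-E(N)$ in an arbitrary fan of \mcal{F}: it uses the two-element move $M/e_{i}\ba e_{i+1}$ (via \Cref{cider,shrub}) when that element is internal, and in the hard case of a terminal element whose single-element removal is \emph{not} $3$\dash connected it runs a long orthogonality analysis that assembles a fan of $M$ containing two members of $\mcal{F}_{N}$ and contradicts the hypothesis by projecting to $N$. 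Your route buys brevity: the Splitter Theorem supplies a removable element, so the paper's hard case never arises, and you never use the hypothesis that minors with $N$\dash minors are $3$\dash connected up to series and parallel sets, nor (essentially) the no-two-fans hypothesis. The paper's route buys self-containedness, replacing the Splitter Theorem by elementary arguments built on \Cref{lasso}, \Cref{shrub}, and \Cref{steal}. Two caveats. First, you need the exact-minor form of the Splitter Theorem (an element of $E(M)-E(N)$ whose removal is $3$\dash connected and keeps the labelled $N$ as a minor); this is standard, and the wheel/whirl proviso is vacuous since $N$ is neither, but it should be cited in that form. Second, your claim $|F|\geq 4$ rests on every fan of a covering family having a member of $\mcal{F}_{N}$ consistent with it; conditions (ii) and (iii) of the definition do not by themselves make the assignment $F_{N}\mapsto F'$ bijective, and the no-two-fans hypothesis concerns fans of $N$, not of $M$, so it does not immediately give injectivity -- the paper reads its definition in exactly this way (``$(e_{1},\ldots,e_{n})$ must contain three elements from $E(N)$''), so this is a shared convention rather than a defect peculiar to your proof, but your parenthetical ``unique'' overstates what you have and is not needed: existence is all your argument uses.
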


\begin{proof}
Let $(e_{1},\ldots, e_{n})$ be an arbitrary fan in \mcal{F}, and assume that
the internal element $e_{i}$ belongs to $E(M)-E(N)$.
By duality, we can assume that $e_{i}$ is a rim element.
If $N$ is a minor of $M\ba e_{i}$, then $n\leq 4$, for otherwise
$M\ba e_{i}$ contains a triangle that contains a series pair, and this
contradicts the hypotheses of the \namecref{burst}
(see \Cref{lasso}).
But if $n\leq 4$, the three elements in $\{e_{1},e_{2},e_{3},e_{4}\}-e_{i}$
all belong to a fan in $\mcal{F}_{N}$, and as this set contains a series pair in
$M\ba e_{i}$, it follows that $N$ is not cosimple, a contradiction.
Therefore $N$ is a minor of $M/e_{i}$.
As $\{e_{i-1},e_{i+1}\}$ is a parallel pair in $M/e_{i}$, we can reverse
$(e_{1},\ldots, e_{n})$ as necessary, and assume that $N$ is a minor of
$M/e_{i}\ba e_{i+1}$.
Thus $n\geq 5$, as $(e_{1},\ldots, e_{n})$ must contain three elements from
$E(N)$.
\Cref{shrub} implies that
$M/e_{i}\ba e_{i+1}$ is $3$\dash connected, and we can
set $M'$ to be $M/e_{i}\ba e_{i+1}$.
Now \Cref{cider} implies $(e_{1},\ldots, e_{i-1},e_{i+2},\ldots, e_{n})$ is a fan
of $M'$.
As $M$ is obtained from $M'$ by performing a fan-lengthening move on
this fan, we are done.
Therefore we will now assume that the internal elements of fans in \mcal{F}
all belong to $E(N)$.

As $M\ne N$, we can reverse as necessary, and
let $(e_{1},\ldots, e_{n})$ be a fan in \mcal{F} where
$e_{1}$ belongs to $E(M)-E(N)$.
By duality, we can assume that $e_{1}$ is a spoke element.
If $M\ba e_{1}$ is $3$\dash connected, then we set $M'$ to
be $M\ba e_{1}$ and we are done.
Therefore we assume that $M\ba e_{1}$ is not $3$\dash connected, and thus
contains a series pair.
Now $e_{1}$ is contained in a triad, $T^{*}$, of $M$.
Orthogonality with the triangle $\{e_{1},e_{2},e_{3}\}$ means that
$T^{*}$ contains $e_{2}$ or $e_{3}$.
Let $x$ be the element in $T^{*}-\{e_{1},e_{2},e_{3}\}$.
Because $(e_{1},\ldots, e_{n})$ contains at least three elements of $E(N)$,
$n\geq 4$, so $e_{2}$ and $e_{3}$ are internal elements,
and hence belong to $E(N)$.
As either $\{x,e_{2}\}$ or $\{x,e_{3}\}$ is a series pair in $M\ba e_{1}$,
we see $N$ is a minor of $M\ba e_{1}/x$.
Because \mcal{F} is a covering family, $x$ is contained in a
fan in \mcal{F}.
Because $x$ is not in $E(N)$, it is not an internal element,
so $x$ is a terminal element of a fan in \mcal{F}.
Orthogonality with $T^{*}$ shows that it is a rim element.
It cannot be the case that $x$ is in $(e_{1},\ldots, e_{n})$, for then
$x=e_{n}$, so this fan would contain a triad that does not consist of three
consecutive elements.
The dual of \Cref{steal} shows that this is a contradiction.
Assume that $x=f_{m}$, where $(f_{1},\ldots, f_{m})$ is a fan in
\mcal{F}.
If $M/f_{m}$ is $3$\dash connected we are done, so we assume
$f_{m}$ is in a triangle, $T$.
This triangle must contain either $f_{m-2}$ or $f_{m-1}$, and an element from
$T^{*}$.
Orthogonality with the triad $\{e_{2},e_{3},e_{4}\}$ shows that $T$
cannot contain $e_{2}$ or $e_{3}$, so it contains $e_{1}$.

If $T=\{f_{m-1},f_{m},e_{1}\}$ and $T^{*}=\{f_{m},e_{1},e_{2}\}$, then
$(f_{1},\ldots, f_{m},e_{1},\ldots, e_{n})$ is a fan of $M$ that contains two
fans in $\mcal{F}_{N}$.
Let $(x_{1},\ldots, x_{p})$ be the subsequence obtained from
$(f_{1},\ldots, f_{m},e_{1},\ldots, e_{n})$ by omitting the elements not in
$E(N)$.
(Any such elements have to be in $\{f_{1},f_{m},e_{1},e_{n}\}$.)
It follows from  \cite[Corollary~8.2.5]{Oxl11} that any three consecutive elements
in $(x_{1},\ldots, x_{p})$ are $3$\dash separating in $N$,
and therefore form either a triangle or a triad in $N$.
It is not too difficult to see, using orthogonality, that either
$(x_{1},\ldots, x_{p})$ is a fan in $N$, or
$\min\{r_{N}(\{x_{1},\ldots, x_{p}\}),r_{N}^{*}(\{x_{1},\ldots, x_{p}\})\}\leq 2$.
In the former case, $N$ has a fan that contains two distinct fans
from $\mcal{F}_{N}$, so we have a contradiction to the hypotheses.
Therefore $\{x_{1},\ldots, x_{p}\}$ is a line in either $N$ or $N^{*}$.
However, $\{e_{2},e_{3},e_{4}\}$ is a triad of $M$, and of $N$ also
(since $\{e_{2},\ldots, e_{n-1}\}\subseteq E(N)$ and $\{e_{2},\ldots, e_{n}\}$
contains at least three elements of $E(N)$).
Similarly, $\{f_{m-3},f_{m-2},f_{m-1}\}$ is a triangle of $N$, so
$\{f_{m-3},f_{m-2},f_{m-1},e_{2},e_{3},e_{4}\}$ cannot be contained in a line
of $N$ or $N^{*}$.

This disposes of the case that $T=\{f_{m-1},f_{m},e_{1}\}$ and
$T^{*}=\{f_{m},e_{1},e_{2}\}$.
Next we assume that $T\ne \{f_{m-1},f_{m},e_{1}\}$.
A symmetrical argument will then deal with the case that
$T^{*}\ne \{f_{m},e_{1},e_{2}\}$.
Because $T$ is not $\{f_{m-1},f_{m},e_{1}\}$, it
is $\{f_{m-2},f_{m},e_{1}\}$ instead.
This means that $m=4$, for otherwise we violate orthogonality between
$T$ and $\{f_{m-4},f_{m-3},f_{m-2}\}$.
If $T^{*}=\{f_{m},e_{1},e_{2}\}$, then
$(f_{1},f_{3},f_{2},f_{4},e_{1},e_{2},\ldots, e_{n})$ is a fan of $M$ that
contains two fans of $\mcal{F}_{N}$.
If $T^{*}$ is not $\{f_{m},e_{1},e_{2}\}$, then it is
$\{f_{m},e_{1},e_{3}\}$, which implies that $n=4$.
In this case $(f_{1},f_{3},f_{2},f_{4},e_{1},e_{3},e_{2},e_{4})$ is a fan of
$M$ that contains two fans in $\mcal{F}_{N}$.
In either case, we can obtain a contradiction to the hypotheses of the
\namecref{burst}, exactly as in the previous paragraph.
This completes the proof.
\end{proof}

\begin{corollary}
\label{court}
Let $M$ and $N$ be as described in \textup{\Cref{burst}}.
If $M$ contains a covering family, then it is a fan-extension
of $N$ relative to $\mcal{F}_{N}$.
\end{corollary}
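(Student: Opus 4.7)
The plan is to induct on $|E(M)|-|E(N)|$. In the base case $M=N$, and $N$ is a fan-extension of itself by the initial clause of \Cref{pinch}, so there is nothing to check.

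For the inductive step, assume $M\neq N$. By \Cref{burst}, the covering family $\mcal{F}$ admits a fan-shortening move: there is a matroid $M'$ with $N$ as a minor, a fan $F\in\mcal{F}$, and a fan $F'$, such that $\mcal{F}'=(\mcal{F}-\{F\})\cup\{F'\}$ and $M$ is obtained from $M'$ by a fan-lengthening move on $F'$ producing $F$. The definition of fan-lengthening forces $M'$ to be $3$\dash connected. Moreover, every minor of $M'$ is a minor of $M$, so the hypothesis that any minor with $N$ as a minor is $3$\dash connected up to series and parallel sets is inherited by $M'$; and the hypotheses on $N$ and $\mcal{F}_{N}$ are unchanged. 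Thus the hypotheses of the \namecref{court} apply to $M'$.

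Next I verify that $\mcal{F}'$ is a covering family of $M'$. The fans in $\mcal{F}'$ are pairwise disjoint, because those in $\mcal{F}$ are, and $F'\subseteq F$ as a set in either flavour of fan-lengthening move. Clearly $|\mcal{F}'|=|\mcal{F}|=|\mcal{F}_{N}|$. For condition~(iii), note that a fan-lengthening move introduces only elements of $E(M)-E(M')\subseteq E(M)-E(N)$, so passing from $F$ to $F'$ does not alter the subsequence of $F$ consisting of elements of $E(N)$. Hence any $F_{N}\in\mcal{F}_{N}$ consistent with $F$ is also consistent with $F'$, and the remaining $F_{N}$'s are handled by the unchanged fans in $\mcal{F}-\{F\}$. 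For condition~(iv), any element of $E(M')-E(N)$ lies in $E(M)-E(N)$ and hence in some fan of $\mcal{F}$; since it is not among the elements removed when forming $F'$ from $F$, it lies in some fan of $\mcal{F}'$.

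Since $M'$ is $3$\dash connected, has $N$ as a minor, and satisfies the hypotheses of the \namecref{court}, and $|E(M')|-|E(N)|<|E(M)|-|E(N)|$, the inductive hypothesis applies: $M'$ is a fan-extension of $N$ relative to $\mcal{F}_{N}$. Because $\mcal{F}'$ is a covering family of $M'$ containing $F'$, and $M$ is obtained from $M'$ by a fan-lengthening move on $F'$, the recursive clause of \Cref{pinch} yields that $M$ is a fan-extension of $N$. The main obstacle is not a deep argument but bookkeeping for the covering family conditions under the fan-shortening; the real work has already been absorbed by \Cref{burst}.
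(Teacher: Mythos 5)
Your proof is correct and follows essentially the same route as the paper: induction on $|E(M)|-|E(N)|$, with \Cref{burst} supplying the fan-shortening move and $(\mcal{F}-\{F\})\cup\{F'\}$ serving as a covering family of $M'$. The only difference is that you spell out the covering-family bookkeeping that the paper leaves implicit, and that verification is accurate.
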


\begin{proof}
The proof is by induction on $|E(M)|-|E(N)|$.
If $M=N$, then $M$ is a fan-extension of $N$, as desired.
Therefore we assume $M\ne N$.
Let \mcal{F} be a covering family of $M$.
By \Cref{burst}, $M$ is obtained from some matroid $M'$ by a performing a
fan-lengthening move on $F'$ to obtain $F\in \mcal{F}$.
As $(\mcal{F}-\{F\})\cup\{F'\}$ is a covering family of $M'$, it follows 
by induction that $M'$ is a fan-extension of $N$.
Now the result is immediate.
\end{proof}

\section{A wheel-gluing lemma}
\label{sect5}

In this section we develop an alternative description of fan-extensions
that will be of use in \Cref{sect6}, where we describe some applications
of our main theorem.
Intuitively, a family of fans in $N$ can be seen as the result of gluing wheels
along a family of triangles in a matroid that is smaller than $N$.
A fan-extension of $N$ can be obtained in the same way, except that we glue
on larger wheels.
The object of this section is to make these ideas formal.
Our focus here will be on the case that $N$ is representable, although
it would be possible to extend these results to arbitrary matroids.

\begin{proposition}
\label{peace}
Let $M_{1}$ and $M_{2}$ be matroids on the same ground set.
Assume $(e_{1},e_{2},e_{3}, e_{4})$ is a fan with $e_{1}$ as a spoke
element in both $M_{1}$ and $M_{2}$.
If $M_{1}\ba e_{1}=M_{2}\ba e_{1}$, then $M_{1}=M_{2}$.
\end{proposition}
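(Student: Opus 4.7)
The plan is to show $M_{1}=M_{2}$ by proving their rank functions coincide; since $M_{1}\ba e_{1}=M_{2}\ba e_{1}=:N$, they already agree on subsets of $E(N)$, so it suffices to show that for every $Y\subseteq E(N)$,
\[
e_{1}\in\cl_{M_{1}}(Y)\iff e_{1}\in\cl_{M_{2}}(Y).
\]
By symmetry I argue only the forward direction: given a circuit $C$ of $M_{2}$ with $e_{1}\in C\subseteq Y\cup e_{1}$, I aim to manufacture a circuit $E$ of $M_{1}$ with $e_{1}\in E\subseteq Y\cup e_{1}$.

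Orthogonality between $C$ and the cocircuit $\{e_{2},e_{3},e_{4}\}$ of $M_{2}$ restricts $|C\cap\{e_{2},e_{3},e_{4}\}|$ to $\{0,2,3\}$. If $\{e_{2},e_{3}\}\subseteq C$---which covers $|\cdot|=3$ and the subcase $C\cap\{e_{2},e_{3},e_{4}\}=\{e_{2},e_{3}\}$---then $\{e_{2},e_{3}\}\subseteq Y$, and the triangle $\{e_{1},e_{2},e_{3}\}$ of $M_{1}$ gives $e_{1}\in\cl_{M_{1}}(\{e_{2},e_{3}\})\subseteq\cl_{M_{1}}(Y)$ at once. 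The substantive cases are
\begin{enumerate}[label=\textup{(\roman*)}]
\item $C\cap\{e_{2},e_{3},e_{4}\}=\emptyset$, and
\item $C\cap\{e_{2},e_{3},e_{4}\}\in\bigl\{\{e_{2},e_{4}\},\{e_{3},e_{4}\}\bigr\}$.
\end{enumerate}

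In case (i), weak circuit elimination in $M_{2}$ applied to $C$ and $\{e_{1},e_{2},e_{3}\}$ at $e_{1}$ yields a circuit $D\subseteq(C\setminus e_{1})\cup\{e_{2},e_{3}\}$ of $N$; orthogonality in $N$ with $\{e_{2},e_{3},e_{4}\}$ forces $\{e_{2},e_{3}\}\subseteq D$. Since $D$ is a circuit of $M_{1}$ as well, eliminating $D$ against $\{e_{1},e_{2},e_{3}\}$ in $M_{1}$ at $e_{3}$ produces a circuit $E\subseteq(D\setminus e_{3})\cup\{e_{1},e_{2}\}$. A second orthogonality check---this time in $M_{1}$---forces $e_{2}\notin E$, so $E\subseteq C$; the independence of $C\setminus e_{1}$ in $M_{1}$ then forces $e_{1}\in E$. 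In case (ii), say $C\cap\{e_{2},e_{3},e_{4}\}=\{e_{2},e_{4}\}$, the pivotal step is to combine $e_{1}\in\cl_{M_{2}}(C\setminus e_{1})$ and $e_{2}\in C\setminus e_{1}$ with the triangle identity $e_{3}\in\cl_{M_{2}}(\{e_{1},e_{2}\})$ to deduce
\[
e_{3}\in\cl_{M_{2}}(C\setminus e_{1})=\cl_{N}(C\setminus e_{1}).
\]
Hence $\{e_{3}\}\cup(C\setminus e_{1})$ contains a unique circuit $D$ of $N$, necessarily through $e_{3}$; eliminating $D$ against $\{e_{1},e_{2},e_{3}\}$ in $M_{1}$ at $e_{3}$ again produces the required $E\subseteq C$, with $e_{1}\in E$ forced again by the independence of $C\setminus e_{1}$. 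The subcase $\{e_{3},e_{4}\}$ follows by the $e_{2}\leftrightarrow e_{3}$ symmetry of the fan.

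The step I expect to be the main obstacle is case (ii): since $C$ does not already contain two elements of the triangle, I cannot directly eliminate $C$ against $\{e_{1},e_{2},e_{3}\}$ to extract an $N$-circuit whose $M_{1}$-circuit properties I can exploit. The workaround is the closure computation that certifies $e_{3}\in\cl_{N}(C\setminus e_{1})$ purely from the triangle identity in $M_{2}$; once $D$ is fabricated through $e_{3}$, the elimination in $M_{1}$ proceeds exactly as in case (i).
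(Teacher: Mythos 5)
Your proof is correct, but it takes a genuinely different route from the paper's. You reduce the statement to comparing $\cl_{M_{1}}$ and $\cl_{M_{2}}$ at $e_{1}$ over subsets of $E(N)$, and then transfer a witnessing circuit from $M_{2}$ to $M_{1}$ by a case analysis on its intersection with the triad, fabricating circuits of the common minor $N$ and eliminating them against the triangle in $M_{1}$. The paper instead exploits the fan structure to derive the identities $M_{1}/e_{2}=M_{2}/e_{2}$ (since $M_{i}/e_{2}$ is $M_{i}\ba e_{1}/e_{2}$ with $e_{1}$ added parallel to $e_{3}$) and $M_{1}\ba e_{3}=M_{2}\ba e_{3}$ (adding $e_{2}$ in series with $e_{4}$); a hypothetical discrepancy set---a circuit of one matroid that is independent in the other---is then forced to contain $e_{1}$ and $e_{3}$ and avoid $e_{2}$, and a single strong circuit-exchange with the triangle gives the contradiction. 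The paper's argument is shorter and locates the discrepancy structurally; yours makes explicit where the triad enters (each case is settled by an orthogonality check against $\{e_{2},e_{3},e_{4}\}$). Two small points you should make explicit: in case~(i), before invoking orthogonality to conclude $\{e_{2},e_{3}\}\subseteq D$, you need $D\cap\{e_{2},e_{3}\}\neq\emptyset$, which holds because $C\setminus e_{1}$ is independent in $M_{2}$ and so cannot contain the circuit $D$; and the orthogonality is best applied in $M_{2}$, where $\{e_{2},e_{3},e_{4}\}$ is known to be a cocircuit (in $N$ it is a priori only a union of a cocircuit with other elements)---since $D$ is also a circuit of $M_{2}$, this changes nothing.
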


\begin{proof}
Assume that $M_{1}\ne M_{2}$.
Without loss of generality, we can assume that $X$ is a circuit in $M_{1}$,
but an independent set in $M_{2}$.
Then $e_{1}$ must be contained in $X$.
As $M_{1}\ba e_{1}/e_{2}=M_{2}\ba e_{1}/e_{2}$, and $M_{i}/ e_{2}$ is obtained
from $M_{i}\ba e_{1}/e_{2}$ by adding the element $e_{1}$ parallel to $e_{3}$
(for $i=1,2$), it follows that $M_{1}/e_{2}=M_{2}/e_{2}$.
Therefore $e_{2}$ is not in $X$.
Now $M_{1}/e_{2}\ba e_{3}=M_{2}/e_{2}\ba e_{3}$, and
$M_{i}\ba e_{3}$ is obtained from $M_{i}/e_{2}\ba e_{3}$
by adding $e_{2}$ in series to $e_{4}$.
Therefore $M_{1}\ba e_{3}=M_{2}\ba e_{3}$, so $e_{3}\in X$.
By strong circuit-exchange between $X$ and $\{e_{1},e_{2},e_{3}\}$,
there is a circuit, $C$, in $M_{1}$ contained in $(X-e_{1})\cup e_{2}$ that contains
$e_{2}$.
As $C$ does not contain $e_{1}$, it is also a circuit of $M_{2}$.
Now $C$ and $\{e_{1},e_{2},e_{3}\}$ are distinct circuits of $M_{2}$, both of which
are contained in $X\cup e_{2}$ and both of which contain $e_{2}$.
As $X$ is independent in $M_{2}$, this is a contradiction.
\end{proof}

Let $M_{1}$ and $M_{2}$ be matroids with $E(M_{1})\cap E(M_{2})=T$.
The \emph{generalised parallel connection} of $M_{1}$ and $M_{2}$
is defined if $M_{1}|T=M_{2}|T$, and $T$ is a modular flat of $M_{2}$.
In this case, we use the notation $M_{1}\boxtimes_{T} M_{2}$ to denote
the generalized parallel connection.
Note that a triangle in a simple binary matroid is a modular flat.
The flats of $M_{1}\boxtimes_{T} M_{2}$ are precisely the subsets
$F\subseteq E(M_{1})\cup E(M_{2})$ such that
$F\cap E(M_{i})$ is a flat of $M_{i}$, for $i=1,2$
(see \cite[Proposition~11.4.13]{Oxl11}).
Assume $M_{1}$, $M_{2}$, and $M_{3}$ are matroids with
$E(M_{2})\cap E(M_{3})\subseteq E(M_{1})$.
Let $T_{i}=E(M_{1})\cap E(M_{i})$ for $i=2,3$.
If $M_{1}\boxtimes_{T_{2}}M_{2}$
and $M_{1}\boxtimes_{T_{3}}M_{3}$ are both defined, then
it follows easily from the definition that
$(M_{1}\boxtimes_{T_{2}}M_{2})\boxtimes_{T_{3}}M_{3}$ and
$(M_{1}\boxtimes_{T_{3}}M_{3})\boxtimes_{T_{2}}M_{2}$ are defined and equal.

Let $N_{0}$ be a matroid and let $\mcal{T}=\{T_{i}\}_{i\in I}$ be a multiset of
triangles of $N_{0}$, indexed by the set $I=\{1,\ldots, t\}$.
Note that triangles in \mcal{T} need not be disjoint, nor indeed distinct.
These are the triangles to which we will glue wheels.
For each $i\in I$, let $T_{i}=\{a_{i},b_{i},c_{i}\}$.
The end points of the fan that we generate by the gluing operation will
be $a_{i}$ and $c_{i}$.
Let $r$ be a function from $I$ to $\{2,3,4,5,\ldots\}$.
This function determines the rank of the wheels that we glue to the triangles
in \mcal{T}.
Finally, let $X$ be a subset of $\cup_{k\in I}T_{k}$ such that,
for all $i\in I$, $b_{i}\notin X$ implies $b_{i}=a_{j}$ or $b_{i}=c_{j}$
for some $j\in I$.
After gluing the wheels to the triangles in \mcal{T}, we delete the
set $X$.
We call the tuple $(N_{0},\mcal{T},r,X)$ a \emph{blueprint}.

Assume that $(N_{0},\mcal{T},r,X)$ is a blueprint.
For each $i\in I$, we let the matroid $W_{i}$ be a copy of a wheel
with rank $r(i)$.
The ground set of $W_{i}$  will be
$\{x_{1}^{i},\ldots, x_{r(i)}^{i}, y_{1}^{i},\ldots, y_{r(i)}^{i}\}$,
where we make the identifications
$x_{1}^{i}=a_{i}$, $y_{r(i)}^{i}=b_{i}$, and $x_{r(i)}^{i}=c_{i}$.
The ground set of $W_{i}$ is labeled in such a way that
$(x_{1}^{i},y_{1}^{i},x_{2}^{i},y_{2}^{i},\ldots, x_{r(i)}^{i},y_{r(i)}^{i})$
is a fan with $x_{1}^{i}$ as a spoke element.
Moreover $\{x_{1}^{i},y_{r(i)}^{i},x_{r(i)}^{i}\}$ is a triangle.
Thus $E(W_{i})\cap E(W_{j})\subseteq \cup_{k\in I}T_{k}$
when $i\ne j$.
For each $i\in I=\{1,\ldots, t\}$, we recursively define $N_{i}$ to be
$N_{i-1}\boxtimes_{T_{i}} W_{i}$.
By an earlier observation, the ordering of the indices in
$I$ makes no difference to the definition of $N_{t}$.
We define $\boxtimes(N_{0},\mcal{T},r,X)$ to be $N_{t}\ba X$,
and we say that this matroid is obtained by \emph{gluing wheels} to
$N_{0}$.
From the definition of generalized parallel connection, it is straightforward to show
that
$(x_{1}^{i},y_{1}^{i},x_{2}^{i},\ldots, y_{r(i)-1}^{i},x_{r(i)}^{i})-X$ is a fan
in $\boxtimes(N_{0},\mcal{T},r,X)$, for every $i\in I$.
We call such a fan a \emph{canonical fan} of $\boxtimes(N_{0},\mcal{T},r,X)$.

Henceforth we take $N$ to be a $3$\dash connected representable matroid
that is neither a wheel nor a whirl.
Let $E$ be the ground of $N$.
We assume that $|E|\geq 4$, so that $N$ is simple.
If $X$ and $Y$ are disjoint subsets of
$E(N)$, then $\sqcap(X,Y)=r(X)+r(Y)-r(X\cup Y)$.
Given an embedding of $N$ in a projective geometry, the parameter $\sqcap(X,Y)$
tells us the rank of the maximal subspace spanned by both $X$ and $Y$.
The next two results are standard, and not difficult to prove.
We omit their proofs.

\begin{proposition}
\label{smash}
Let $F=(e_{1},\ldots, e_{n})$ be a fan of $N$.
If $e_{1}$ is a rim element, then $\sqcap(\{e_{1},e_{2}\},E-F)=1$.
\end{proposition}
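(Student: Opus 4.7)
The plan is to relate $\sqcap(\{e_1,e_2\},E-F)$ to the ranks of the fan $F$ and its sub-fan $F-\{e_1,e_2\}$, and to compute those using the alternating triangle/triad structure. Since $N$ is $3$-connected with $|E(N)|\geq 4$ and is neither a wheel nor a whirl, we have $|E(N)|\geq 5$ and $N$ is simple; in particular $r(\{e_1,e_2\})=2$, so the claim reduces to showing
\[
r((E-F)\cup\{e_1,e_2\}) - r(E-F) = 1.
\]
The case $n=3$ is immediate: the triad $\{e_1,e_2,e_3\}$ makes $E-F$ a hyperplane, of rank $r(N)-1$, while $(E-F)\cup\{e_1,e_2\}=E-e_3$ has rank $r(N)$ as $e_3$ is not a coloop, giving the desired difference.

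For $n\geq 4$, I would invoke the standard fact that a fan of length at least three in a $3$-connected matroid is $3$-separating, so $r(F)+r(E-F)=r(N)+2$, and similarly $r(F-\{e_1,e_2\})+r((E-F)\cup\{e_1,e_2\})=r(N)+2$ (the set $F-\{e_1,e_2\}$ is either a rim-leading sub-fan of length $n-2$ when $n\geq 5$, or a $2$-element coindependent set when $n=4$, and is $3$-separating in either case). Subtracting these identities gives
\[
r((E-F)\cup\{e_1,e_2\}) - r(E-F) = r(F) - r(F-\{e_1,e_2\}),
\]
so the task becomes to show that $\{e_1,e_2\}$ contributes exactly $1$ to the rank of $F-\{e_1,e_2\}$.

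Since $\{e_2,e_3,e_4\}$ is a triangle, $e_2\in\cl(\{e_3,e_4\})\subseteq\cl(F-\{e_1,e_2\})$, so $e_2$ contributes nothing, and the question reduces to whether $e_1\in\cl(F-e_1)$. A circuit $C\subseteq F$ containing $e_1$ would meet the triad $\{e_1,e_2,e_3\}$ in at least two elements by orthogonality, forcing $C$ to contain $e_2$ or $e_3$; iterating orthogonality against the successive triads and triangles of the fan pushes $C$ along and, using the hypothesis (valid since $|E|\geq 5$ and $N$ is not a wheel or whirl) that no triangle of $N$ is also a triad, eventually yields a contradiction. The main obstacle is cleanly executing this pushing argument; a slicker alternative is to appeal to the closed-form rank formula $r(F)=1+\lceil n/2\rceil$ for a rim-leading fan of length $n$, whence $r(F)-r(F-\{e_1,e_2\})=1$ is immediate.
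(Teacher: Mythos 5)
The paper omits the proof of this proposition (it is stated as standard), so there is no argument of the authors to compare with; judged on its own, your reduction is sound but the decisive step is not adequately justified. The skeleton is fine: since $N$ is simple, the claim is $r((E-F)\cup\{e_{1},e_{2}\})-r(E-F)=1$; the identities $\lambda(F)=2$ and $\lambda(F-\{e_{1},e_{2}\})=2$ (both legitimate because, by \Cref{hocus}, $E-F$ has at least two elements, so $3$\dash connectivity gives the lower bound while \Cref{floor} gives the upper bound) reduce this to $r(F)-r(F-\{e_{1},e_{2}\})=1$, and the triangle $\{e_{2},e_{3},e_{4}\}$ reduces that to $e_{1}\notin\cl(F-e_{1})$. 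The gap is your justification of this last fact. The orthogonality ``pushing'' argument cannot reach a contradiction: the internal triangle/triad pattern of a rim-ended fan, together with orthogonality and the fact that no triangle is a triad, is perfectly consistent with a circuit consisting of the rim elements of the fan. This is exactly what happens in a wheel, whose ground set can be ordered as a fan $(r_{1},s_{2},r_{2},\ldots,s_{m},r_{m},s_{1})$ beginning with a rim element, and there $\{r_{1},\ldots,r_{m}\}$ is a circuit containing the first element; when the pushing reaches the far end of the fan there is no remaining cocircuit to produce a contradiction, so some global hypothesis beyond ``no triangle is a triad'' must enter. For the same reason the fallback formula $r(F)=1+\lceil n/2\rceil$ cannot simply be quoted: it is false in wheels (where $r(F)=m$, not $m+1$), and in the present setting its proof requires precisely the fact you are trying to establish, so the ``slicker alternative'' is circular.

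The repair is short and is the argument the paper uses in comparable situations (see the proofs of \Cref{steal} and \Cref{drunk}). For $n\geq 4$, the sequence $(e_{2},\ldots,e_{n})$ is a fan, so $\lambda(F-e_{1})\leq 2$ by \Cref{floor}. Since $\{e_{1},e_{2},e_{3}\}$ is a triad, $e_{1}\in\cl^{*}(\{e_{2},e_{3}\})\subseteq\cl^{*}(F-e_{1})$. If also $e_{1}\in\cl(F-e_{1})$, then $\lambda(F)\leq\lambda(F-e_{1})-1\leq 1$, so $3$\dash connectivity forces $|E-F|\leq 1$, and \Cref{hocus} makes $N$ a wheel or a whirl, contrary to hypothesis. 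Hence $e_{1}\notin\cl(F-e_{1})$, which is what your reduction needs; your separate treatment of $n=3$ is correct as written.
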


\begin{proposition}
\label{cough}
Let $F=(e_{1},\ldots, e_{n})$ be a fan of $N$, and let $R$ be the
set of rim elements in $F$.
Then $\sqcap(R,E-F)=1$.
\end{proposition}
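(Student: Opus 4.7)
My plan is to prove \Cref{cough} by induction on the length $n$ of the fan, with \Cref{smash} as the essential tool. The base case $n = 3$ splits into two subcases. If $\{e_1, e_2, e_3\}$ is a triangle, then $R = \{e_2\}$, and the $3$-connectivity of $N$ together with $|E| \geq 5$ and the non-wheel/whirl hypothesis implies the triangle is not also a cocircuit; therefore $E - F$ is not contained in a hyperplane of $N$, so it spans $N$ and $e_2 \in \cl(E - F)$, giving $\sqcap = 1$. If $\{e_1, e_2, e_3\}$ is a triad, then $R = \{e_1, e_3\}$ has rank $2$, the triad being a cocircuit makes $E - F$ a hyperplane of rank $r(N) - 1$, and the absence of coloops in $N$ yields $r(R \cup (E - F)) = r(E \setminus \{e_2\}) = r(N)$; the formula $\sqcap = 2 + (r(N) - 1) - r(N) = 1$ follows.

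For the inductive step with $n \geq 4$, I first reduce to the case that $e_1$ is a rim, either by reversing $F$ or by passing to the sub-fan $(e_2, \ldots, e_n)$, which always begins with a rim since the triples alternate. With $e_1$ a rim and $\{e_1, e_2, e_3\}$ a triad, \Cref{smash} gives $r(\{e_1, e_2\} \cup (E - F)) = r(E - F) + 1$, while the triad being a cocircuit forces $e_1 \notin \cl(E - F)$; combined, $e_2 \in \cl(\{e_1\} \cup (E - F)) \subseteq \cl(R \cup (E - F))$. I then iterate along the fan's triangles $\{e_{2k}, e_{2k+1}, e_{2k+2}\}$: each has middle element in $R$ and outer pair spokes, so once $e_{2k} \in \cl(R \cup (E - F))$ then $e_{2k+2} \in \cl(\{e_{2k}, e_{2k+1}\}) \subseteq \cl(R \cup (E - F))$ as well. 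Induction on $k$ puts every spoke into $\cl(R \cup (E - F))$, yielding $r(R \cup (E - F)) = r(N)$ and $\sqcap(R, E - F) = r(R) + r(E - F) - r(N)$.

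To finish I apply the inductive hypothesis to the shorter fan $F' = (e_3, \ldots, e_n)$ (handling $n = 4$ directly via a length-$3$ sub-fan argument analogous to the base case), whose rim set is $R \setminus \{e_1\}$ and whose complement $(E - F) \cup \{e_1, e_2\}$ has rank $r(E - F) + 1$ by the previous paragraph. This reduces the problem to the rank-increment identity $r(R) - r(R \setminus \{e_1\}) = 1$, i.e., $e_1 \notin \cl(R \setminus \{e_1\})$. This is the main obstacle of the argument. My plan is a circuit-exchange proof: if on the contrary $e_1 \in \cl(R \setminus \{e_1\})$, then there is a circuit $C$ with $e_1 \in C \subseteq R$, and iterated orthogonality with each fan triad $\{e_{2k-1}, e_{2k}, e_{2k+1}\}$---each of which meets $R$ in exactly the two rims $e_{2k-1}, e_{2k+1}$ and so must meet $C$ in both or in neither---forces $C = R$. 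Applying strong circuit-exchange repeatedly between $R$ and the successive fan triangles $\{e_{2k}, e_{2k+1}, e_{2k+2}\}$ then produces a chain of shrinking circuits whose final term meets some fan triad in exactly one element, contradicting orthogonality. The bridging steps when $e_1$ is a terminal spoke (the reduction from the ``both endpoints spoke'' configuration) use the triangle $\{e_1, e_2, e_3\}$ in an analogous rank calculation to verify that $e_1 \in \cl(E - F)$, so that adding $e_1$ to the complement does not disturb the connectivity.
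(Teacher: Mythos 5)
Your scaffolding is mostly sound, and since the paper omits this proof (calling it standard) there is no official argument to compare against: your base case, the use of \Cref{smash}, the chaining along the fan triangles to get $\cl(R\cup(E-F))=E$, and the reduction via the inductive hypothesis to the single claim $e_{1}\notin\cl(R\setminus\{e_{1}\})$ are all correct. But the step you yourself flag as the main obstacle is genuinely gapped. Your proposed contradiction uses only data internal to the fan: the circuit $R$, the fan triangles, the fan triads, strong circuit elimination, and orthogonality. No argument built from these ingredients alone can succeed, because the configuration you are trying to refute is realizable: in the rank-$r$ wheel the sequence $(r_{1},s_{2},r_{2},s_{3},\ldots,s_{r},r_{r})$ is a fan whose rim set is the full rim, which \emph{is} a circuit, and every circuit and cocircuit of the wheel of course satisfy orthogonality. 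So whatever circuits your exchanges produce, they cannot be forced to meet a fan triad in exactly one element; the hypotheses that $N$ is $3$-connected and not a wheel or whirl (equivalently $|E-F|\geq 2$, by \Cref{hocus}) must enter at precisely this point, and your sketch never invokes them.

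A correct finish along your lines: if $R$ were dependent, your orthogonality chaining already shows $R$ is a circuit, so $r(F)\leq r(R\cup\{e_{2}\})\leq r(R)+1=|R|$, since each spoke lies in the closure of the previous spoke and a rim. Dually, chaining the triads shows every rim lies in the coclosure of the spoke set $S$ together with $e_{1}$, so $r^{*}(F)\leq |S|+1$. Then $\lambda(F)=r(F)+r^{*}(F)-|F|\leq 1$, contradicting $3$-connectivity because $|E-F|\geq 2$. (The same count with $R$ independent gives $r(F)=r(R)+1$, and in fact proves the proposition directly: once $\cl(R\cup(E-F))=E$ and $\lambda(F)=2$ are known, $\sqcap(R,E-F)=2-\bigl(r(F)-r(R)\bigr)=1$, which would let you dispense with the induction on $n$ altogether.) The same remark applies, more mildly, to your bridging claim that a terminal spoke $e_{1}$ lies in $\cl(E-F)$: this is true, but not by a rank calculation with the triangle alone; if $e_{1}\notin\cl(E-F)$ then $e_{1}$ lies in both the closure and the coclosure of $F-e_{1}$, forcing $\lambda(F)\leq 1$, so again connectivity and \Cref{hocus} are what make it work.
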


Recall that we have required $N$ to be representable, so henceforth we
identify elements of $E$ with points in a projective geometry, $P$.
Let $\mcal{F}=\{F_{i}\}_{i\in I}$ be a family of pairwise disjoint
fans in $N$ indexed by the set $I=\{1,\ldots, t\}$.
As $N$ is neither a wheel nor a whirl, \Cref{hocus} implies that
the complement of any fan $F_{i}$ contains at least two elements.
We are going to apply \Cref{smash,cough} to each fan in \mcal{F} and its
reversal.
This identifies three points in $P$ that are distinguished by the fan.
We add three elements in parallel to these points.
To be more precise, we let $F_{i}=(e_{1},\ldots,e_{n})$ be a fan in \mcal{F}.
If $e_{1}$ is a spoke element, then we add $a_{i}$ in parallel to $e_{1}$.
If $e_{1}$ is a rim element, then we add $a_{i}$ in parallel to the single point in $P$
that is in the closure of $E-F_{i}$ and $\{e_{1},e_{2}\}$.
Note that if $e_{1}$ is a spoke element, and $e_{1}$ is not in the closure of $E-F_{i}$,
then it is in the closure and the coclosure of $\{e_{2},\ldots, e_{n}\}$.
This implies $\lambda(F_{i})\leq 1$, which contradicts the $3$\dash connectivity of
$N$.
Therefore $a_{i}$ is in the closure of $E-F_{i}$ in any case.
Similarly, we add $c_{i}$ in parallel to $e_{n}$ if $e_{n}$ is a spoke element,
and otherwise we add $c_{i}$ in parallel to the point of $P$ that lies
in the closure of both $E-F_{i}$ and $\{e_{n-1},e_{n}\}$.
Let $R$ be the set of rim elements in $F_{i}$.
Add $b_{i}$ in parallel to the point of $P$ that is in the closure of both $R$
and $E-F_{i}$.
\Cref{floor} implies that $\lambda(F_{i})=2$, so the maximal subspace
of $P$ that lies in the closure of $F_{i}$ and $E-F_{i}$ has rank two.
The points $a_{i}$, $b_{i}$, and $c_{i}$ are all parallel to points in this
subspace.

Define $N^{+}$ to be the matroid consisting of the points in
$E(N)$ and $\cup_{i\in I}\{a_{i},b_{i},c_{i}\}$.
(We have defined $N^{+}$ relative to a representation of $N$.
In fact, any two representations of $N$ will lead to the same matroid $N^{+}$,
but we will not make use of this fact.)
We define $\core(N)$ to be the matroid obtained from $N^{+}$
by deleting $\cup_{i\in I}F_{i}$ and any other point of $E(N)$ that
is parallel to a point $a_{i}$ or $c_{i}$ for some $i\in I$.
\Cref{print} shows schematic representations of the matroids $N$,
$N^{+}$, and $\core(N)$.
\Cref{drunk} shows that $\mcal{T}=\{\{a_{i},b_{i},c_{i}\}\}_{i\in I}$ is a family of
pairwise disjoint triangles in $\core(N)$.
We say that $\core(N)$ and \mcal{T} are \emph{determined} by \mcal{F}.

\begin{figure}[htb]
\centering
\includegraphics{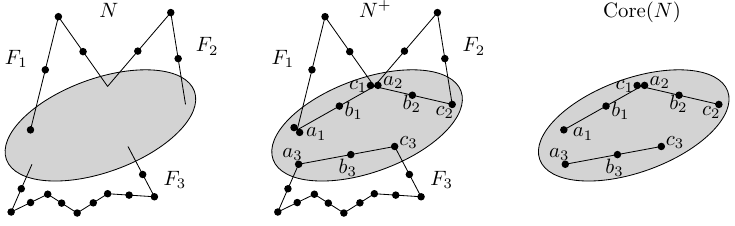}
\caption{Geometric illustrations of $N$, $N^{+}$, and $\core(N)$.}
\label{print}
\end{figure}

If $F_{i}=(e_{1},\dots, e_{n})$ is a fan in \mcal{F}, then
we define $F_{i}^{+}$ to be the sequence $(x_{1},\ldots,x_{m})$,
formed from $(e_{1},\ldots,e_{n})$ by prepending $a_{i}$ if $e_{1}$ is a rim
element, and appending $c_{i}$ if $e_{n}$ is a rim element.
Note that if $e_{1}$ is a rim element, then orthogonality and our
choice of $a_{i}$ mean that $a_{i}$, $e_{1}$, and $e_{2}$ are
pairwise distinct.
Thus $\{x_{1},x_{2},x_{3}\}$ is a triangle of $N^{+}$ in any case.
Similarly, $\{x_{m-2},x_{m-1},x_{m}\}$ is a triangle of $N^{+}$.
Any triangle of $N$ contained in $F_{i}$ is also a triangle in $N^{+}$.
If $T^{*}$ is a triad of $N$ contained in $F_{i}$, then every point in
$\cup_{i\in I}\{a_{i},b_{i},c_{i}\}$ is in the closure of the complement of $T^{*}$.
Therefore $T^{*}$ is a triad in $N^{+}$.
We have just shown that $F_{i}^{+}$ is a fan in $N^{+}$,
and that both $x_{1}$ and $x_{m}$ are spoke elements.
This implies that $m$ is odd.

\begin{proposition}
\label{drunk}
For each $i\in I$, $\{a_{i},b_{i},c_{i}\}$ is a triangle in $N^{+}$.
\end{proposition}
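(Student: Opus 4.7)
Fix $i \in I$ and write $F_i = (e_1,\ldots,e_n)$, with $R$ the set of rim elements. My strategy has two parts: first, place all three of $a_i,b_i,c_i$ on a common projective line $L$ so that $\{a_i,b_i,c_i\}$ is dependent in $N^+$; second, verify that the three projective locations are pairwise distinct, so that the dependent triple is actually a $3$-circuit.

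Collinearity follows from $\lambda(F_i)=2$ (which is \Cref{floor}): the set $L := \cl(F_i) \cap \cl(E-F_i)$ is a rank-$2$ flat of the projective geometry $P$. By \Cref{cough}, $\sqcap(R,E-F_i)=1$, so $\cl(R) \cap \cl(E-F_i)$ is a single point of $L$, to which $b_i$ is parallel by construction. The location of $a_i$ splits by cases: if $e_1$ is a spoke element, the $3$-connectivity argument immediately preceding the proposition gives $e_1 \in \cl(E-F_i)$, so $a_i$ is parallel to $e_1 \in L$; if $e_1$ is a rim element, \Cref{smash} gives $\sqcap(\{e_1,e_2\},E-F_i)=1$, so $a_i$ is parallel to the single point of $\cl(\{e_1,e_2\}) \cap \cl(E-F_i) \subseteq L$. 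The placement of $c_i$ is handled by the symmetric argument using the tail of $F_i$. Hence $\{a_i,b_i,c_i\}$ has rank at most $2$ in $N^+$ and is therefore dependent.

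The main obstacle is distinctness of the three projective locations $p_a,p_b,p_c$ on $L$, which is what prevents $\{a_i,b_i,c_i\}$ from collapsing into a parallel pair plus a third point. I would rule out each coincidence via orthogonality together with the triangle--triad alternation of $F_i$ and the assumption that $N$ is simple, $3$-connected, and neither a wheel nor a whirl. For instance, to exclude $p_a=p_b$ in the spoke case one supposes $e_1 \in \cl(R)$; a shortest circuit $C$ of $\{e_1\}\cup R$ through $e_1$, combined via strong circuit-exchange with the triangle $\{e_1,e_2,e_3\}$, either violates orthogonality with the internal triad $\{e_2,e_3,e_4\}$ (when $n\geq 4$) or in the sole remaining case $n=3$ forces $\{e_1,e_2,e_3\}$ to be both a triangle and a triad -- impossible since $N$ is $3$-connected on at least four elements and not a whirl. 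The coincidence $p_a=p_c$ is handled analogously: equality places a single point in both $\cl(\{e_1,e_2\})$ (or at $e_1$) and $\cl(\{e_{n-1},e_n\})$ (or at $e_n$), which by orthogonality collapses two distinct fan elements, contradicting simplicity. The case $p_b=p_c$ mirrors $p_a=p_b$. Once distinctness is established, $\{a_i,b_i,c_i\}$ is a $3$-element circuit of $N^+$, that is, a triangle.
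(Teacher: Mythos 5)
Your collinearity half is fine and is essentially what the paper already records just before the proposition: by \Cref{floor}, \Cref{smash} and \Cref{cough} all three points $a_{i},b_{i},c_{i}$ sit on the rank-$2$ subspace common to $\cl(F_{i})$ and $\cl(E-F_{i})$, so the real content is distinctness. That is where your argument has a genuine gap: the coincidences cannot be killed by orthogonality and simplicity alone, and the contradictions you claim do not follow. Take your own test case $p_{a}=p_{b}$ with $e_{1}$ a spoke element, so that $e_{1}\in\cl(R)$ and there is a circuit $C\subseteq\{e_{1}\}\cup R$ through $e_{1}$. Orthogonality with the fan triads does not get violated: it merely forces $C$ to contain every rim element (each triad $\{e_{j},e_{j+1},e_{j+2}\}$ meets $\{e_{1}\}\cup R$ in the two rim elements $e_{j},e_{j+2}$), and the circuit you obtain by strong circuit-exchange with $\{e_{1},e_{2},e_{3}\}$ can likewise run all the way to the top of the fan, so it need not meet $\{e_{2},e_{3},e_{4}\}$ in a single element. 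The whirl is the canonical witness that no fan-local orthogonality argument can work here: in a whirl the rim set together with one spoke \emph{is} a circuit, so a terminal spoke lies in the closure of the rim while every triangle/triad incidence is exactly as in your configuration. In other words, the hypothesis that $N$ is not a wheel or a whirl must enter the distinctness argument in an essential way, and in your sketch it never does (except in the trivial $n=3$ fallback).

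The paper's route supplies the missing ingredient: from the supposed coincidence one extracts a circuit inside the fan (or inside $F_{i}^{+}$) whose largest-indexed element $e_{k}$ is a rim element, so $e_{k}$ lies in $\cl(\{e_{1},\ldots,e_{k-1}\})$ by the circuit and in $\cl^{*}(\{e_{1},\ldots,e_{k-1}\})$ by the triad $\{e_{k-2},e_{k-1},e_{k}\}$; hence $\lambda_{N}(\{e_{1},\ldots,e_{k}\})\leq 1$, which contradicts $3$\dash connectivity because the complement of the fan has at least two elements by \Cref{hocus} (this is exactly where ``not a wheel or whirl'' is used). Your treatment of $p_{a}=p_{c}$ has the same defect: ``collapses two distinct fan elements, contradicting simplicity'' is only valid when both ends of $F_{i}$ are spoke elements, since when $e_{1}$ (or $e_{n}$) is a rim element the point $a_{i}$ (or $c_{i}$) is a new point of $P$, not an element of $N$; e.g.\ with $e_{1}$ a spoke and $e_{n}$ a rim element the coincidence only says $\{e_{1},e_{n-1},e_{n}\}$ is dependent, which violates neither simplicity nor orthogonality and again has to be excluded by the connectivity computation above.
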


\begin{proof}
We have already noted that $r_{N^{+}}(\{a_{i},b_{i},c_{i}\})\leq 2$.
Let $F_{i}=(e_{1},\ldots, e_{n})$, and let
$F_{i}^{+}$ be $(x_{1},\ldots, x_{m})$.
Assume that $a_{i}$ and $c_{i}$ are parallel.
This means that $x_{1}$ and $x_{m}$ are parallel.
As $\{x_{1},x_{2},x_{3}\}$ and $\{x_{m-2},x_{m-1},x_{m}\}$ are triangles,
there is a circuit contained in $\{x_{2},x_{3},x_{m-2},x_{m-1}\}$
that contains $x_{m-1}$.
From this it follows that if $x_{m-1}=e_{k}$ (where $k$ is either
$n-1$ or $n$), then $e_{k}$ is in the closure of
$\{e_{1},\ldots, e_{k-1}\}$.
However, we also know that $\{e_{k-2},e_{k-1},e_{k}\}$ is a triad, so
$e_{k}$ is in the coclosure of  $\{e_{1},\ldots, e_{k-1}\}$ in $N$.
From this we can deduce that $\lambda_{N}(\{e_{1},\ldots,e_{k}\})\leq 1$,
and this leads to a contradiction to the fact that $N$ is
$3$\dash connected.
Therefore $a_{i}$ and $c_{i}$ are not parallel.
Next we will show that $a_{i}$ is not parallel to $b_{i}$.
A symmetrical argument can be used to prove that $c_{i}$ is not parallel to $b_{i}$,
and that therefore $\{a_{i},b_{i},c_{i}\}$ is a triangle, as desired.
Assume that $a_{i}$ and $b_{i}$ are parallel.
The set of rim elements in $(e_{1},\ldots,e_{n})$ is equal to
$\{x_{2},x_{4},\ldots,x_{m-1}\}$.
Thus $a_{i}$ is in the closure of this set and of $\{x_{2},x_{3}\}$.
Hence $\{x_{2},x_{3},x_{4},x_{6},\ldots, x_{m-1}\}$ contains a
circuit, and this circuit must contain at least three elements.
If $k$ is the largest integer such that $e_{k}$ is in this circuit,
then $e_{k}$ is in the closure and the coclosure of
$\{e_{1},\ldots, e_{k-1}\}$ in $N$.
From this we can derive a contradiction to the $3$\dash connectivity of $N$.
\end{proof}

If $F_{1}=(e_{1},\ldots, e_{m})$ and $F_{2}=(e_{1}',\ldots,e_{n}')$
are fans, we will say that $F_{1}$ is \emph{enclosed} in
$F_{2}$ if either $(e_{1},\ldots, e_{m})$ or
$(e_{m},\ldots, e_{1})$ is equal to
$(e_{i}',\ldots, e_{i+m-1}')$,
for some $i\in \{1,\ldots, n-m+1\}$.
Let \mcal{F} and $\mcal{F}'$ be collections of fans.
We will say that \mcal{F} is \emph{enclosed} in
$\mcal{F}'$ if there is an bijective function from
\mcal{F} to $\mcal{F}'$ such that every fan in \mcal{F}
is enclosed in its image.

The next result shows that a fan-extension of $N$ can be constructed
by gluing wheels to $\core(N)$, where $\core(N)$ is defined relative to
some pairwise disjoint family of fans in $N$.
Note that \Cref{court} and the hypotheses of the \namecref{chest} imply
that $M$ is a fan-extension of $N$.

\begin{lemma}
\label{chest}
Let $N$ be a $3$\dash connected representable matroid,
where $|E(N)|\geq 4$ and $N$ is neither a wheel nor a whirl.
Let $\mcal{F}_{N}$ be a pairwise disjoint family of fans in $N$.
Assume there is no fan, $F$, in $N$ such that two distinct fans in $\mcal{F}_{N}$
(considered as unordered sets) are contained in $F$.
Assume that $M$ is a $3$\dash connected matroid with $N$ as a minor, and
every minor of $M$ with $N$ as a minor is $3$\dash connected up to series and
parallel sets.
Let \mcal{F} be a covering family in $M$ relative to $\mcal{F}_{N}$.
There exists a pairwise disjoint family of fans, $\mcal{F}_{0}$, in $N$, such that
$\mcal{F}_{0}$ encloses $\mcal{F}_{N}$, and moreover, up
to relabeling, $M=\boxtimes(\core(N),\mcal{T},r,X)$, for some
blueprint $(\core(N),\mcal{T},r,X)$
where $\core(N)$ and \mcal{T} are determined by $\mcal{F}_{0}$.
Furthermore, \mcal{F} is enclosed in the family of canonical fans of
$\boxtimes(\core(N),\mcal{T},r,X)$.
\end{lemma}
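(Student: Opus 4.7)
The plan is to induct on $|E(M)| - |E(N)|$, leveraging that $M$ is a fan-extension of $N$ (via \Cref{court}) together with the uniqueness result \Cref{peace}. In the base case $M = N$, I extend each $F_N \in \mcal{F}_N$ to a maximal fan of $N$; by the hypothesis that no fan of $N$ encloses two distinct members of $\mcal{F}_N$, these maximal extensions are well-defined and pairwise disjoint, giving a family $\mcal{F}_0$ that encloses $\mcal{F}_N$. Forming $\core(N)$ and $\mcal{T}$ from $\mcal{F}_0$, the identity $N = \boxtimes(\core(N), \mcal{T}, r, X)$ for appropriate $r$ and $X$ follows by unpacking the definitions, since $\core(N)$ is precisely $N^{+}$ with the fan elements and extraneous parallel points removed, and those removals are the content of~$X$.

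For the inductive step, \Cref{burst} writes $M$ as a fan-lengthening of some matroid $M'$ with $N$ as a minor: there is a fan $F' \in \mcal{F}'$ such that the fan-lengthening move on $F'$ produces $F \in \mcal{F}$, where $\mcal{F}' = (\mcal{F} - \{F\}) \cup \{F'\}$ is a covering family of $M'$ relative to $\mcal{F}_N$. The hypotheses of the lemma pass to $M'$, so induction furnishes a disjoint family $\mcal{F}_0$ of fans in $N$ enclosing $\mcal{F}_N$ and a blueprint $(\core(N), \mcal{T}, r', X')$ with $M' = \boxtimes(\core(N), \mcal{T}, r', X')$ and $\mcal{F}'$ enclosed in the canonical fans; in particular $F'$ sits inside the canonical fan at some specific index $j \in I$.

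I then modify the blueprint to $(\core(N), \mcal{T}, r, X)$, agreeing with the previous one except at index $j$: the fan-lengthening move either adds a single terminal element (so $r(j) = r'(j) + 1$ and one endpoint marker in $X$ is replaced by the freshly exposed wheel element) or inserts two middle elements via the $M/e_{i}\backslash e_{i+1}$ operation (so $r(j) = r'(j) + 1$ with two wheel labels swapped out of $X$). Setting $\tilde{M} = \boxtimes(\core(N), \mcal{T}, r, X)$, off the wheel at index $j$ both $\tilde{M}$ and $M$ coincide with $M'$, and both exhibit an extension of the canonical fan whose initial four-element configuration matches the fan-lengthening data. Iterated application of \Cref{peace} along this fan then forces $\tilde{M} = M$, up to the relabeling that identifies the wheel elements $x_{k}^{j}, y_{k}^{j}$ with the actual elements of $E(M)$ appearing in $F$. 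By construction the new canonical fan at index $j$ encloses $F$, so $\mcal{F}$ is enclosed in the canonical fans of the updated blueprint.

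The main obstacle is the bookkeeping of $X$ across the several cases of fan-lengthening, verifying in each case that the candidate $X$ remains admissible (every retained $b_i$ must coincide with some $a_j$ or $c_j$) and that the updated tuple is genuinely a blueprint. The substantive geometric content lies in the iterated use of \Cref{peace}: it promotes the a priori qualitative information that $M$ is \emph{some} fan-extension (from \Cref{court}) into the sharper statement that $M$ equals the \emph{specific} wheel gluing prescribed by our blueprint, by uniquely determining each added wheel element from its four-element local pattern.
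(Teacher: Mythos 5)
Your inductive step follows the paper's argument in outline (use \Cref{burst} to peel off a fan-lengthening move, apply induction to $M'$, enlarge or adjust the wheel at the relevant index, and pin down $M$ with \Cref{peace} or its dual), but your base case has two genuine gaps. First, the identity $N=\boxtimes(\core(N),\mcal{T},r,X)$ does not ``follow by unpacking the definitions.'' The substantive claim is that $N^{+}$ is the generalized parallel connection of $N^{+}\ba F_{i}$ with a wheel of the appropriate rank across the modular line $\{a_{i},b_{i},c_{i}\}$, i.e.\ that the 3\dash separation induced by the fan really does decompose the matroid as a wheel glued along that line; this is exactly where the paper invokes the Oxley--Wu theorem \cite[Theorem~1.8]{OW00} (applied to the simplification of $N^{+}$), iterated over the fans, before accounting for the deleted set $X$. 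Without that (or an equivalent argument), you have only defined the right-hand side; you have not shown it equals $N$.

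Second, your choice of $\mcal{F}_{0}$ as ``maximal extensions'' of the fans in $\mcal{F}_{N}$ is both unjustified and unnecessary. The hypothesis that no fan of $N$ contains two distinct members of $\mcal{F}_{N}$ does not make a maximal fan containing a given $F_{N}$ unique, nor does it make the chosen maximal fans pairwise disjoint: two maximal fans extending different members of $\mcal{F}_{N}$ can overlap in one or two elements without any single fan containing both members. Moreover the lemma requires the covering family $\mcal{F}$ of $M=N$ to be enclosed in the canonical fans determined by $\mcal{F}_{0}$, and a fan $F\in\mcal{F}$ need not be enclosed in whichever maximal fan you selected for its $F_{N}$ (\Cref{unity} only gives a contiguous intersection, not containment). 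The paper sidesteps all of this by simply taking $\mcal{F}_{0}=\mcal{F}$, the covering family of $N$ itself, which is pairwise disjoint by definition, encloses $\mcal{F}_{N}$ by \Cref{unity}, and makes the enclosure of $\mcal{F}$ in the canonical fans automatic; you should do the same. (In the inductive step, note also that the paper's terminal-spoke-deletion case does not increase the wheel rank at all---it only removes $a_{i}$ from the deleted set---and the two-element case needs a separate argument when $m=5$, so the ``bookkeeping'' is a little heavier than your sketch suggests, though the strategy is the right one.)
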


\begin{proof}
Let $E=E(N)$.
The proof will be by induction on $|E(M)|-|E|$.
First we assume $|E(M)|=|E|$.
Then $M=N$.
Now \mcal{F} is a covering family of $N$, relative to $\mcal{F}_{N}$.
This does not mean that $\mcal{F}=\mcal{F}_{N}$.
The fans in $\mcal{F}_{N}$ may be enclosed in larger fans, and these
larger fans may be members of \mcal{F}.
Nonetheless, \mcal{F} is a pairwise disjoint family of fans in $N$.
Because each fan in $\mcal{F}_{N}$ is consistent with
a fan in \mcal{F}, it follows without difficulty from \Cref{unity} that
each fan in $\mcal{F}_{N}$ must be enclosed
by a fan in \mcal{F}.
Thus we can define $\mcal{F}_{0}$ to be \mcal{F}.
Let $\mcal{F}=\{F_{i}\}_{i\in I}$, where $I=\{1,\ldots, t\}$, and let
$\core(N)$ and $\mcal{T}=\{\{a_{i},b_{i},c_{i}\}\}_{i\in I}$ be the matroid and
family of triangles determined by \mcal{F}.
To prove the base case of the induction, we now need only check that
$N=\boxtimes(\core(N),\mcal{T},r,X)$ (up to relabeling).

Recall that $N^{+}$ is the matroid consisting of the points in $E$
and $\cup_{i\in I}\{a_{i},b_{i},c_{i}\}$.
Let $F_{t}^{+}$ be $(x_{1},\ldots, x_{m})$.
This is a fan in $N^{+}$, and $x_{1}$ and $x_{m}$ are spoke elements.
The rim elements in $F_{t}$ are $\{x_{2},x_{4},\ldots, x_{m-1}\}$.
Thus $b_{t}$ is in the closure of this set.
Any circuit contained in $\{x_{2},x_{4},\ldots, x_{m-1},b_{t}\}$
must contain $\{x_{2},x_{4},\ldots, x_{m-1}\}$, for otherwise we can find a
violation of orthogonality with one of the triads in $F_{t}$.
Thus $\{x_{2},x_{4},\ldots, x_{m-1},b_{t}\}$ is a circuit in $N^{+}$, so
$\{x_{m-1},b_{t}\}$ is a circuit in
$N^{+}/\{x_{2},x_{4},\ldots, x_{m-3}\}\ba \{x_{3},x_{5},\ldots,x_{m-2}\}$.
We are going to apply a result by Oxley and Wu \cite[Theorem~1.8]{OW00}.
This theorem applies only to $3$\dash connected matroids, but
the simplification of $N^{+}$ is $3$\dash connected, and it
is easy to see that the result still holds.
The theorem by Oxley and Wu tells us that, up to relabeling, $N^{+}$ is equal to
$(N^{+}\ba F_{t})\boxtimes_{\{a_{t},b_{t},c_{t}\}} W_{t}$,
where $W_{t}$ has rank $(m+1)/2$.
Moreover $N^{+}\ba F_{t}$ is $3$\dash connected up
to parallel pairs.
Now $F_{t-1}^{+}$ is a fan in $N^{+}$.
Note that $F_{t}$ is disjoint from $F_{t-1}^{+}$ by construction of $\core(N)$.
As $\si(N^{+}\ba F_{t})$ is $3$\dash connected, it is easy to see that
$F_{t-1}^{+}$ is a fan in $N^{+}\ba F_{t}$ also.
By again using \cite[Theorem~1.8]{OW00}, we see that
$N^{+}\ba F_{t}$ can be obtained by using
generalized parallel connection to glue a wheel of the correct rank to
$N^{+}\ba F_{t}\ba F_{t-1}$ along the line $\{a_{t-1},b_{t-1},c_{t-1}\}$.
We proceed inductively in this way, and conclude that
$N^{+}$ is obtained from
\[
N^{+}\ba F_{t}\ba F_{t-1}\ba \cdots\ba F_{1}
\]
by attaching wheels via generalized parallel connections.
Let $S$ be the set of elements in $E-(\cup_{i\in I}F_{i})$ that are parallel to
some element $a_{i}$ or $c_{i}$.
Then $\core(N)=N^{+}\ba (\cup_{i\in I}F_{i})\ba S$.
This means that $N^{+}\ba S$ can be obtained from
$\core(N)$ by attaching wheels to the lines
$\{\{a_{i},b_{i},c_{i}\}\}_{i\in I}$.
This in turn means that $N$ can be obtained from $\core(N)$
in the following way:
attach wheels to the lines $\{\{a_{i},b_{i},c_{i}\}\}_{i\in I}$, and then,
for each element $s\in S$, distinguish an element
in $\{\{a_{i},b_{i},c_{i}\}\}_{i\in I}$ that is parallel to it, and relabel
that element as $s$, and then finally delete all other elements in
$\{\{a_{i},b_{i},c_{i}\}\}_{i\in I}$.
In other words, up to relabeling, we can obtain $N$ from $\core(N)$
by gluing on wheels.
This is exactly what we aimed to prove, and it establishes the
base case of our induction.

Now we can assume that $M\ne N$.
\Cref{burst} implies that there is a fan $F\in \mcal{F}$ such that
$M$ and $F$ are obtained from a $3$\dash connected matroid $M'$
by performing a fan-lengthening move on the fan $F'$.
It is clear that $\mcal{F}'=(\mcal{F}-\{F\})\cup\{F'\}$ is a covering family in $M'$, and
we can apply the inductive hypothesis.
There is a family, $\mcal{F}_{0}$, of pairwise disjoint fans in $N$ such that
$\mcal{F}_{0}$ encloses $\mcal{F}_{N}$.
We relabel $M'$ in such a way that it is obtained by gluing wheels to
$\core(N)$ (relative to $\mcal{F}_{0}$), and $\mcal{F}'$ is enclosed
in the family of canonical fans.
Let $X$ be the set of elements that we delete after attaching wheels
to $\core(N)$.
This means that $F'$ is enclosed in the
canonical fan $(x_{1}^{i},y_{1}^{i},x_{2}^{i},\ldots, y_{r(i)-1}^{i},x_{r(i)}^{i})-X$,
for some $i$.
Let $F$ be $(e_{1},\ldots, e_{m})$.

Assume that $M'$ is obtained from $M$ by deleting a terminal spoke element of $F$.
By reversing, we may assume that $M'=M\ba e_{1}$.
Now $F'=(e_{2},\ldots, e_{m})$ is enclosed in
$(x_{1}^{i},y_{1}^{i},x_{2}^{i},\ldots, y_{r(i)-1}^{i},x_{r(i)}^{i})-X$.
As $\{e_{2},e_{3},e_{4}\}$ is a triad in $M'$, by reversing
$(x_{1}^{i},y_{1}^{i},x_{2}^{i},\ldots, y_{r(i)-1}^{i},x_{r(i)}^{i})$
as necessary, we can assume
$e_{2}=y_{k}^{i}$ and $e_{3}=x_{k+1}^{i}$.
Assume for a contradiction that either (i) $k>1$, or (ii) $k=1$ and $x_{1}^{i}=e_{1}$,
and hence $x_{1}^{i}\in E(N)$, or (iii) $k=1$ and $x_{1}^{i}=a_{i}$,
but $a_{i}\notin X$.
If any of these situations hold, then $\{x_{k}^{i},y_{k}^{i},x_{k+1}^{i}\}$ and
$\{e_{1},e_{2},e_{3}\}$ are both triangles in $M$, and they intersect in the elements
$e_{2}$ and $e_{3}$.
This means that $\{x_{k}^{i},e_{1},e_{2}\}$ is a triangle in $M$ that intersects
the triad $\{e_{2},e_{3},e_{4}\}$ in a single element.
This contradiction means that $k=1$, $x_{1}^{i}=a_{i}$, and $a_{i}\in X$.
Now let $M''$ be the matroid obtained by gluing the same wheels to
$\core(N)$, except that instead of deleting $X$, we delete $X-x_{1}^{i}$.
Obviously $M''\ba x_{1}^{i}=M'=M\ba e_{1}$.
We relabel $M$ so that it inherits the relabeling of $M'$ and $e_{1}$ is relabeled
as $x_{1}^{i}$.
This means that $(x_{1}^{i},y_{1}^{i},x_{2}^{i},y_{2}^{i})$ is a fan in both $M$ and $M''$.
Now \Cref{peace} implies that $M''=M$.
It is clear that after the relabeling $F$ is enclosed in the canonical fan
$(x_{1}^{i},y_{1}^{i},x_{2}^{i},\ldots, y_{r(i)-1}^{i},x_{r(i)}^{i})-(X-x_{1}^{i})$, so
the result holds.

Next we will assume that $M'=M/e_{1}$, where $e_{1}$ is a rim element in $F$.
We can assume that $e_{2}=x_{k}^{i}$ and $e_{3}=y_{k}^{i}$.
If $k>1$, then $\{y_{k-1}^{i},x_{k}^{i},y_{k}^{i}\}$ is a triad in $M'$, and hence in
$M$.
As $\{e_{1},e_{2},e_{3}\}$ is also a triad of $M$, it follows that $\{y_{k-1}^{i},e_{1},e_{2}\}$
is a triad in $M$.
As $\{e_{2},e_{3},e_{4}\}$ is a triangle, this is a contradiction, so $k=1$.
Therefore $a_{i}$ is parallel to $x_{1}^{i}=e_{2}$.
This means that we can assume that $e_{2}$ is relabeled as $a_{i}$
when $M'$ is relabeled in such a way that
it becomes equal to $\boxtimes(\core(N),\mcal{T},r,X)$.
Thus $a_{i}\notin X$.
Let $W_{i}''$ be a copy of a wheel with rank $r(i)+1$.
Let the ground set of $W_{i}''$ be $E(W_{i})\cup \{x,y\}$, labeled in such a way that
$(x,y,x_{1}^{i},y_{1}^{i},x_{2}^{i},\ldots, y_{r(i)-1}^{i},x_{r(i)}^{i},y_{r(i)}^{i})$ is a fan
and $\{x,y_{r(i)}^{i},x_{r(i)}^{i}\}$ is a triangle.
Therefore $x$ is identified with $a_{i}$.
Now let $M''$ be the matroid obtained from $\core(N)$ by gluing on the same
wheels as before, except that we use $W_{i}''$ instead of $W_{i}$,
and we then delete $X\cup a_{i}$.
Now $W_{i}''/y\ba x=W_{i}$.
From this it follows easily that $M''/y=M'$.
We relabel $M$ in such a way that it inherits the labeling we applied to $M'$,
and $e_{1}$ is labeled as $y$.
Now $(y,x_{1}^{i},y_{1}^{i},x_{2}^{i})$ is a fan in both $M$ and $M''$,
and $M/y=M'=M''/y$.
Therefore the dual of \Cref{peace} implies that $M''=M$, as desired.

For the final case, we assume $|E(M')|=|E(M)|-2$.
Since $(\mcal{F}-\{F\})\cup\{F'\}$ is a covering family in $M'$, it follows that $m\geq 5$.
By reversing $F$ as necessary, we can assume that $M'=M/e_{j}\ba e_{j+1}$,
where $e_{j}$ is a rim element of $F$.
We will assume that $j>1$.
An almost identical argument will hold in the case that $j=1$.
Now $(e_{1},\ldots, e_{j-1},e_{j+2},\ldots, e_{m})$ is a fan of $M'$ that is
enclosed in $(x_{1}^{i},y_{1}^{i},x_{2}^{i},\ldots, y_{r(i)-1}^{i},x_{r(i)}^{i})$.
By reversing $(x_{1}^{i},y_{1}^{i},x_{2}^{i},\ldots, y_{r(i)-1}^{i},x_{r(i)}^{i})$
as necessary, we can assume that $e_{j-1}=x_{k}^{i}$ and
$e_{j+2}=y_{k}^{i}$ (if $j+2\leq m$).
Let $W_{i}''$ be a copy of a wheel with rank $r(i)+1$, and
let the ground set of $W_{i}''$ be $E(W_{i})\cup \{x,y\}$.
We label $W_{i}''$ in such a way that
$(x_{1}^{i},y_{1}^{i},\ldots,x_{k}^{i},y,x,y_{k}^{i},\ldots,x_{r(i)}^{i})$ is a fan.
Now let $M''$ be the matroid obtained from $\core(N)$ by gluing on the same
wheels used to obtain $M'$, except that we use $W_{i}''$ instead of $W_{i}$.
After attaching these wheels to $\core(N)$ we delete exactly
the same set $X$.
Now $W_{i}''/y\ba x=W_{i}$, and it follows that $M''/y\ba x=M'$.
We relabel $M$ so that it inherits the relabeling of $M'$, while
$e_{i}$ is labeled as $y$ and $e_{i+1}$ is labeled as $x$.
After this relabeling, $M/y\ba x=M'=M''/y\ba x$.
We claim that this implies $M=M''$.
Note that $M/y$ and $M''/y$ are both obtained by adding $x$ parallel
to $x_{k}^{i}$ in $M/y\ba x=M''/y\ba x$.
Thus $M/y=M''/y$.
If either $(x_{k-1}^{i},y_{k-1}^{i},x_{k}^{i},y)$ or $(y,x,y_{k}^{i},x_{k+1}^{i})$ is a fan in
both $M$ and $M''$, then the dual of \Cref{peace} implies that $M=M''$ as desired.
If neither of these sequences is a fan in $M$ and $M''$, then
since $m\geq 5$, it is not difficult to see that $m=5$, and $e_{j}=e_{3}$.
Thus $(x_{1}^{i},x_{2}^{i},y,x,x_{3}^{i})$ is a fan in both $M$ and $M''$.
In this case $M\ba x$ and $M''\ba x$ are both obtained from
$M/y\ba x=M''/y\ba x$ by adding $y$ in series to $x_{3}^{i}$.
Thus $M\ba x=M''\ba x$, and we can use  \Cref{peace} to show that
$M=M''$, as claimed.
Obviously $F$ is contained in the canonical fan associated with gluing
$W_{i}''$ to $\core(N)$, so the proof is complete.
\end{proof}

\section{Applications}
\label{sect6}

Now we consider some applications of \Cref{stage}.
We omit the proofs, since \Cref{stage} and \Cref{chest} reduce them
to computational case checking.
Details can be found in \cite{CCCMWZ}.
Our first application concerns the binary matroids that are
$\{F_{7},F_{7}^{*}\}$\dash fragile.
We describe these via \emph{grafts}
(see \cite[p.~386]{Oxl11}).
Recall that $N_{12}$ is illustrated in \Cref{magic}(i).
\Cref{twist} gives graft representations of the matroids $N_{11}^{+}$ and $N_{12}$.
In both case, the distinguished vertices in the graft are those
vertices with squares around them.
In $N_{12}$, let $w_{4}$ be the matroid element that corresponds to the
set of distinguished vertices.

\begin{figure}[htb]
\centering
\includegraphics{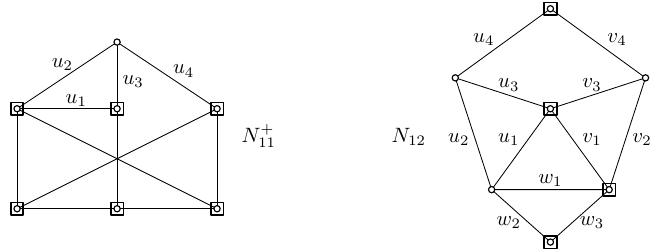}
\caption{Graft representations of $N_{11}^{+}$ and $N_{12}$.}
\label{twist}
\end{figure}

Now we have the tools to characterize binary
$\{F_{7},F_{7}^{*}\}$\dash fragile matroids that have $N_{11}^{+}$
or $N_{12}$ as a minor.
The following result is known to Truemper \cite{Tru92a} and
Kingan and Lemos \cite{KL02}.
It is a straightforward application of
\Cref{stage} and \Cref{chest}.

\begin{theorem}
\label{smoke}
Let $M$ be a $3$\dash connected binary $\{F_{7},F_{7}^{*}\}$\dash fragile matroid.
If $M$ has $N_{11}^{+}$ as a minor, then $M$ is a fan-extension of $N_{11}^{+}$
relative to $\{(u_{1},u_{2},u_{3},u_{4})\}$, and $M$ is a restriction of
a member of the family illustrated in the lefthand diagram in \textup{\Cref{night}}.
If $M$ has $N_{12}$ as a minor, then $M$ is a fan-extension of $N_{12}$
relative to $\{(u_{1},u_{2},u_{3},u_{4}),(v_{1},v_{2},v_{3},v_{4}),(w_{1},w_{2},w_{3},w_{4})\}$,
and $M$ is a restriction of a member of the family illustrated in the
righthand diagram in \textup{\Cref{night}}.
\end{theorem}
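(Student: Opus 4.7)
The plan is to combine Theorem \ref{stage} with Lemma \ref{chest} to reduce the claim to a finite case-check followed by a bookkeeping identification. First, let $\mcal{M}$ denote the class of binary $\{F_{7},F_{7}^{*}\}$\dash fragile matroids; this class is closed under minors and isomorphism. Both $F_{7}$ and $F_{7}^{*}$ are $3$\dash connected, have at least four elements, and are neither wheels nor whirls, so by \Cref{block} every member of $\mcal{M}$ with an $\{F_{7},F_{7}^{*}\}$\dash minor is $3$\dash connected up to series and parallel sets. A direct check shows that $N_{11}^{+}$ and $N_{12}$ are $3$\dash connected binary matroids with at least four elements, that neither is a wheel or whirl, and that each has an $\{F_{7},F_{7}^{*}\}$\dash minor. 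Hence the hypotheses of \Cref{stage} hold for $N\in\{N_{11}^{+},N_{12}\}$ with the prescribed family $\mcal{F}_{N}$. We also verify that no fan of $N$ contains two of the fans in $\mcal{F}_{N}$, so the stronger hypotheses of \Cref{court} and \Cref{chest} are met.

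By \Cref{stage}, to prove that every $3$\dash connected $M\in\mcal{M}$ containing $N$ as a minor is a fan-extension of $N$, it suffices to check those $M$ with $|E(M)|-|E(N)|\leq 2$. This is a finite computation performed over $\mathrm{GF}(2)$: enumerate the binary single-element extensions and coextensions of $N$, iterate once more to cover the two-element case, and retain only those that are $3$\dash connected, $\{F_{7},F_{7}^{*}\}$\dash fragile, and contain $N$ as a minor. For each surviving matroid one verifies that the extra elements lie in fans consistent with $\mcal{F}_{N}$, exhibiting it as a fan-extension in the sense of \Cref{pinch}.

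Once this case-check succeeds, \Cref{court} promotes the conclusion to all $M\in\mcal{M}$ with $N$ as a minor, and \Cref{chest} then supplies a uniform description: up to relabeling, $M=\boxtimes(\core(N),\mcal{T},r,X)$, where $\core(N)$ and $\mcal{T}$ are determined by an enclosing family of fans in $N$. For $N_{11}^{+}$, the core has a single distinguished triangle, yielding the one-parameter family of wheel-gluings in the lefthand diagram of \Cref{night}; for $N_{12}$, the core has three pairwise disjoint distinguished triangles, yielding the three-parameter family in the righthand diagram. The phrase ``restriction of a member of the family'' in the statement accounts for the freedom to choose the deletion set $X$ among the endpoints of the glued fans.

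The main obstacle is the finite enumeration in the middle paragraph. Although bounded in principle by \Cref{stage}, the check must rule out binary $3$\dash connected $\{F_{7},F_{7}^{*}\}$\dash fragile extensions of $N_{11}^{+}$ and $N_{12}$ that are \emph{not} fan-extensions of the prescribed fans; the difficulty is purely combinatorial, and the authors defer the tabulation to \cite{CCCMWZ}. After that check, the identification of $\core(N_{11}^{+})$ and $\core(N_{12})$, together with the form of the canonical fans produced by \Cref{chest}, gives the explicit families shown in \Cref{night} with no further work.
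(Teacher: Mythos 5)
Your proposal matches the paper's intended argument: the paper gives no written proof for this theorem, stating only that it is a straightforward application of \Cref{stage} and \Cref{chest} with the finite case-check delegated to \cite{CCCMWZ}, which is exactly the reduction you describe (verify the hypotheses via \Cref{block}, check matroids at most two elements larger than $N_{11}^{+}$ or $N_{12}$, then use the wheel-gluing description to obtain the families of \Cref{night}). The only quibble is that the promotion from the bounded case-check to all of $\mcal{M}$ follows directly from the contrapositive of \Cref{stage} rather than from \Cref{court}, and the covering family needed for \Cref{chest} comes from \Cref{bunch}; these are cosmetic, not gaps.
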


\begin{figure}[htb]
\centering
\includegraphics{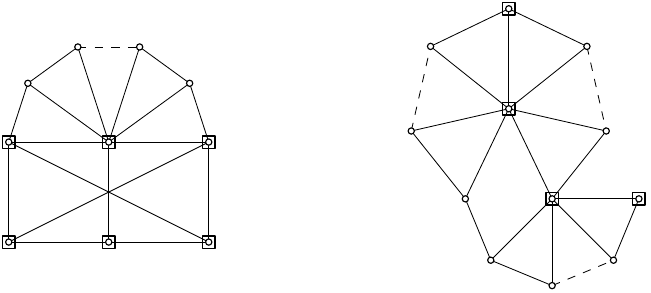}
\caption{The $N_{11}^{+}$ and $N_{12}$ families.}
\label{night}
\end{figure}

Next we consider fragile classes of matroid representable
over the partial fields $\mathbb{U}_{2}$ and $\mathbb{H}_{5}$.
Every $\mathbb{U}_{2}$\dash representable matroid is also
$\mathbb{H}_{5}$\dash representable, so it suffices to
consider only the latter partial field.
We currently have a solid understanding of the
$\mathbb{H}_{5}$\dash representable $\{U_{2,5},U_{3,5}\}$\dash fragile
matroids, and this brings excluded-minor characterizations for these two
partial fields within grasp.
Proofs and definitions can all be found in \cite{CCCMWZ}.

\begin{proposition}
\label{guide}
Let $M$ be a $3$\dash connected $\mathbb{H}_{5}$\dash representable
$\{U_{2,5},U_{3,5}\}$\dash fragile matroid.
Assume that $M$ has $N$ as minor, where $N$ is in
$\{M_{9,0},M_{9,2},M_{9,15},M_{9,18}\}$.
Then $M$ is a fan-extension of $N$.
\end{proposition}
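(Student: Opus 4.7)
The plan is to apply \Cref{stage} with $\mathcal{M}$ equal to the class of $\mathbb{H}_{5}$\dash representable $\{U_{2,5},U_{3,5}\}$\dash fragile matroids, which is closed under isomorphism and minors. First I verify the hypotheses: $\mathcal{S}=\{U_{2,5},U_{3,5}\}$ consists of $3$\dash connected matroids of size $5\geq 4$, and neither $U_{2,5}$ nor $U_{3,5}$ is a wheel or a whirl. By \Cref{block}, any \mcal{S}\dash fragile matroid with an \mcal{S}\dash minor is $3$\dash connected up to series and parallel sets, so the structural assumption of \Cref{stage} on members of $\mathcal{M}$ containing $N$ as a minor holds. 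Each prescribed $N\in\{M_{9,0},M_{9,2},M_{9,15},M_{9,18}\}$ is itself $3$\dash connected with $|E(N)|=9\geq 4$ and not a wheel or whirl, and carries a natural distinguished pairwise disjoint family $\mathcal{F}_{N}$ of fans which one can read off from its representation.

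Given these hypotheses, \Cref{stage} tells us that if some $M\in\mathcal{M}$ with $N$ as a minor fails to be a fan-extension of $N$ relative to $\mathcal{F}_{N}$, then there is such a counterexample with $|E(M)|-|E(N)|\leq 2$, i.e.\ with $|E(M)|\leq 11$. The problem therefore reduces to a finite check: for each of the four choices of $N$, enumerate up to isomorphism the $3$\dash connected $\mathbb{H}_{5}$\dash representable $\{U_{2,5},U_{3,5}\}$\dash fragile matroids $M$ with $N$ as a minor and $|E(M)|\leq 11$, and verify that each such $M$ is a fan-extension of $N$ relative to $\mathcal{F}_{N}$. By \Cref{court}, verifying fan-extension reduces to exhibiting a covering family of $M$ relative to $(N,\mathcal{F}_{N})$; equivalently, by \Cref{chest}, one checks that $M$ is obtained from $\core(N)$ by gluing wheels along the triangles determined by $\mathcal{F}_{N}$, possibly after some deletions.

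The enumeration itself is executed with partial-field software: one generates all single-element $\mathbb{H}_{5}$\dash representable extensions and coextensions of $N$, filters by $3$\dash connectivity and $\{U_{2,5},U_{3,5}\}$\dash fragility, then iterates once more to obtain the size-$11$ candidates. For each surviving matroid one identifies the fans of $M$ that contain the images of the members of $\mathcal{F}_{N}$ as consistent subsequences, and checks the four conditions of a covering family. The main obstacle is purely computational rather than structural: the combinatorial explosion in the number of $\mathbb{H}_{5}$\dash representable extensions, and the bookkeeping needed to confirm the covering-family conditions in every case, is nontrivial, but no further matroid-theoretic ingredient beyond \Cref{stage} and \Cref{chest} is required. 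The detailed enumeration is deferred to \cite{CCCMWZ}.
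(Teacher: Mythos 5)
Your proposal matches the paper's intended argument exactly: the paper omits the proof of this proposition precisely because \Cref{stage} and \Cref{chest} reduce it to a finite computational case-check of matroids with $|E(M)|-|E(N)|\leq 2$, with the hypotheses of \Cref{stage} supplied by \Cref{block} and the details of the enumeration deferred to \cite{CCCMWZ}. Your verification of the hypotheses and the reduction to checking covering families via \Cref{court} and \Cref{chest} is the same route the authors take.
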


\begin{theorem}
\label{water}
Let $M$ be a $3$\dash connected $\mathbb{H}_{5}$\dash representable matroid.
Up to relabeling, one of the following statements holds:
\begin{enumerate}[label=\textup{(\roman*)}]
\item $M$ has one of $X_{8}$, $Y_{8}$, or $Y_{8}^{*}$ as a minor,
\item $M$ is in $\{U_{2,6}, U_{3,6}, U_{4,6}, P_6, M_{9,9}, M_{9,9}^*\}$,
\item $M$ or $M^{*}$ can be obtained from $U_{2,5}$
(with groundset $\{a,b,c,d,e\}$) by gluing wheels to $\{(a,c,b), (a,d,b), (a,e,b)\}$,
\item $M$ or $M^{*}$ can be obtained from $U_{2,5}$
(with groundset $\{a,b,c,d,e\}$) by gluing wheels to $\{(a,b,c), (c,d,e)\}$,
\item $M$ or $M^{*}$ can be obtained from $M_{7,1}$ by gluing on a wheel.
\end{enumerate}
\end{theorem}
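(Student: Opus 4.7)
The plan is to combine a computational analysis of small matroids with the structural results of \Cref{stage}, \Cref{chest}, and \Cref{guide}. The matroids $X_{8}$, $Y_{8}$, and $Y_{8}^{*}$ listed in case~(i) should serve as the certificates of non-fragility: my first step would be to establish that any $3$\dash connected $\mathbb{H}_{5}$\dash representable matroid with none of these three as a minor is $\{U_{2,5},U_{3,5}\}$\dash fragile. Thanks to \Cref{stage}, this implication itself is checkable by a finite inspection, since a minor-minimal counterexample to fragility within a minor-closed class satisfies $|E(M)|-|E(N)|\leq 2$ over its fragile sub-minor $N$; only finitely many $3$\dash connected $\mathbb{H}_{5}$\dash representable matroids need be enumerated to verify that $X_{8}$, $Y_{8}$, and $Y_{8}^{*}$ are exactly the obstructions that arise.

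With fragility in hand, I would next enumerate the $3$\dash connected $\mathbb{H}_{5}$\dash representable $\{U_{2,5},U_{3,5}\}$\dash fragile matroids on at most nine elements. The sporadic outputs of this enumeration that do not contain one of $M_{9,0}$, $M_{9,2}$, $M_{9,15}$, $M_{9,18}$ (nor a dual thereof) as a minor constitute case~(ii); the computation should confirm that this finite list is exactly $\{U_{2,6},U_{3,6},U_{4,6},P_{6},M_{9,9},M_{9,9}^{*}\}$. Any remaining matroid $M$ must then have a minor $N$ from the four-element list, so \Cref{guide} declares $M$ to be a fan-extension of $N$ relative to a canonical family $\mcal{F}_{N}$ of fans.

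At this point I invoke \Cref{chest}: up to relabeling, $M$ may be written as $\boxtimes(\core(N),\mcal{T},r,X)$ for some blueprint $(\core(N),\mcal{T},r,X)$ where $\core(N)$ and $\mcal{T}$ are determined by a family of fans enclosing $\mcal{F}_{N}$. The final step is a direct computation of $\core(N)$ and the corresponding triangle system for each of $N\in\{M_{9,0},M_{9,2},M_{9,15},M_{9,18}\}$. I expect that two of these roots (up to duality) have core $U_{2,5}$ with the three-triangle configuration $\{(a,c,b),(a,d,b),(a,e,b)\}$ of case~(iii), another pair have core $U_{2,5}$ with the two-triangle configuration $\{(a,b,c),(c,d,e)\}$ of case~(iv), and the remaining root has core $M_{7,1}$, yielding case~(v); in each case the fan-extensions of $N$ coincide with the matroids obtained by gluing wheels to the core.

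The principal obstacle is the certification step that correctly identifies $X_{8}$, $Y_{8}$, $Y_{8}^{*}$ as the complete list of obstructions to fragility, and the parallel verification that $M_{9,0}$, $M_{9,2}$, $M_{9,15}$, $M_{9,18}$ are the only large roots up to duality. Both rely on a systematic computer-assisted enumeration of small $\mathbb{H}_{5}$\dash representable matroids and their single- and two-element extensions; the proofs referenced in \cite{CCCMWZ} presumably carry out this bookkeeping. Once the cores are identified, the translation between fan-extensions and wheel gluings is automatic via \Cref{chest}, so no further structural work is needed.
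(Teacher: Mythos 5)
The paper itself does not prove this theorem: it is stated as an application whose proof is ``reduced to computational case checking'' by \Cref{stage} and \Cref{chest}, with the actual verification deferred to \cite{CCCMWZ}. The second half of your plan --- use \Cref{guide} to conclude that a matroid with one of $M_{9,0}$, $M_{9,2}$, $M_{9,15}$, $M_{9,18}$ as a minor is a fan-extension, then use \Cref{chest} to rewrite each fan-extension as $\boxtimes(\core(N),\mcal{T},r,X)$ and identify the cores as $U_{2,5}$ or $M_{7,1}$ --- is indeed the route the paper intends.

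The genuine gap is in your first two steps. \Cref{stage} cannot certify that every $3$\dash connected $\mathbb{H}_{5}$\dash representable matroid with no $X_{8}$, $Y_{8}$, or $Y_{8}^{*}$ minor is $\{U_{2,5},U_{3,5}\}$\dash fragile: that theorem bounds counterexamples to the \emph{fan-extension} property inside a class that is already assumed (via fragility and \Cref{block}) to be $3$\dash connected up to series and parallel sets, and no result in this paper bounds a minor-minimal failure of \emph{fragility}; your sentence about a ``minor-minimal counterexample to fragility'' satisfying $|E(M)|-|E(N)|\leq 2$ corresponds to nothing here. The implication you propose to check is also false as stated: $U_{3,6}$ (which appears in case (ii)) has no $8$\dash element minor, yet for every element both the deletion ($U_{3,5}$) and the contraction ($U_{2,5}$) retain a $\{U_{2,5},U_{3,5}\}$\dash minor, so it is not fragile. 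Fragility (or the series--parallel connectivity condition it yields) is a hypothesis that \Cref{guide} and \Cref{chest} consume, not something the fan-extension machinery can produce, so it must be supplied by a separate argument, which is part of what \cite{CCCMWZ} actually does. Similarly, the inference ``any remaining matroid must have a minor among the four nine-element roots'' is unsupported: a large matroid in the scope of the theorem could a priori have only other nine-element $3$\dash connected minors (e.g.\ $M_{9,9}$), and ruling this out needs a splitter/chain-type argument together with the computation that those other small matroids do not extend --- again the substantive content of the companion paper. Finally, the bookkeeping is off: the small matroids without root minors are not exactly list (ii) (for instance $U_{2,5}$, $M_{7,1}$ and other small members of families (iii)--(v) arise and must be recognized as degenerate wheel-gluings), and you assign cores to five roots when there are only four.
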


\section{Results on fans}
\label{sect2}

Now we start moving towards a proof of our main theorem.
This section collects some results that we will need.
The next \namecref{floor} can be proved with an easy inductive argument.

\begin{proposition}
\label{floor}
Let $(e_{1},\ldots, e_{n})$ be a fan
of the matroid $M$.
Then $\{e_{1},\ldots, e_{n}\}$ is 
$3$\dash separating in $M$.
\end{proposition}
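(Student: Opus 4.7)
The plan is to prove this by induction on $n$, using the fact that $3$-separation is preserved when extending a set by an element in its closure or coclosure.

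First I would set up notation: write $X_k=\{e_1,\ldots,e_k\}$ and recall $\lambda_M(X)=r(X)+r(E(M)-X)-r(M)$. A standard computation gives $\lambda_M(X)=\lambda_{M^{*}}(X)$, so arguments can be dualized freely; this is convenient because a fan of $M$ is also a fan of $M^{*}$, with the roles of triangles and triads exchanged.

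For the base case $n=3$, the set $X_3=\{e_1,e_2,e_3\}$ is either a triangle or a triad. If it is a triangle then $r(X_3)=2$, so $\lambda_M(X_3)\le r(X_3)=2$. If it is a triad, apply the same argument to $M^{*}$ and use $\lambda_M=\lambda_{M^{*}}$.

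For the inductive step, assume $\lambda_M(X_{n-1})\le 2$ and consider $X_n=X_{n-1}\cup\{e_n\}$. The set $\{e_{n-2},e_{n-1},e_n\}$ is either a triangle or a triad. In the triangle case, $e_n\in\cl_M(\{e_{n-2},e_{n-1}\})\subseteq\cl_M(X_{n-1})$, so $r(X_n)=r(X_{n-1})$ and $r(E(M)-X_n)\le r(E(M)-X_{n-1})$, which yields $\lambda_M(X_n)\le\lambda_M(X_{n-1})\le 2$. In the triad case, the symmetric statement holds in $M^{*}$: $e_n\in\cl_{M^{*}}(X_{n-1})$, so $\lambda_{M^{*}}(X_n)\le\lambda_{M^{*}}(X_{n-1})$, and translating back via $\lambda_M=\lambda_{M^{*}}$ gives the conclusion.

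The proof is essentially a bookkeeping exercise, so there is no real obstacle; the only point requiring care is the duality step, which is handled cleanly by the identity $\lambda_M=\lambda_{M^{*}}$ together with the invariance of the fan condition under passage to the dual.
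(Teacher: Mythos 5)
Your proof is correct, and it is exactly the ``easy inductive argument'' the paper alludes to (the paper omits the details entirely). The base case, the closure/coclosure step, and the use of $\lambda_M=\lambda_{M^{*}}$ to handle the triad case are all sound.
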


\begin{proposition}
\label{cider}
Let $M$ be a $3$\dash connected matroid, and
let $F_{1}=(e_{1},\ldots, e_{n})$ be a fan of
$M$ such that $n\geq 5$.
Let $e_{i}$ be a rim element, for some $i\in\{1,\ldots, n-1\}$.
Then $F_{2}=(e_{1},\ldots, e_{i-1},e_{i+2},\ldots, e_{n})$ is 
a fan of $M/e_{i}\ba e_{i+1}$, and
for all $j$, $e_{j}$ is a spoke
element in $F_{1}$ if and only if it is
a spoke element in $F_{2}$
\end{proposition}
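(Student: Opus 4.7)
The plan is to verify, triple by triple, that the consecutive three-element subsequences of $F_2$ are triangles and triads of $M':=M/e_i\backslash e_{i+1}$ alternating in the same pattern as in $F_1$. Because $e_i$ is a rim element of $F_1$, we know that in $M$ the set $\{e_{i-1},e_i,e_{i+1}\}$ is a triangle (whenever $i\geq 2$), the set $\{e_i,e_{i+1},e_{i+2}\}$ is a triad (whenever $i\leq n-2$), and the flanking sets $\{e_{i-2},e_{i-1},e_i\}$ and $\{e_{i+1},e_{i+2},e_{i+3}\}$ are respectively a triad (when $i\geq 3$) and a triangle (when $i\leq n-3$). The three-element subsequences of $F_2$ split into those inherited from $F_1$, namely $\{e_j,e_{j+1},e_{j+2}\}$ with $j+2\leq i-1$ or $j\geq i+2$, and the two new \emph{junction} sets $\{e_{i-2},e_{i-1},e_{i+2}\}$ and $\{e_{i-1},e_{i+2},e_{i+3}\}$.

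The main work is handling the two junction sets. For the junction triangle $\{e_{i-1},e_{i+2},e_{i+3}\}$, I would observe that in $M/e_i$ the triangle $\{e_{i-1},e_i,e_{i+1}\}$ collapses to the parallel pair $\{e_{i-1},e_{i+1}\}$, while the triangle $\{e_{i+1},e_{i+2},e_{i+3}\}$ remains a circuit of $M/e_i$ (it cannot shrink to a two-circuit, since that would require some triangle $\{e_i,x,y\}$ of $M$ with $\{x,y\}\subseteq\{e_{i+1},e_{i+2},e_{i+3}\}$; each such possibility is excluded either by coincidence of the resulting triangle with the triad $\{e_i,e_{i+1},e_{i+2}\}$, impossible in a $3$\dash connected matroid with $|E(M)|\geq 5$, or by orthogonality with one of the flanking triads of the fan). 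Strong circuit elimination between $\{e_{i-1},e_{i+1}\}$ and $\{e_{i+1},e_{i+2},e_{i+3}\}$ in $M/e_i$ then produces a circuit contained in $\{e_{i-1},e_{i+2},e_{i+3}\}$, and orthogonality with the flanking triads prevents this circuit from having size two, so $\{e_{i-1},e_{i+2},e_{i+3}\}$ is a triangle of $M/e_i$, and hence of $M'$. The junction triad $\{e_{i-2},e_{i-1},e_{i+2}\}$ is obtained by the completely dual argument applied in $M^*$; equivalently, one applies cocircuit elimination in $M\backslash e_{i+1}$ to the series pair $\{e_i,e_{i+2}\}$ inherited from the triad $\{e_i,e_{i+1},e_{i+2}\}$ and to the triad $\{e_{i-2},e_{i-1},e_i\}$.

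Next, each inherited triangle or triad of $F_1$, being disjoint from $\{e_i,e_{i+1}\}$, persists as a circuit or cocircuit of $M'$: any degeneration to a two-element set would demand a triangle or triad of $M$ containing $e_i$ or $e_{i+1}$ together with two of the fan elements in question, and orthogonality with the fan's nearby triads and triangles rules each of these out. The spoke/rim labelling is preserved because positions $1,\ldots,i-1$ keep their indices in $F_2$ and positions $i+2,\ldots,n$ shift by two, so in either case the parity (and hence the type) of each element agrees between $F_1$ and $F_2$.

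The main obstacle is the bookkeeping for the boundary cases $i\in\{1,2,n-2,n-1\}$, where some of the flanking triangles or triads invoked above do not exist, so the orthogonality arguments must rely on the fan structure on the remaining side. The hypothesis $n\geq 5$ ensures that the fan always has enough length on at least one side of $e_i$ for the arguments to go through, and the relevant junction triangle or triad simply disappears from the list of things to verify when $e_{i-2}$ or $e_{i+3}$ is out of range.
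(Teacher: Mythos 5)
Your proposal is correct and essentially reproduces the paper's own argument: the inherited triples are shown to persist by orthogonality with the triangle $\{e_{i-1},e_{i},e_{i+1}\}$, the triad $\{e_{i},e_{i+1},e_{i+2}\}$ and the triangle $\{e_{i+1},e_{i+2},e_{i+3}\}$, and the two junction triples are produced by strong circuit/cocircuit exchange across the junction plus orthogonality --- the paper obtains the triad $\{e_{i-2},e_{i-1},e_{i+2}\}$ directly by strong cocircuit-exchange between $\{e_{i-2},e_{i-1},e_{i}\}$ and $\{e_{i},e_{i+1},e_{i+2}\}$ and gets the triangle by duality, which is the mirror image of your treatment. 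The only quibble is that at a few extreme positions not in your boundary list (for instance $i=n-3$, where the triad $\{e_{i+2},e_{i+3},e_{i+4}\}$ you would invoke against the degenerate triangle $\{e_{i-1},e_{i},e_{i+2}\}$ does not exist) bare orthogonality must be replaced by a short circuit-elimination argument of exactly the kind you already use at the junction, so this is bookkeeping rather than a gap.
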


\begin{proof}
Any triad of $M$ contained in $\{e_{1},\ldots, e_{i-2}\}$
remains a triad in $M/e_{i}\ba e_{i+1}$, by orthogonality
with the triangle $\{e_{i-1},e_{i},e_{i+1}\}$.
Similarly, any triad of $M$ in $\{e_{i+4},\ldots, e_{n}\}$
is a triad in $M/e_{i}\ba e_{i+1}$ by orthogonality with
$\{e_{i+1},e_{i+2},e_{i+3}\}$.
In the same way, the triad $\{e_{i},e_{i+1},e_{i+2}\}$
implies that any triangle in
$\{e_{1},\ldots, e_{i-1}\}$ or $\{e_{i+3},\ldots, e_{n}\}$
is also a triangle in $M/e_{i}\ba e_{i+1}$.

Now the only sets we need check are
$\{e_{i-2},e_{i-1},e_{i+2}\}$ and $\{e_{i-1},e_{i+2},e_{i+3}\}$.
Strong cocircuit-exchange between
$\{e_{i-2},e_{i-1},e_{i}\}$ and $\{e_{i},e_{i+1},e_{i+2}\}$
gives a cocircuit contained in
$\{e_{i-2},e_{i-1},e_{i+1},e_{i+2}\}$ that contains $e_{i+1}$.
This cocircuit contains $e_{i-1}$, by orthogonality
with $\{e_{i-1},e_{i},e_{i+1}\}$.
If $e_{i-2}$ is not in the cocircuit, then
$\{e_{i-1},e_{i+1},e_{i+2}\}$ and $\{e_{i},e_{i+1},e_{i+2}\}$
are triads, so $\{e_{i-1},e_{i},e_{i+1}\}$ is a triad
and a triangle.
This contradicts the fact that $M$ is $3$\dash connected with
at least $5$ elements.
Hence $e_{i-2}$ is in the cocircuit, and a similar argument
shows that $\{e_{i-2},e_{i-1},e_{i+1},e_{i+2}\}$ is a cocircuit
of $M$.
Thus $\{e_{i-2},e_{i-1},e_{i+2}\}$ is a triad in
$M/e_{i}\ba e_{i+1}$, as desired.
We can show that $\{e_{i-1},e_{i+2},e_{i+3}\}$ is
a triangle of $M/e_{i}\ba e_{i+1}$ by a dual argument.
\end{proof}

\begin{proposition}
\label{ninny}
Let $(e_{1},\ldots, e_{n})$ be a fan in the $3$\dash connected
matroid $M$, and assume that $n\geq 4$.
Let $e\in E(M)-\{e_{1},\ldots, e_{n}\}$ be such that there is a triangle,
$T$, satisfying $\{e\}\subseteq T\subseteq \{e,e_{1},\ldots, e_{n}\}$.
Then either:
\begin{enumerate}[label=\textup{(\roman*)}]
\item $T=\{e,e_{1},e_{2}\}$, $e_{1}$ is a rim element,
and $(e,e_{1},\ldots, e_{n})$ is a fan,
\item $T=\{e,e_{n-1},e_{n}\}$, $e_{n}$ is a rim element,
and $(e_{1},\ldots, e_{n},e)$ is a fan,
\item $T=\{e,e_{1},e_{n}\}$, and $e_{1}$ and $e_{n}$ are spoke
elements,
\item $T=\{e,e_{2},e_{4}\}$, $e_{2}$ is a rim element, and $n\leq 5$, or
\item $T=\{e,e_{n-1},e_{n-3}\}$, $e_{n-1}$ is a rim element, and $n\leq 5$.
\end{enumerate}
\end{proposition}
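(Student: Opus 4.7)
The plan is to write $T = \{e, e_i, e_j\}$ with $1 \leq i < j \leq n$, and use orthogonality between $T$ and the triads of the fan to restrict the pair $\{e_i, e_j\}$. I will combine this with the observation that in a $3$-connected matroid with at least four elements, two distinct triangles cannot share two elements; otherwise circuit-elimination between them would produce a parallel pair, contradicting $3$-connectivity. A similar observation shows that two distinct triads can share at most one element.

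Let $P \subseteq \{1, \ldots, n-2\}$ be the set of indices $p$ for which $\{e_p, e_{p+1}, e_{p+2}\}$ is a triad. For each $p \in P$, orthogonality forces $|\{e_i, e_j\} \cap \{e_p, e_{p+1}, e_{p+2}\}| \in \{0,2\}$. Since distinct triads of the fan share at most one element (consecutive triads share exactly one, non-consecutive triads are disjoint), the pair $\{e_i, e_j\}$ is contained in at most one triad. Hence either $\{e_i, e_j\}$ is disjoint from every triad, or it is contained in exactly one triad. In the disjoint case, only those terminal elements of the fan that are spokes avoid every triad, and I need two such elements, so both $e_1$ and $e_n$ must be spokes and $\{e_i, e_j\} = \{e_1, e_n\}$, giving case (iii).

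In the single-triad case, suppose $\{e_i, e_j\} \subseteq \{e_p, e_{p+1}, e_{p+2}\}$ with $p \in P$. If $p - 2 \in P$, then orthogonality with the left neighboring triad forces $e_p \notin \{e_i, e_j\}$; symmetrically, if $p + 2 \in P$ then $e_{p+2} \notin \{e_i, e_j\}$. Using this together with the no-two-triangles-share-two-elements principle to rule out $\{e_i, e_j\}$ being contained in any fan triangle, the three possible pairs split as follows: (a) the pair $\{e_p, e_{p+1}\}$ lies in the fan triangle $\{e_{p-1}, e_p, e_{p+1}\}$ whenever $p \geq 2$, so this forces $p = 1$, whence $e_1$ is a rim element and $T = \{e, e_1, e_2\}$, which is case (i); (b) the pair $\{e_{p+1}, e_{p+2}\}$ lies in the fan triangle $\{e_{p+1}, e_{p+2}, e_{p+3}\}$ whenever $p + 3 \leq n$, forcing $p = n-2$, whence $e_n$ is a rim element and $T = \{e, e_{n-1}, e_n\}$, which is case (ii); (c) the pair $\{e_p, e_{p+2}\}$ lies in no fan triangle, but requires both $p-2 \notin P$ and $p+2 \notin P$, hence $|P| = 1$ and $n \leq 5$, yielding case (iv) when $p = 2$ and case (v) when $p = 1$.

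Finally, for cases (i) and (ii) the extra fan-extension claims are immediate: in case (i), the sequence $(e, e_1, \ldots, e_n)$ alternates correctly because $\{e, e_1, e_2\} = T$ is a triangle while the remaining consecutive triples come from the original fan and begin with the triad $\{e_1, e_2, e_3\}$; case (ii) is symmetric. The principal bookkeeping obstacle is sub-case (c), where I must track how the ``outer'' pair in a triad is allowed only when the triad has no neighbor on either side, and collate this into the two labelings of terminal rim/spoke positions appearing in cases (iv) and (v). Everything else reduces to routine orthogonality and circuit-elimination.
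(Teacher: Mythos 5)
Your overall strategy is sound and, in substance, close to the paper's own proof: classify $T-e$ by its intersections with the consecutive triads of the fan via orthogonality, and your case split does land exactly on outcomes (i)--(v). The genuine problem is the auxiliary principle you lean on: in a $3$-connected matroid with at least four elements it is \emph{not} true that two distinct triangles cannot share two elements. Circuit elimination between triangles $\{a,b,c\}$ and $\{a,b,d\}$ yields a circuit contained in $\{b,c,d\}$, and simplicity only excludes the two-element outcome; the outcome can be the triangle $\{b,c,d\}$, i.e.\ $\{a,b,c,d\}$ is a four-point line, which is perfectly compatible with $3$-connectivity (consider $U_{2,5}$, or any $3$-connected matroid with a $U_{2,4}$-restriction). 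The dual assertion about triads is false for the same reason, although where you actually use it (two consecutive triads of the fan meet in at most one element) it is merely a statement about index sets and needs no matroid input. Consequently your dismissals of sub-case (a) with $p\geq 2$ and sub-case (b) with $p+3\leq n$ are not justified as written.

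Fortunately the gap is local and repairable with tools you already invoke, and the repair is exactly the paper's move. In sub-case (a), eliminate the common element $e_{p+1}$ from $T=\{e,e_{p},e_{p+1}\}$ and the fan triangle $\{e_{p-1},e_{p},e_{p+1}\}$: since $M$ is simple the resulting circuit is the full triangle $\{e,e_{p-1},e_{p}\}$, and this triangle meets the triad $\{e_{p},e_{p+1},e_{p+2}\}$ in the single element $e_{p}$, violating orthogonality. Symmetrically, in sub-case (b) elimination produces the triangle $\{e,e_{p+2},e_{p+3}\}$, which meets that same triad only in $e_{p+2}$. With these substitutions your argument goes through: the identification of the elements avoiding every fan triad with the terminal spoke elements, the constraints $p-2\notin P$ and $p+2\notin P$ in sub-case (c) forcing $n\leq 5$ and yielding (iv) and (v), and the verification that in (i) and (ii) the sequence extends to a fan, are all correct.
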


\begin{proof}
Let $T=\{e,e_{x},e_{y}\}$, where $1\leq x<y\leq n$.
If $x=1$ and $y=n$, then statement~(iii) must hold,
since orthogonality with $T$ requires that $\{e_{1},e_{2},e_{3}\}$ and
$\{e_{n-2},e_{n-1},e_{n}\}$ are both triangles.
We will assume that $x>1$, and show that statement~(ii) or
statement~(iv) holds.
If $x=1$, then $y<n$, so we can replace $(e_{1},\ldots, e_{n})$ with
$(e_{n},\ldots, e_{1})$, and swap labels on $x$ and $y$, and then
apply exactly the same arguments to deduce that (i) or (v) holds.

Now we assume that $x>1$.
Assume also that $y=x+1$.
If $y<n$, then either $\{e_{x-1},e_{x},e_{x+1}\}$ is a triangle
and $\{e_{x},e_{x+1},e_{x+2}\}$ is a triad, or
$\{e_{x},e_{x+1},e_{x+2}\}$ is a triangle and
$\{e_{x-1},e_{x},e_{x+1}\}$ is a triad.
As $M$ is $3$\dash connected, in the first case
$\{e,e_{x-1},e_{x}\}$ is a triangle, and in the second case
$\{e,e_{x+1},e_{x+2}\}$ is a triangle.
In either case we have a contradiction to orthogonality.
Therefore $y=n$, and $e_{y}$ is a rim element, by
orthogonality between $T=\{e,e_{n-1},e_{n}\}$ and
$\{e_{n-3},e_{n-2},e_{n-1}\}$.
Now it is clear that statement~(ii) holds.
Hence we will assume that $y>x+1$.
Thus $e_{x}$ is a rim element, by orthogonality between
$T$ and $\{e_{x-1},e_{x},e_{x+1}\}$.
If $2<x$, then we have a contradiction to orthogonality
between $T$ and $\{e_{x-2},e_{x-1},e_{x}\}$.
Therefore $x=2$.
If $y>x+2$, then $T$ intersects the triad $\{e_{x},e_{x+1},e_{x+2}\}$
in a single element.
Therefore $y=x+2=4$.
Finally, $n\leq 5$, for otherwise $T$ intersects the triad
$\{e_{y},e_{y+1},e_{y+2}\}$ in a single element.
We have shown that statement~(iv) holds, so the proof is complete.
\end{proof}

\begin{proposition}
\label{hocus}
Assume that $(e_{1},\ldots, e_{n})$ is a fan of the
$3$\dash connected matroid $M$, and that
$4\leq |E(M)| \leq n+1$.
Then $M$ is a wheel or a whirl.
\end{proposition}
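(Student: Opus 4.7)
The proof will go by induction on $n$.  For the base case $n = 3$, the hypothesis $|E(M)| \leq n+1 = 4$ together with $|E(M)| \geq 4$ forces $|E(M)| = 4$, so $M$ is the unique $3$-connected matroid on four elements, namely $U_{2,4} \cong \mcal{W}^2$, a whirl.  For $n \in \{4, 5\}$ I would argue by direct case analysis, passing to the dual if necessary (fans are preserved under duality, and the class of wheels and whirls is self-dual) to assume that $\{e_1, e_2, e_3\}$ is a triangle.  A rank computation using the triangle (which gives $r(M) \geq 2$) and the triads in the fan (whose complements must be hyperplanes of rank $r(M) - 1$), combined with the prohibition on $2$-cocircuits and coloops imposed by $3$-connectivity, would show that the sub-cases $(n, |E(M)|) = (4, 5)$ and $(5, 5)$ are impossible, that $(4, 4)$ forces $M = U_{2,4}$, and that $(5, 6)$ forces $r(M) = 3$ (a short $2$-separation argument excludes $r(M) = 4$) with $M$ being either $\mcal{W}^3$ or $M(K_4)$, depending on whether the spoke triangles are complemented by a rim triangle.

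For the inductive step, let $n \geq 6$ and suppose WLOG that $\{e_1, e_2, e_3\}$ is a triangle, so $e_2$ is an internal rim element.  By \Cref{cider}, the sequence $(e_1, e_4, \ldots, e_n)$ is a fan of $M' := M/e_2 \ba e_3$ of length $n - 2 \geq 4$.  After verifying that $M'$ is $3$-connected, we have $|E(M')| = |E(M)| - 2 \in \{n-2, n-1\}$ and $|E(M')| \geq 4$, so the inductive hypothesis applies and $M'$ is a wheel or whirl of some rank $r - 1$.  Then $M$ is determined by $M'$ together with the new triangle $\{e_1, e_2, e_3\}$ and triad $\{e_2, e_3, e_4\}$, via a uniqueness argument in the spirit of \Cref{peace}; since the wheel and the whirl of rank $r$ both exhibit precisely this structure (extending the covering fan of $M'$ by two elements), $M$ must be one of them.

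The principal technical obstacle is establishing $3$-connectivity of $M' = M/e_2 \ba e_3$ in the inductive step, since the contract-delete operation does not in general preserve $3$-connectivity.  The argument proceeds by showing that any putative small separation of $M'$ would, via the surviving triangle $\{e_3, e_4, e_5\}$ and triad $\{e_4, e_5, e_6\}$ of $M$, yield a $2$-separation of $M$ itself, contradicting the hypothesis.  A secondary delicate point is the uniqueness of $M$ given $M'$ and the attached fan data; this does not follow immediately from \Cref{peace} (which concerns single-element deletions from a four-element fan) and requires a modest additional argument, either by applying \Cref{peace} in two stages or by directly checking the circuit structure.
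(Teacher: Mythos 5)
Your overall strategy is genuinely different from the paper's: the paper reduces everything to the known result of Oxley and Wu that a $3$\dash connected matroid whose entire ground set is a fan is a wheel or a whirl, and only has to argue that the single element outside the fan (when $|E(M)|=n+1$) extends the fan into a spanning one; you are instead attempting to reprove that result from scratch by induction on $n$. Unfortunately your inductive step breaks at its very first instance. Take $M=M(K_{4})$ with hub $h$, rim vertices $v_{1},v_{2},v_{3}$, spokes $s_{i}=hv_{i}$, and rim edges $r_{ij}=v_{i}v_{j}$. Then $(s_{1},r_{12},s_{2},r_{23},s_{3},r_{31})$ is a fan of length $n=6$ covering $E(M)$, with $e_{2}=r_{12}$ a rim element, but $M/r_{12}\ba s_{2}$ is a rank\dash $2$ matroid on $\{s_{1},s_{3},r_{23},r_{31}\}$ in which $r_{23}$ and $r_{31}$ are parallel, so it is not $3$\dash connected. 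Hence the claim that $M'=M/e_{2}\ba e_{3}$ is $3$\dash connected is simply false for $n=6$, and your proposed lifting argument ("any small separation of $M'$ yields a $2$\dash separation of $M$") cannot succeed, since here $M$ is $3$\dash connected while $M'$ is not. (Your sketch also appeals to the "surviving triangle $\{e_{3},e_{4},e_{5}\}$", but $e_{3}$ has been deleted from $M'$.) Note that \Cref{shrub}, which does establish $3$\dash connectivity of such a fan-shortening, depends on the extra hypotheses of the class $\mcal{M}$ and the minor $N$, which are unavailable in the bare setting of \Cref{hocus}, so there is no off-the-shelf substitute.

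Even if you absorbed $n=6$ (and $n=7$) into the base cases, you would still owe a proof that $M/e_{2}\ba e_{3}$ is $3$\dash connected for all larger $n$, and that is essentially the hard content of the proposition; the base case $(n,|E(M)|)=(5,6)$ and the reconstruction of $M$ from $M'$ are likewise only asserted. The uniqueness half of your plan is fine in principle --- the two-stage application of \Cref{peace} and its dual is exactly what is done in the proof of \Cref{chest} --- but as written the argument has a genuine hole at the connectivity step. If you want to avoid redoing Oxley and Wu's work, follow the paper: dispose of $|E(M)|=4$ directly, and when $|E(M)|=n+1$ use orthogonality and spanning arguments to show the extra element $e$ forms a triad with $e_{1}$ and $e_{2}$, so that $(e,e_{1},\ldots,e_{n})$ is a fan covering $E(M)$ and \cite[Lemma~4.8]{OW00} applies.
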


\begin{proof}
If $|E(M)|=4$, then $M$ is isomorphic to the whirl
$U_{2,4}$, and we are done.
Therefore we assume $|E(M)|>4$.
Since wheels and whirls are self-dual, we can switch to
$M^{*}$ as required, and assume that $e_{1}$ is a spoke
element of $(e_{1},\ldots, e_{n})$.
If $|E(M)|=n$, then result follows immediately from
\cite[Lemma~4.8]{OW00}.
Therefore we assume that $|E(M)| = n+1$.
Let $e$ be the single element in
$E(M)-\{e_{1},\ldots, e_{n}\}$.

Suppose that $e_{n}$ is a rim element, so that
$n$ is even.
Then
$\{e_{1},e_{3},\ldots, e_{n-3},e_{n-1}\}$ spans
$E(M)-\{e,e_{n}\}$.
Since $M$ is $3$\dash connected, it also spans $E(M)$.
Let $C$ be a circuit contained in
$\{e_{1},e_{3},\ldots, e_{n-3},e_{n-1}\}\cup\{e\}$
that contains $e$.
If $i$ is odd and $3\leq i \leq n-1$, then
$e_{i}\notin C$, or else $C$ intersects the triad
$\{e_{i-1},e_{i},e_{i+1}\}$ in a single
element.
Therefore $C\subseteq \{e,e_{1}\}$, which is
impossible as $M$ is $3$\dash connected.
This implies that $e_{n}$ is a spoke element and $n$ is odd.
Suppose that
$\{e_{3},e_{5},\ldots, e_{n-2},e_{n}\}$
spans $E(M)$.
Let $C$ be a circuit in
$\{e_{3},e_{5},\ldots, e_{n-2},e_{n}\}\cup\{e_{1}\}$
that contains $e_{1}$.
Then $C\subseteq \{e_{1},e_{n}\}$, for otherwise
$C$ intersects a triad in a single element.
This is a contradiction, so
$\{e_{3},e_{5},\ldots, e_{n-2},e_{n}\}$
does not span $E(M)$.
Since it spans $E(M)-\{e,e_{1},e_{2}\}$, it
follows that $\{e,e_{1},e_{2}\}$ is a triad.
Thus $(e,e_{1},\ldots, e_{n})$ is a fan, and
we can again apply \cite[Lemma~4.8]{OW00} to deduce
that $M$ is a wheel or a whirl.
\end{proof}

\begin{proposition}
\label{thumb}
Let $M$ be a $3$\dash connected matroid with
$|E(M)| \geq 4$.
Let $(e_{1},\ldots, e_{n})$ be a fan of $M$.
If there is an element $e\in E(M)-\{e_{1},\ldots, e_{n}\}$, and
elements $x,y,z\in \{e_{1},\ldots, e_{n}\}$ such that
$\{e,x,y\}$ is a triangle and $\{e,y,z\}$ is a triad, then
$M$ is a wheel or a whirl.
\end{proposition}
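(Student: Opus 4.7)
The plan is to argue by contradiction. If $|E(M)| \leq n+1$, then \Cref{hocus} applied to the fan $(e_{1},\ldots,e_{n})$ directly gives that $M$ is a wheel or a whirl, so I may assume $|E(M)|\geq n+2$. I then apply \Cref{ninny} to the triangle $T=\{e,x,y\}$ to obtain one of its five structural cases (i)--(v). Applying \Cref{ninny} to $M^{*}$ on the same fan (where the roles of spoke and rim are reversed) classifies the triad $T^{*}=\{e,y,z\}$ via dual cases (i')--(v'). The hypothesis that $T$ and $T^{*}$ share both $e$ and $y$ then drives the subsequent case-analysis.

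Cases (i) and (ii) of $T$ extend the original fan to a new fan $F' = (e,e_{1},\ldots,e_{n})$ or $(e_{1},\ldots,e_{n},e)$ of $M$. In either case, $T^{*}$ is a triad of $M$ contained in $F'$ and meeting the terminal element $e$. A standard fan lemma---every triad of $M$ contained in a fan of a $3$-connected matroid consists of three consecutive elements of that fan, of exactly the kind invoked in the proof of \Cref{burst}---forces $T^{*}$ to be a consecutive triple of $F'$. The only such triple containing the terminal $e$ is the extending triangle itself, so $T=T^{*}$ as sets; but then $\lambda_{M}(T)=r(T)+r^{*}(T)-3=1$, contradicting the $3$-connectivity of $M$ (since $|E(M)|\geq 4$). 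Cases (i') and (ii') for $T^{*}$ are ruled out by the dual argument.

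The remaining options all force $n\leq 5$. Cases (iv), (v), (iv'), (v') include this bound as part of their hypotheses. Case (iii) requires $e_{1}$ and $e_{n}$ to both be spoke elements, while (iii') requires both to be rim elements, so these conditions are mutually incompatible; hence case (iii) of $T$ can only be paired with (iv') or (v') for $T^{*}$, and symmetrically for (iii'), again producing $n\leq 5$.

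The proof finishes with a finite enumeration for $n\in\{3,4,5\}$. For $n=3$ only (iii) and (iii') survive, and they exclude each other. For $n=4$, case (iii) is excluded by parity (positions $1$ and $n$ have opposite parity, so $e_{1}$ and $e_{n}$ cannot be of the same type), and the only remaining combinations---(iv), (v) for $T$ paired with (iv'), (v') for $T^{*}$---yield $T\cap T^{*}=\{e\}$, contradicting the requirement that $T$ and $T^{*}$ share $y$. For $n=5$, each of (iii), (iv), (v) forces the spoke/rim labelling of the fan in which $e_{1}$ is a spoke, while each of (iii'), (iv'), (v') forces the opposite labelling, so no pair is jointly admissible. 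Thus $|E(M)|\geq n+2$ is impossible, and $M$ must be a wheel or a whirl. The main obstacle is justifying the consecutive-triad fan lemma cleanly and being scrupulous in the small-$n$ enumeration about which case pairings actually realize the shared element $y$.
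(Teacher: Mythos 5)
Your route differs genuinely from the paper's: the paper first notes that $e$ lies in both the closure and the coclosure of $F=\{e_{1},\ldots,e_{n}\}$, so $\lambda(F\cup e)\leq 1$ and hence $|E(M)|\leq n+2$, after which the fan-extending cases of \Cref{ninny} are disposed of by \Cref{hocus}; you instead assume $|E(M)|\geq n+2$ and replace \Cref{hocus} by consecutiveness of triangles and triads inside fans. For $n\geq 4$ this works, with two caveats. First, your ``standard fan lemma'' is \Cref{steal} and its dual, and as you state it (for all $3$-connected matroids) it is false---wheels and whirls are counterexamples; it is only available because you are arguing by contradiction, so the assumption that $M$ is not a wheel or a whirl must be made explicit before citing it. Second, for $n=4$ the surviving pairings do not all give $T\cap T^{*}=\{e\}$: pairing (iv) with (iv$'$), or (v) with (v$'$), gives $T=T^{*}$, a triangle that is also a triad; that is still a contradiction, but not for the reason you state.

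The genuine gap is the case $n=3$. \Cref{ninny} carries the hypothesis $n\geq 4$, so its five-case classification---and therefore your assertion that for $n=3$ ``only (iii) and (iii$'$) survive''---has no basis there. Concretely, if $(e_{1},e_{2},e_{3})$ is a triangle, the triangle $T$ may be $\{e,e_{1},e_{2}\}$, which matches none of the patterns (i)--(v), and nothing in your enumeration addresses it. The case needs a separate argument, which is how the paper proceeds: when $n=3$, up to duality $F\cup e$ is a four-point line of the simple matroid $M$ containing the triad $\{e,y,z\}$, so $M$ has a triangle that is also a triad; $3$-connectivity then forces $|E(M)|=4$, contradicting your standing assumption $|E(M)|\geq n+2=5$ (and when $|E(M)|=4$ one gets $M\cong U_{2,4}$, a whirl, which is the paper's conclusion in that subcase). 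With that repair and the caveats above, your argument goes through.
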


\begin{proof}
Let $F=\{e_{1},\ldots, e_{n}\}$.
Then $\lambda(F)\leq 2$, and as $e$ is in the closure and
coclosure of $F$, $\lambda(F\cup e)\leq 1$.
Therefore the complement of $F\cup e$ contains at most one
element.
If $n=3$, then either $F$ and $\{e,x,y\}$ are both triangles, or
$F$ and $\{e,y,z\}$ are both triads.
In the first case $M|(F\cup e)\cong U_{2,4}$, and in
the second $M^{*}|(F\cup e)\cong U_{2,4}$.
In either case, $F\cup e$ contains a triangle that is a triad.
Since $M$ is $3$\dash connected, this means $|E(M)|=4$,
and $M$ is isomorphic to $U_{2,4}$, so we are done.
Therefore we assume $n\geq 4$.

We apply \Cref{ninny}.
If statement~(i) or~(ii) holds, then we can apply
\Cref{hocus} to $(e_{1},\ldots, e_{n},e)$ or
$(e,e_{1},\ldots, e_{n})$ and conclude that $M$ is a wheel or a
whirl.
Next assume that statement~(iii) holds.
Then $n\geq 5$.
By reversing $(e_{1},\ldots, e_{n})$ as necessary, we
assume that $y=e_{n}$.
Then orthogonality between the triad $\{e,y,z\}$ and
the triangle $\{e_{n-2},e_{n-1},e_{n}\}$
requires that $z$ is in $\{e_{n-2},e_{n-1}\}$.
If $z=e_{n-1}$, then $(e_{1},\ldots, e_{n},e)$ is a fan,
and we can again apply \Cref{hocus}.
Therefore we assume $z=e_{n-2}$.
This means that the triangle $\{e_{n-4},e_{n-3},e_{n-2}\}$
and the triad $\{e,y,z\}$ meet in a single element, a
contradiction.
Therefore statement~(iv) or~(v) in \Cref{ninny} holds.
By reversing, we can assume that $\{x,y\}=\{e_{2},e_{4}\}$,
and $e_{2}$ is a rim element.

If $n=4$, then we apply \Cref{hocus} to the fan
$(e_{1},e_{3},e_{2},e_{4},e)$.
Therefore we assume $n=5$.
If $z=e_{1}$, then $y=e_{2}$, by orthogonality between
the triangles $\{e_{1},e_{2},e_{3}\}$ and $\{e_{3},e_{4},e_{5}\}$,
and the triad $\{e,y,z\}$.
Therefore $(e,e_{1},e_{2},e_{3},e_{4},e_{5})$ is a fan.
The same argument disposes of the case that $z=e_{5}$.
Therefore we assume $z=e_{3}$.
But in this case either $\{e_{1},e_{2},e_{3}\}$ or
$\{e_{3},e_{4},e_{5}\}$ is a triangle that intersects the
triad $\{e,y,z\}$ in a single element.
This contradiction completes the proof.
\end{proof}

\begin{proposition}
\label{steal}
Let $M$ be a $3$\dash connected matroid such that $|E(M)|\geq 4$,
and $M$ is not a wheel or a whirl.
Let $(e_{1},\ldots, e_{n})$ be a fan of $M$.
If $T\subseteq \{e_{1},\ldots, e_{n}\}$ is a triangle,
then $T=\{e_{i},e_{i+1},e_{i+2}\}$ for some
$i\in\{1,\ldots, n-2\}$ such that $e_{i}$ is a spoke
element.
\end{proposition}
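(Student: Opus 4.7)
The plan is to proceed by strong induction on the fan length $n$. For the base case $n = 3$, the only three-element subset of $\{e_{1},e_{2},e_{3}\}$ is the set itself, and \Cref{hocus} forces $|E(M)| \geq n + 2 = 5$. In a $3$-connected matroid with at least five elements no $3$-element set is simultaneously a triangle and a triad, so $\{e_{1},e_{2},e_{3}\}$ must be a triangle of the fan, meaning $e_{1}$ is a spoke.

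For the inductive step, write $T = \{e_{a},e_{b},e_{c}\}$ with $a < b < c$. If $(a,c) \neq (1,n)$, then $T$ is contained in a contiguous sub-fan of strictly smaller length (either $(e_{a},\ldots,e_{n})$ when $a > 1$, or $(e_{1},\ldots,e_{c})$ when $c < n$), which is itself a fan of $M$. Since the spoke/rim designation of each element is inherited by any contiguous sub-fan, the induction hypothesis applied to this shorter sub-fan gives the conclusion. The remaining case is $T = \{e_{1},e_{b},e_{n}\}$ with $1 < b < n$, where I would assume without loss of generality that $\{e_{1},e_{2},e_{3}\}$ is a triangle, the triad case being symmetric.

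The core of the argument is orthogonality: for each fan triad $\tau_{k} = \{e_{2k},e_{2k+1},e_{2k+2}\}$, the intersection $|T \cap \tau_{k}|$ must be even. Noting that $e_{1}$ lies in no fan triad and that $e_{n}$ lies in a fan triad only when $n$ is even (and then only in $\tau_{(n-2)/2}$), a short case analysis gives an immediate contradiction when $n$ is odd (the position $b$ would have to avoid the entire covered interval $\{2,\ldots,n-1\}$), and forces $b = n - 1$ when $n$ is even. For the residual configuration $T = \{e_{1},e_{n-1},e_{n}\}$, I would apply strong circuit elimination between $T$ and the fan triangle $\sigma = \{e_{n-3},e_{n-2},e_{n-1}\}$, producing a circuit $C \subseteq \{e_{1},e_{n-3},e_{n-2},e_{n}\}$. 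Orthogonality with $\tau_{(n-2)/2}$ and $\tau_{(n-4)/2}$ pins $\{e_{n-3},e_{n-2},e_{n}\} \subseteq C$, so either $C$ is the triangle $\{e_{n-3},e_{n-2},e_{n}\}$, which the induction hypothesis applied to the length-$4$ sub-fan $(e_{n-3},\ldots,e_{n})$ identifies with $\sigma$ and forces $e_{n} = e_{n-1}$, or $C$ is the $4$-circuit $\{e_{1},e_{n-3},e_{n-2},e_{n}\}$.

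The hard part is eliminating this residual $4$-circuit. I would iterate strong circuit elimination against successive inward fan triangles, at each step invoking orthogonality with the relevant fan triads together with the standard fact that two distinct triangles of a $3$-connected matroid with $|E(M)| \geq 4$ share at most one element. Each iteration must terminate in one of three contradictions: a triangle ruled out by the induction hypothesis, a two-element parallel pair (contradicting $3$-connectedness), or a circuit properly containing a known fan triangle (violating the circuit axioms). The main obstacle is ensuring that the admissible support of the iterated circuit shrinks sufficiently at each step to force one of these terminal contradictions for arbitrary even $n$.
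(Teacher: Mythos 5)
Your reduction to the case $T=\{e_{1},e_{b},e_{n}\}$ is sound, but the proof is not complete: you yourself flag that the termination of the iterated strong-circuit-elimination in the residual case $T=\{e_{1},e_{n-1},e_{n}\}$ is an unresolved ``main obstacle,'' and that residual case is precisely the crux of the proposition. (There is also a smaller leak: for $n=4$ the orthogonality screen against the fan triads does not force $b=n-1$; the configuration $T=\{e_{1},e_{2},e_{4}\}$ survives it and is never addressed. Likewise, ``the triad case being symmetric'' is not a literal symmetry --- reversing the fan or dualizing does not convert $\{e_{1},e_{2},e_{3}\}$ from a triad into a triangle --- so that branch also needs its own argument.)

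The missing idea is a local connectivity argument rather than more circuit exchange. In the residual configuration ($n$ even, $e_{1}$ a spoke), $e_{n}$ is a rim element, so $\{e_{n-2},e_{n-1},e_{n}\}$ is a triad and $e_{n}\in\cl^{*}(\{e_{1},\ldots,e_{n-1}\})$; since $e_{n}$ also lies in the triangle $T$, it is in $\cl(\{e_{1},\ldots,e_{n-1}\})$ as well. Hence $\lambda(\{e_{1},\ldots,e_{n}\})\leq\lambda(\{e_{1},\ldots,e_{n-1}\})-1\leq 1$, so by $3$\dash connectivity the complement of the fan has at most one element, and \Cref{hocus} makes $M$ a wheel or a whirl, a contradiction. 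The identical argument shows that the minimum-index element of any triangle contained in a fan must be a spoke (if $e_{i}$ were a rim element, it would lie in the triad $\{e_{i},e_{i+1},e_{i+2}\}$ and in $\cl(\{e_{i+1},\ldots,e_{n}\})$), and this disposes of your $n=4$ stragglers and the ``triad'' branch at once. This is exactly how the paper proves the statement: a direct argument with no induction, first forcing the extreme elements of $T$ to be spokes via this $\cl\cap\cl^{*}$ observation, then pinning down consecutiveness by orthogonality.
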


\begin{proof}
Let $i$, $j$, and $k$ be such that
$1 \leq i<j< k\leq n$, and $T=\{e_{i},e_{j},e_{k}\}$.
If $e_{i}$ is a rim element of $(e_{1},\ldots, e_{n})$,
then $\{e_{i},e_{i+1},e_{i+2}\}$ is a triad.
Since $T$ is a triangle, it follows that
$e_{i}$ is in the closure and the coclosure of
$\{e_{i+1},\ldots, e_{n}\}$.
As $\lambda(\{e_{i+1},\ldots, e_{n}\})\leq 2$, it follows that
$\lambda(\{e_{i},\ldots, e_{n}\})\leq 1$.
As $M$ is $3$\dash connected, we deduce that
the complement of $\{e_{i},\ldots, e_{n}\}$ contains at
most one element.
Thus $M$ is a wheel or a whirl by \Cref{hocus},
and this is a contradiction.
Therefore $e_{i}$ is a spoke element.
The same argument shows that $e_{k}$ is a spoke element.

Assume that $j>i+1$.
If $e_{j}$ is a rim element of
$(e_{1},\ldots, e_{n})$, then $j\geq i+3$, so $T$
intersects the triad $\{e_{j-2},e_{j-1},e_{j}\}$ in a single
element, violating orthogonality.
Therefore $e_{j}$ is a spoke element.
If $k>j+1$, then $T$ intersects the
triad $\{e_{j-1},e_{j},e_{j+1}\}$ in a single
element.
Therefore $k=j+1$, implying that $e_{k}$ is a rim element.
Since this contradicts our earlier conclusion, we see that
$j=i+1$.
Now $k=i+2$, by orthogonality between $T$ and
$\{e_{i+1},e_{i+2},e_{i+3}\}$, and the fact that $e_{k}$
is a spoke element.
\end{proof}

\begin{proposition}
\label{vicar}
Let $M$ be a $3$\dash connected
matroid such that $|E(M)| \geq 4$ and $M$ is neither a
wheel nor a whirl.
Let $(e_{1},\ldots, e_{n})$ be a fan of $M$.
If $X\subseteq \{e_{1},\ldots, e_{n}\}$ is a
$3$\dash separating set of $M$ such that $|X|\geq 3$,
then there are integers $1\leq x < y \leq n$ such that
$X=\{e_{x},e_{x+1},\ldots, e_{y}\}$.
\end{proposition}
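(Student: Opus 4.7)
The plan is to prove the proposition by induction on $n$, the length of the fan. The base case $n=3$ is immediate: the hypothesis $|X|\ge 3$ forces $X=\{e_1,e_2,e_3\}$, which is trivially a contiguous subsequence.

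For the inductive step, assume $n\ge 4$. If $e_1\notin X$, then $X\subseteq\{e_2,\ldots,e_n\}$, and since $(e_2,\ldots,e_n)$ is itself a fan of $M$ of length $n-1\ge 3$, the inductive hypothesis applied to this shorter fan shows that $X$ is a contiguous subsequence, hence contiguous in $(e_1,\ldots,e_n)$. The case $e_n\notin X$ is symmetric. So I may assume $e_1,e_n\in X$, and it suffices to show $X=F:=\{e_1,\ldots,e_n\}$.

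Suppose for contradiction that some $e_a\in F-X$; necessarily $1<a<n$, so $e_a$ is internal in the fan. Since $n\ge 4$, at least one of the consecutive triples $\{e_{a-2},e_{a-1},e_a\}$, $\{e_{a-1},e_a,e_{a+1}\}$, $\{e_a,e_{a+1},e_{a+2}\}$ exists and is a triad $T^*$ of $M$ contained in $F$. The key orthogonality observation I would exploit is that $e_a\notin\cl(E-F)$: otherwise there is a circuit $C$ with $e_a\in C$ and $C\setminus\{e_a\}\subseteq E-F$, but then $C\cap T^*=\{e_a\}$ (since $T^*\setminus\{e_a\}\subseteq F$ is disjoint from $E-F$), contradicting orthogonality of circuits with cocircuits.

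Using \Cref{floor} to obtain $\lambda(F)=2$, a direct rank manipulation yields the identity $\lambda(X)=2+\sqcap(X,F-X)-\sqcap(F-X,E-F)$. Connectedness of the restriction $M|F$ (clear from the fan being a path of overlapping triangles and triads) gives $\sqcap(X,F-X)\ge 1$, while the orthogonality argument aims to give $\sqcap(F-X,E-F)=0$; together these would force $\lambda(X)\ge 3$, contradicting the assumption that $X$ is $3$-separating. The main obstacle is establishing $\sqcap(F-X,E-F)=0$ when $|F-X|\ge 2$: the single-element case follows immediately from the key orthogonality observation, but for larger $F-X$ one must iterate the argument to rule out circuits of $M$ using elements of both $F-X$ and $E-F$, which entails a careful case analysis of when two elements of $F-X$ can share a fan triad. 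The boundary situations near $a=2$ and $a=n-1$ are where the hypothesis that $M$ is neither a wheel nor a whirl (together with \Cref{steal}, which restricts where triangles can sit inside the fan) is invoked to close out the exceptional configurations.
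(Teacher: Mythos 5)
Your inductive reduction to the case $e_{1},e_{n}\in X$ is clean, and the rank identity $\lambda(X)=\lambda(F)+\sqcap(X,F-X)-\sqcap(F-X,E-F)$ is correct. But the two sub-claims you propose to establish do not both hold, so the route cannot be closed as stated. A minor point first: $M|F$ need not be connected, since the triads of a fan are cocircuits of $M$ and contribute no circuits to $M|F$; a terminal rim element can be a coloop of $M|F$. The inequality $\sqcap(X,F-X)\geq 1$ is nevertheless true, but it must come from the chain of overlapping triangles (which covers every internal element, hence all of $F-X$) together with $|X|\geq 3$, not from connectedness of the restriction.

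The genuine gap is the claim $\sqcap(F-X,E-F)=0$. You correctly flag it as the main obstacle, but it is in fact false, so no amount of case analysis will prove it. \Cref{ninny}(iv) permits a triangle $\{e,e_{2},e_{4}\}$ with $e\in E-F$ when $n=5$ and $e_{2}$ is a rim element; taking $X=\{e_{1},e_{3},e_{5}\}$ then gives $F-X=\{e_{2},e_{4}\}$ and $\sqcap(F-X,E-F)\geq 1$. This configuration is realizable: let $G$ be $K_{5}$ on the vertices $h,u,v_{1},v_{2},v_{3}$ with the edge $uv_{2}$ deleted, let the fan be $(hv_{1},v_{1}v_{2},hv_{2},v_{2}v_{3},hv_{3})$, and let $X=\{hv_{1},hv_{2},hv_{3}\}$. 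Here $\{v_{1}v_{2},v_{2}v_{3},v_{1}v_{3}\}$ is a triangle meeting both $F-X$ and $E-F$, and one computes $\sqcap(X,F-X)=2$, $\sqcap(F-X,E-F)=1$, $\lambda(X)=3$. So the desired conclusion $\lambda(X)\geq 3$ still arrives, but only because the first term exceeds $1$ by exactly the amount the second exceeds $0$; what your approach actually requires is the combined inequality $\sqcap(X,F-X)\geq 1+\sqcap(F-X,E-F)$, which is a genuinely harder statement than either sub-claim and would itself need the full strength of \Cref{ninny}, \Cref{thumb}, and \Cref{steal}. The paper sidesteps all of this: it takes a maximal non-contiguous counterexample $X$, applies submodularity of $\lambda$ against the $3$\dash separating sets $\{e_{1},\ldots,e_{i}\}$ to extract a $3$\dash element $3$\dash separating subset of $X$, identifies it as a triangle or triad located by \Cref{steal}, and then shows that the first element $e_{p}$ missing from a contiguous initial block of $X$ lies in both $\cl(X)$ and $\cl^{*}(X)$, whence $\lambda(X\cup e_{p})\leq 1$ and \Cref{hocus} forces $M$ to be a wheel or a whirl.
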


\begin{proof}
Assume that the result fails for $X$, and that $X$ is as
large as possible with respect to this assumption.
If $|E(M)|=4$, then $M$ is $U_{2,4}$, contradicting the fact that
$M$ is not a whirl.
If $|E(M)|=5$, then $M$ is $U_{2,5}$ or $U_{3,5}$.
In the first case $M$ has no triads, and in the second,
$M$ has no triangles.
In either case, $n=3$, and $X=\{e_{1},e_{2},e_{3}\}$.
Therefore we may as well assume that $|E(M)|\geq 6$.
Let $i$ be the least index such that
$|X\cap \{e_{1},\ldots, e_{i}\}|=3$.
Then $X\cup\{e_{1},\ldots, e_{i}\}$ is contained
in $\{e_{1},\ldots, e_{n}\}$, so the
complement of $X\cup\{e_{1},\ldots, e_{i}\}$
contains at least two elements, or else
$M$ is a wheel or a whirl by \Cref{hocus}.
As $M$ is $3$\dash connected, it follows that
$X\cup\{e_{1},\ldots, e_{i}\}$ is not
$2$\dash separating.
As $\lambda(\{e_{1},\ldots, e_{i}\})\leq 2$,
$\lambda(X)\leq 2$,
and $\lambda(X\cup \{e_{1},\ldots, e_{i}\})\geq 2$,
it follows that $X'=X\cap\{e_{1},\ldots, e_{i}\}$ is
a $3$\dash separating set with cardinality three,
by the submodularity of $\lambda$
(\cite[Lemma~8.2.9]{Oxl11}).
Since $|E(M)|\geq 6$, it follows that
$(X',E(M)-X')$ is a $3$\dash separation of $M$.
Thus $X'$ is a triad or a triangle
(\cite[Corollary~8.2.2]{Oxl11}).
By duality, we will assume that $X'$ is a triangle of $M$.
\Cref{steal} implies that
$X'=\{e_{i-2},e_{i-1},e_{i}\}$.

There must be an index $p> i$ such that $e_{p} \notin X$,
or else $X$ is not a counterexample.
Let $p$ be the least such index.
Note that $e_{p-2}$ and $e_{p-1}$ are in $X$.
Therefore $e_{p}$ is in either $\cl_{M}(X)$ or
$\cl_{M}^{*}(X)$, so $X\cup e_{p}$ is
$3$\dash separating.
Because $X\cup e_{p}$ is strictly larger than $X$, it follows that
$X\cup e_{p}$ is not a counterexample, so
$X\cup e_{p}=\{e_{x},e_{x+1},\ldots, e_{y}\}$ for some
indices $x$ and $y$.
Clearly $p < y$, or else
$X=\{e_{x},e_{x+1},\ldots, e_{y-1}\}$.
Thus $e_{p+1}$ is in $X$.
Now $\{e_{p-2},e_{p-1},e_{p}\}$ and
$\{e_{p-1},e_{p},e_{p+1}\}$ show that
$e_{p}$ is in the closure and the coclosure of $X$.
Therefore $\lambda(X\cup e_{p})\leq 1$, so
the complement of $X\cup e_{p}$ contains at most
one element.
Hence $M$ is a wheel or a whirl, and
this contradiction completes the proof.
\end{proof}

\begin{lemma}
\label{unity}
Let $M$ be a $3$\dash connected matroid such that
$|E(M)|\geq 4$ and $M$ is not a wheel or a whirl.
Let $(f_{1},\ldots, f_{m})$ and
$(g_{1},\ldots, g_{n})$ be fans of $M$.
Assume that
$|\{f_{1},\ldots, f_{m}\}\cap\{g_{1},\ldots, g_{n}\}|
\geq 3$.
Then there are integers $1\leq x < y\leq m$
and $1\leq x' < y'\leq n$ such that
\[
\{f_{1},\ldots, f_{m}\}\cap\{g_{1},\ldots, g_{n}\}
=\{f_{x},f_{x+1},\ldots, f_{y}\}
=\{g_{x'},g_{x'+1},\ldots, g_{y'}\}.
\]
\end{lemma}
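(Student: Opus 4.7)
The plan is to deduce the conclusion from \Cref{vicar}, which guarantees that any $3$\dash separating subset of a fan of size at least $3$ is a contiguous subsequence. Write $F=\{f_{1},\ldots,f_{m}\}$, $G=\{g_{1},\ldots,g_{n}\}$, and set $X=F\cap G$. If I can show that $X$ is $3$\dash separating in $M$, then applying \Cref{vicar} first to the fan $(f_{1},\ldots,f_{m})$ and then to $(g_{1},\ldots,g_{n})$ produces integers $1\leq x<y\leq m$ with $X=\{f_{x},\ldots,f_{y}\}$ and $1\leq x'<y'\leq n$ with $X=\{g_{x'},\ldots,g_{y'}\}$, which is exactly the assertion of the lemma.

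To bound $\lambda(X)$, I combine \Cref{floor} with submodularity of the connectivity function:
\[
\lambda(X)+\lambda(F\cup G)\;\leq\;\lambda(F)+\lambda(G)\;\leq\;4.
\]
Since $M$ is $3$\dash connected but neither a wheel nor a whirl, \Cref{hocus} forces $|E(M)-F|\geq 2$ and $|E(M)-G|\geq 2$, so in particular $|E(M)-X|\geq 2$. In the generic regime where $|E(M)-(F\cup G)|\geq 2$, the $3$\dash connectivity of $M$ yields $\lambda(F\cup G)\geq 2$, and hence $\lambda(X)\leq 2$; this makes $X$ a $3$\dash separating subset of each fan with $|X|\geq 3$, and \Cref{vicar} finishes the job.

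The main obstacle will be the boundary regime in which $F\cup G$ covers all, or all-but-one, of $E(M)$, where the submodular bound only gives $\lambda(X)\leq 3$. I plan to handle this case directly from the fan structure: any $a\in G-F$ lies in both a triangle and a triad of $G$, and orthogonality of these with the triangles and triads of $F$---applied via \Cref{ninny} to the fan $G$ together with an element of $F-G$, and dually---places tight constraints on how $a$ is attached to $F$. Iterating on either end of $F$, I expect to be able to paste $F$ and $G$ together into a single fan of $M$ of length at least $|E(M)|-1$, whereupon \Cref{hocus} contradicts the hypothesis that $M$ is not a wheel or whirl, unless $X$ was already contiguous in each fan.
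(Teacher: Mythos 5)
Your first half is sound and matches the paper: when $|E(M)-(F\cup G)|\geq 2$, $3$\dash connectivity gives $\lambda(F\cup G)\geq 2$, submodularity with \Cref{floor} gives $\lambda(F\cap G)\leq 2$, and \Cref{vicar} applied to each fan yields the conclusion. This is exactly how the paper disposes of that regime (its sublemma asserting $|E-(F\cup G)|\leq 1$ in any counterexample).

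The gap is the boundary regime, which is the actual content of the lemma, and your treatment of it is a plan rather than a proof. First, your starting assertion that ``any $a\in G-F$ lies in both a triangle and a triad of $G$'' is false for the terminal elements $g_{1},g_{n}$ (and for $n=3$); the paper has to prove, by induction on $m+n$, that in a minimal counterexample the terminal elements of each fan already lie in the other fan, and that $|F-G|,|G-F|\geq 2$, before any such orthogonality argument can be run (using \Cref{thumb}, \Cref{ninny}, \Cref{steal} along the way). Second, ``iterating on either end of $F$, I expect to be able to paste $F$ and $G$ together into a single fan'' does not go through in general: when the intersection is non-contiguous it has the form $\{f_{1},\ldots,f_{i}\}\cup\{f_{j},\ldots,f_{m}\}$ with $f_{i}=g_{n}$ and $f_{j}=g_{1}$, and the direct pasting into a long fan (followed by \Cref{hocus}) only works once one knows $|F\cap G|=3$; the case $|F\cap G|\geq 4$ is eliminated in the paper not by pasting but by applying the inductive hypothesis to $(f_{1},\ldots,f_{m})$ and $(g_{1},\ldots,g_{n-1})$. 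Since you set up no induction on $m+n$ (or any substitute for it), the argument as proposed cannot handle either of these steps, so the hard case remains unproved.
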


\begin{proof}
The proof is by induction on $m+n$.
The hypotheses imply that $m+n\geq 6$.
If $m+n=6$, then obviously $m=n=3$ and
\[
\{f_{1},f_{2},f_{3}\}\cap\{g_{1},g_{2}, g_{3}\}
=\{f_{1},f_{2}, f_{3}\}
=\{g_{1},g_{2}, g_{3}\},
\]
so there is nothing left to prove.
Let us assume that $m+n>6$, and that the result does not
hold for $(f_{1},\ldots, f_{m})$
and $(g_{1},\ldots, g_{n})$.
We will make the inductive assumption that the result is true for
any pair of fans with combined length less than $m+n$.

Let $F=\{f_{1},\ldots, f_{m}\}$, and let
$G=\{g_{1},\ldots, g_{n}\}$.

\begin{sublemma}
\label{cloth}
$m,n>3$.
\end{sublemma}

\begin{proof}
By symmetry, it suffices to prove that $m > 3$.
If $m=3$, then $F\cap G=\{f_{1},f_{2},f_{3}\}$.
Moreover, $F$ is a triangle or a triad contained
in $G$.
\Cref{steal} or its dual tells us that $F=F\cap G$
consists of three consecutive elements from
$(g_{1},\ldots, g_{n})$.
Therefore the result holds, contrary to our choice of
$(f_{1},\ldots, f_{m})$ and
$(g_{1},\ldots, g_{n})$.
\end{proof}

\begin{sublemma}
\label{deity}
$f_{1},f_{m}\in G$ and
$g_{1},g_{n}\in F$.
\end{sublemma}

\begin{proof}
Suppose that $f_{1}\notin G$.
Then $(f_{2},\ldots, f_{m})$ and
$(g_{1},\ldots, g_{n})$ are fans by \ref{cloth}.
Since
$|\{f_{2},\ldots, f_{m}\}\cap\{g_{1},\ldots, g_{n}\}|\geq 3$,
the inductive assumption implies that
there are integers $1\leq x < y\leq m$
and $1\leq x' < y'\leq n$ such that
\begin{linenomath}
\begin{multline*}
\{f_{1},\ldots, f_{m}\}\cap\{g_{1},\ldots, g_{n}\}
=\{f_{2},\ldots, f_{m}\}\cap\{g_{1},\ldots, g_{n}\}\\
=\{f_{x},f_{x+1},\ldots, f_{y}\}
=\{g_{x'},g_{x'+1},\ldots, g_{y'}\},
\end{multline*}
\end{linenomath}
and the result holds, contrary to assumption.
Exactly the same argument shows that
$f_{m}\in G$ and $g_{1},g_{n}\in F$.
\end{proof}

Let the ground set of $M$ be $E$.

\begin{sublemma}
\label{cobra}
$|E-(F\cup G)| \leq 1$.
\end{sublemma}

\begin{proof}
Assume that $|E-(F\cup G)| \geq 2$.
As $|F\cup G| \geq 2$, and $M$ is $3$\dash connected,
it follows that $\lambda(F\cup G) \geq 2$.
Since $\lambda(F),\lambda(G)\leq 2$, the submodularity of
$\lambda$ shows that $\lambda(F\cap G) \leq 2$.
Then we can set $X=F\cap G$ and apply \Cref{vicar}
to see that
\[F\cap G=\{f_{x},f_{x+1},\ldots, f_{y}\}
=\{g_{x'},g_{x'+1},\ldots, g_{y'}\}
\]
for integers $1\leq x < y \leq m$ and $1\leq x' < y' \leq n$.
This contradiction completes the proof of the claim.
\end{proof}

\begin{sublemma}
\label{smell}
$|F-G|,|G-F|\geq 2$.
\end{sublemma}

\begin{proof}
We have complete symmetry between $F$ and $G$, so it suffices
to prove that $|F-G|\geq 2$.
Assume that $|F-G|\leq 1$.
If $|F-G|=0$, then $|E-G|\leq 1$ by \ref{cobra}, so
\Cref{hocus} implies that $M$ is a wheel or a whirl.
This contradiction means that $|F-G|=1$.
Let $f_{i}$ be the unique element in $F-G$.
By \ref{deity}, we deduce that $1 < i < m$.
Either $\{f_{i-1},f_{i},f_{i+1}\}$ and
$\{f_{i},f_{i+1},f_{i+2}\}$ are a triangle and a triad
that are contained in $G\cup f_{i}$, or we can make the same
statement about  $\{f_{i-1},f_{i},f_{i+1}\}$ and
$\{f_{i-2},f_{i-1},f_{i}\}$.
This means that we can apply \Cref{thumb}, and
deduce that $M$ is a wheel or a whirl.
This contradiction completes the proof of \ref{smell}.
\end{proof}

\begin{sublemma}
\label{faith}
The elements of $F-G$ form a consecutive subsequence
of $(f_{1},\ldots, f_{m})$.
The elements of $G-F$ form a consecutive subsequence
of $(g_{1},\ldots, g_{n})$.
\end{sublemma}

\begin{proof}
It suffices to prove that the elements in $F-G$ form
a consecutive subsequence of $(f_{1},\ldots, f_{m})$.
Certainly $F\cup(E-G)$ contains at least two elements.
Its complement, $G-F$, contains at least two elements
by \ref{smell}.
Therefore $\lambda(F\cup(E-G))\geq 2$.
As $\lambda(F),\lambda(E-G) \leq 2$, it follows that
$\lambda(F\cap (E-G))=\lambda(F-G)\leq 2$.

If $|F-G| \geq 3$, then we can let $X=F-G$, and apply
\Cref{vicar} to the fan $(f_{1},\ldots, f_{m})$.
Therefore we assume $|F-G|=2$.
Let $i,j\in\{1,\ldots, m\}$ be chosen so that
$i < j$, and $\{f_{i},f_{j}\}=F-G$.
By \ref{deity}, $1 < i < j < m$.
If $j=i+1$, there is nothing left to prove, so $j>i+1$.
Assume that $j>i+2$.
Then $\{f_{i-1},f_{i},f_{i+1}\}$ and $\{f_{i},f_{i+1},f_{i+2}\}$
are a triangle or triad contained in $G\cup f_{i}$.
\Cref{thumb} implies that $M$ is a wheel or a whirl.
Hence $j=i+2$.

By duality, we can assume that $f_{i}$ is a rim
element of $(f_{1},\ldots, f_{m})$.
Since $\{f_{i-1},f_{i},f_{i+1}\}$ is a triangle that
contains $f_{i}$ and is otherwise contained
in $G$, we can apply \Cref{ninny}.
From \Cref{smell} and the fact that $|F\cap G|\geq 3$,
we see that $m,n\geq 5$.
Thus $\{f_{i-1},f_{i+1}\}$ is $\{g_{1},g_{2}\}$,
$\{g_{n-1},g_{n}\}$, $\{g_{1},g_{n}\}$, or
$\{g_{2},g_{4}\}$.
But $\{f_{i+1},f_{i+2},f_{i+3}\}$ is a triangle, and
we can apply the same arguments to show that
$\{f_{i+1},f_{i+3}\}$ is also one of the same four sets.
We deduce that $\{\{f_{i-1},f_{i+1}\},\{f_{i+1},f_{i+3}\}\}$
is one of
$\{\{g_{1},g_{2}\},\{g_{1},g_{n}\}\}$,
$\{\{g_{1},g_{n}\},\{g_{n-1},g_{n}\}\}$,
$\{\{g_{1},g_{2}\},\{g_{2},g_{4}\}\}$,
or $\{\{g_{2},g_{4}\},\{g_{4},g_{5}\}\}$.
\Cref{ninny} implies that, in the first case,
$g_{1}$ is both a spoke element and a rim element.
In the second case it implies
$g_{n}$ is a spoke element and a rim element.
In the third case, $g_{2}$ is both a rim element and a
spoke element, and in the fourth, $g_{4}$ is a rim element
and a spoke element.
Thus we have a contradiction in any case.
\end{proof}

By \ref{deity}, \ref{smell}, and \ref{faith}, there are indices
$1\leq i < i+3\leq j\leq m$ such that
$F\cap G=\{f_{1},\ldots, f_{i}\}\cup\{f_{j}\ldots, f_{m}\}$.
Note that $\{f_{i},f_{i+1},f_{i+2}\}$ is a triangle or a triad
that contains exactly one element of $G$.
If $f_{i}=g_{k}$, then $k=1$ or $k=n$, for otherwise, since
$n \geq 4$, it follows that $g_{k}$ is contained in a
triangle and a triad that are both contained in $G$.
This leads to a contradiction to orthogonality.
By applying the same arguments to $\{f_{j-2},f_{j-1},f_{j}\}$,
we see that $f_{j}$ is equal to either $g_{1}$ or $g_{n}$.
By reversing $(g_{1},\ldots, g_{n})$
as necessary, we can assume that
$f_{i}=g_{n}$ and that $f_{j}=g_{1}$.

Assume that $|F\cap G| \geq 4$.
Either $1<i$ or $j<m$.
Let us first assume $1<i$.
The fans $(f_{1},\ldots, f_{m})$ and
$(g_{1},\ldots, g_{n-1})$ intersect in
$\{f_{1},\ldots, f_{i-1}\}\cup\{f_{j},\ldots, f_{m}\}$,
and this set contains at least three elements.
The inductive assumption implies that
$(f_{1},\ldots, f_{m})$ and
$(g_{1},\ldots, g_{n-1})$ should meet in consecutive
subsequences of $(f_{1},\ldots, f_{m})$ and
$(g_{1},\ldots, g_{n-1})$, but
$G-g_{n}$ contains $f_{i-1}$ and $f_{j}$, and
no element between them.
Similarly, if $j<m$, then
$(f_{1},\ldots, f_{m})$ and
$(g_{2},\ldots, g_{n})$ intersect in
$\{f_{1},\ldots, f_{i}\}\cup\{f_{j+1},\ldots, f_{m}\}$,
and this set is not consecutive in
$(f_{1},\ldots, f_{m})$.
From this contradiction to the inductive hypothesis, we
deduce that $|F\cap G|=3$.
Either $i=1$ and $j=m-1$, or $i=2$ and $j=m$.
We can reverse $(f_{1},\ldots, f_{m})$
and $(g_{1},\ldots, g_{n})$, and assume that the former case
holds.
Thus $f_{1}=g_{n}$ and $f_{m-1}=g_{1}$.
Since $\{f_{m-2},f_{m-1},f_{m}\}$ is a triangle
or a triad contained in $G\cup f_{m-2}$ that contains
$g_{1}$ but not $g_{n}$, \Cref{ninny} implies that
$f_{m}=g_{2}$.
Hence $F\cap G=\{f_{1},f_{m-1},f_{m}\}=\{g_{n},g_{1},g_{2}\}$.

By replacing $M$ with $M^{*}$, we can assume that $f_{m}=g_{2}$
is a spoke element of $(f_{1},\ldots, f_{m})$, so that
$\{f_{m-2},f_{m-1},f_{m}\}$ is a triangle.
Note that $\{g_{1},g_{2},g_{3}\}$ is not a triangle,
or else it intersects the triad $\{f_{m-3},f_{m-2},f_{m-1}\}$
in the single element $f_{m-1}=g_{1}$.
Therefore $g_{1}$ is a rim element of $(g_{1},\ldots, g_{n})$.
It follows that
\[
(f_{1},\ldots,f_{m-2}, f_{m-1},f_{m},g_{3},\ldots, g_{n-1})
=(f_{1},\ldots, f_{m-2},g_{1},g_{2},g_{3},\ldots, g_{n-1})
\]
is a fan of $M$.
By \ref{cobra}, there is at most one element of $E$ not contained in
this fan, so $M$ is a wheel or a whirl.
This contradiction completes the proof of \Cref{unity}.
\end{proof}

\section{A finite case-check theorem}
\label{sect4}

From this point onwards, \mcal{M} will be a class of matroids
closed under isomorphism and minors, and
$N\in \mcal{M}$ will be a fixed $3$\dash connected matroid
such that $|E(N)|\geq 4$, and $N$ is neither a wheel nor a whirl.
We let $\mcal{F}_{N}$ be a family of disjoint fans in $N$.
Assume that whenever $M'\in \mcal{M}$ has $N$ as a minor,
$M'$ is $3$\dash connected up to series and parallel sets.
Note that we can replace \mcal{M} with the dual class
$\{M^{*} \mid M\in \mcal{M}\}$ and replace $N$ with $N^{*}$
and the same hypotheses will hold.

In the subsequent results we state some consequences
of these definitions.
First of all, the only $3$\dash connected minors of
wheels and whirls are wheels and whirls.
This has the following implication.

\begin{proposition}
\label{ether}
No matroid having $N$ as a minor is a wheel or a whirl.
\end{proposition}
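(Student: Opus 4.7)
The plan is to argue by contradiction. Suppose, contrary to the claim, that some matroid $M$ has $N$ as a minor and that $M$ is a wheel or a whirl. Since $N$ is a $3$\dash connected minor of $M$ with $|E(N)|\geq 4$, it suffices to show that every $3$\dash connected minor of a wheel or a whirl that has at least four elements is itself a wheel or a whirl: this immediately yields that $N$ is a wheel or a whirl, contradicting our standing hypothesis on $N$.

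The natural way to carry out that step is by induction on $|E(M)|-|E(N)|$. If $|E(M)|=|E(N)|$ there is nothing to prove, so assume $M$ is a wheel or whirl and $N$ is a proper $3$\dash connected minor. Pick any element $e\in E(M)-E(N)$ so that $N$ is a minor of either $M\ba e$ or $M/e$. The key observation is that in a wheel or whirl $M$ with ground set $\{e_1,\ldots,e_{2n}\}$ listed as a Hamiltonian fan, every single-element deletion or contraction either produces a matroid with a parallel or series pair (and hence is not $3$\dash connected) or produces a smaller wheel/whirl. More precisely, if $e$ is a spoke element, then $M\ba e$ has a series pair and $M/e$ has a parallel pair unless deleting $e$ produces a matroid whose simplification is again a smaller wheel or whirl; the analogous statement holds with roles reversed for a rim element. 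In either case, passing to the $3$\dash connected matroid obtained by taking the simplification or cosimplification yields a smaller wheel or whirl, and the inductive hypothesis finishes the argument.

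The main obstacle, such as it is, is just carrying out this case analysis cleanly. Once done, applying the conclusion to our $M$ and $N$ gives that $N$ is a wheel or whirl, contradicting the assumption of the section. Hence no matroid having $N$ as a minor can be a wheel or a whirl.
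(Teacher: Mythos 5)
Your top-level route is exactly the paper's: the proposition is deduced there in one line from the standard fact that the only $3$\dash connected minors of wheels and whirls are wheels and whirls, which, together with the standing hypotheses that $N$ is $3$\dash connected, has at least four elements, and is neither a wheel nor a whirl, gives the result. The difference is that you also sketch an inductive proof of that standard fact, and the sketch contains a genuine flaw in its key step.

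The flawed step is the claim that for \emph{every} element $e$ of a wheel or whirl $M$, passing to the simplification or cosimplification of $M\ba e$ or $M/e$ yields a smaller wheel or whirl. That is true only for the right pairing of element type and operation: deleting a spoke and cosimplifying, or contracting a rim element and simplifying, gives the next smaller wheel or whirl. Deleting a rim element (or, dually, contracting a spoke) instead yields the cycle matroid of a fan (a path together with a hub joined to every path vertex; note that a whirl with a rim element deleted equals the corresponding wheel with that rim element deleted, since the extra whirl circuits all use the whole rim). Its cosimplification is not $3$\dash connected and is not a wheel or whirl, and iterating si/co collapses it to something tiny. Since your induction starts from an arbitrary $e\in E(M)-E(N)$ with $N$ a minor of $M\ba e$ or $M/e$, you can land precisely in this bad case, and duality does not rescue you: it swaps ``delete a rim element'' with ``contract a spoke'', both bad. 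The repair is to handle that case separately, for instance by showing a fan has no $3$\dash connected minor with at least four elements (such a graphic minor would be $M(H)$ for a $3$\dash connected graph $H$, which would contain a $K_{4}$\dash minor, and fans have none), so that $N$ cannot be a minor of $M\ba r$ for a rim element $r$ at all; with that added, your induction goes through. Alternatively, one can simply invoke the known fact, as the paper does.
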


We are going to make continuous use of the next two results.

\begin{proposition}
\label{lasso}
Let $M'$ be a matroid in \mcal{M} that has $N$ as a minor.
Every triangle in $M'$ is coindependent, and every
triad is independent.
\end{proposition}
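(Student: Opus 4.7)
The plan is to argue by contradiction. Suppose some triangle $T = \{e,f,g\}$ of $M'$ contains a cocircuit $C^{*}$. Orthogonality forces $|T \cap C^{*}| \neq 1$, so $|C^{*}| \geq 2$; hence either $C^{*} = T$ (so $T$ is also a triad), or $C^{*}$ is a series pair contained in $T$, say $\{e,f\}$. The first observation is that $|E(N)| \geq 5$, since the only $3$\dash connected matroid on four elements is $U_{2,4}$, which is a whirl; consequently $|E(M')| \geq 5$ and $|E(M')-T| \geq 2$.

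Next I would study the $2$\dash separation $(T, E(M')-T)$. Since $T$ is a circuit, $r(T) = 2$. One of $E(M')-T$ or $E(M')-\{e,f\}$ is a hyperplane, so $r(E(M')-T) \in \{r(M')-2, r(M')-1\}$; the smaller value would give $\lambda(T)=0$, violating connectedness of $M'$, so $\lambda(T) = 1$ and a short calculation shows $r^{*}(T) = 2$. Because $M'$ is $3$\dash connected up to series and parallel sets, the $2$\dash separation $(T, E(M')-T)$ must satisfy $\min\{r(T), r(E(M')-T)\} = 1$ or $\min\{r^{*}(T), r^{*}(E(M')-T)\} = 1$. Combined with $r(T) = r^{*}(T) = 2$, this forces either $r(M') = 2$ (the case $r(E(M')-T) = 1$) or $r^{*}(M') = 2$ (the case $r^{*}(E(M')-T) = 1$).

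The main work is showing these rank conditions contradict $N$ being a minor of $M'$. Suppose first $r(M') = 2$. Because $E(M')-T$ or $E(M')-\{e,f\}$ is a flat, the parallel classes of $M'$ are easily enumerated: they are $\{e\},\{f\},\{g\},E(M')-T$ in the triangle-triad case and $\{e\},\{f\},E(M')-\{e,f\}$ in the series-pair case. Thus $\si(M')$ has at most four elements. Since $M'$ is loopless and has rank~$2$, any simple rank-$2$ minor of $M'$ arises by deletion only and is therefore a restriction of $\si(M')$, while any simple minor of strictly smaller rank has at most one element. As $N$ is simple with $|E(N)| \geq 5$ and $r(N) \leq r(M') = 2$, this contradicts $N$ being a minor of $M'$. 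The case $r^{*}(M') = 2$ follows by applying the same argument to $(M')^{*}$ and its minor $N^{*}$, using that our hypotheses---being $3$\dash connected up to series and parallel sets, being $3$\dash connected, having at least four elements, and not being a wheel or whirl---are all self-dual.

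The statement about triads follows from duality: a triad of $M'$ containing a circuit is a triangle of $(M')^{*}$ containing a cocircuit, so applying the triangle result just proved to $(M')^{*}$ and its minor $N^{*}$ yields the required contradiction. The main obstacle throughout is the parallel-class bookkeeping in the rank-$2$ case; the rest is a direct computation with the connectivity function.
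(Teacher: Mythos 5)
Your proof is correct, and it follows essentially the same route as the paper: identify $(T,E(M')-T)$ as a $2$\dash separation with $\lambda(T)=1$, use the ``$3$\dash connected up to series and parallel sets'' hypothesis to force the complement of $T$ into a parallel or series class (your $r(M')=2$ or $r^{*}(M')=2$), and conclude that no simple, cosimple minor on five or more elements can survive, contradicting $N$ not being a whirl. The paper phrases the endgame as ``every $3$\dash connected minor with at least four elements has exactly four, so $N\cong U_{2,4}$,'' while you count the points of $\si(M')$ directly, but these are the same argument.
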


\begin{proof}
By duality, it suffices to assume that
$T$ is a codependent triangle in $M'$.
Then $r_{M'}(T)=2$,
and $r^{*}_{M'}(T)<3$, so $\lambda_{M'}(T)\leq 1$.
Clearly $T$ is not contained in a parallel class.
If $T$ is contained in a series class, then $\lambda_{M'}(T)=0$,
so $E(M')=T$ since $M'$ is connected.
This contradicts the fact that $N$ is a minor of $M'$.
As $M'$ is $3$\dash connected up to series and parallel sets,
the complement of $T$ is contained in a series or
parallel class.
This implies that a $3$\dash connected minor of $M'$ with
at least four elements has precisely four elements.
Hence $|E(N)|=4$, and $N$ is isomorphic to $U_{2,4}$, which contradicts
the assumption that $N$ is not a whirl.
\end{proof}

The next result follows from \Cref{lasso} by orthogonality.

\begin{corollary}
\label{putty}
Let $M'$ be a matroid in \mcal{M} that has $N$
as a minor.
If $X$ is a $U_{2,4}$\dash restriction of $M'$, and $T^{*}$ is a triad
of $M'$, then $X\cap T^{*}=\emptyset$.
\end{corollary}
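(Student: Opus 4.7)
The plan is to argue by contradiction, supposing that $X \cap T^{*} \neq \emptyset$, and then split into cases according to $|X \cap T^{*}|$. Since $X$ is a $U_{2,4}$\dash restriction of $M'$, every $3$\dash element subset of $X$ is a triangle of $M'$; this is the key structural fact we will exploit against orthogonality and against \Cref{lasso}.

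If $|X \cap T^{*}| = 1$, let $a$ be the common element and choose any two other elements $c,d \in X - a$. Then $\{a,c,d\} \subseteq X$ is a triangle of $M'$ that meets $T^{*}$ in the single element $a$, violating the orthogonality of circuits and cocircuits. If $|X \cap T^{*}| = 2$, write $X \cap T^{*} = \{a,b\}$ and let $c,d$ be the remaining two elements of $X$; then $\{a,c,d\}$ is a triangle of $M'$, and since $c,d \notin T^{*}$ (the third element of $T^{*}$ is not in $X$ in this case), this triangle again meets $T^{*}$ in exactly the single element $a$, contradicting orthogonality.

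The remaining case is $|X \cap T^{*}| = 3$, which means $T^{*} \subseteq X$. But then $T^{*}$ is a $3$\dash element subset of $X$, so by the $U_{2,4}$ structure $T^{*}$ is a triangle, hence dependent in $M'$. On the other hand, \Cref{lasso} asserts that every triad of $M'$ is independent. This is a contradiction, completing the case analysis.

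None of the three cases requires real work beyond citing orthogonality and \Cref{lasso}, so I do not expect any genuine obstacle; the only mild subtlety is bookkeeping in the $|X\cap T^{*}|=2$ case to ensure that both extra elements $c,d$ genuinely lie outside $T^{*}$ (which is forced by $|X \cap T^{*}| = 2$ and $|T^{*}| = 3$). This is exactly the sort of short orthogonality argument foreshadowed by the remark that the corollary "follows from \Cref{lasso} by orthogonality."
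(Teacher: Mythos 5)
Your proof is correct, and it is essentially the argument the paper intends: the paper gives no explicit proof, stating only that the corollary ``follows from \Cref{lasso} by orthogonality,'' and your case analysis (orthogonality rules out $|X\cap T^{*}|\in\{1,2\}$ via a triangle in $X$ meeting $T^{*}$ in one element, while \Cref{lasso} rules out $T^{*}\subseteq X$ since $T^{*}$ would then be a dependent triad) is exactly that argument spelled out.
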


\begin{proposition}
\label{shrub}
Let $M'$ be a $3$\dash connected matroid in \mcal{M}.
Assume that $(e_{1},\ldots, e_{n})$ is a fan of $M'$ such that
$n\geq 4$, and $N$ is a minor of
$M'/e_{i}\ba e_{i+1}$ for some $i\in\{1,\ldots, n-1\}$.
Then $M'/e_{i}\ba e_{i+1}$ is $3$\dash connected.
\end{proposition}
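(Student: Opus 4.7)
The plan is to reduce the problem to ruling out non-trivial parallel and series pairs in $M'' := M'/e_i \backslash e_{i+1}$. Since $M''$ is a minor of $M'$ it lies in $\mathcal{M}$, and since $N$ is a minor of $M''$ the standing hypothesis gives that $M''$ is $3$\dash connected up to series and parallel sets. In particular $M''$ is connected, and a standard argument shows that a matroid which is $3$\dash connected up to series and parallel sets, has enough elements, and has no non-trivial parallel or series pair is actually $3$\dash connected. So it suffices to rule out both kinds of pair in $M''$.

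Suppose $\{f,g\}$ is a parallel pair of $M''$. Pulling the circuit $\{f,g\}$ back through the minor operations shows that $\{f,g\}$ or $\{f,g,e_i\}$ is a circuit of $M'$; since $M'$ is $3$\dash connected, the former is ruled out, so $T := \{f,g,e_i\}$ is a triangle of $M'$, with $f,g \notin \{e_i,e_{i+1}\}$. I would then exploit orthogonality between $T$ and the triads of the fan incident to $e_i$: if $\{e_i,e_{i+1},e_{i+2}\}$ is a triad then, since $e_{i+1}\notin E(M'')$, one of $f,g$ must equal $e_{i+2}$, and analogously when $\{e_{i-2},e_{i-1},e_i\}$ is a triad. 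A short case analysis, using \Cref{steal} to rule out the other element of $\{f,g\}$ lying elsewhere in the fan and the $3$\dash connectivity of $M'$ to rule out a parallel pair, produces a triangle of $M'$ of the shape treated by \Cref{thumb}: it contains an element outside the fan, together with fan elements sitting in both a triangle and a triad. \Cref{thumb} then forces $M'$ to be a wheel or a whirl, contradicting \Cref{ether}. The series-pair case is dual: a cocircuit $\{f,g\}$ of $M''$ pulls back to a triad $\{f,g,e_{i+1}\}$ of $M'$, and orthogonality with the triangles of the fan yields the same contradiction via \Cref{thumb}.

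The main obstacle is the case analysis on the position of $i$ in the fan and on whether $e_i$ is a rim or a spoke element. For each configuration one must check that orthogonality with a nearby triple of the fan either restricts the new triangle $T$ sufficiently to apply \Cref{thumb}, or produces an immediate contradiction with $3$\dash connectivity (for instance, two triangles sharing an edge, forcing a parallel pair in $M'$). The boundary cases $i \in \{1,n-1\}$ and the short-fan case $n=4$ are the most delicate, since fewer triples of the fan are available to drive the orthogonality arguments; here one appeals more directly to \Cref{ninny} and \Cref{hocus} to exclude the possibility that $M'$ is a wheel or whirl.
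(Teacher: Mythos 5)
Your overall reduction (rule out a nontrivial parallel pair and a nontrivial series pair in $M''=M'/e_{i}\ba e_{i+1}$, and pull such a pair back to a triangle through $e_{i}$ avoiding $e_{i+1}$, or a triad through $e_{i+1}$ avoiding $e_{i}$) matches the paper's opening move, and the orthogonality observations with the fan triples are sound. The gap is in the step where you claim the case analysis "produces a triangle of the shape treated by \Cref{thumb}". \Cref{thumb} requires the element \emph{outside} the fan to lie in both a triangle and a triad that meet the fan (a triangle $\{e,x,y\}$ and a triad $\{e,y,z\}$ with $x,y,z$ in the fan). In the parallel-pair case all you obtain is one triangle $T=\{f,g,e_{i}\}$; when its third element $g$ lies off the fan you have no triad through $g$, so \Cref{thumb} simply does not apply (dually for the series-pair case, where you only get a triad through the outside element). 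Moreover, in exactly the delicate configurations you flag — $e_{i}$ a rim element with $i\in\{1,2\}$ and a short fan, where orthogonality forces $T=\{g,e_{i},e_{i+2}\}$ with $g$ off the fan (a \Cref{ninny}(iv)-type triangle) — this configuration does \emph{not} force $M'$ to be a wheel or whirl, so no contradiction is available from \Cref{thumb}, \Cref{hocus} ($|E(M')|\leq n+1$ is not known), or \Cref{ether}; appealing to \Cref{ninny} only classifies the triangle, it does not refute it.

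What is actually needed there, and what the paper does, is to exploit the hypothesis that $N$ is a minor of $M'/e_{i}\ba e_{i+1}$ through \Cref{lasso} and \Cref{putty}: two triads meeting in two elements give a $U_{2,4}$\dash corestriction, which \Cref{putty} forbids from meeting a triangle, pinning the offending triad down to $\{e_{i+1},e_{i+3},w\}$ (in the paper's orientation); then one swaps the labels on the series pair $\{e_{i},e_{i+2}\}$ of $M'\ba e_{i+1}$ (equivalently the parallel pair in $M'/e_{i}$) to produce a relabelled matroid that still has $N$ as a minor but contains a dependent triad, contradicting \Cref{lasso}. Even the easy cases in the paper are closed by \Cref{lasso} (a codependent triangle in $M'\ba e_{i+1}$ or a dependent triad in $M'/e_{i}$), not by wheel/whirl arguments, so your sketch underuses the hypothesis that $N$ survives in $M''$. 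Without the $U_{2,4}$/\Cref{putty} step and the label-swapping argument (or a substitute for them), the proof does not go through.
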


\begin{proof}
If $M'/ e_{i}\ba e_{i+1}$ is not $3$\dash connected,
then, as it is $3$\dash connected up to series and parallel sets,
there is a triangle that contains $e_{i}$ but not $e_{i+1}$,
or a triad that contains $e_{i+1}$ but not $e_{i}$.
We assume $T^{*}$ is a triad satisfying
$T^{*}\cap \{e_{i},e_{i+1}\}=\{e_{i+1}\}$.

First assume that $e_{i+1}$ is a rim element.
If $i>2$, then $\{e_{i-2},e_{i-1},e_{i}\}$ is a
codependent triangle in $M'\ba e_{i+1}$,
contradicting \Cref{lasso}.
Similarly, if $i<n-2$, then
$\{e_{i+1},e_{i+2},e_{i+3}\}$ is a dependent triad in
$M'/e_{i}$.
Therefore $n-2\leq i \leq 2$, implying that $n=4$ and $i=2$.
Since $e_{i}\notin T^{*}$, orthogonality with
$\{e_{i},e_{i+1},e_{i+2}\}$ implies $T^{*}$
contains the parallel pair $\{e_{i+1},e_{i+2}\}$
in $M'/e_{i}$, and we have another contradiction to
\Cref{lasso}.

Thus $e_{i+1}$ is a spoke element.
If $\{e_{i},e_{i+1}\}$ is contained in a triangle, then
$T^{*}$ contains the third element of this triangle, and
$T^{*}$ is a triad of $M'/ e_{i}$ that contains a parallel pair,
contradicting \Cref{lasso}.
Therefore there is no triangle containing
$\{e_{i},e_{i+1}\}$, so $i=1$.
In this case $T^{*}$ contains two elements of the
triangle $\{e_{i+1},e_{i+2},e_{i+3}\}$.
In a matroid without series pairs, two triads
that intersect in two elements form a
$U_{2,4}$\dash corestriction.
Therefore, if $T^{*}$ contains $e_{i+2}$, then $T^{*}\cup e_{i}$ 
is a $U_{2,4}$\dash corestriction of $M'$ that intersects
the triangle $\{e_{i+1},e_{i+2},e_{i+3}\}$, contradicting
\Cref{putty}.
Therefore $T^{*}\cap\{e_{i+1},e_{i+2},e_{i+3}\}=\{e_{i+1},e_{i+3}\}$.
Let $M''$ be produced from $M'$ by swapping labels on $e_{i}$ and
$e_{i+2}$.
As $\{e_{i},e_{i+2}\}$ is a series pair in $M'\ba e_{i+1}$,
it follows that $M''\ba e_{i+1}=M'\ba e_{i+1}$, so
$M''\ba e_{i+1}/e_{i}$ has $N$ as a minor.
However, $M''/e_{i}$ contains the parallel pair
$\{e_{i+1},e_{i+3}\}$, which is contained in the triad
$T^{*}$.
This contradicts \Cref{lasso}.

To complete the proof, we must assume that there is a triangle
containing $e_{i}$ but not $e_{i+1}$.
In this case we replace $M'$, $N$, and $\mcal{M}$ by their
duals, and we reverse the fan $(e_{1},\ldots, e_{n})$,
relabeling $e_{n-i}$ as $e_{i}$.
After the relabeling, there is a triad that contains $e_{i+1}$
but not $e_{i}$, so we make the same argument as before.
\end{proof}

\begin{proposition}
\label{cabal}
Let $M'$ be a matroid in \mcal{M} with $N$ as a minor.
Assume $M'$ has a covering family, and that
$F=(e_{1},\ldots, e_{n})$ is a fan in that family, where
$\{e_{i},e_{i+1}\}\subseteq E(M')-E(N)$
for some rim element $e_{i}$.
Then $N$ is a minor of $M'/e_{i}\ba e_{i+1}$.
\end{proposition}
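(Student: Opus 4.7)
The plan is to argue in two steps, paralleling the strategy in the proof of \Cref{burst}: first show that $N$ is a minor of $M'/e_{i}$, then show that $N$ is in fact a minor of $M'/e_{i}\ba e_{i+1}$. As a preliminary, note that since $F_{N}$ is a subsequence of $F$ contained in $E(N)$ while $e_{i},e_{i+1}\notin E(N)$, we have $|F_{N}|\leq n-2$; combined with $|F_{N}|\geq 3$, this forces $n\geq 5$.

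For the first step I would assume for contradiction that $N$ is a minor of $M'\ba e_{i}$. Because $e_{i}$ is rim, one of the fan triads $\{e_{i},e_{i+1},e_{i+2}\}$ (when $i+2\leq n$) or $\{e_{i-2},e_{i-1},e_{i}\}$ (when $i\geq 3$) becomes a series pair in $M'\ba e_{i}$, and the adjacent fan triangle---$\{e_{i+1},e_{i+2},e_{i+3}\}$ (if $i+3\leq n$) or $\{e_{i-3},e_{i-2},e_{i-1}\}$ (if $i\geq 4$)---persists in $M'\ba e_{i}$, producing a codependent triangle and contradicting \Cref{lasso}. The bound $n\geq 5$ guarantees at least one such triad/triangle pair exists, with the sole exception being $n=5$ with $e_{i}$ at the middle rim position. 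In that remaining configuration I would exploit the parallel pair $\{e_{i-1},e_{i+1}\}$ arising in $M'\ba e_{i}$ from the triangle $\{e_{i-1},e_{i},e_{i+1}\}$, together with the series pairs from the two flanking triads: any choice of deleting or contracting $e_{i+1}$ to obtain $N$ forces an element of $F_{N}\subseteq E(N)$ to become a loop or coloop, contradicting the $3$\dash connectivity of $N$.

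For the second step, write $N=(M'/e_{i})/X\ba Y$ with $(X,Y)$ partitioning $E(M'/e_{i})-E(N)$. If $e_{i+1}\in Y$ we are done, so suppose $e_{i+1}\in X$. In the generic case $i\geq 2$, the triangle $\{e_{i-1},e_{i},e_{i+1}\}$ becomes a parallel pair $\{e_{i-1},e_{i+1}\}$ in $M'/e_{i}$, so contracting $e_{i+1}$ makes $e_{i-1}$ a loop in $M'/e_{i}/e_{i+1}$. Since $N$ is loopless, $e_{i-1}$ cannot be in $E(N)$, so $e_{i-1}\in X\cup Y$. The parallel pair identity $(M'/e_{i})/e_{i+1}\ba e_{i-1}=(M'/e_{i})\ba e_{i+1}/e_{i-1}$ (and the analogous rewriting when $e_{i-1}$ is also contracted, using that a loop's deletion coincides with its contraction) lets me rearrange the factorization so that $\ba e_{i+1}$ is applied to $M'/e_{i}$ directly, yielding $N$ as a minor of $M'/e_{i}\ba e_{i+1}$. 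The remaining edge case is $i=1$, where no triangle $\{e_{i-1},e_{i},e_{i+1}\}$ is available; there I would argue dually that $N$ cannot be a minor of $M'/e_{1}/e_{2}$, since the triangle $\{e_{2},e_{3},e_{4}\}$ becomes a parallel pair $\{e_{3},e_{4}\}$ in $M'/e_{1}/e_{2}$ while the triad $\{e_{3},e_{4},e_{5}\}$ (which exists as $n\geq 5$) persists, producing a dependent triad that contradicts \Cref{lasso}; hence $e_{2}\in Y$.

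The main obstacle will be the careful boundary analysis: the generic triangle/triad arguments rely on specific fan triples adjacent to $e_{i}$ and $e_{i+1}$, and when the fan is short or when $e_{i}$ lies near an end, some of these triples do not exist. In each such configuration one must fall back on combining the parallel/series pair structure in a fan with the simplicity, cosimplicity, and $3$\dash connectivity of $N$ (delivered through $F_{N}\subseteq E(N)$) to manufacture the required contradiction.
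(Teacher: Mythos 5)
Your overall strategy is the same as the paper's: rule out $N$ being a minor of $M'\ba e_{i}$ via codependent triangles and \Cref{lasso}, isolate the exceptional configuration $n=5$, $i=3$, and then pass from $M'/e_{i}$ to $M'/e_{i}\ba e_{i+1}$ using the parallel pair $\{e_{i-1},e_{i+1}\}$ when $i>1$ and a dependent triad in $M'/e_{1}/e_{2}$ when $i=1$. The second step and the $i=1$ edge case are handled exactly as in the paper and are correct.

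There is one concrete misstep, in the exceptional case of your first step. You invoke ``the parallel pair $\{e_{i-1},e_{i+1}\}$ arising in $M'\ba e_{i}$ from the triangle $\{e_{i-1},e_{i},e_{i+1}\}$''; no such parallel pair exists, since deleting an element of a triangle leaves the other two independent (the parallel pair appears in the \emph{contraction} $M'/e_{i}$, which is not the matroid you are working in at that point). The subsequent case analysis on ``deleting or contracting $e_{i+1}$'' is therefore built on a false premise, and the ``loop or coloop'' conclusion does not follow as written. The repair is simpler than what you attempt and uses only the other ingredient you name: with $n=5$ and $i=3$, the fan $F_{N}\in\mcal{F}_{N}$ must be $\{e_{1},e_{2},e_{5}\}$ as a set, so in particular $\{e_{1},e_{2}\}\subseteq E(N)$; but $\{e_{1},e_{2}\}$ is a series pair of $M'\ba e_{3}$ coming from the triad $\{e_{1},e_{2},e_{3}\}$, and a $2$\dash element cocircuit of $M'\ba e_{3}$ lying entirely inside $E(N)$ yields a cocircuit of $N$ of size at most two, contradicting the cosimplicity of $N$. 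This is precisely how the paper disposes of that case, and with this substitution your proof goes through.
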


\begin{proof}
Note that $n\geq 5$, since $F$ contains a fan in
$\mcal{F}_{N}$ as well as $\{e_{i},e_{i+1}\}$.
If $N$ is a minor of $M'\ba e_{i}$, then $n=5$ and
$i=3$, for otherwise $\{e_{i-3},e_{i-2},e_{i-1}\}$ or
$\{e_{i+1},e_{i+2},e_{i+3}\}$ is a codependent triangle in
$M'\ba e_{i}$.
But now $F-\{e_{i},e_{i+1}\}\subseteq E(N)$, so $N$
contains the series pair $\{e_{i-2},e_{i-1}\}$.
Therefore $N$ is a minor of $M'/e_{i}$.
If $i>1$, then $N$ is a minor of $M'/e_{i}\ba e_{i+1}$, as
$\{e_{i-1},e_{i+1}\}$ is a parallel pair in $M'/e_{i}$.
If $i=1$, then $\{e_{3},e_{4},e_{5}\}$ is a dependent triad in
$M'/e_{i}/e_{i+1}$, so $N'$ is a minor of $M'/e_{i}\ba e_{i+1}$ in
either case.
\end{proof}

\begin{proposition}
\label{llama}
Let $M'$ be a $3$\dash connected matroid in \mcal{M}
with $N$ as a minor.
Assume that $M'/x\ba y$ is $3$\dash connected and
has $N$ as a minor.
Let $F$ be a fan of $M'$ such that a fan in
$\mcal{F}_{N}$ is consistent with $F$.
If either $x$ or $y$ is an internal element of $F$,
then they are consecutive in $F$, and $x$ is a rim
element.
\end{proposition}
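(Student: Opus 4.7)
The plan is a case analysis on which of $x$ or $y$ is the internal element of $F = (e_{1}, \ldots, e_{n})$, and on whether that element is a spoke or a rim of $F$. The key observation is that contracting an internal spoke $e_{i}$ turns each adjacent triangle $\{e_{i-2},e_{i-1},e_{i}\}$ and $\{e_{i},e_{i+1},e_{i+2}\}$ into a parallel pair $\{e_{i-2},e_{i-1}\}$ or $\{e_{i+1},e_{i+2}\}$ of $M'/e_{i}$; contracting an internal rim instead produces only the single parallel pair $\{e_{i-1},e_{i+1}\}$ coming from the triangle $\{e_{i-1},e_{i},e_{i+1}\}$. Dually, deleting an internal rim produces up to two disjoint series pairs from the adjacent triads, while deleting an internal spoke produces only the single series pair $\{e_{i-1},e_{i+1}\}$ from the triad through $e_{i}$. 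For $M'/x\ba y$ to be $3$\dash connected every such parallel pair must contain $y$ and every such series pair must contain $x$.

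The first main step will be to rule out $x$ being an internal spoke. When $3 \le i \le n-2$ the two parallel pairs above are disjoint, so no single $y$ can destroy both. In the boundary cases $i \in \{2, n-1\}$ only one parallel pair survives, but the next triad one step further along the fan (e.g.\ $\{e_{3},e_{4},e_{5}\}$ when $i=2$ and $n\ge 5$) is a cocircuit of $M'/e_{i}$ that, after $\ba y$ with $y$ in the unique parallel pair, becomes a $2$\dash element cocircuit by an elementary triad-deletion calculation, contradicting $3$\dash connectivity of $M'/x\ba y$. When $n$ is too small ($n\le 4$) to support such an extra triad, the hypothesis that some $F_{N} \in \mcal{F}_{N}$ is consistent with $F$ gives $|F\cap E(N)| \ge 3$, while the requirement that the copy of $N$ survive in $M'/x\ba y$ forces $\{x,y\}\cap E(N)=\emptyset$; these two are incompatible. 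The dual calculation, with $M^{*}$, then rules out $y$ being an internal rim.

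The two remaining configurations yield the conclusion directly. If $x=e_{i}$ is an internal rim then the unique parallel pair $\{e_{i-1},e_{i+1}\}$ in $M'/x$ forces $y\in\{e_{i-1},e_{i+1}\}$, both of which are spokes consecutive to $x$ in $F$; and if $y=e_{j}$ is an internal spoke then the unique series pair $\{e_{j-1},e_{j+1}\}$ in $M'\ba y$ forces $x\in\{e_{j-1},e_{j+1}\}$, both of which are rims consecutive to $y$.

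The hard part will be the boundary analysis when only one parallel or series pair appears (the internal element sits at position $2$ or $n-1$). This is where one needs to look one step further along $F$ to produce a persistent series pair, and, in the smallest fans, to combine $F_{N}$\dash consistency with the requirement that the $N$\dash minor survive into $M'/x\ba y$.
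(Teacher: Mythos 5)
Your proposal is correct and takes essentially the same route as the paper's proof: remove the internal fan element, use the parallel/series pairs it forces together with the simplicity and cosimplicity of the $3$\dash connected matroid $M'/x\ba y$ to kill the spoke\dash contraction (dually, rim\dash deletion) configurations, fall back on the at least three elements of $E(N)$ that the consistent fan of $\mcal{F}_{N}$ places in $F$ to dispose of short fans, and read the conclusion off the two surviving configurations. The only difference is cosmetic: where you argue directly against $3$\dash connectivity of $M'/x\ba y$ (disjoint parallel pairs, or a cocircuit of size at most two left by the adjacent triad), the paper packages the same contradiction as a dependent triad violating \Cref{lasso}.
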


\begin{proof}
Let $F=(e_{1},\ldots, e_{m})$, and
assume $x$ or $y$ is an internal element in $F$.
In the latter case, we swap $M'$, $N$, and \mcal{M} for
their duals, and swap labels on $x$ and $y$.
Therefore we lose no generality in assuming $x$ is an
internal element, $e_{i}$, of $F$.
Assume $x$ is a spoke element.
If $n>5$, then $\{e_{i-3},e_{i-2},e_{i-1}\}$ or
$\{e_{i+1},e_{i+2},e_{i+3}\}$ is a dependent
triad in $M'/x$, which is a contradiction.
Therefore $n\leq 5$.
If $n=4$, then $F-x\subseteq E(N)$, as
$F$ contains a fan in $\mcal{F}_{N}$, as well as $x$.
But $F-x$ also contains a parallel pair in $M'/x$.
This leads to the contradiction that $N$ contains a
parallel pair.
Therefore $n=5$, and $x=e_{3}$, for otherwise
$M'/x$ contains a dependent triad.
Now either $\{e_{1},e_{2}\}$ or $\{e_{4},e_{5}\}$ is
contained in $E(N)$, and as both these sets are
parallel pairs in $M'/x$, this is a contradiction.

Hence $x$ is a rim element in $F$.
Because $\{e_{i-1},e_{i+1}\}$ is a parallel pair in
$M'/x$, and $M'/x\ba y$ is $3$\dash connected, it
follows that $y\in \{e_{i-1},e_{i+1}\}$, and we are done.
\end{proof}

\begin{proposition}
\label{metro}
Let $M'$ be a $3$\dash connected member of \mcal{M}, and assume that
$(e_{1},\ldots, e_{n})$ is a fan of $M'$ containing
$x$, $y$, and $z$, where $\{x,y,z\}$ is a triangle
of $M'$, and $N$ is a minor of $M'/x\ba y$.
If $n\geq 5$, then $x$ and $y$ are consecutive elements
in $(e_{1},\ldots, e_{n})$.
\end{proposition}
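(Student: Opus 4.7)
The plan is to assume for contradiction that $x$ and $y$ are not consecutive in the fan, and to derive a contradiction using Proposition \ref{shrub}. First, since $N$ is a minor of $M'$, Proposition \ref{ether} implies that $M'$ is neither a wheel nor a whirl. Because $M'$ is $3$-connected with $|E(M')|\geq n\geq 5\geq 4$, Proposition \ref{steal} applies to the triangle $\{x,y,z\}\subseteq\{e_1,\ldots,e_n\}$ and forces $\{x,y,z\}=\{e_i,e_{i+1},e_{i+2}\}$ for some $i$ with $e_i$ a spoke and $e_{i+1}$ a rim. Since $x$ and $y$ are assumed not to be consecutive, $\{x,y\}$ must equal $\{e_i,e_{i+2}\}$, and after reversing the fan if necessary I may assume $x=e_i$ and $y=e_{i+2}$.

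The key step is to show that $M'/e_i\ba e_{i+2}$ is $3$-connected. In $M'/e_i$ the triangle $\{e_i,e_{i+1},e_{i+2}\}$ collapses to a parallel pair $\{e_{i+1},e_{i+2}\}$, and swapping these two elements is an automorphism of $M'/e_i$. Restricting this swap to $E(M')-\{e_i,e_{i+2}\}$ yields an isomorphism from $M'/e_i\ba e_{i+2}$ onto $M'/e_i\ba e_{i+1}$. Consequently $N$ is a minor of $M'/e_i\ba e_{i+1}$, and Proposition \ref{shrub} (applied to the fan $(e_1,\ldots,e_n)$ and the index $i$) yields that $M'/e_i\ba e_{i+1}$, and hence also $M'/e_i\ba e_{i+2}$, is $3$-connected.

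To finish, I will exhibit a forbidden $2$-element circuit or cocircuit in $M'/e_i\ba e_{i+2}$. If $i\geq 3$, the triangle $\{e_{i-2},e_{i-1},e_i\}$ of $M'$ becomes the parallel pair $\{e_{i-2},e_{i-1}\}$ after contracting $e_i$, and the deletion of $e_{i+2}$ does not affect it. If instead $i\leq n-3$, the triad $\{e_{i+1},e_{i+2},e_{i+3}\}$ of $M'$ becomes the series pair $\{e_{i+1},e_{i+3}\}$ after deleting $e_{i+2}$, and this cocircuit survives the contraction of $e_i$ since $e_i$ is not among its elements. Because $n\geq 5$ and $1\leq i\leq n-2$, at least one of the two conditions $i\geq 3$ and $i\leq n-3$ must hold; either gives a series or parallel pair in the $3$-connected matroid $M'/e_i\ba e_{i+2}$, the required contradiction.

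The main obstacle is the intermediate step: one cannot apply Proposition \ref{shrub} directly to $M'/e_i\ba e_{i+2}$, since that proposition handles only the deletion of $e_{i+1}$ after contracting $e_i$. The trick is to exploit the parallel pair in $M'/e_i$ produced by the very triangle that contains $x$ and $y$, which lets us transfer $3$-connectivity from the $M'/e_i\ba e_{i+1}$ setting of Proposition \ref{shrub} to the $M'/e_i\ba e_{i+2}$ setting we actually need.
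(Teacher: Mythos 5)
Your proof is correct, but it routes the contradiction differently from the paper. You use the fact that $y$ and $z$ are clones (a parallel pair) in $M'/x$ to transfer the minor hypothesis from $M'/x\ba y$ to $M'/x\ba z=M'/e_{i}\ba e_{i+1}$, invoke \Cref{shrub} to get $3$\dash connectivity of $M'/x\ba y$, and then exhibit the parallel pair $\{e_{i-2},e_{i-1}\}$ when $i\geq 3$ or the series pair $\{e_{i+1},e_{i+3}\}$ when $i\leq n-3$, one of which must exist since $n\geq 5$. The paper never establishes $3$\dash connectivity of $M'/x\ba y$ at all: it appeals directly to \Cref{lasso}, first forcing $n=i+2$ because $\{e_{i+1},e_{i+2},e_{i+3}\}$ would be a dependent triad of $M'/x$, and then, after the same $y\leftrightarrow z$ label swap you use implicitly, finding the codependent triangle $\{e_{i-2},e_{i-1},e_{i}\}$ in $M''\ba y$. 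So the two arguments exploit the same obstructions (the triangle and triad flanking $\{x,z,y\}$, and the clone swap in $M'/x$), but yours leans on the heavier \Cref{shrub} (whose own proof goes through \Cref{lasso} and \Cref{putty}), whereas the paper's is the more economical direct use of \Cref{lasso}; in compensation your case split is symmetric and avoids the separate $n=i+2$ analysis. One point you should tighten: "$N$ is a minor of $M'/e_{i}\ba e_{i+1}$" is only true up to the relabeling $z\mapsto y$ (literally true only if $z\notin E(N)$), so \Cref{shrub} does not apply verbatim to $M'$; the clean fix is exactly the paper's device, namely to pass to the matroid $M''$ obtained by swapping the labels on $y$ and $z$, observe that $(e_{1},\ldots,e_{i},e_{i+2},e_{i+1},e_{i+3},\ldots,e_{n})$ is a fan of $M''$ and that $M''/x\ba y=M'/x\ba y$ has $N$ as a minor, and apply \Cref{shrub} to $M''$, which returns the $3$\dash connectivity of $M'/x\ba y$ that your final step needs.
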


\begin{proof}
By \Cref{steal}, there is some $i\in\{1,\ldots, n-2\}$ such that
$\{x,y,z\}=\{e_{i},e_{i+1},e_{i+2}\}$, where
$e_{i}$ is a spoke element.
By reversing $(e_{1},\ldots, e_{n})$ as
necessary, we can assume that
$(x,y)=(e_{i},e_{i+2})$, for otherwise there
is nothing left to prove.
If $i+2< n$, then $\{e_{i+1},e_{i+2},e_{i+3}\}$ is a
dependent triad of $M'/e_{i}=M'/x$, since $\{e_{i+1},e_{i+2}\}$
is a parallel pair.
This is impossible, as $M'/x$ has $N$ as a minor.
Therefore $n=i+2$.

Let $M''$ be the matroid obtained from
$M'$ by swapping the labels on $e_{i+1}=z$ and $e_{i+2}=y$.
Since $\{y,z\}$ is a parallel pair in $M'/x$,
it follows that $M''/x\ba y=M'/x\ba y$.
Therefore $N$ is a minor of $M''\ba y$.
But $\{e_{i-2},e_{i-1},e_{i}\}$ is a triangle
of $M''\ba y$ that contains a series pair.
This contradiction completes the proof.
\end{proof}

\begin{proposition}
\label{tooth}
Let $M'\in\mcal{M}$ be a $3$\dash connected matroid with $N$ as
a minor.
If $e\in E(M')-E(N)$, then for some $(M_{1},N_{1})$
in $\{(M',N),((M')^{*},N^{*})\}$ one of the following
statements holds:
\begin{enumerate}[label=\textup{(\roman*)}]
\item $M_{1}/ e$ is $3$\dash connected and has $N_{1}$ as a minor,
\item there is a triangle $\{x,e,f\}$ of $M_{1}$ such that
$N_{1}$ is a minor of $M_{1}/e\ba f$ and $M_{1}\ba f$
is $3$\dash connected,
\item there is a fan $(x,e,f,y)$ of $M_{1}$, where
$\{x,e,f\}$ is a triangle, and $M_{1}/e\ba f$ is $3$\dash connected
with $N_{1}$ as a minor.
\end{enumerate}
\end{proposition}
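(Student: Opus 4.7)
The plan is a case analysis based on triangles and triads through $e$, using the hypothesis that every member of $\mcal{M}$ with $N$ as a minor is $3$-connected up to series and parallel sets. Since $e\notin E(N)$, $N$ is a minor of $M'/e$ or of $M'\ba e$; after replacing $(M',N)$ by $((M')^*,N^*)$ if necessary, we may assume that $N$ is a minor of $M'/e$.

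If $M'/e$ is $3$-connected, case~(i) is immediate. Otherwise, $M'/e$ is $3$-connected up to series and parallel sets but not $3$-connected, and since $M'$ has no $2$-element cocircuit, $M'/e$ has no series pair and hence contains a parallel pair $\{x,f\}$; equivalently, $\{x,e,f\}$ is a triangle of $M'$, and since $N$ is simple we may assume $N$ is a minor of $M'/e\ba f$. If $M'\ba f$ is $3$-connected, case~(ii) holds. Otherwise, $M'\ba f$ has a series pair, giving a triad $T^*\ni f$ of $M'$; by orthogonality with $\{x,e,f\}$, $T^*$ contains $e$ or $x$. If $T^*=\{e,f,y\}$, then $(x,e,f,y)$ is a fan of $M'$, and \Cref{shrub} yields that $M'/e\ba f$ is $3$-connected, giving case~(iii). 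In the remaining sub-case $T^*=\{x,f,y\}$ with $e\notin T^*$, we repeat the analysis with $x$ in the role of $f$ (using that $N$ is also a minor of $M'/e\ba x$): either $M'\ba x$ is $3$-connected, so case~(ii) holds with the roles of $x$ and $f$ swapped, or there is a triad $T^*_x\ni x$ of $M'$. By orthogonality, $T^*_x$ contains $e$ (giving a fan $(f,e,x,z)$ and case~(iii) via \Cref{shrub}) or contains $f$, and in the latter instance $T^*_x$ must equal $\{x,f,y\}$, since two distinct triads through $\{x,f\}$ would, via strong cocircuit exchange, produce a triad $\{f,y,z\}$ meeting $\{x,e,f\}$ in just $\{f\}$, contradicting orthogonality.

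This leaves the residual situation in which $\{x,f,y\}$ is the unique triad of $M'$ through $\{x,f\}$ and $M'$ contains no triad through $e$ (any such triad would by orthogonality give a fan that would have produced case~(iii) above). Here we switch to the dual. Because $\{x,e,f\}$ is a triangle of $M'$, the simplifications $\si(M'/e)$ and $\si(M'/f)$ are isomorphic as matroids, so $N$ (being a simple minor of $\si(M'/e)$) is a minor of $\si(M'/f)$, which in turn is a minor of $M'/f\ba e = M'\ba e/f$, and hence of $M'\ba e$. As $\mcal{M}$ is closed under minors, $M'\ba e\in\mcal{M}$, so $M'\ba e$ is $3$-connected up to series and parallel sets; since this property is self-dual, $(M')^*/e = (M'\ba e)^*$ also satisfies it. Now $(M')^*/e$ has no $2$-element cocircuit (from the $3$-connectivity of $(M')^*$) and no parallel pair (as such a pair would correspond to a triad of $M'$ through $e$), so it is $3$-connected. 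Since $N^*$ is a minor of $(M')^*/e$ (equivalent to $N$ being a minor of $M'\ba e$), this delivers case~(i) for $(M_1,N_1)=((M')^*,N^*)$.

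The main obstacle is this last transfer: the primal analysis genuinely gets stuck in the residual case, where all of $M'\ba f$, $M'\ba x$, $M'/e\ba f$, and $M'/e\ba x$ fail to be $3$-connected. The resolution uses the triangle $\{x,e,f\}$ to move $N$ from a minor of $M'/e$ to a minor of $M'\ba e$ via the isomorphism of simplifications, after which the absence of triads through $e$ in $M'$ precisely precludes parallel pairs in $(M')^*/e$ and forces $3$-connectivity in the dual.
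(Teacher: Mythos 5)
Your argument is correct up to the point where the triad $T^{*}$ through $f$ satisfies $T^{*}=\{x,f,y\}$ with $e\notin T^{*}$, but there it goes astray in two ways. First, this sub-case is vacuous, which you do not notice: $\{x,f,y\}$ is a cocircuit of $M'$ avoiding $e$, hence a triad of $M'/e$, and it contains the parallel pair $\{x,f\}$, so it is a dependent triad in a member of \mcal{M} having $N$ as a minor. This contradicts \Cref{lasso} (or its dual, if you performed the initial dualization), so the triad through $f$ must in fact be $\{e,f,y\}$ and statement~(iii) follows at once; this one-line observation is exactly how the paper finishes. All of your subsequent analysis --- the symmetric pass with $x$, the uniqueness of the triad through $\{x,f\}$, and the ``residual situation'' --- is therefore an analysis of an empty case.

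Second, and more seriously, your resolution of that residual case rests on a false claim: it is not true that $\si(M'/e)\cong\si(M'/f)$ whenever $e$ and $f$ lie in a common triangle. (For a rank-$3$ example, take a simple matroid whose only nontrivial lines are $\{e,f,x\}$ and $\{e,a,b\}$: then $\si(M/e)\cong U_{2,2}$ while $\si(M/f)\cong U_{2,3}$.) The isomorphism $M/e\cong M/f$ requires $e$ and $f$ to be parallel, not merely cotriangular. Consequently your transfer of $N$ from a minor of $M'/e$ to a minor of $M'\ba e$ is unjustified, and no such transfer should be expected: in the fragile setting that motivates this paper, $N$ may well survive the contraction of $e$ but not its deletion. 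The concluding dual switch therefore does not close the case; it is only because the case never arises that the proposition is true.
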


\begin{proof}
By duality, we can assume $N_{1}$ is a minor of $M_{1}/e$.
We assume $M_{1}/e$ is not $3$\dash connected.
Since $M_{1}$ is $3$\dash connected, and $M_{1}/e$ is
$3$\dash connected up to series and parallel sets,
$M_{1}$ has a triangle containing $e$.
Let this triangle be $\{x,e,f\}$, where $N_{1}$ is a minor of
$M_{1}/e\ba f$.
We assume $M_{1}\ba f$ is not $3$\dash connected, so $f$ is in a
triad of $M_{1}$.
This triad contains $x$ or $e$ by orthogonality, and does
not contain both by \Cref{lasso}.
Let $y$ be the third element in the triad.
If $\{x,f,y\}$ is a triad, then it is a dependent
triad in $M_{1}/e$, which is impossible, so $\{e,f,y\}$
is a triad, and statement~(iii) holds by \Cref{shrub}.
\end{proof}

\begin{proposition}
\label{freak}
Let $N'$ be a matroid in \mcal{M} such that
$(e_{1},e_{2},e_{3},e_{4})$ is a fan of $N'$ with $e_{2}$
as a rim element, and $N'/e_{2}\ba e_{3}=N$.
Then $N'$ is $3$\dash connected.
\end{proposition}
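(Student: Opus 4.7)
The plan is to invoke the blanket hypothesis of this section: every member of $\mcal{M}$ with $N$ as a minor is $3$\dash connected up to series and parallel sets, so $N'$ has this property. It then suffices to show $N'$ contains no $2$\dash element circuit and no $2$\dash element cocircuit; once this is established, any putative $2$\dash separation $(X,Y)$ of $N'$ with $|X|,|Y|\geq 2$ would force one side to be a parallel class or a series class of size at least two, contradicting the simplicity or cosimplicity we have obtained.

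As a preliminary, I would observe that $N'$ has no loops and no coloops. The elements $e_2$ and $e_3$ lie in the triangle $\{e_1,e_2,e_3\}$, so neither is a loop, and they lie in the triad $\{e_2,e_3,e_4\}$, so neither is a coloop. Any loop or coloop of $N'$ outside $\{e_2,e_3\}$ would descend to a loop or coloop of $N$, contradicting $3$\dash connectivity of $N$.

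To rule out parallel pairs, suppose $\{f,g\}$ is a circuit of $N'$. If $\{f,g\}\cap\{e_2,e_3\}=\emptyset$, then $\{f,g\}$ survives as a $2$\dash circuit in $N=N'/e_2\ba e_3$, a contradiction. If $e_2\in\{f,g\}$, orthogonality of $\{f,g\}$ with the triad $\{e_2,e_3,e_4\}$ forces the other element into $\{e_3,e_4\}$: the choice $\{e_2,e_4\}$ violates independence of the triad (\Cref{lasso}), while $\{e_2,e_3\}$ combined with the triangle $\{e_1,e_2,e_3\}$ yields $\{e_1\}$ as a dependent set by circuit elimination, forcing $e_1$ to be a loop. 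The case $e_3\in\{f,g\}$ is symmetric. Dually, for cocircuits: a $2$\dash cocircuit disjoint from $\{e_2,e_3\}$ descends intact to $N$ (since $N$ has no coloops to absorb the deletion of $e_3$), and a $2$\dash cocircuit containing $e_3$ but not $e_2$ descends to a $1$\dash cocircuit of $N$, producing a coloop in $N$. A $2$\dash cocircuit containing $e_2$ has its other element in $\{e_1,e_3\}$ by orthogonality with the triangle $\{e_1,e_2,e_3\}$; the choice $\{e_2,e_3\}$ would be a proper cocircuit inside the triad, impossible, and the choice $\{e_1,e_2\}$ is handled by cocircuit elimination against the triad $\{e_2,e_3,e_4\}$ to produce a cocircuit $C^*\subseteq\{e_1,e_3,e_4\}$; this $C^*$ avoids $e_2$, and one checks case-by-case that its image under $\ba e_3$ yields either a coloop or a series pair in $N$.

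The main obstacle is precisely this last case, where the offending $2$\dash cocircuit itself contains the contracted element $e_2$ and therefore does not descend directly to a cocircuit of $N'/e_2$; the resolution is to combine it with the triad $\{e_2,e_3,e_4\}$ via cocircuit elimination to obtain a cocircuit that does descend, after which the contradiction in $N$ is immediate. Once both parallel pairs and series pairs are excluded, the reduction in the first paragraph completes the proof that $N'$ is $3$\dash connected.
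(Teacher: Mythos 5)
Your proof is correct, and its skeleton (rule out $2$\dash element circuits and cocircuits of $N'$ by orthogonality with the triangle $\{e_1,e_2,e_3\}$ and the triad $\{e_2,e_3,e_4\}$, plus descent to $N$) matches the paper's. Two differences are worth recording. First, the endgame: the paper, after establishing that $N'$ is simple and cosimple, runs a direct analysis of a hypothetical $2$\dash separation $(X,Y)$, showing $|X|,|Y|\geq 3$ and that the separation induces a $2$\dash separation of $N$ after removing $\{e_2,e_3\}$ from one or both sides. You instead invoke the blanket hypothesis that $N'$ is $3$\dash connected up to series and parallel sets: any $2$\dash separation would have a side of rank or corank $1$, which simplicity and cosimplicity forbid. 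Your route is shorter and is legitimate given the standing assumptions of the section; the paper's route is self-contained and would survive even without that hypothesis beyond connectivity. Second, in the middle you resolve the cases $\{e_1,e_2\}$ a cocircuit and $\{e_1,e_3\}$ a cocircuit by cocircuit elimination against the triad followed by descent to $N$, whereas the paper dispatches them in one line via \Cref{lasso} (such a pair makes $\{e_1,e_2,e_3\}$ a codependent triangle). Both work; the paper's is cheaper since \Cref{lasso} is already in hand. One small slip: in the case where $\{e_2,e_3\}$ is a circuit, circuit elimination against $\{e_1,e_2,e_3\}$ does not directly yield $\{e_1\}$ as a dependent set --- it yields a circuit inside $\{e_1,e_3\}$ containing $e_1$, which could a priori be $\{e_1,e_3\}$. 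The cleanest contradiction here is simply that a circuit cannot properly contain another circuit, so $\{e_2,e_3\}$ dependent already contradicts $\{e_1,e_2,e_3\}$ being a triangle. This is a cosmetic repair, not a gap.
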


\begin{proof}
Because $N'$ has $N$ as a minor, it is $3$\dash connected
up to series and parallel sets, so it is connected.
In particular, it has no loops or coloops.
If $e_{2}$ is in a parallel pair, then $N'/e_{2}$
has $N$ as a minor and a loop, which is impossible.
If $e_{2}$ is in a series pair, then orthogonality requires
that the pair is contained in $\{e_{1},e_{2},e_{3}\}$, so
this set is a codependent triangle in $N'$, a contradiction to
\Cref{lasso}.
By a dual argument, $e_{3}$ is not contained in any series
or parallel pair in $N$.
As $N=N'/e_{2}\ba e_{3}$ is simple and cosimple,
it follows that $N'$ is simple and cosimple.

Assume $N'$ is not $3$\dash connected.
As it is connected, we can let $(X,Y)$ be a $2$\dash separation.
Since $N'$ is simple and cosimple, it follows that $|X|,|Y|\geq 3$.
If $X\cap \{e_{2},e_{3}\}$ and $Y\cap\{e_{2},e_{3}\}$ are
both non-empty, then $(X-\{e_{2},e_{3}\},Y-\{e_{2},e_{3}\})$
is a $2$\dash separation in $N$, a contradiction.
Therefore we can assume $e_{2},e_{3}\in X$.
If $|X|>3$, then $(X-\{e_{2},e_{3}\},Y)$ is a $2$\dash separation
in $N$.
Therefore $|X|\leq 3$.
If $|E(N)|=4$, then $N$ is isomorphic to $U_{2,4}$, contradicting
the hypothesis that it is not a whirl.
Hence $|E(N')|=|E(N)|+2\geq 7$, so $|Y|\geq 4$.
Now $(X\cup\{e_{1},e_{4}\},Y-\{e_{1},e_{4}\})$ is a
$2$\dash separation in $N'$ that induces a $2$\dash separation
in $N$, so we have a contradiction that completes the proof.
\end{proof}

Now we can begin the proof of our main theorem.

\begin{theorem}
\label{arrow}
Let \mcal{M} be a set of matroids that is closed under
minors and isomorphism.
Let $N\in \mcal{M}$ be a $3$\dash connected matroid such that
$|E(N)|\geq 4$ and $N$ is neither a wheel nor a whirl.
Assume that any member of \mcal{M}
with $N$ as a minor is $3$\dash connected
up to series and parallel sets.
Let $\mcal{F}_{N}$ be a family of pairwise disjoint
fans of $N$.
If there is a $3$\dash connected matroid in \mcal{M}
with $N$ as a minor that is not a fan-extension of $N$
relative to $\mcal{F}_{N}$, then there
exists such a matroid, $M$, satisfying $|E(M)|-|E(N)|\leq 2$.
\end{theorem}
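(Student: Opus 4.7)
The plan is to argue by contradiction using a minimum counterexample. Suppose the theorem fails, and choose a $3$\dash connected matroid $M \in \mcal{M}$ with $N$ as a minor such that $M$ is not a fan-extension of $N$ and $|E(M)| - |E(N)| \geq 3$, with $|E(M)|$ as small as possible. Then any $3$\dash connected matroid $M' \in \mcal{M}$ with $N$ as a minor and $|E(M')| < |E(M)|$ is a fan-extension of $N$: otherwise $M'$ would itself be a counterexample, either violating minimality (if $|E(M')| - |E(N)| \geq 3$) or directly witnessing the conclusion we are trying to negate (if $|E(M')| - |E(N)| \leq 2$).

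Pick $e \in E(M) - E(N)$ and invoke \Cref{tooth}. After possibly dualising (and using that fan-extensions are preserved under duality), we obtain a smaller $3$\dash connected matroid $M_1 \in \mcal{M}$ with $N$ as a minor, of one of three forms: $M_1 = M/e$; $M_1 = M\ba f$ with $\{x,e,f\}$ a triangle of $M$; or $M_1 = M/e\ba f$ with $(x,e,f,y)$ a fan of $M$. By minimality, $M_1$ is a fan-extension of $N$, and so by \Cref{bunch} it carries a covering family $\mcal{F}_1$ relative to $\mcal{F}_{N}$.

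Now the aim is to upgrade $\mcal{F}_1$ to a covering family of $M$ itself, for then \Cref{court} will contradict the choice of $M$. To do this I would examine, in each of the three cases of \Cref{tooth}, how the missing element (or pair of elements) attaches to $M_1$, and argue using \Cref{steal,ninny,thumb,vicar,unity} that the triangle, triad, and fan structure in $M$ around $e$ and $f$ forces the element(s) to extend one of the fans in $\mcal{F}_1$ at a terminal, thus yielding a covering family of $M$. When this direct analysis does not terminate immediately, for instance when the new triangle through $e$ does not sit cleanly at the end of any fan of $\mcal{F}_1$, the extra hypothesis $|E(M)| - |E(N)| \geq 3$ can be exploited by performing a \emph{second} reduction at an element $e' \in E(M) - E(N)$ distinct from $e$ (and any auxiliary element $f$), producing another covering family to cross-reference against $\mcal{F}_1$ via \Cref{unity}.

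I expect the main obstacle to be exactly this case-by-case verification that the covering family of $M_1$ lifts to a covering family of $M$. The analysis is triply branched, by which of the three cases of \Cref{tooth} applies and by where the removed element(s) sit relative to $\mcal{F}_1$, and some stubborn configurations will only be ruled out via the second-reduction trick, which is precisely where the bound $|E(M)| - |E(N)| \leq 2$ becomes sharp. Once a covering family of $M$ is in hand, \Cref{court} closes the argument.
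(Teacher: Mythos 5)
Your skeleton --- a minimal counterexample $M$ with $|E(M)|-|E(N)|\geq 3$, a reduction via \Cref{tooth}, and minimality giving a covering family $\mcal{F}_{1}$ of the smaller matroid --- matches how the paper's proof begins, but the step you rely on to finish has a genuine gap, in two respects. First, \Cref{court} is not available under the hypotheses of \Cref{arrow}: via \Cref{burst} it assumes that no fan of $N$ contains two distinct fans of $\mcal{F}_{N}$, whereas in \Cref{arrow} the family $\mcal{F}_{N}$ is only assumed pairwise disjoint. Without that extra hypothesis the implication ``covering family $\Rightarrow$ fan-extension'' is simply false (the paper notes explicitly that the converse of \Cref{bunch} need not hold), so even a successfully lifted covering family of $M$ would not certify that $M$ is a fan-extension. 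What is true, and what the paper proves as \Cref{fever} by a substantial argument using both the minimality of $M$ and the assumption $|E(M)|-|E(N)|>2$, is that the minimal counterexample has \emph{no} covering family at all; in particular your plan of producing one can never be completed.

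Second, the lifting itself is exactly the hard part and cannot be pushed through in general: the element(s) removed by \Cref{tooth} need not attach at a terminal of any fan of $\mcal{F}_{1}$, and when they do not, the contradiction has to be reached by entirely different means than exhibiting a covering family of $M$. That is what occupies the bulk of the paper's proof: the relabeling lemmas \Cref{flint,flame}, which identify label permutations preserving ``has $N$ as a minor but is not a fan-extension''; \Cref{bliss}, which produces an isomorph of $M$ admitting a $3$\dash connected single-element deletion or contraction with $N$ as a minor; and then \Cref{train,jewel,balsa,hovel,chaff}, which force the elements of $E(M\ba e)-E(N)$ to terminal rim positions of fans in a covering family of $M\ba e$ and force triangles through $e$, before a final circuit/orthogonality argument yields the contradiction. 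Your ``second-reduction'' remark gestures in the right direction --- the hypothesis $|E(M)|-|E(N)|\geq 3$ is indeed what lets the paper find a second removable element and guarantees that certain two-element extensions $N'$ of $N$ are proper minors of $M$, hence fan-extensions --- but as written your proposal asserts that the structure around $e$ always forces an extension of a fan of $\mcal{F}_{1}$ at a terminal, which is false, and it supplies no mechanism for the configurations where this fails.
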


For the remainder of the paper, we let \mcal{M}, $N$, and
$\mcal{F}_{N}$ be as in the statement of \Cref{arrow}.
We let $M\in \mcal{M}$ be a $3$\dash connected matroid
with $N$ as a minor such that $M$ is not a fan-extension
of $N$, and, subject to these constraints, $|E(M)|$ is as
small as possible.
We assume that $|E(M)|-|E(N)|> 2$, and ultimately derive a
contradiction from this, thereby proving \Cref{arrow}.

\begin{lemma}
\label{fever}
Let $M_{0}$ be isomorphic to $M$, and assume that $M_{0}$
has $N$ as a minor, but is not a fan-extension of $N$.
Then $M_{0}$ does not have a covering family (relative
to $N$ and $\mcal{F}_{N}$).
\end{lemma}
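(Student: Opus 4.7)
The plan is to argue by contradiction: suppose $M_{0}\cong M$ has $N$ as a minor, is not a fan\dash extension of $N$, and nevertheless admits a covering family $\mcal{F}$ relative to $\mcal{F}_{N}$. Since $\mcal{M}$ is closed under isomorphism, $M_{0}\in \mcal{M}$, and $M_{0}$ is $3$\dash connected with $|E(M_{0})|=|E(M)|$. I will produce a matroid $M_{0}'\in \mcal{M}$ that is $3$\dash connected, has $N$ as a minor, satisfies $|E(M_{0}')|<|E(M)|$, and from which $M_{0}$ arises by a single fan\dash lengthening move. The minimality of $|E(M)|$ then forces $M_{0}'$ to be a fan\dash extension of $N$, whence $M_{0}$ is one too, contradicting the hypothesis on $M_{0}$.

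To produce $M_{0}'$, I apply the case analysis from the proof of \Cref{burst} to $M_{0}$, $N$, and $\mcal{F}$. Since $|E(M_{0})|-|E(N)|>2$, we have $M_{0}\neq N$, so some fan of $\mcal{F}$ contains an element of $E(M_{0})-E(N)$. If such an element is internal to its fan, then \Cref{cabal}, \Cref{cider} and \Cref{shrub} together deliver $M_{0}'$ as a fan\dash shortening. Otherwise every element of $E(M_{0})-E(N)$ is terminal in some fan of $\mcal{F}$; after dualising and reversing, take a terminal spoke $e_{1}\in E(M_{0})-E(N)$ in $(e_{1},\ldots,e_{n})\in \mcal{F}$. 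If $M_{0}\ba e_{1}$ is $3$\dash connected it is the required $M_{0}'$; otherwise, Burst's argument produces a triad $T^{*}$ through $e_{1}$ whose third element $x$ is a terminal rim of another fan $(f_{1},\ldots,f_{m})\in \mcal{F}$. The ensuing subcase analysis mirrors Burst's, and each subcase but one produces $M_{0}'$ directly.

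The one remaining subcase is the exceptional configuration in which $M_{0}$ carries a fan $G=(f_{1},\ldots,f_{m},e_{1},\ldots,e_{n})$ containing two distinct fans from $\mcal{F}_{N}$ as unordered subsets; this is precisely where Burst's proof invokes the standing hypothesis that no fan of $N$ contains two fans from $\mcal{F}_{N}$, a hypothesis not available here. In this configuration $e_{1}\in E(M_{0})-E(N)$ is an internal rim element of the fan $G$, whose length is at least $m+n$, and Burst's analysis has already established that $N$ is a minor of $M_{0}\ba e_{1}/f_{m}$. The triangle $T=\{f_{m-1},f_{m},e_{1}\}$ and triad $T^{*}=\{f_{m},e_{1},e_{2}\}$, which meet on the pair $\{e_{1},f_{m}\}$, allow me to transfer this minor relation to $M_{0}/e_{1}\ba f_{m}$; \Cref{shrub} applied to $G$ then gives the $3$\dash connectivity of $M_{0}/e_{1}\ba f_{m}$, and \Cref{cider} shows that $M_{0}$ arises from this matroid via a fan\dash lengthening move on a contiguous subsequence of $G$, thereby completing the construction of $M_{0}'$. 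The main technical obstacle I anticipate is this transfer of the minor relation, which requires careful symmetric bookkeeping with the minor operations on the shared pair $\{e_{1},f_{m}\}$ and exploits the slack inside $G$ provided by $|E(M_{0})|-|E(N)|>2$.
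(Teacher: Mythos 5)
There is a genuine gap, and it sits exactly where you locate the difficulty: the exceptional configuration in which the two fans of $\mcal{F}$ containing two distinct members of $\mcal{F}_{N}$ get welded into one long fan $G$ through the triangle/triad pair on $\{e_{1},f_{m}\}$. Your plan is to shorten $G$ to obtain a smaller $3$\dash connected matroid $M_{0}'$ with $N$ as a minor, invoke the minimality of $M$ to conclude $M_{0}'$ is a fan-extension, and then say $M_{0}$ is a fan-extension because it arises from $M_{0}'$ by a fan-lengthening move. But \Cref{pinch} does not allow that inference: the fan-lengthening move must be performed on a fan belonging to a covering family of $M_{0}'$ \emph{relative to $\mcal{F}_{N}$}. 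The move that recovers $M_{0}$ from $M_{0}'$ acts on the merged fan $(f_{1},\ldots,f_{m-1},e_{2},\ldots,e_{n})$, and a family containing that fan in place of the two fans it swallows has only $|\mcal{F}_{N}|-1$ members, so condition~(ii) of the covering-family definition fails; you exhibit no covering family licensing the move, and in general none exists. This is not a repairable technicality: matroids of precisely this bridged shape are the paper's own examples (the remark after \Cref{bunch}, and the graphs of \Cref{beach}) of $3$\dash connected matroids that have $N$ as a minor and a covering family yet are \emph{not} fan-extensions, so "shorten the merged fan and appeal to minimality" cannot close this case without a new idea. There are also bookkeeping slips feeding the confusion: $e_{1}$ is a spoke, not a rim, element of $G$, the correct shortened matroid is $M_{0}\ba e_{1}/f_{m}=M_{0}/f_{m}\ba e_{1}$, which already has $N$ as a minor, so the "transfer" to $M_{0}/e_{1}\ba f_{m}$ that you flag as the main obstacle is both unnecessary and unjustified (having $N$ as a minor of $M_{0}\ba e_{1}/f_{m}$ does not give it as a minor of $M_{0}/e_{1}\ba f_{m}$).

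The paper's proof handles this case by a quite different mechanism. After the preliminary sublemmas (no two consecutive non-$E(N)$ elements in a fan of the covering family; all internal elements lie in $E(N)$), the bad configuration is pinned down to $n=m=4$ with a twisted eight-element fan, and the argument then constructs the intermediate minor $N'$ of $M_{0}$ on $|E(N)|+2$ elements by keeping only $e_{1}$ and $f_{1}$ of the removed set. The hypothesis $|E(M)|-|E(N)|>2$ is used exactly here, to make $N'$ a \emph{proper} minor of $M_{0}$, so minimality applies to $N'$ (not to a shortening of $M_{0}$): $N'$ is $3$\dash connected by \Cref{freak}, is a fan-extension obtained from $N$ by a single two-element fan-lengthening move, and the contradiction comes from showing, via \Cref{ninny}, \Cref{steal}, and \Cref{unity}, that no covering family of $N'$ can contain $e_{1}$ and $f_{1}$ consecutively while remaining consistent with the relevant fans of $\mcal{F}_{N}$. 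Your proposal never builds such an $N'$ and uses the slack $|E(M_{0})|-|E(N)|>2$ only rhetorically, so the exceptional case remains open.
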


\begin{proof}
We assume for a contradiction that $M_{0}$ has a covering family.
Note $M_{0}\ne N$, as $M_{0}$ is not a fan-extension of $N$.
The fans in any covering family of $M_{0}$ contain the elements of
$E(M_{0})-E(N)$.

\begin{sublemma}
\label{grass}
In any fan belonging to a covering family of $M_{0}$,
there are no two consecutive elements in $E(M_{0})-E(N)$.
\end{sublemma}

\begin{proof}
Assume there is a covering family, \mcal{F}, of $M_{0}$, and a
fan, $F=(e_1,\ldots,e_n)$, in \mcal{F}, such that
$e_{i}$ and $e_{i+1}$ are not in $E(N)$ for some
$i\in\{1,\ldots, n-1\}$.
If $e_{i}$ is a spoke element, then we can replace
$M$, $M_{0}$, $N$, and \mcal{M} by their duals, and then apply
the forthcoming arguments.
Thus we assume that $e_{i}$ is a rim element.
Then $N$ is a minor of $M_{0}/e_{i}\ba e_{i+1}$ by \Cref{cabal}.
\Cref{cider,shrub} show that $M_{0}/e_{i}\ba e_{i+1}$ is a
$3$\dash connected matroid containing the fan $F-\{e_{i},e_{i+1}\}$.
The minimality of $M$ implies $M_{0}/e_{i}\ba e_{i+1}$ is a
fan-extension of $N$.
It is easy to see that $(\mcal{F}-\{F\})\cup\{F-\{e_{i},e_{i+1}\}\}$ is a
covering family in $M_{0}/e_{i}\ba e_{i+1}$, and $M_{0}$ is obtained from
$M_{0}/e_{i}\ba e_{i+1}$ by a fan-lengthening move on
$F-\{e_{i},e_{i+1}\}$.
Therefore $M_{0}$ is a fan-extension of $N$, a contradiction.
\end{proof}

\begin{sublemma}
\label{giant}
If $F$ is a fan in a covering family of $M_{0}$,
then all internal elements of $F$ belong to $E(N)$.
\end{sublemma} 

\begin{proof}
Assume $F=(e_{1},\ldots, e_{n})$ is a fan in a covering family, and
$e_{i}$ is an internal element in $E(M_{0})-E(N)$, so $n\geq 4$.
By replacing $M$, $M_{0}$, $N$, and \mcal{M} with their duals if
necessary, we can assume $e_{i}$ is a rim element.
If $N$ is a minor of $M_{0}/e_{i}$, we contradict \ref{grass}, as
$\{e_{i-1},e_{i+1}\}$ is a parallel pair in $M_{0}/e_{i}$, so
$N$ is a minor of $M_{0}/e_{i}\ba e_{i-1}$ or $M_{0}/e_{i}\ba e_{i+1}$.
Therefore $N$ is a minor of $M_{0}\ba e_{i}$, so $n\leq 5$,
and if $n=5$, then $i=3$, for otherwise $\{e_{i-3},e_{i-2},e_{i-1}\}$
or $\{e_{i+1},e_{i+2},e_{i+3}\}$ is a codependent triangle in
$M_{0}\ba e_{i}$.
By reversing $F$ as required, we will assume that $i<n-1$.
Now $\{e_{i+1},e_{i+2}\}$ is a series pair in $M_{0}\ba e_{i}$, so
\ref{grass} means that $N$ is a minor of $M_{1}\ba e_{i}/ e_{i+2}$.
Thus $n=5$, since $F$ contains $\{e_{i},e_{i+2}\}$ and at least
three elements of $E(N)$.
In this case, $\{e_{i-2},e_{i-1},e_{i+1}\}\subseteq E(N)$,
which leads to a contradiction as $\{e_{i-2},e_{i-1}\}$ is
is a series pair in $M_{0}\ba e_{i}$.
\end{proof}

Fix the covering family \mcal{F}, and 
let $F=(e_{1},\ldots, e_{n})$ be a fan in \mcal{F} such that
$e_{1}\notin E(N)$.
This implies $n\geq 4$.
By duality, we assume $e_{1}$ is a rim element, and this
easily implies that $N$ is a minor of $M_{0}/ e_{1}$.
If $M_{0}/e_{1}$ is $3$\dash connected, then $M_{0}$ is obtained from
$M_{0}/ e_{1}$ by a fan-lengthening move on $F-e_{1}$,
and $(\mcal{F}-\{F\})\cup\{F-e_{1}\}$ is a covering family in
$M_{0}/ e_{1}$.
By minimality, $M_{0}/ e_{1}$ is a fan-extension of $N$, and therefore,
so is $M_{0}$.
Hence $M_{0}/ e_{1}$ is not $3$\dash connected.
As $M_{0}/ e_{2}$ contains a parallel pair, it is not $3$\dash connected.
The dual of Tutte's Triangle Lemma (see \cite[Lemma~8.7.7]{Oxl11}) implies
there is a triangle, $T$, of $M_{0}$ containing $e_{1}$ and either
$e_{2}$ or $e_{3}$.
It follows from \Cref{steal} that $T$ cannot be contained
in $F$, for otherwise $\{e_{1},e_{2},e_{3}\}$ is a triad and a triangle.
Let $f$ be the element in $T-\{e_{1},e_{2},e_{3}\}$.
Then $N$ is a minor of $M_{0}/e_{1}\ba f$, by \ref{giant}.
Since \mcal{F} is a covering family, there is a fan,
$F_{f}\in \mcal{F}$, such that $f$ is in $F_{f}$.
As $F_{f}$ contains at least three elements of $E(N)$, any
internal element of $F_{f}$ is contained in a triad.
Orthogonality with the triangle $T$ now implies that $f$ is a
terminal spoke element of $F_{f}$.
Clearly $F_{f}-f$ is a fan of $M_{0}$.

If $e_{2}$ is in $T$, then $F+f=(f,e_{1},\ldots, e_{n})$ is a fan
of $M_{0}$, and it is straightforward to see that
$(\mcal{F}-\{F,F_{f}\})\cup\{F+f,F_{f}-f\}$ is a covering family of
$M_{0}$.
But now $e_{1}$ is an internal element of $F+f$ that is not in $E(N)$,
contradicting \ref{giant}.
Hence $T=\{f,e_{1},e_{3}\}$.
This means that $n=4$, for otherwise $T$ violates orthogonality with
the triad $\{e_{3},e_{4},e_{5}\}$.
Thus $(e_{2},e_{3},e_{4})$ or its reversal is in $\mcal{F}_{N}$.

Let $F_{f}=(f_{1},\ldots, f_{m})$, where $f=f_{1}$.
By applying the duals of the previous arguments, we see that
$\{f_{1},f_{3}\}$ is contained in a triad of $M_{0}$, that $m=4$,
and that either $(f_{2},f_{3},f_{4})$ or its reversal is in $\mcal{F}_{N}$.
The triad containing $\{f_{1},f_{3}\}$ contains either $e_{1}$ or $e_{3}$,
by orthogonality with $T$, and the latter case cannot occur, by
orthogonality
with $\{e_{2},e_{3},e_{4}\}$.
Thus $\{f_{3},f_{1},e_{1}\}$ is a triad, and
$(f_{4},f_{2},f_{3},f_{1},e_{1},e_{3},e_{2},e_{4})$ is a fan of $M_{0}$.

Let $I$ and $I^{*}$ be, respectively, independent and coindependent
sets such that $I\cap I^{*}=\emptyset$ and $N=M_{0}/I\ba I^{*}$.
Let $N'=M_{0}/(I-\{e_{1},f_{1}\})\ba (I^{*}-\{e_{1},f_{1}\})$.
Note that $\{f_{4},f_{2},f_{3}\}$ and $\{e_{3},e_{2},e_{4}\}$ are,
respectively, a triad and a triangle in $N'$, for otherwise
$N$ contains a circuit or cocircuit with at most two elements.
Since $\{f_{2},f_{3},f_{1}\}$ is a union of circuits in $N'$,
it is easy to see it is a circuit of $N'$, for otherwise
we can use orthogonality with $\{f_{4},f_{2},f_{3}\}$ to show that
$\{f_{2},f_{3}\}$ contains a circuit in $N'$, and hence in $N$.
Hence $f_{1}$ is not a loop of $N'$.
Similarly, $\{e_{1},e_{3},e_{2}\}$ is a triad in $N'$ and
$e_{1}$ is not a coloop.
Since $\{f_{3},f_{1},e_{1}\}$ is a union of cocircuits,
orthogonality with $\{f_{2},f_{3},f_{1}\}$
now implies that $\{f_{3},f_{1},e_{1}\}$ is a triad in $N'$.
Orthogonality with $\{e_{1},e_{3},e_{2}\}$ implies
$\{f_{1},e_{1},e_{3}\}$ is a triangle in $N'$.
Therefore $(f_{4},f_{2},f_{3},f_{1},e_{1},e_{3},e_{2},e_{4})$ is a
fan of $N'$.

Because $N'\ba e_{1}$ contains the codependent triangle
$\{f_{1},f_{3},f_{2}\}$, we see that $N$ is not a minor of
$N'\ba e_{1}$.
Therefore it is a minor of $N'/e_{1}$, and this matroid
contains the parallel pair $\{f_{1},e_{3}\}$, so
$N=N'/e_{1}\ba f_{1}$.
By applying \Cref{freak} to the fan $(e_{3},e_{1},f_{1},f_{3})$,
we see that $N'$ is $3$\dash connected.
Since $|E(M)|-|E(N)|>2=|E(N')|-|E(N)|$, $N'$ is a proper minor
of $M_{0}$.
Therefore $N'$ is a fan-extension of $N$.

As $|E(N')|-|E(N)|=2$, it follows that $N'$ is obtained
from $N$ by either one or two fan-lengthening moves.
However, none of $N'/e_{1}$, $N'\ba e_{1}$, $N'/f_{1}$,
$N'\ba f_{1}$ is $3$\dash connected, so
$N'$ is obtained from $N$ by a single fan-lengthening move.
Hence there is a covering family, $\mcal{F}'$, of $N'$ containing a fan,
$F'$, such that $e_{1}$ and $f_{1}$ are consecutive elements
of $F'$.
Assume that $e_{3}$ is in $F'$.
Because $\mcal{F}'$ is a covering family and
$(e_{2},e_{3},e_{4})$ or its reversal is in $\mcal{F}_{N}$,
it follows that $(e_{2},e_{3},e_{4})$ is consistent with $F'$.
This means that $F'$ contains $\{f_{1},e_{1},e_{3},e_{2},e_{4}\}$.
By applying \Cref{steal} to the triangle $\{f_{1},e_{1},e_{3}\}$,
the triad $\{e_{1},e_{3},e_{2}\}$, and the triangle $\{e_{3},e_{2},e_{4}\}$,
we see that $(f_{1},e_{1},e_{3},e_{2},e_{4})$ is a
contiguous subsequence of $F'$.
Hence $(e_{2},e_{3},e_{4})$ is not consistent with $F'$ after all.
This contradiction shows that $e_{3}$ is not in $F'$.
As $\{e_{3},e_{1},f_{1}\}$ is a
triangle, \Cref{ninny} implies that, up to reversing,
$e_{1}$ and $f_{1}$ are the first two elements in $F'$, and
$F'$ starts with a rim element.
If $f_{3}$ is not in $F'$, then we can apply \Cref{ninny} to the triad
$\{f_{3},f_{1},e_{1}\}$ and get the contradiction that $F'$ starts with
a spoke element.
Therefore $f_{3}$ is in $F'$, and therefore $(f_{2},f_{3},f_{4})$ is
consistent with $F'$.
As $\{e_{1},f_{1},f_{3}\}$, $\{f_{1},f_{3},f_{2}\}$,
and $\{f_{3},f_{2},f_{4}\}$ are triangles or triads of $N'$
contained in $F'$, \Cref{steal} implies that
$(e_{1},f_{1},f_{3},f_{2},f_{4})$ is a contiguous subsequence of
$F'$, so $(f_{2},f_{3},f_{4})$ is not consistent with $F'$ after all.
This contradiction completes the proof.
\end{proof}

\begin{lemma}
\label{grape}
Let $M_{0}$ be a matroid isomorphic to $M$ such that
$N$ is a minor of $M_{0}$, but $M_{0}$ is not a fan-extension of $N$.
Assume that $(e_{1},\ldots, e_{n})$ is a fan of $M_{0}$,
where $n\geq 5$, and $\{e_{1},e_{2},e_{3}\}$ is a triangle.
If $N$ is a minor of $M_{0}/e_{2}\ba e_{3}$
and some ordering of $\{e_{1},e_{4},\ldots, e_{n}\}$ is in $\mcal{F}_{N}$,
then either $(e_{1},e_{4},\ldots, e_{n})$ or its reversal is in
$\mcal{F}_{N}$.
\end{lemma}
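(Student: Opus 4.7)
My plan is to apply \Cref{unity} to two fans of $M_{0}/e_{2}\ba e_{3}$, one of them drawn from a covering family that exists by the minimality of $M$, and then to pin down the ordering of $F_{N}$ via the consistency axiom for covering families. First, since $n\ge 5$ and $N$ is a minor of $M_{0}/e_{2}\ba e_{3}$, \Cref{shrub} applied with $i=2$ gives that $M_{0}/e_{2}\ba e_{3}$ is $3$-connected. The hypothesis $|E(M)|-|E(N)|>2$ then gives $|E(M_{0}/e_{2}\ba e_{3})| < |E(M)|$, so the minimality of $M$ as a non-fan-extension in $\mcal{M}$ forces $M_{0}/e_{2}\ba e_{3}$ to be a fan-extension of $N$. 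By \Cref{bunch} it admits a covering family $\mcal{F}'$ relative to $N$ and $\mcal{F}_{N}$.

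Next, let $F_{N}=(x_{1},\ldots,x_{k})\in\mcal{F}_{N}$ be the fan whose underlying set is $\{e_{1},e_{4},\ldots,e_{n}\}$ (so $k=n-2$), and let $F'=(y_{1},\ldots,y_{m})\in\mcal{F}'$ be a fan with which $F_{N}$ is consistent. By \Cref{cider} the sequence $(e_{1},e_{4},\ldots,e_{n})$ is itself a fan of $M_{0}/e_{2}\ba e_{3}$, and by \Cref{ether} this matroid is neither a wheel nor a whirl. Applying \Cref{unity} to the fans $(e_{1},e_{4},\ldots,e_{n})$ and $F'$, whose intersection has size $k\ge 3$, I conclude that $\{e_{1},e_{4},\ldots,e_{n}\}$ appears as a contiguous subsequence $(y_{a},\ldots,y_{b})$ of $F'$, with $b-a+1=k$.

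The crux is to show that $(y_{a},\ldots,y_{b})$ equals $(e_{1},e_{4},\ldots,e_{n})$ or its reversal. Both are fans of $M_{0}/e_{2}\ba e_{3}$ on the same underlying set, and by \Cref{steal} and its dual the triangles and triads of $M_{0}/e_{2}\ba e_{3}$ contained in this set are precisely the consecutive triples of each of the two orderings. For $n\ge 7$ this triple data determines the ordering up to reversal: each terminal element of the sub-fan lies in exactly one consecutive triple, each of their immediate neighbours in exactly two, and each interior element in exactly three, and these multiplicities are invariants of the set of triples. Consistency of $F_{N}$ with $F'$ then forces $F_{N}$ to equal $(y_{a},\ldots,y_{b})$ or its reversal, which in turn equals $(e_{1},e_{4},\ldots,e_{n})$ or its reversal.

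The main obstacle will be the low-length cases $n\in\{5,6\}$ (that is, $k\in\{3,4\}$), where the triple-counting argument degenerates: any ordering of a triangle is a fan of length $3$, and the two middle elements of a length-$4$ fan can be swapped without changing the consecutive-triple set. My plan for those cases is to argue by contradiction: assuming that $(y_{a},\ldots,y_{b})$ is a non-natural ordering of $\{e_{1},e_{4},\ldots,e_{n}\}$, I would try to convert $\mcal{F}'$ into a covering family of $M_{0}$ by extending a suitable fan of $M_{0}/e_{2}\ba e_{3}$ through the insertion of $(e_{2},e_{3})$ between $e_{1}$ and $e_{4}$, and then to invoke \Cref{court} to conclude that $M_{0}$ is a fan-extension of $N$, contradicting \Cref{fever}. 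Verifying that such an insertion yields a genuine fan of $M_{0}$ — in particular, checking the orthogonality conditions at the interface between the block $(y_{a},\ldots,y_{b})$ and the remainder of $F'$ — is the most technically delicate step.
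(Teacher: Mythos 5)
Your main route — pass to $M_{0}/e_{2}\ba e_{3}$ (3\dash connected by \Cref{shrub}, a fan-extension by the minimality of $M$), locate $\{e_{1},e_{4},\ldots,e_{n}\}$ as a contiguous block of the covering fan $F'$ via \Cref{cider}, \Cref{unity} and \Cref{steal}, and then recover the ordering from the set of triangles/triads inside the block — is sound for $n\geq 7$, where the consecutive-triple data of a fan of length at least five determines the ordering up to reversal. But for $n\in\{5,6\}$ there is a genuine gap, and these are not fringe cases: the fans in $\mcal{F}_{N}$ of length three or four are exactly the ones occurring in the applications, and for them the lemma's whole content is to rule out, say, $(e_{4},e_{1},e_{5})\in\mcal{F}_{N}$ when $n=5$. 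Knowing only that $\{e_{1},e_{4},e_{5}\}$ is three consecutive elements of $F'$ does not determine its order there, so consistency of $F_{N}$ with $F'$ pins down nothing. Your fallback sketch does not close this: first, \Cref{court} is not available, since its hypotheses (inherited from \Cref{burst}) require that no fan of $N$ contains two distinct members of $\mcal{F}_{N}$, an assumption not made in the setting of \Cref{arrow} (you could aim at \Cref{fever} directly, but that only relocates the problem). Second, and more seriously, in the bad configuration (say $F'=(\ldots,e_{4},e_{1},e_{5},w,\ldots)$) inserting $(e_{2},e_{3})$ ``between $e_{1}$ and $e_{4}$'' does not visibly yield a fan of $M_{0}$: the interface triple $\{e_{2},e_{1},e_{5}\}$ (or $\{e_{3},e_{1},e_{5}\}$) is not known to be a triangle or triad of $M_{0}$; and even if it were, the remaining fans of $\mcal{F}'$ are fans of $M_{0}/e_{2}\ba e_{3}$, not of $M_{0}$ (their triangles and triads may arise from four-element circuits through $e_{2}$ or cocircuits through $e_{3}$), so $\mcal{F}'$ does not convert into a covering family of $M_{0}$ without substantial further argument.

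The paper sidesteps all of this by never discarding $e_{2}$ and $e_{3}$: it forms $N'=M_{0}/(I-\{e_{2},e_{3}\})\ba(I^{*}-\{e_{2},e_{3}\})$, checks that $(e_{1},\ldots,e_{n})$ survives as a fan of $N'$ with $N=N'/e_{2}\ba e_{3}$, gets 3\dash connectivity of $N'$ from \Cref{freak}, and concludes by minimality that $N'$ is a fan-extension. Applying \Cref{unity} and repeated \Cref{steal} to the full length-$n$ fan $(e_{1},\ldots,e_{n})$ inside the covering fan $F'$ of $N'$ forces $(e_{1},\ldots,e_{n})$ to be a contiguous subsequence of $F'$, which fixes the relative order of $e_{1},e_{4},\ldots,e_{n}$ in $F'$ for every $n\geq 5$ — precisely the rigidity your length-$(n-2)$ fan lacks when $n\leq 6$. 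To repair your argument you would either need to import this idea (work in a minor retaining $e_{2},e_{3}$) or supply a genuine ad hoc analysis of the $n=5,6$ configurations; as written, the proof is incomplete.
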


\begin{proof}
We assume that some ordering of
$\{e_{1},e_{4},\ldots, e_{n}\}$ is in
$\mcal{F}_{N}$.
Let $I$ and $I^{*}$ be independent and coindependent
sets such that $I\cap I^{*}=\emptyset$ and $N=M_{0}/I\ba I^{*}$
and let $N'=M_{0}/(I-\{e_{2},e_{3}\})\ba (I^{*}-\{e_{2},e_{3}\})$.
If $4\leq i\leq n-2$ and $i$ is odd (respectively, even), then
$\{e_{i},e_{i+1},e_{i+2}\}\subseteq E(N)$, and
$\{e_{i},e_{i+1},e_{i+2}\}$ is a circuit (cocircuit)
in $M_{0}$, and is therefore a union of circuits (cocircuits) in $N'$.
Hence it is a circuit (cocircuit) in $N'$, or else $N$
contains a circuit (cocircuit) of at most two elements, which
contradicts the $3$\dash connectivity of $N$.
Thus $(e_{4},\ldots, e_{n})$ is a fan of $N'$.
As $\{e_{3},e_{4},e_{5}\}$ is a union of circuits in $N'$,
it is a circuit, for otherwise $\{e_{4},e_{5}\}$ contains a circuit
in $N'$, and hence in $N$.
Since $\{e_{1},e_{2},e_{3}\}$ is a union of circuits in $N'$,
$e_{2}$ is not a coloop of $N'$.
Now, as $\{e_{2},e_{3},e_{4}\}$ is a union of cocircuits,
orthogonality with $\{e_{3},e_{4},e_{5}\}$ shows that
$\{e_{2},e_{3},e_{4}\}$ is a triad of $N'$.
Finally, $e_{1}$ is not a loop of $N$, and hence not
a loop in $N'$.
Neither $e_{2}$ nor $e_{3}$ is a loop in $N'$, as
$\{e_{2},e_{3},e_{4}\}$ is a triad.
If $\{e_{1},e_{2},e_{3}\}$ is not a circuit, then
it is contained in a parallel class, and this leads
to a contradiction to orthogonality with $\{e_{2},e_{3},e_{4}\}$.
Thus we have shown $(e_{1},\ldots, e_{n})$ is a fan in $N'$.

As $N'\ba e_{2}$ contains a codependent triangle, it
cannot have $N$ as a minor.
Therefore $N'/e_{2}$ has $N$ as a minor, as well as
the parallel pair $\{e_{1},e_{3}\}$.
Therefore $N=N'/e_{2}\ba e_{3}$.
\Cref{freak} implies that $N'$ is $3$\dash connected.
As $|E(M_{0})|-|E(N)|>2=|E(N')|-|E(N)|$, we see
that $N'$ is a fan-extension of $N$.

Let $F'$ be the fan in a covering family of
$N'$ such that some ordering of $\{e_{1},e_{4},\ldots, e_{n}\}$
is consistent with $F'$.
As $(e_{1},\ldots, e_{n})$ is a fan of $N'$
that intersects $F'$ in at least three elements,
including $e_{1}$ and $e_{n}$, we see from \Cref{unity}
that there is a contiguous subsequence of $F'$
comprising the elements $\{e_{1},\ldots,e_{n}\}$.
Now, repeatedly applying \Cref{steal}, it follows that
$(e_{1},\ldots, e_{n})$ is a contiguous subsequence of $F'$.
Thus $(e_{1},e_{4},\ldots, e_{n})$ is consistent with
$F'$, so this fan, or its reversal, is in $\mcal{F}_{N}$.
\end{proof}

If we apply an arbitrary permutation to the element labels of
$M$, we do not know if the resulting matroid will be a
fan-extension of $N$, or, indeed, if it will have $N$ as a minor.
In the next \namecref{flint}, and in \Cref{flame},
we look for circumstances under which the permuted matroid
is guaranteed to have $N$ as a minor, but not to be a
fan-extension of $N$.
This will allow us to relabel elements of $M$ without
losing generality.

\begin{lemma}
\label{flint}
Let $M_{0}$ be isomorphic to $M$, and assume that $M_{0}$
has $N$ as a minor, but is not a fan-extension of $N$.
Assume that $(e_{1},e_{2},e_{3},e_{4})$ is a fan
of $M_{0}$ with $e_{2}$ as a rim element, and
$N$ is a minor of $M_{0}/e_{2}\ba e_{3}$.
Let $M'$ be the matroid obtained from $M_{0}$
by swapping the labels on $e_{1}$ and $e_{3}$.
Then $M'$ contains $N$ as a minor, but is not a
fan-extension of $N$.
\end{lemma}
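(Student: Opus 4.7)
The argument splits naturally into two parts: showing (a) that $M'$ has $N$ as a minor, and (b) that $M'$ is not a fan-extension of $N$.

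For part (a), since $e_2$ is a rim element of the fan $(e_1,e_2,e_3,e_4)$, the set $\{e_1,e_2,e_3\}$ is a triangle of $M_0$, and hence $\{e_1,e_3\}$ is a parallel pair in $M_0/e_2$. Swapping two parallel elements is a matroid automorphism, so $M'/e_2 = M_0/e_2$ on the common ground set $E(M_0)-e_2$. Consequently $M'/e_2\ba e_3 = M_0/e_2\ba e_3$ as matroids on $E(M_0)-\{e_2,e_3\}$, and since $N$ is a minor of the latter by hypothesis, it is also a minor of $M'$.

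For part (b), I argue by contradiction: suppose $M'$ is a fan-extension of $N$. Then by \Cref{bunch} it carries a covering family $\mathcal{F}'$ relative to $\mathcal{F}_N$. Let $\sigma \colon M_0 \to M'$ be the isomorphism that swaps $e_1 \leftrightarrow e_3$; it maps fans of $M_0$ bijectively onto fans of $M'$. The plan is to produce, from $\mathcal{F}'$, a covering family of $M_0$, thereby contradicting \Cref{fever}. Conditions (i) and (ii) of the covering-family definition transfer automatically to $\sigma^{-1}(\mathcal{F}')$. In the easy case $e_1 \notin E(N)$, the map $\sigma$ fixes $E(N)$ pointwise and permutes $E(M_0)-E(N)$, so conditions (iii) and (iv) also transfer without modification, and we are done.

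The main obstacle is the remaining case $e_1 \in E(N)$. Here $\sigma$ moves $e_1 \in E(N)$ to $e_3 \notin E(N)$, so $\sigma^{-1}(\mathcal{F}')$ a priori fails to cover $e_3$ and mishandles any fan $F_N \in \mathcal{F}_N$ that contains $e_1$. To salvage the argument I would locate the unique fan $F^* \in \mathcal{F}'$ containing $e_3$ and analyse how $e_1, e_2, e_3, e_4$ must sit inside $F^*$ by exploiting the fan $(e_3,e_2,e_1,e_4)$ of $M'$ (the $\sigma$-image of the given fan of $M_0$) together with the structural lemmas \Cref{steal}, \Cref{unity}, and \Cref{ninny}. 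The resulting rigidity should force $\sigma^{-1}(F^*)$ to admit a local modification --- intuitively, re-inserting $e_3$ next to the transported copy of $e_1$ via the triangle $\{e_1,e_2,e_3\}$ and triad $\{e_2,e_3,e_4\}$ of $M_0$ --- that restores both conditions (iii) and (iv). Verifying that this modification produces a genuine covering family of $M_0$ is the technical heart of the proof.
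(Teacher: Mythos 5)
Your part (a) is exactly the paper's argument, and your observation for part (b) that when $e_{1}\notin E(N)$ the swap $\sigma$ fixes $E(N)$ pointwise, so that $\sigma^{-1}(\mcal{F}')$ is verbatim a covering family of $M_{0}$ contradicting \Cref{fever}, is correct and clean. The gap is that the case $e_{1}\in E(N)$ --- which does arise in the applications, e.g.\ the repeated swaps in \Cref{bliss} --- is where essentially all the content of the lemma lives, and what you offer there is a statement of intent rather than a proof. Moreover the plan misjudges the obstacle: there is no reason for $e_{1},e_{2},e_{3},e_{4}$ to lie in a single fan $F^{*}$ of the covering family of $M'$. In the paper's proof the first substantial hurdle is to show that $e_{2}$ and $e_{3}$ can even be made \emph{consecutive in the same fan of some covering family}; a priori $e_{2}$ may be a terminal rim element of one fan, $e_{3}$ may sit in a second, and $e_{1}$ in a third fan $G$, and resolving this requires shuffling terminal elements between fans and a long orthogonality analysis. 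Several of the resulting branches do \emph{not} terminate by producing a covering family of $M_{0}$: they terminate by showing that $\mcal{F}_{N}$ would have to contain two distinct non-disjoint fans, which contradicts the standing hypothesis that $\mcal{F}_{N}$ is pairwise disjoint.

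Two ingredients are missing from your sketch that I do not see how to avoid. First, \Cref{grape} and, through it, the minimality of $|E(M)|$: when the analysis produces a fan of $M_{0}$ such as $(e_{1},e_{2},e_{3},e_{4},g_{2},g_{4})$ one must determine which ordering of $\{e_{1},e_{4},g_{2},g_{4}\}$ lies in $\mcal{F}_{N}$, and \Cref{grape} does this by constructing an intermediate matroid $N'$ with $|E(N')|-|E(N)|=2<|E(M)|-|E(N)|$ and using that $N'$ is therefore a fan-extension. Your proposal never appeals to the minimality of $M$, yet without it these configurations cannot be excluded. Second, your ``local modification re-inserting $e_{3}$'' must be shown to preserve condition (iii) for every fan of $\mcal{F}_{N}$, including any fan containing $e_{1}$; the swap-back only works once \Cref{steal} has forced $\{e_{1},e_{2},e_{3}\}$ to occupy three consecutive positions of a single fan of the covering family, and establishing that this situation can always be reached (or else deriving one of the other contradictions above) is the bulk of the paper's argument. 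As written, the main case is not proved.
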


\begin{proof}
Because $\{e_{1},e_{3}\}$ is a parallel pair of
$M_{0}/e_{2}$, it follows that
$M'/e_{2}\ba e_{3}=M_{0}/e_{2}\ba e_{3}$, so $N$
is a minor of $M'/e_{2}\ba e_{3}$.
\Cref{shrub} implies $M_{0}/e_{2}\ba e_{3}$, and hence
$M'/e_{2}\ba e_{3}$, is $3$\dash connected.
Note that $(e_{3},e_{2},e_{1},e_{4})$ is a fan of $M'$,
where $e_{2}$ is a rim element.
Assume for a contradiction that $M'$ is a fan-extension of $N$.
As $e_{2},e_{3}\notin E(N)$, there
is a covering family of $M'$ that contains fans
that contain $e_{2}$ and $e_{3}$.

\begin{sublemma}
\label{sniff}
If $\mcal{F}'$ is a covering family of $M'$, then no fan
in $\mcal{F}'$ contains $\{e_{1},e_{2},e_{3}\}$.
\end{sublemma}

\begin{proof}
Assume $F'\in\mcal{F}'$ contains $\{e_{1},e_{2},e_{3}\}$.
Then \Cref{steal} implies that $\{e_{1},e_{2},e_{3}\}$
is a set of three consecutive elements in $F'$.
As $e_{2},e_{3}\notin E(N)$, it now is obvious that
we can swap the labels on $e_{1}$ and $e_{3}$ and obtain a
covering family of $M_{0}$, contradicting \Cref{fever}. 
\end{proof}

\begin{sublemma}
\label{joker}
Let $\mcal{F}'$ be a covering family of $M'$ and let
$F_{2}$ be a fan in $\mcal{F}'$ that contains $e_{2}$.
Then $e_{2}$ is a terminal rim element of $F_{2}$.
\end{sublemma}

\begin{proof}
Assume that $e_{2}$ is in a triangle, $T$, contained in $F_{2}$.
If $e_{3}$ is not in $T$, then $M'/e_{2}\ba e_{3}$ contains a parallel pair,
and this is impossible as $M'/e_{2}\ba e_{3}$ is
$3$\dash connected with at least four elements in
its ground set.
Therefore $e_{2},e_{3}\in T$.
Orthogonality with the triad $\{e_{2},e_{1},e_{4}\}$
shows that $e_{1}\in T$ or $e_{4}\in T$.
The former case is not true by \ref{sniff}, so $T=\{e_{2},e_{3},e_{4}\}$.
As $\{e_{1},e_{2},e_{3}\}$ is also a triangle,
$\{e_{1},e_{2},e_{3},e_{4}\}$ is a $U_{2,4}$\dash restriction
of $M'$ that intersects the triad $\{e_{2},e_{1},e_{4}\}$,
contradicting \Cref{putty}.
Therefore $e_{2}$ is contained in
no triangle in $F_{2}$.
Note that $F_{2}$ contains a fan in $\mcal{F}_{N}$,
as well as the element $e_{2}$, so it contains
at least four elements.
Therefore $e_{2}$ is a terminal rim element in $F_{2}$, as desired.
\end{proof}

\begin{sublemma}
\label{ghost}
There is a covering family, $\mcal{F}'$, of
$M'$, and a fan $F'\in\mcal{F}'$, such that
$e_{2}$ and $e_{3}$ are consecutive elements
in $F'$.
\end{sublemma}

\begin{proof}
Assume that this is not true.
Let $\mcal{F}'$ be an arbitrary covering family of $M'$.
Let $F_{2}=(f_{1},\ldots, f_{m})$ be a fan in $\mcal{F}'$
where $e_{2}=f_{1}$ and $\{f_{1},f_{2},f_{3}\}$ is a triad.
Orthogonality with the triangle $\{e_{1},e_{2},e_{3}\}$ shows that
$e_{3}$ or $e_{1}$ is in $\{f_{2},f_{3}\}$.
Assume that $e_{3}$ is in $\{f_{2},f_{3}\}$, so
$e_{1}\notin F_{2}$ by \ref{sniff}.
Because we have assumed that $e_{2}$ and $e_{3}$
are not consecutive in $F_{2}$, we deduce that $e_{3}=f_{3}$.
Now $m\leq 4$, or else we have a contradiction to
orthogonality between
$\{e_{1},e_{2},e_{3}\}=\{e_{1},f_{1},f_{3}\}$ and
the triad $\{f_{3},f_{4},f_{5}\}$.
However, $F_{2}$ contains a fan in $\mcal{F}_{N}$
as well as the elements $e_{2}$ and $e_{3}$, so $m\geq 5$.
This contradiction shows that $e_{1}$ is in $\{f_{2},f_{3}\}$,
and therefore $e_{3}\notin F_{2}$.

Now $\{f_{1},f_{2},f_{3}\}=\{e_{1},e_{2},z\}$ for some
element $z$, where $z\ne e_{3}$.
If $z\ne e_{4}$, then $\{e_{2},e_{1},e_{4},z\}$ is a
$U_{2,4}$\dash corestriction of $M'$ that intersects the
triangle $\{e_{1},e_{2},e_{3}\}$,
contradicting the dual of \Cref{putty}.
Therefore $z=e_{4}$, so
$\{f_{1},f_{2},f_{3}\}=\{e_{1},e_{2},e_{4}\}$.
If $(f_{1},f_{2},f_{3})=(e_{2},e_{1},e_{4})$,
then $F_{2}+e_{3}=(e_{3},f_{1},\ldots, f_{m})$ is a fan of $M'$.
Let $F_{3}$ be the fan in $\mcal{F}'$ that contains $e_{3}$.
Then $F_{3}$ contains a fan in $\mcal{F}_{N}$
as well as $e_{3}$, so $|F_{3}|\geq 4$.
If $e_{3}$ is an internal element in $F_{3}$,
then it is contained in a triad that is contained in
$F_{3}$, and such a triad violates orthogonality with
$\{e_{1},e_{2},e_{3}\}$.
Therefore $e_{3}$ is a terminal spoke element of $F_{3}$.
Now we easily see that
$(\mcal{F}'-\{F_{2},F_{3}\})\cup\{F_{2}+e_{3},F_{3}-e_{3}\}$
is a covering family of $M'$, and $e_{3}$ and $e_{2}$
are consecutive in $F_{2}+e_{3}$.
This contradicts our assumption, so
$(f_{1},f_{2},f_{3})=(e_{2},e_{4},e_{1})$.

Now $m\leq 4$, or else we have a contradiction to
orthogonality between the triangle $\{e_{1},e_{2},e_{3}\}$ and
the triad $\{f_{3},f_{4},f_{5}\}$
(recall $e_{3}$ is not in $F_{2}$).
In fact, $m=4$, since $F_{2}$ contains one of the fans
in $\mcal{F}_{N}$, as well as $e_{2}$.
Therefore
$F_{2}=(f_{1},f_{2},f_{3},f_{4})=(e_{2},e_{4},e_{1},f_{4})$,
and $(e_{4},e_{1},f_{4})$ or $(f_{4},e_{1},e_{4})$ is in
$\mcal{F}_{N}$.
However,
$(e_{3},f_{1},f_{3},f_{2},f_{4})= (e_{3},e_{2},e_{1},e_{4},f_{4})$
is also a fan of $M'$, so $(e_{1},e_{2},e_{3},e_{4},f_{4})$ is a
fan of $M_{0}$.
Now $\{e_{1},e_{2},e_{3}\}$ is a triangle, and
$N$ is a minor of $M_{0}/e_{2}\ba e_{3}$.
By \Cref{grape}, $(e_{1},e_{4},f_{4})$ or
$(f_{4},e_{4},e_{1})$ is in $\mcal{F}_{N}$.
Thus $\mcal{F}_{N}$ contains two distinct fans that
are non-disjoint.
This is a contradiction.
\end{proof}

By \Cref{joker} and \Cref{ghost} we can let $\mcal{F}'$ be a covering family of
$M'$ and let $F'=(f_{1},\ldots, f_{m})$ in $\mcal{F}'$ be such that
$(f_{1},f_{2})=(e_{2},e_{3})$ where $m\geq 5$ and $\{f_{1},f_{2},f_{3}\}$
is a triad.
Now \Cref{sniff} implies $e_{1}\notin F'$.
Observe that $F'+e_{1}=(e_{1},f_{1},f_{2},\ldots,f_{m})$
is a fan of $M'$.
By \ref{sniff}, $(\mcal{F}'-\{F'\})\cup\{F'+e_{1}\}$ cannot be
a covering family in $M'$.
Therefore there is a fan,
$G=(g_{1},\ldots, g_{t})$, in $\mcal{F}'-\{F'\}$, that
contains $e_{1}$.
Orthogonality with $\{e_{1},e_{2},e_{3}\}$ shows that
$e_{1}$ is not contained in a triad that is contained in $G$.
Let $G_{N}$ be the fan in $\mcal{F}_{N}$ that is consistent
with $G$.
If $e_{1}$ is not in $G_{N}$, then $G$ contains at
least four elements, so $e_{1}$ must be a terminal
element of $G$.
In this case, it is easy to see that
$(\mcal{F}'-\{F',G\})\cup\{F'+e_{1},G-e_{1}\}$
is a covering family of $M'$, which leads to a contradiction
with \ref{sniff}.
Therefore $e_{1}$ is contained in $G_{N}$.

Assume that $t=3$.
Then, by reversing, we can assume that $(g_{1},g_{2},g_{3})$
is in $\mcal{F}_{N}$.
Orthogonality with $\{e_{1},e_{2},e_{3}\}$
implies $\{g_{1},g_{2},g_{3}\}$ is a triangle.
But $\{e_{2},e_{1},e_{4}\}$ is a triad in $M'$, so orthogonality requires
that $e_{4}$ is in $\{g_{1},g_{2},g_{3}\}$.
Let $g$ be the element in $\{g_{1},g_{2},g_{3}\}-\{e_{1},e_{4}\}$.
Then $(e_{3},e_{2},e_{1},e_{4},g)$ is a fan in $M'$, so
$(e_{1},e_{2},e_{3},e_{4},g)$ is a fan in $M_{0}$.
As  $N$ is a minor of $M_{0}/e_{2}\ba e_{3}$,
\Cref{grape} implies that, up to reversing
$(e_{1},e_{4},g)$ is in $\mcal{F}_{N}$, so
$(e_{1},e_{4},g)=(g_{1},g_{2},g_{3})$.
This means that $(e_{3},e_{2},e_{1},e_{4},g)$
is a fan in $M'$ that is consistent with
$(g_{1},g_{2},g_{3})$, and
\[
(\mcal{F}'-\{F',(e_{1},e_{4},g)\})
\cup
\{F'-\{f_{1},f_{2}\},(e_{3},e_{2},e_{1},e_{4},g)\}
\]
is a covering family in $M'$, contradicting \ref{sniff}.
Therefore $t\geq 4$.

Because $e_{1}$ is not contained in any triad that is contained
in $G$, we see that $e_{1}$ is a
terminal spoke element in $(g_{1},\ldots, g_{t})$.
By reversing as necessary, we can assume that
$e_{1}=g_{1}$ and $\{g_{1},g_{2},g_{3}\}$ is a triangle.

Orthogonality between $\{g_{1},g_{2},g_{3}\}$ and
$\{e_{1},e_{2},e_{4}\}$ requires that
$e_{4}$ is in $\{g_{2},g_{3}\}$.
Assume that $e_{4}=g_{2}$.
Then
\[G'=(e_{3},e_{2},g_{1},g_{2},g_{3},\ldots,g_{t})
=(e_{3},e_{2},e_{1},e_{4},g_{3},\ldots, g_{t})\]
is a fan in $M'$.
It is easy to see that $(\mcal{F}'-\{F',G\})\cup\{F'-\{f_{1},f_{2}\},G'\}$
is a covering family in $M'$, contradicting \ref{sniff}.
Therefore $g_{3}=e_{4}$.

Next we observe that $t=4$, for otherwise
$\{g_{3},g_{4},g_{5}\}$ is a triangle and
$\{e_{1},e_{2},e_{4}\}$ is a triad that intersects
it in the element $e_{4}=g_{3}$.
Now $G_{N}$ is a subsequence of at least three elements from
$(g_{1},g_{2},g_{3},g_{4})$ or
$(g_{4},g_{3},g_{2},g_{1})$ that contains $e_{1}$.

Assume that $g_{4}$ is not in $G_{N}$.
Then $G_{N}$ is equal, up to reversing,
to $(g_{1},g_{2},g_{3})$.
Since
$(e_{3},e_{2},g_{1},g_{3},g_{2})
=(e_{3},e_{2},e_{1},e_{4},g_{2})$
is a fan in $M'$,
$(e_{1},e_{2},e_{3},e_{4},g_{2})$
is a fan in $M_{0}$.
As $N$ is a minor of $M_{0}/e_{2}\ba e_{3}$, we see from
\Cref{grape} that $(e_{1},e_{4},g_{2})=(g_{1},g_{3},g_{2})$,
or its reversal, is in $\mcal{F}_{N}$.
Therefore $\mcal{F}_{N}$ contains two distinct
fans of three elements that are non-disjoint.
This contradiction shows that $g_{4}$ is in $G_{N}$.

Assume that exactly one of $g_{2}$ and $g_{3}$ is in
$G_{N}$.
Then $G_{N}$ is consistent with the fan
$G'=(e_{3},e_{2},g_{1},g_{3},g_{2},g_{4})=
(e_{3},e_{2},e_{1},e_{4},g_{2},g_{4})$
in $M'$, and
\[
(\mcal{F}'-\{F',G\})\cup\{F'-\{f_{1},f_{2}\},G'\}
\]
is a covering family of $M'$, violating \ref{sniff}. 
We conclude that $G_{N}$ is equal, up to reversing,
to $(g_{1},g_{2},g_{3},g_{4})$.

Now
$(e_{3},e_{2},g_{1},g_{3},g_{2},g_{4})
=(e_{3},e_{2},e_{1},e_{4},g_{2},g_{4})$
is a fan in $M'$, so
$(e_{1},e_{2},e_{3},e_{4},g_{2},g_{4})$
is a fan in $M_{0}$, and some ordering of
$\{e_{1},e_{4},g_{2},g_{4}\}$ is in $\mcal{F}_{N}$.
By \Cref{grape},
$(e_{1},e_{4},g_{2},g_{4})=(g_{1},g_{3},g_{2},g_{4})$
or its reversal is in $\mcal{F}_{N}$.
This shows that $\mcal{F}_{N}$ contains two distinct
fans that are non-disjoint.
We have a contradiction that completes the proof of the lemma.
\end{proof}

Now we have some control over the ways in which we
may permute the element labels on $M$.
The next result shows a consequence of this:
we can assume that there is an element in $M$ whose
removal is $3$\dash connected with $N$ as a minor.

\begin{lemma}
\label{bliss}
There exists a matroid $M_{0}$, isomorphic to $M$, such that
$N$ is a minor of $M_{0}$, but $M_{0}$ is not a fan-extension of $N$.
Moreover, $M_{0}$ has a $3$\dash connected single-element deletion
or contraction that has $N$ as a minor.
\end{lemma}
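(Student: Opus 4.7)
The plan is to begin with $M_0 = M$ and look for an element $g$ such that $M\ba g$ or $M/g$ is $3$\dash connected with $N$ as a minor. Pick any $e \in E(M) - E(N)$ (which exists because $|E(M)| - |E(N)| > 2$) and apply \Cref{tooth}. After dualising $M$, $N$, and $\mcal{M}$ if necessary, one of its three cases holds. In case~(i), $M/e$ is $3$\dash connected with $N$ as a minor, and we are done. In case~(ii), the element $f$ from the triangle $\{x,e,f\}$ satisfies $M\ba f$ $3$\dash connected, and since $N$ is a minor of $M/e\ba f$, which is a minor of $M\ba f$, we have $M\ba f$ $3$\dash connected with $N$ as a minor, so again $M_0 = M$ works.

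Only case~(iii) requires further work. Here there is a fan $(x,e,f,y)$ of $M$ with $\{x,e,f\}$ a triangle, $e$ a rim element (so $\{e,f,y\}$ is a triad), and $M/e\ba f$ $3$\dash connected with $N$ as a minor. This matches the hypotheses of \Cref{flint} exactly, with $(e_1,e_2,e_3,e_4)=(x,e,f,y)$. Applying \Cref{flint} yields a matroid $M'$, isomorphic to $M$, with $N$ as a minor, and not a fan-extension of $N$, obtained by swapping the labels on $x$ and $f$. Under the new labels the fan in $M'$ reads $(f,e,x,y)$; the triangle $\{x,e,f\}$ is preserved as a set, but the triad of $M'$ through $e$ is now $\{e,x,y\}$ rather than $\{e,f,y\}$.

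I then reapply \Cref{tooth} in $M'$ to an element near the relabelled fan whose case~(iii) obstruction has been disrupted by the swap; the natural candidates are the element now labelled $f$ in $M'$ (which is the original $x$) and the terminal element $y$. The aim is to show that at least one of these candidates falls in case~(i) or case~(ii) of \Cref{tooth}, yielding the desired $3$\dash connected single-element deletion or contraction of $M_0 = M'$ with $N$ as a minor. The main obstacle is ruling out case~(iii) recurring at \emph{both} candidates: if it did, the resulting further fans of $M'$ would, by \Cref{unity}, have to overlap with $(f,e,x,y)$ in a consistent way; the combined triangle/triad structure, analysed using orthogonality together with \Cref{putty} on $U_{2,4}$\dash restrictions and \Cref{steal} on triangles inside fans, together with \Cref{ninny}, would either contradict $3$\dash connectivity of $M'$ or exhibit a covering family of $M'$ relative to $\mcal{F}_N$. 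Since $M'$ is isomorphic to $M$, has $N$ as a minor, and is not a fan-extension of $N$, the existence of such a covering family contradicts \Cref{fever}. Hence case~(i) or~(ii) of \Cref{tooth} must hold at one of the two candidates, completing the proof.
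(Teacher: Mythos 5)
Your handling of cases~(i) and~(ii) of \Cref{tooth} is fine, and the single application of \Cref{flint} in case~(iii) is legitimate, but the remainder of case~(iii) is a genuine gap. After swapping $x$ and $f$, the pair whose removal preserves $N$ sits at positions $(1,2)$ of the fan $(f,e,x,y)$ of $M'$, so what you actually need is that $M'\ba f\cong M_0\ba x$ is $3$\dash connected --- and there is no reason for this: the fan $(x,e,f,y)$ delivered by \Cref{tooth} may be an internal piece of a longer fan of $M$, in which case $x$ lies in a triad and $M_0\ba x$ has a series pair. Your proposed rescue does not work: the two candidates you name (the original $x$, and $y$) typically lie in $E(N)$, so \Cref{tooth} cannot even be applied to them; and for the elements where it can be applied, case~(iii) recurring is not contradictory --- it simply records that the fan extends further --- so no covering family of $M'$ is forced (hence no conflict with \Cref{fever}) and no violation of $3$\dash connectivity arises. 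The sentence beginning ``the combined triangle/triad structure \ldots would either contradict \ldots or exhibit a covering family'' is precisely the step that fails, and it is where all the work lies.

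The paper closes this gap differently. It enlarges the $4$\dash element fan from \Cref{tooth} to a fan $(e_1,\ldots,e_n)$, chosen with $n$ maximal among fans containing a ``removable'' rim/spoke pair $(e_i,e_{i+1})$ with $M/e_i\ba e_{i+1}$ $3$\dash connected and having $N$ as a minor; maximality yields sublemma \ref{saint}, namely that deleting or contracting the terminal element $e_1$ keeps $3$\dash connectivity. It then performs not one swap but a chain of $i-1$ label swaps, alternating \Cref{flint} and its dual (sublemma \ref{ratio}), which walks the removable pair to the front of the fan while preserving both ``$N$ is a minor'' (since each relabelled matroid agrees with $M$ after removing $\{e_i,e_{i+1}\}$) and ``not a fan-extension''. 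The final relabelled matroid $\M{i-1}$ then has the removable element at the position governed by \ref{saint}, giving the required $3$\dash connected single-element deletion or contraction with $N$ as a minor. Your write-up is missing both ingredients: the maximal-fan argument that guarantees removability at the fan's end, and the iterated relabelling that brings the removable pair there.
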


\begin{proof}
We will assume that the lemma is false, so that no
such matroid $M_{0}$ exists.
Let $e$ be an element in $E(M)-E(N)$.
We use \Cref{tooth}.
By possibly replacing $M$, $N$, and \mcal{M}
with their duals, we can assume that
$(e_{1},\ldots, e_{n})$ is a fan of $M$ such that $N$ is a
minor of $M/e_{i}\ba e_{i+1}$ for some rim element
$e_{i}$, where $i\in \{2,\ldots, n-2\}$.
Moreover, $M/e_{i}\ba e_{i+1}$ is $3$\dash connected.
We assume that amongst all such fans, $(e_{1},\ldots, e_{n})$ has been
chosen so that $n$ is as large as possible.

\begin{sublemma}
\label{saint}
If $e_{1}$ is a spoke element of $(e_{1},\ldots, e_{n})$, then
$M\ba e_{1}$ is $3$\dash connected, and if $e_{1}$ is a
rim element, then $M/e_{1}$ is $3$\dash connected.
\end{sublemma}

\begin{proof}
Assume $e_{1}$ is a rim element.
Then $n>4$, for otherwise $i=2$, and both $e_{1}$
and $e_{2}$ are rim elements.
If $M/e_{1}$ is not $3$\dash connected, then
$e_{1}$ is in a triangle.
It follows from \Cref{steal} that this triangle is not contained
in $\{e_{1},\ldots, e_{n}\}$.
Let $z\notin \{e_{1},\ldots, e_{n}\}$ be an element in a
triangle with $e_{1}$.
Orthogonality with $\{e_{1},e_{2},e_{3}\}$ and
$\{e_{3},e_{4},e_{5}\}$ shows $\{e_{1},e_{2},z\}$ is a triangle,
and $(z,e_{1},\ldots, e_{n})$ is a fan.
This contradicts the maximality of $n$.
Similarly, if $e_{1}$ is a spoke element and
$M\ba e_{1}$ is not $3$\dash connected, then
$e_{1}$ is in a triad with an element $z\notin\{e_{1},\ldots, e_{n}\}$,
and either $(z,e_{1},\ldots, e_{n})$ is a fan,
or $n=4$, and $\{e_{1},e_{3},z\}$ is a triad.
In the first case we have a contradiction to the
maximality of $n$.
In the latter case, $n=4$ and $i=2$, so
$N$ is a minor of $M/e_{2}\ba e_{3}$, but
$\{e_{1},e_{3},z\}$ is a triad that contains
the parallel pair $\{e_{1},e_{3}\}$ in $M/e_{2}$,
contradicting \Cref{lasso}.
\end{proof}

Define \M{0} to be $M$.
We obtain \M{1} from \M{0} by swapping labels on $e_{i-1}$ and $e_{i+1}$.
Then $\M{1}/e_{i}\ba e_{i+1}=\M{0}/e_{i}\ba e_{i+1}$, and
by applying \Cref{flint} to $(e_{i-1},e_{i},e_{i+1},e_{i+2})$ we see that
\M{1} is not a fan-extension of $N$.
Let \M{2} be obtained from \M{1} by swapping the labels on $e_{i-2}$ and
$e_{i}$.
Then $\M{2}/e_{i}\ba e_{i+1}=\M{1}/e_{i}\ba e_{i+1}$, and
the dual of \Cref{flint} implies \M{2} is not a
fan-extension of $N$.
In general, when $j\in \{1,\ldots, i-1\}$, we obtain
\M{j}\ from \M{j-1} by swapping labels on
$e_{i-j}$ and $e_{i+1}$ if $j$ is odd, and on
$e_{i-j}$ and $e_{i}$ if $j$ is even.
An obvious inductive argument establishes the
following statement.

\begin{sublemma}
\label{ratio}
For every $j\in \{0,1,\ldots, i-1\}$,
$\M{j}/e_{i}\ba e_{i+1}=M/e_{i}\ba e_{i+1}$,
so $\M{j}/e_{i}\ba e_{i+1}$ has $N$ as a minor.
Moreover, \M{j} is not a fan-extension of $N$.
\end{sublemma}

Note that \M{i-1}\ is obtained from $M$ by relabeling
$(e_{1},\ldots, e_{n})$ as
\[
\begin{cases}
(e_{i},e_{i+1},e_{1},\ldots, e_{i-1},e_{i+2},\ldots, e_{n})
\qquad\text{if}\ i\ \text{is odd}\\
(e_{i+1},e_{i},e_{1},\ldots, e_{i-1},e_{i+2},\ldots, e_{n})
\qquad\text{if}\ i\ \text{is even}.
\end{cases}
\]

Assume $i$ is odd, so $e_{1}$ is a rim element
of $(e_{1},\ldots, e_{n})$ in $M$.
There is an isomorphism from $M$ to \M{i-1}\ that relabels
$e_{1}$ as $e_{i}$.
Therefore $\M{i-1}/e_{i}$ is $3$\dash connected, by
 \ref{saint}.
Similarly, if $i$ is even, then $e_{1}$ is a
spoke element and $\M{i-1}\ba e_{i+1}$ is $3$\dash connected.
In either case, \M{i-1} is isomorphic to $M$,
and has $N$ as a minor, but is not a fan-extension of
$N$.
Since \M{i-1} has a $3$\dash connected
single-element deletion or
contraction that has $N$ as a minor, the lemma
is proved.
\end{proof}

Now we can assume $M$ has a $3$\dash connected single-element
deletion or contraction with $N$ as a minor.
\Cref{flint} considered swapping labels on elements that
belonged to a fan of $M$.
In the next lemma we swap labels on elements that
belong to a fan in a $3$\dash connected single-element
deletion.

\begin{lemma}
\label{flame}
Let $M_{0}$ be isomorphic to $M$, and assume that $M_{0}$ has $N$
as a minor, but is not a fan-extension of $N$.
Assume $M_{0}\ba e$ is $3$\dash connected
and has $(e_{1},e_{2},e_{3},e_{4})$ as a fan.
Moreover, assume that either:
\begin{enumerate}[label=\textup{(\roman*)}]
 \item $e_{2}$ is a rim element and $N$ is a minor of
$M_{0}\ba e/e_{2}\ba e_{3}$, or
\item $e_{2}$ is a spoke element, and $N$ is a minor of
$M_{0}\ba e\ba e_{2}/e_{3}$.
\end{enumerate}
Let $M'$ be obtained from $M_{0}$ by swapping the labels on
$e_{1}$ and $e_{3}$.
Then $M'$ has $N$ as a minor, but is not a fan-extension of $N$.
\end{lemma}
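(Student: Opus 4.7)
The plan is to parallel the proof of \Cref{flint}, adapting its structure to handle the extra element $e$ that is deleted before the fan appears. I will first verify that $M'$ has $N$ as a minor. In case~(i), since $\{e_{1},e_{3}\}$ is a parallel pair of $M_{0}\ba e/e_{2}$, deleting either gives the same matroid, so $M'\ba e/e_{2}\ba e_{3}=M_{0}\ba e/e_{2}\ba e_{3}$, which has $N$ as a minor by hypothesis. The dual argument settles case~(ii), using that $\{e_{1},e_{3}\}$ is a series pair of $M_{0}\ba e\ba e_{2}$. By duality I then handle only case~(i) for the remainder.

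Next, I split case~(i) into two subcases according to whether $\{e_{2},e_{3},e_{4}\}$ is a triad of $M_{0}$. If it is, then $(e_{1},e_{2},e_{3},e_{4})$ is already a fan of $M_{0}$, and since $N$ is a minor of $M_{0}/e_{2}\ba e_{3}$, \Cref{flint} applies directly to give the conclusion. Otherwise $\{e,e_{2},e_{3},e_{4}\}$ must be a cocircuit of $M_{0}$ (since deleting $e$ turns it into the triad $\{e_{2},e_{3},e_{4}\}$); this $4$\dash cocircuit plays the role that the triad played in the proof of \Cref{flint}, with the extra wrinkle that $e$ now participates in orthogonality arguments.

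Assuming $M'$ is a fan-extension of $N$ for contradiction, I fix a covering family $\mcal{F}'$ of $M'$. Because $e$, $e_{2}$, and $e_{3}$ all lie in $E(M')-E(N)$, each must appear in a fan of $\mcal{F}'$. The goal is then to imitate the two sub-claims from the proof of \Cref{flint}: first, that no fan of $\mcal{F}'$ contains $\{e_{1},e_{2},e_{3}\}$ as three consecutive elements, because otherwise swapping the labels of $e_{1}$ and $e_{3}$ back yields a covering family of $M_{0}$ and contradicts \Cref{fever}; second, that some covering family of $M'$ has $e_{2}$ and $e_{3}$ consecutive in one of its fans. The second sub-claim is established by a case analysis on the position of $e_{2}$ within its fan, driven by orthogonality with $\{e_{1},e_{2},e_{3}\}$ and the cocircuit $\{e,e_{2},e_{3},e_{4}\}$.

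Once both sub-claims are in hand, I analyze the fan $F'\in\mcal{F}'$ in which $e_{2}$ and $e_{3}$ are consecutive. Applying \Cref{ninny} pins down the positions of $e_{2}$ and $e_{3}$ in $F'$ and forces $e_{1}$ to lie outside $F'$, so $e_{1}$ must sit in a second fan $G\in \mcal{F}'$. Tracking $G$ by repeated applications of orthogonality against $\{e_{1},e_{2},e_{3}\}$, the cocircuit $\{e,e_{2},e_{3},e_{4}\}$, and the triads of $F'$, I then push through the endgame used in the closing paragraphs of the proof of \Cref{flint}, either producing two overlapping fans of $\mcal{F}_{N}$ (contradicting disjointness) or constructing a covering family of $M_{0}$ (contradicting \Cref{fever}). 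The main obstacle throughout is the extra cocircuit $\{e,e_{2},e_{3},e_{4}\}$: every orthogonality argument involving the old triad $\{e_{2},e_{3},e_{4}\}$ must be re-examined to rule out, or exploit, the possibility that $e$ lies in the same fan being analyzed, and this bookkeeping on the location of $e$ is what distinguishes the present proof from its simpler predecessor.
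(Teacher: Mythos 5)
There is a genuine gap: your reduction of case~(ii) to case~(i) ``by duality'' does not work. The hypothesis of the lemma is that the \emph{deletion} $M_{0}\ba e$ is $3$\dash connected and carries the fan $(e_{1},e_{2},e_{3},e_{4})$. If case~(ii) holds and you pass to duals, what you know is that $(e_{1},e_{2},e_{3},e_{4})$ is a fan of $M_{0}^{*}/e$ with $e_{2}$ a rim element and $N^{*}$ a minor of $M_{0}^{*}/e/e_{2}\ba e_{3}$ --- i.e.\ the relevant information now concerns $M_{0}^{*}/e$, not $M_{0}^{*}\ba e$, so the dual instance of the lemma (whose hypothesis again requires a $3$\dash connected deletion of $e$ containing the fan) simply does not apply. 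Duality is only available at the very start, in the easy subcase where $(e_{1},e_{2},e_{3},e_{4})$ is already a fan of $M_{0}$, because \Cref{flint}'s hypotheses concern $M_{0}$ itself. The paper instead unifies the two hypotheses by setting $(x_{1},x_{2},x_{3},x_{4})$ to be $(e_{1},e_{2},e_{3},e_{4})$ in case~(i) and the reversal $(e_{4},e_{3},e_{2},e_{1})$ in case~(ii), so that $\{e,x_{2},x_{3},x_{4}\}$ is a cocircuit in both cases; but then the label swap defining $M'$ is on $\{x_{1},x_{3}\}$ in case~(i) and on $\{x_{2},x_{4}\}$ in case~(ii), and these two situations require genuinely different analyses (the paper's Case~1 and Case~2). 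By discarding case~(ii) you omit roughly half of the required argument, and that half cannot be recovered from the case~(i) analysis.

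Beyond this, even within case~(i) your outline asserts rather than supplies the hard content. The paper's proof does not proceed by replaying \Cref{flint}'s two sub-claims and endgame: its pivot is the fan $F_{e}$ of the covering family that must contain $e$ (since $e\notin E(N)$), from which orthogonality with the cocircuit $\{e,x_{2},x_{3},x_{4}\}$, together with the facts that $M_{0}\ba e/x_{2}\ba x_{3}$ and $M_{0}\ba x_{3}$ are $3$\dash connected, forces a triangle $\{e,x_{4},z\}$ and a triad $\{x_{4},z,z'\}$ on seven distinct elements; the contradictions are then obtained by label-swapping arguments against \Cref{lasso} and \Cref{putty} (and, in Case~2, a covering-family argument in $M_{0}\ba x_{3}$ via \Cref{unity} and \Cref{steal}), not by the ``two overlapping fans of $\mcal{F}_{N}$ or a covering family of $M_{0}$'' endgame of \Cref{flint}. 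The bookkeeping you defer --- how each orthogonality step survives the replacement of the triad $\{e_{2},e_{3},e_{4}\}$ by a $4$\dash element cocircuit through $e$ --- is precisely where the work lies, so as written the proposal is a plan with a fatal structural error rather than a proof.
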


\begin{proof}
Note that in case (i), $M'\ba e/e_{2}\ba e_{3}=M_{0}\ba e/e_{2}\ba e_{3}$,
and in case (ii), $M'\ba e\ba e_{2}/e_{3}=M_{0}\ba e\ba e_{2}/e_{3}$, so
$M'$ certainly has $N$ as a minor.
If $(e_{1},e_{2},e_{3},e_{4})$ is a fan of $M_{0}$, then we
could apply \Cref{flint} or its dual to
$(e_{1},e_{2},e_{3},e_{4})$,
and deduce that $M'$ is not a fan-extension of $N$.
In this case there is nothing left to prove, so we assume that
$(e_{1},e_{2},e_{3},e_{4})$ is not a fan of $M_{0}$.
Thus, if statement (i) holds, $\{e,e_{2},e_{3},e_{4}\}$ is a
cocircuit of $M_{0}$.
In this case we set $(x_{1},x_{2},x_{3},x_{4})$ to be
$(e_{1},e_{2},e_{3},e_{4})$.
If statement (ii) holds, $\{e,e_{1},e_{2},e_{3}\}$ is a
cocircuit, and in this case we set $(x_{1},x_{2},x_{3},x_{4})$
to be $(e_{4},e_{3},e_{2},e_{1})$.
In either case, $(x_{1},x_{2},x_{3},x_{4})$ is a fan of
$M_{0}\ba e$ with $x_{1}$ as a spoke element, $N$ is a minor
of $M_{0}\ba e/x_{2}\ba x_{3}$, and
$\{e,x_{2},x_{3},x_{4}\}$ is a cocircuit of $M_{0}$.

\begin{sublemma}
\label{viper}
$M_{0}\ba e/x_{2}\ba x_{3}$ is $3$\dash connected.
\end{sublemma}

\begin{proof}
This follows by applying \Cref{shrub} to $M_{0}\ba e$.
\end{proof}

\begin{sublemma}
\label{cable}
$M_{0}\ba x_{3}$ is $3$\dash connected.
\end{sublemma}

\begin{proof}
Assume otherwise.
Then $x_{3}$ is contained in a triad, $T^{*}$, of $M_{0}$.
As $M_{0}\ba e$ is $3$\dash connected, $T^{*}$ is also a
triad in $M_{0}\ba e$.
\Cref{lasso} and orthogonality with $\{x_{1},x_{2},x_{3}\}$
shows that $T^{*}$ contains exactly one of $x_{1}$ or $x_{2}$.
If $x_{1}$ is in $T^{*}$, then $T^{*}$ is a dependent triad
in $M_{0}/x_{2}$.
Since $N$ is a minor of $M_{0}/x_{2}$ this contradicts \Cref{lasso}.
Therefore $x_{2}, x_{3}\in T^{*}$.
Note $T^{*}\ne \{x_{2},x_{3},x_{4}\}$, as
$\{e,x_{2},x_{3},x_{4}\}$ is a cocircuit of $M_{0}$.
Therefore $T^{*} \cup x_{4}$ is a $U_{2,4}$\dash corestriction in
$M_{0}\ba e$ that intersects the triangle $\{x_{1},x_{2},x_{3}\}$.
This contradiction to the dual of \Cref{putty} completes
the proof.
\end{proof}

We will assume for a contradiction that $M'$ is a fan-extension of $N$.
Then there is a covering family of $M'$ containing a fan, $F_{e}$,
that contains $e$.
As $M'\ba e$ is $3$\dash connected (since it is isomorphic to
$M_{0}\ba e$) and $F_{e}$ contains at least four elements,
it follows that $e$ is a terminal spoke element of a fan in $M_{0}$
that has at least four elements.
Let $T_{e}$ be the triangle in this fan that contains $e$.
Orthogonality with $\{e,x_{2},x_{3},x_{4}\}$ shows that $T_{e}$
contains $x_{2}$, $x_{3}$, or $x_{4}$.
As $T_{e}-e$ is contained in a triad of $M_{0}$,
it follows from \ref{cable} that $x_{3}$ is not in $T_{e}$.
Assume that $x_{2}$ is in $T_{e}$.
Orthogonality between $\{x_{1},x_{2},x_{3}\}$ and the triad
containing $T_{e}-e$ shows that $x_{1}$ is contained in the triad.
However, $x_{1}$ is not in $T_{e}$, for that would imply
$T_{e}\cup x_{3}$ is a $U_{2,4}$\dash restriction of $M_{0}$ that
intersects a triad, a contradiction.
Now we can swap the labels on $x_{1}$ and $x_{3}$, and delete $x_{3}$.
The resulting matroid has $N$ as a minor, since $x_{1}$ and
$x_{3}$ are parallel in $M_{0}/x_{2}$, and contains the
codependent triangle $T_{e}$.
Because this is a contradiction, we conclude that $x_{2}$ is not
in $T_{e}$, so $x_{4}\in T_{e}$.
Let $z$ be the third element of $T_{e}$, so that $\{e,x_{4},z\}$
is a triangle in $M_{0}$, and $\{x_{4},z\}$ is contained
in a triad, $\{x_{4},z,z'\}$, of $M_{0}$.

\begin{sublemma}
\label{virus}
The elements in $\{e,x_{1},x_{2},x_{3},x_{4},z,z'\}$ are pairwise
distinct.
\end{sublemma}

\begin{proof}
By the hypotheses of the lemma, $e$, $x_{1}$, $x_{2}$, $x_{3}$,
and $x_{4}$ are pairwise distinct.
Also, $e$, $x_{4}$, $z$, and $z'$ are distinct members of a fan.
If $z=x_{1}$, then orthogonality between $\{x_{1},x_{2},x_{3}\}$
and $\{x_{4},z,z'\}$ shows that $z'\in\{x_{2},x_{3}\}$.
Then $\{x_{4},z,z'\}$ and $\{x_{2},x_{3},x_{4}\}$ are
distinct triads of $M_{0}\ba e$ that intersect in two elements.
Hence $M_{0}\ba e$ has a $U_{2,4}$\dash corestriction intersecting
a triangle, a contradiction.
If $z=x_{2}$, then $\{e,x_{2},x_{4}\}$ is a triangle and a triad
in $M_{0}\ba x_{3}$, which is impossible by \ref{cable}.
Finally, if $z=x_{3}$, then $M_{0}\ba x_{3}$ contains the
series pair $\{x_{4},z'\}$, contradicting \ref{cable}.
This shows that $e$, $x_{1}$, $x_{2}$, $x_{3}$, $x_{4}$, and $z$
are pairwise distinct.
If $z'\in \{x_{1},x_{2},x_{3}\}$, then we contradict
orthogonality with $\{x_{4},z,z'\}$.
This completes the proof.
\end{proof}

Now we subdivide into two cases.

\textbf{Case 1}.
Statement (i) holds, so $M'$ is obtained from $M_{0}$ by swapping the
labels on $x_{1}$ and $x_{3}$.
Since $M'$ is a fan-extension of $N$, there is a covering family,
$\mcal{F}'$, of $M'$ containing a fan, $F_{2}$, that contains $x_{2}$.
Let $F_{2}=(f_{1},\ldots, f_{m})$
Note that $m\geq 4$.

Assume that $x_{2}$ is contained in a triangle, $T_{2}$, that
is contained in $F_{2}$.
Suppose $x_{3}$ is not in $T_{2}$.
From \ref{viper} we see that $M'\ba e/x_{2}\ba x_{3}$ 
contains no parallel pair, so $e$ is in $T_{2}$.
This means $m\geq 5$, as $F_{2}$ contains $e$, $x_{2}$,
and at least three elements of $E(N)$.
Because $M'\ba e$ is 3\dash connected, $e$ is contained in no
triad of $M'$.
Therefore we can assume that $e=f_{1}$, and $f_{1}$ is a spoke
element of $F_{2}$.
Thus $T_{2}=\{f_{1},f_{2},f_{3}\}$.
As $N$ is a minor of $M'/x_{2}\ba e$, \Cref{metro}
implies $e$ and $x_{2}$ are consecutive in $F_{2}$,
so $x_{2}=f_{2}$.
\Cref{shrub} applied to $(f_{m},\ldots, f_{1})$ implies that
$M'/f_{2}\ba f_{1}=M'/x_{2}\ba e$ is $3$\dash connected.
This is impossible, as this matroid contains the parallel
pair $\{x_{1},x_{3}\}$.
Therefore we conclude that $x_{3}$ is in $T_{2}$.

If $x_{1}$ is not in $T_{2}$, then $T_{2}\cup x_{1}$ is a
$U_{2,4}$\dash restriction of $M'\ba e$, and $\{x_{2},x_{1},x_{4}\}$
is a triad.
This contradiction shows that $T_{2}=\{x_{1},x_{2},x_{3}\}$.
Now $\{x_{1},x_{2},x_{3}\}$ form a set of three consecutive
elements in $F_{2}$, by \Cref{steal}.
Let $F_{2}'$ be the fan obtained from $F_{2}$ by swapping
$x_{1}$ and $x_{3}$.
As $x_{2}$ and $x_{3}$ are not in $E(N)$, it is clear that
$(\mcal{F}'-\{F_{2}\})\cup\{F_{2}'\}$ is a covering
family of $M_{0}$.
This is a contradiction to \Cref{fever}, so we have to
conclude that $x_{2}$ is contained in no triangle in
$F_{2}$.
Therefore we can assume that $x_{2}=f_{1}$, and this is a
rim element of $F_{2}$.

By orthogonality with $\{x_{1},x_{2},x_{3}\}$, and the fact that
$M'\ba x_{1}$ is $3$\dash connected (by \Cref{cable}), we see
that $\{f_{1},f_{2},f_{3}\}$ contains $x_{3}$, so $m\geq 5$.
The dual of \Cref{metro} shows that $x_{2}$ and $x_{3}$
are consecutive elements in $F_{2}$, so $x_{3}=f_{2}$.
If $x_{1}$ is in $F_{2}$, then we can apply \Cref{steal}
to deduce that $x_{1}=f_{3}$.
This implies $\{f_{1},f_{2},f_{3}\}$ is simultaneously
a triad and a triangle of $M'$, which is impossible.
Hence $x_{1}$ is not in $F_{2}$.
Let $(F_{2}+x_{1})'$ be the fan of $M_{0}$ obtained by appending
$x_{1}$ to the end of $F_{2}$ and then swapping the locations of
$x_{1}$ and $x_{3}$.
Since $(\mcal{F}'-\{F_{2}\})\cup (F_{2}+x_{1})'$ cannot be a
covering family in $M_{0}$, it follows that $x_{1}$ is in another
fan, $F_{1}$, belonging to $\mcal{F}'$.
Orthogonality with $\{x_{1},x_{2},x_{3}\}$ shows that $x_{1}$ is a
terminal spoke element in $F_{1}$.
Because $(\mcal{F}'-\{F_{1},F_{2}\})\cup \{F_{1}-x_{1},(F_{2}+x_{1})'\}$
is not a covering family of $M_{0}$, we deduce that
$x_{1}$ is in the fan of $\mcal{F}_{N}$ that is consistent with $F_{1}$.

Let $T_{1}$ be the triangle in $F_{1}$ that contains $x_{1}$.
Orthogonality with $\{e,x_{2},x_{1},x_{4}\}$, implies that
$T_{1}$ contains $e$ or $x_{4}$.
If $e\in T_{1}$, then $|F_{1}|\geq 4$, as $e$ is not in $E(N)$.
In this case, $e$ is in a triad that is contained in $F_{1}$,
which is a contradiction, as $M'\ba e$ is $3$\dash connected.
Therefore $T_{1}$ contains $x_{4}$.
Recall that $\{e,x_{4},z\}$ is a triangle and $\{x_{4},z,z'\}$ is
a triad of $M_{0}$, and hence of $M'$, for some element $z'$.
Assume $T_{1}$ contains $z$.
This implies that $\{e,x_{1},x_{4},z\}$ is a $U_{2,4}$\dash restriction
of $M'$ that meets the triad $\{x_{4},z,z'\}$, which is impossible.
Therefore $T_{1}$ contains $z'$.
By swapping labels on $x_{1}$ and $x_{3}$ in $M'$, we see that
$\{x_{3},x_{4},z'\}$ is a triangle of $M_{0}$.
Let $M_{1}$ be the matroid obtained from $M_{0}$ by swapping
labels on $x_{2}$ and $x_{4}$.
As $\{x_{2},x_{4}\}$ is a series pair in $M_{0}\ba e\ba x_{3}$,
it follows that $M_{1}\ba e\ba x_{3}/x_{2}=M_{0}\ba e\ba x_{3}/x_{2}$,
so $M_{1}\ba e\ba x_{3}/x_{2}$ has $N$ as a minor.
Now $\{x_{3},x_{2},z'\}$ is a triangle of $M_{1}$, so $\{x_{3},z'\}$
is a parallel pair in $M_{1}/x_{2}$.
We let $M_{2}$ be the matroid obtained from $M_{1}$ by swapping
labels on $x_{3}$ and $z'$.
Then $M_{2}/x_{2}\ba x_{3}\ba e=M_{1}/x_{2}\ba x_{3}\ba e$, and hence
$M_{2}/x_{2}\ba x_{3}\ba e$ has $N$ as a minor.
However, $\{e,x_{2},z\}$ is a triangle in $M_{2}\ba x_{3}$,
and $\{x_{2},z\}$ is a series pair.
This contradiction to \Cref{lasso} competes the analysis of Case~1.

\textbf{Case 2}.
Statement (ii) holds, so $M'$ is obtained from $M_{0}$ by swapping
the labels on $x_{2}$ and $x_{4}$.
Let $\mcal{F}'$ be a covering family of $M'$, and let
$F_{3} \in \mcal{F}'$ be the fan that contains $x_{3}$.
Note that $x_{3}$ is a terminal spoke element of $F_{3}$ as
$M'\ba x_{3}$ is $3$\dash connected.
Let $T_{3}$ be the triangle in $F_{3}$ that contains $x_{3}$.
Orthogonality with the cocircuit $\{e,x_{2},x_{3},x_{4}\}$
shows that $e$, $x_{2}$, or $x_{4}$ is in $T_{3}$.
Since $M'\ba e$ is $3$\dash connected, and $T_{3}-x_{3}$ is
contained in a triad, $e$ is not in $T_{3}$.
Assume $x_{2}$ is in $T_{3}$.
Since $\{x_{2},z,z'\}$ is a triad of $M'$, either $z$ or $z'$
is in $T_{3}$.
In the former case, $\{e,x_{2},x_{3},z\}$ is a
$U_{2,4}$\dash restriction that intersects a triad, which
is a contradiction.
Hence $T_{3}=\{x_{2},x_{3},z'\}$.
As $M'/x_{2}$ contains the parallel pair $\{x_{3},z'\}$ and
$N$ as a minor, we can swap labels on $x_{3}$ and $z'$ in $M'$
and then delete $x_{3}$.
The resulting matroid contains $N$ as a minor,
a triangle $\{e,x_{2},z\}$, and a series pair $\{x_{2},z\}$.
This contradicts \Cref{lasso}, so we deduce that
$x_{4}$ is in $T_{3}$.
Now $T_{3}=\{x_{1},x_{4},x_{3}\}$, for otherwise $T_{3}\cup x_{1}$
is a $U_{2,4}$\dash restriction of $M'\ba e$ that intersects a
triad.
Let $F_{3}=(f_{1},\ldots,f_{s})$.
By reversing, we can assume that $f_{s}=x_{3}$.

\Cref{steal} implies that $\{x_{1},x_{4},x_{3}\}=\{f_{s-2},f_{s-1},f_{s}\}$.
Assume that $x_{4}\ne f_{s-1}$.
If $s\geq 5$, then we
let $F_{3}'$ be the fan of $M_{0}$ obtained from $F_{3}$
by relabeling $x_{4}$ with $x_{2}$.
As $N$ is a minor of $M_{0}/x_{2}\ba x_{3}$, and
$\{x_{1},x_{2},x_{3}\}\subseteq F_{3}'$, \Cref{metro}
implies $x_{2}$ and $x_{3}$ are consecutive in $F_{3}'$,
which implies $x_{4}$ and $x_{3}$ are consecutive in
$F_{3}$, contrary to assumption.
Therefore $s=4$, and $F_{3}=(f_{1},x_{4},x_{1},x_{3})$.
Thus $(f_{1},x_{4},x_{1})$ or its reversal is in $\mcal{F}_{N}$.
Note that $\{f_{1},x_{1},x_{4}\}$ is a triad of $M'$, so
$\{f_{1},x_{1},x_{2}\}$ is a triad of $M_{0}$, and
$(f_{1},x_{1},x_{2},x_{3},x_{4})$ is a fan in $M_{0}\ba e$.
As $M_{0}\ba e$ is a fan-extension of $N$, it contains a
fan that has $(f_{1},x_{4},x_{1})$ as a subsequence.
This fan intersects $(f_{1},x_{1},x_{2},x_{3},x_{4})$ in at least
three elements, so it contains $\{f_{1},x_{1},x_{2},x_{3},x_{4}\}$,
by \Cref{unity}, and therefore contains
$(f_{1},x_{1},x_{2},x_{3},x_{4})$ as a contiguous subsequence,
by three applications of \Cref{steal} and its dual.
Hence $(f_{1},x_{4},x_{1})$ is not a subsequence after all.
From this contradiction we see that $x_{4}=f_{s-1}$ and
$(f_{s-2},f_{s-1},f_{s})=(x_{1},x_{4},x_{3})$.

Let $F_{N}$ be the fan in $\mcal{F}_{N}$ that is consistent
with $F_{3}$.
Let \mcal{F} be the family of fans in $M_{0}$ that is induced
from $\mcal{F}'$ by swapping the labels on $x_{2}$ and $x_{4}$.
As the fans in $\mcal{F}'$ contains all elements in
$E(M')-E(N)$, it follows that the fans in \mcal{F} contain
all elements in $E(M_{0})-E(N)$, except possibly
for $x_{2}$.
However, $x_{4}$ is in $F_{3}$, so $x_{2}$ is in a fan of
\mcal{F}.
Therefore the fans in \mcal{F} contain every element in
$E(M_{0})-E(N)$.
As $x_{2}$ does not belong to a fan of $\mcal{F}_{N}$, the
only way \mcal{F} can fail to be a covering family of $M_{0}$
is if $x_{4}$ is in $F_{N}$.
We deduce this is the case.
Then $x_{4}$ is a terminal element of $F_{N}$, because
$(f_{s-2},f_{s-1},f_{s})=(x_{1},x_{4},x_{3})$ and
$x_{3}\notin E(N)$.

We claim that $\{e,x_{2},x_{4}\}$ is the unique triad
of $M_{0}\ba x_{3}$ that contains $e$.
Suppose $T^{*}$ is another such triad.
If $x_{2}$ is in $T^{*}$, then $T^{*}\cup x_{4}$ is a
$U_{2,4}$\dash corestriction of $M_{0}\ba x_{3}$, and
$\{e,x_{4},z\}$ is a triangle that intersects it.
This is a contradiction, so $x_{2}\notin T^{*}$.
This means that $T^{*}-e$ is a series pair of
$M_{0}\ba e/x_{2}\ba x_{3}$, contradicting \ref{viper},
and proving the claim.

As $M_{0}\ba x_{3}$ is $3$\dash connected, it
contains a covering family.

\begin{sublemma}
\label{house}
There exists a covering family of $M_{0}\ba x_{3}$
containing a fan that contains $e$ and $x_{2}$.
\end{sublemma}

\begin{proof}
Let \mcal{G} be an arbitrary covering family of $M_{0}\ba x_{3}$,
and let $G=(g_{1},\ldots, g_{t})$ be the fan in \mcal{G} that contains
$e$.
If $e$ is in a triad that is contained in $G$,
then we are done, as this triad must be $\{e,x_{2},x_{4}\}$,
by the earlier claim.
Therefore we assume that $e=g_{1}$ is a terminal spoke element
of $G$, and $x_{2}$ is not in $G$.
Orthogonality between $\{e,x_{2},x_{4}\}$ and $\{g_{1},g_{2},g_{3}\}$
shows that $x_{4}\in\{g_{2},g_{3}\}$.
If $x_{4}=g_{3}$, then $t=4$, for otherwise we contradict
orthogonality between $\{e,x_{2},x_{4}\}$ and
$\{g_{3},g_{4},g_{5}\}$.
In this case $(g_{2},g_{3},g_{4})=(g_{2},x_{4},g_{4})$ or
its reversal is in $\mcal{F}_{N}$.
Recall $F_{N}$ is the fan in $\mcal{F}_{N}$ consistent with
$F_{3}$, and $x_{4}$ is in $F_{N}$.
Hence $F_{N}$ is $(g_{2},x_{4},g_{4})$ or its reversal, and this
is a contradiction, as we concluded
$x_{4}$ is a terminal element of $F_{N}$.
Therefore $x_{4}=g_{2}$, and
$G+x_{2}=(x_{2},g_{1},\ldots, g_{t})$ is a fan of $M_{0}\ba x_{3}$.
Let $G'$ be the fan in \mcal{G} that contains $x_{2}$.
Then $x_{2}$ is a terminal rim element of $G'$, by
orthogonality with $\{e,x_{2},x_{4}\}$.
Now $(\mcal{G}-\{G,G'\})\cup\{G+x_{2},G'-x_{2}\}$ is the
desired covering family of $M_{0}\ba x_{3}$.
\end{proof}

Next we claim $M_{0}\ba x_{3}/x_{2}$ is $3$\dash connected.
Note that $(f_{s-3},f_{s-2},f_{s-1},f_{s})=(f_{s-3},x_{1},x_{4},x_{3})$
is a fan of $M'$, so $(f_{s-3},x_{1},x_{2},x_{3})$ is a fan
in $M_{0}$.
Now \Cref{shrub} tells us that $M_{0}\ba x_{3}/x_{2}$ is
$3$\dash connected, as desired.

Let \mcal{G} be a covering family of $M_{0}\ba x_{3}$, and
let $G=(g_{1},\ldots, g_{t})$ be a fan in \mcal{G} that
contains $e$ and $x_{2}$.
Then $t\geq 5$.
As $M_{0}\ba x_{3}/x_{2}$ is $3$\dash connected, $x_{2}$ is
a terminal rim element of $G$.
We can assume $x_{2}=g_{1}$.
The only triad of $M_{0}\ba x_{3}$ that contains $e$ is
$\{e,x_{2},x_{4}\}$, so if $x_{4}$ is not in $G$ then $e$ is
contained in no triad in $G$, implying $e=g_{t}$ is a
spoke element.
But then $\{e,x_{2},x_{4}\}$ violates orthogonality
with $\{g_{t-2},g_{t-1},g_{t}\}$.
Hence $G$ contains $e$, $x_{2}$, and $x_{4}$.
The dual of \Cref{steal} implies
$\{e,x_{2},x_{4}\}=\{g_{1},g_{2},g_{3}\}$.
As $t\geq 5$, and $e$ is not contained in the triad
$\{g_{3},g_{4},g_{5}\}$, we see that
$(g_{1},g_{2},g_{3})=(x_{2},e,x_{4})$.

Since $G$ contains $x_{4}$ it follows that $F_{N}$
is consistent with $G$.
Now consider the fans $G$ and
$(f_{1},\ldots, f_{s-2},x_{2})$ in $M_{0}\ba x_{3}$.
They both contain $x_{2}$, and the elements of
$F_{N}-x_{4}$.
Therefore they intersect in at least three elements, so we
can apply \Cref{unity}.
This shows that the elements common to both fans
form contiguous subsequences in both.
This implies $\{g_{1},g_{2},g_{3}\}=\{x_{2},e,x_{4}\}$
is a triad contained in $(f_{1},\ldots, f_{s-2},x_{2})$.
Therefore \Cref{steal} implies
$\{x_{2},e,x_{4}\}=\{x_{2},f_{s-2},f_{s-3}\}=\{x_{2},x_{1},f_{s-3}\}$.
This is impossible as $e$, $x_{1}$, and $x_{4}$ are distinct
elements.
We have a contradiction that completes the proof of
\Cref{flame}.
\end{proof}

If $M_{0}\ba e$ is $3$\dash connected with $N$ as a minor,
where $M_{0}$ is isomorphic to $M$ and is not a fan-extension,
then $M_{0}\ba e$ must be a fan-extension, by the
minimality of $M$.
This implies $M_{0}\ba e$ has a covering family.
In the next result, we examine this covering family,
and discover that the elements in $E(M_{0}\ba e)-E(N)$
are concentrated at the ends of fans.

\begin{lemma}
\label{train}
Let $M_{0}$ be isomorphic to $M$.
Assume that $N$ is a minor of $M_{0}$, but that $M_{0}$ is not
a fan-extension of $N$.
Assume also that $M_{0}\ba e$ is
$3$\dash connected and has $N$ as a minor.
Let $\mcal{F}_{e}$ be a covering family of $M_{0}\ba e$, and
let $(e_{1},\ldots, e_{m})$ be a fan in $\mcal{F}_{e}$.
If $2<i<m-1$, then $e_{i}$ is in $E(N)$.
\end{lemma}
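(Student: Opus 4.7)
My plan is to argue by contradiction: suppose $e_i\notin E(N)$ for some $i$ with $2<i<m-1$, and derive a contradiction. By replacing $M_0$, $N$, and $\mcal{M}$ with their duals if necessary (fans remain fans under duality, though spoke and rim labels swap), I may assume that $e_i$ is a rim element of the fan in $M_0\ba e$. Since $3\leq i\leq m-2$, the triangle $\{e_{i-1},e_i,e_{i+1}\}$ of $M_0\ba e$ is also a triangle of $M_0$, while $T_-:=\{e_{i-2},e_{i-1},e_i\}$ and $T_+:=\{e_i,e_{i+1},e_{i+2}\}$ are triads of $M_0\ba e$.

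Since $|E(M_0\ba e)|<|E(M)|$ and $M_0\ba e$ is $3$\dash connected with $N$ as a minor, the minimality of $M$ will force $M_0\ba e$ to be a fan-extension of $N$; by \Cref{bunch} it then has a covering family $\mcal{F}_e$ containing $(e_1,\ldots,e_m)$. Applying the argument in the proof of \Cref{burst} to $\mcal{F}_e$ at the internal rim element $e_i$ (using $m\geq 5$), I get that $N$ is a minor of $M_0\ba e/e_i$ and, after possibly reversing the fan, of $M'':=M_0\ba e/e_i\ba e_{i+1}$. By \Cref{shrub}, $M''$ is $3$\dash connected and so contains neither coloops nor series pairs.

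The crucial observation is that each triad $T$ of $M_0\ba e$ either persists as a triad of $M_0$ or else $T\cup\{e\}$ is a $4$\dash cocircuit of $M_0$. I will split into three cases based on how $T_-$ and $T_+$ lift, and in each case trace a cocircuit through to $M''=M^\sharp\ba e$, where $M^\sharp:=M_0/e_i\ba e_{i+1}$, ultimately producing a cocircuit of size at most two in $M''$ --- the desired contradiction. If $T_-$ is a triad of $M_0$, then contracting $e_i$ gives the cocircuit $\{e_{i-2},e_{i-1}\}$ of $M_0/e_i$, which (involving neither $e_{i+1}$ nor $e$) survives unchanged as a series pair of $M''$. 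If $T_-$ extends with $e$ while $T_+$ is a triad of $M_0$, then $T_+$ contracts to the cocircuit $\{e_{i+1},e_{i+2}\}$ of $M_0/e_i$, and deletion of $e_{i+1}$ yields the singleton cocircuit $\{e_{i+2}\}$ of $M^\sharp$, making $e_{i+2}$ a coloop of $M''$. If both $T_-$ and $T_+$ extend with $e$, then $\{e,e_{i-2},e_{i-1}\}=(\{e\}\cup T_-)-e_i$ is a union of cocircuits of $M_0/e_i$, hence contains some cocircuit $C_{0}^{*}$; since $e_{i+1}$ is not in $\{e,e_{i-2},e_{i-1}\}$, the same $C_{0}^{*}$ is a cocircuit of $M^\sharp$, and its image in $M''$ yields a cocircuit of size at most two.

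The main technical step will be verifying in this last case that the cocircuit thus obtained actually has size at most two --- equivalently, ruling out the degenerate possibility $C_{0}^{*}=\{e\}$. Such a $C_{0}^{*}$ would force $e$ to be a coloop of $M^\sharp$, which in turn would require, tracing back through the construction, a coloop of $M_0$, a $2$\dash cocircuit of $M_0$, or the triad $\{e,e_i,e_{i+1}\}$ of $M_0$. The first two are forbidden by $3$\dash connectivity of $M_0$, and the triad would be strictly contained in the $4$\dash cocircuit $\{e\}\cup T_+$, violating the incomparability of distinct cocircuits. Once this last verification is in place, all three cases give the required contradiction.
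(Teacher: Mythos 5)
Your argument rests on a false principle about contraction and cocircuits, and this error sinks all three cases. You assert that if $T_{-}=\{e_{i-2},e_{i-1},e_{i}\}$ is a triad of $M_{0}$, then $\{e_{i-2},e_{i-1}\}$ is a cocircuit of $M_{0}/e_{i}$ (and similarly for $T_{+}$, and for the ``union of cocircuits'' claim in your third case). This is backwards: the cocircuits of $M/e$ are precisely the cocircuits of $M$ that \emph{avoid} $e$, so contracting an element of a cocircuit does not shrink it --- it destroys it. Indeed, $T_{-}$ is a circuit of $M_{0}^{*}$ containing $e_{i}$, so $T_{-}-e_{i}$ is independent in $M_{0}^{*}\ba e_{i}$, i.e.\ \emph{coindependent} in $M_{0}/e_{i}$; it contains no cocircuit at all. (It is deletion, not contraction, that turns a triad containing $e_{i}$ into a series pair.) A concrete check: in $U_{2,4}$ the set $\{a,b,c\}$ is a cocircuit, but $\{b,c\}$ is not a cocircuit of $U_{2,4}/a\cong U_{1,3}$.

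Beyond the local error, the strategy cannot be repaired, because the contradiction you are aiming for is not there: \Cref{shrub} proves that $M_{0}\ba e/e_{i}\ba e_{i+1}$ \emph{is} $3$\dash connected whenever it has $N$ as a minor, so no argument can legitimately produce a coloop or series pair in it. The hypothesis that must actually be contradicted is that $M_{0}$ is not a fan-extension of $N$. That is why the paper's proof is structured very differently: it first shows two \emph{consecutive} fan elements lie outside $E(N)$, uses \Cref{flame} repeatedly to slide that pair to a controlled position (this is where the hypothesis $2<i<m-1$, rather than merely $1<i<m$, is needed), analyses via orthogonality which triads of $M_{0}\ba e$ extend to $4$\dash element cocircuits through $e$ (your ``crucial observation'' does appear there, in sublemma~\ref{paper}), and then shows that $M_{0}$ would be obtained from a smaller fan-extension by a fan-lengthening move, or that $M_{0}\ba e$ would be a wheel or whirl. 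You would need to engage with the covering-family and fan-lengthening structure to complete the proof; a pure connectivity computation will not do it.
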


\begin{proof}
Assume that the \namecref{train} fails, so that
$(e_{1}',\ldots,e_{m}')$ is a fan in a covering family
of $M_{0}\ba e$ and $e_{j}'$ is in $E(M_{0}\ba e)-E(N)$,
for some $j\in \{3,\ldots, m-2\}$.
Note that $m\geq 5$.

We claim that either $e_{j-1}'$ or $e_{j+1}'$ is in
$E(M_{0}\ba e)-E(N)$.
Assume for a contradiction that $\{e_{j-1}',e_{j+1}'\}\subseteq E(N)$.
If $e_{j}'$ is a rim element in $(e_{1}',\ldots, e_{m}')$
then let $M'$ be $M_{0}\ba e$ and let $N'$ be $N$.
Otherwise let $M'$ be
$(M_{0}\ba e)^{*}$ and let $N'$ be $N^{*}$.
Thus $e_{j}'$ is a rim element in the
fan $(e_{1}',\ldots, e_{m}')$ in $M'$.
Now $N'$ is a minor of $M'\ba e_{j}'$, for if $N'$ is a
minor of $M'/e_{j}'$, then $N'$ contains the parallel pair
$\{e_{j-1}',e_{j+1}'\}$.
Therefore $m=5$ and $j=3$, because otherwise
$\{e_{j-3}',e_{j-2}',e_{j-1}'\}$ or
$\{e_{j+1}',e_{j+2}',e_{j+3}'\}$ is a codependent
triangle in $M'\ba e_{j}'$.
From $m=5$ and $j=3$, it follows that
$\{e_{1}',e_{2}',e_{4}',e_{5}'\}$ is a union
of series pairs in $M'\ba e_{j}'$, and contains
at least three elements of $E(N')$.
This leads to a contradiction to the $3$\dash connectivity of
$N'$.
Therefore $e_{j-1}'$ or $e_{j+1}'$ is in $E(M_{0}\ba e)-E(N)$,
as claimed.

By reversing $(e_{1}',\ldots, e_{m}')$
as necessary, we can assume that $e_{i}'$ and $e_{i+1}'$
are in $E(M_{0}\ba e)-E(N)$, where $1<i<m$, and
$e_{i}'$ is a rim element.
\Cref{cabal} tells us that $N$ is a minor of
$M_{0}\ba e/e_{i}'\ba e_{i+1}'$.

By applying \Cref{flame} to the fan
$(e_{i-1}',e_{i}',e_{i+1}',e_{i+2}')$, we see that we
can swap labels on $e_{i-1}'$ and $e_{i+1}'$ without
any loss of generality.
If $i-1>1$, we would then be able to apply \Cref{flame}
to the fan $(e_{i-2}',e_{i+1}',e_{i}',e_{i-1}')$, and swap labels on
$e_{i-2}'$ and $e_{i}'$.
Deleting $e$ from the resulting matroid reveals the following
fan:
\[
(e_{1},\ldots, e_{i-3}',e_{i}',e_{i+1}',e_{i-2}',
e_{i-1}',e_{i+2}',\ldots, e_{m}).
\]
In fact, by starting from
$(e_{1}',\ldots, e_{i}',e_{i+1}',\ldots, e_{m}')$, and repeatedly
applying \Cref{flame}, it is possible for us to assume that any
sequence of the form
\begin{linenomath}
\begin{multline*}
(e_{1}',\ldots, e_{j}',x,y,e_{j+1}',\ldots,
e_{i-1}',e_{i+2}',\ldots,e_{m}')\quad\text{or}\\
(e_{1}',\ldots,e_{i-1}',e_{i+2}',\ldots,e_{j}',x,y,e_{j+1}',
\ldots,e_{m}')
\end{multline*}
\end{linenomath}
is a fan of $M_{0}\ba e$.
If we have applied \Cref{flame} an even number of times
in this process, then $(x,y)$ is $(e_{i}',e_{i+1}')$,
otherwise it is $(e_{i+1}',e_{i}')$.
Note that this collection of sequences includes
$(x,y,e_{1}',\ldots,e_{i-1}',e_{i+2}',\ldots,e_{m}')$ and
$(e_{1}',\ldots,e_{i-1}',e_{i+2}',\ldots,e_{m}',x,y)$.
We remark that if $i$ had been equal to $1$ or $m-1$, then
\Cref{flame} would not have been applicable.
This is why it was necessary to show that $i$ lies between
$2$ and $m-1$.

Since there is a fan of $\mcal{F}_{N}$ that is consistent with
$(e_{1}',\ldots, e_{m}')$, by repeatedly applying \Cref{flame},
we can assume that we have the following situation:
$(e_{1},\ldots,e_{n+2})$ is a contiguous subsequence of
$(e_{1}',\ldots, e_{m}')$, and is therefore a fan of $M_{0}\ba e$.
Some fan $F\in \mcal{F}_{N}$ is consistent with $(e_{1},\ldots,e_{n})$
and contains $e_{1}$ and $e_{n}$.
Also, $e_{n+1}$ and $e_{n+2}$ are not in $E(N)$.
Let $\{x,y\}=\{e_{n+1},e_{n+2}\}$, where $x$ is a rim
element of $(e_{1},\ldots,e_{n+2})$.
Then $N$ is a minor of $M_{0}\ba e/x\ba y$
by \Cref{cabal}.

\begin{sublemma}
\label{paper}
Let $e_{i}$ be a rim element of $(e_{1},\ldots,e_{n+2})$,
where $1\leq i\leq n$.
Then $\{e_{i},e_{i+1},e_{i+2}\}$ is not a triad in $M_{0}$.
\end{sublemma}

\begin{proof}
Assume this is false.
By starting from $(e_{1}',\ldots, e_{m}')$, and using \Cref{flame}
to shift $x$ and $y$, we can, without losing any generality, assume
that $(e_{1},\ldots,e_{i-1},x,y,e_{i},\ldots,e_{n})$ is a
fan of $M_{0}\ba e$.
Here $x$ is a rim element and $\{x,y,e_{i}\}$
is a triad of $M_{0}$, while $N$ is a minor of $M_{0}\ba e/x\ba y$.
By \Cref{shrub}, $M_{0}\ba e/x\ba y$ is $3$\dash connected.
If $M_{0}/x\ba y$ is not $3$\dash connected, there is a triangle of
$M_{0}$ that contains $x$ and $e$ but not $y$.
Orthogonality with $\{x,y,e_{i}\}$ shows that
$\{e,x,e_{i}\}$ is a triangle of $M_{0}$, so $\{e,x,e_{i}\}$ is
a codependent triangle in $M_{0}\ba y$.
This contradiction shows that $M_{0}/x\ba y$ is $3$\dash connected,
and is therefore a fan-extension of $N$.

Let $F'$ be the fan in a covering family of $M_{0}/x\ba y$ such
that $F$ is consistent with $F'$.
Now $(e_{1},\ldots,e_{n})$ is a fan of $M_{0}/x\ba y\ba e$,
by \Cref{cider}, and $F$ is consistent with $(e_{1},\ldots,e_{n})$
and contains $e_{1}$ and $e_{n}$.
Note that if $e$ is in $F'$, then it is a terminal spoke element,
as $M_{0}/x\ba y\ba e$ is $3$\dash connected.
Hence $F'-e$ is a fan of $M_{0}/x\ba y\ba e$.
Because $F'-e$ and $(e_{1},\ldots,e_{n})$ have at least three
elements in common, and these elements include $e_{1}$ and $e_{n}$,
it follows from \Cref{unity} that the elements in $\{e_{1},\ldots,e_{n}\}$
form a contiguous subsequence of $F'-e$.

Assume that the elements $\{e_{1},\ldots, e_{n}\}$ do not appear
in the order $(e_{1},\ldots,e_{n})$ in $F'-e$.
Then it follows easily from \Cref{steal} that $n<5$.
Note that $F$ is consistent with $(e_{1},\ldots, e_{n})$
and with $F'$, so the elements of $F$
appear in $F'$ in the same order as they do in
$(e_{1},\ldots, e_{n})$.
Therefore our assumption means that $F$ is not
equal to $(e_{1},\ldots, e_{n})$ or its reversal, although it
contains $e_{1}$ and $e_{n}$.
If $n=3$, then $F$ is equal to $(e_{1},e_{2},e_{3})$ or its
reversal, contrary to our conclusion.
Therefore $n=4$, and $F$ is either
$(e_{1},e_{2},e_{4})$ or $(e_{1},e_{3},e_{4})$, or one of their
reversals.
Now $\{e_{1},e_{2},e_{3},e_{4}\}$ is contained in $E(N)$,
or else $N$ contains a series or parallel pair.
Therefore $(e_{1},e_{2},e_{3},e_{4})$ is a fan of $N$.
As $(e_{1},e_{2},e_{4})$ or $(e_{1},e_{3},e_{4})$ is also a fan
of $N$, we have contradicted \Cref{steal}.
Thus $(e_{1},\ldots,e_{n})$ is a contiguous subsequence of
$F'-e$, and hence of $F'$.

Let $F'$ be $(f_{1},\ldots,f_{j-1},e_{1},\ldots,e_{n},f_{j},\ldots,f_{p})$,
so this sequence is a fan in $M_{0}/x\ba y$.
We will show that
\[(f_{1},\ldots,f_{j-1},e_{1},\ldots,e_{i-1},x,y,e_{i},\ldots,e_{n},
f_{j},\ldots,f_{p})\]
is a fan in $M_{0}$.
Let this sequence be labeled
$(g_{1},\ldots, g_{k-1},x,y,g_{k},\ldots, g_{r})$, so that
$(g_{1},\ldots, g_{r})$ is a fan in $M_{0}/x\ba y$.
Note that $i\leq n$, so there is at least one element in
$\{g_{k},\ldots, g_{r}\}$.
By assumption, $\{x,y,e_{i}\}=\{x,y,g_{k}\}$ is a triad of $M_{0}$.

Assume that $r\geq k+1$.
If $g_{k+1}=e_{i+1}$, then $\{y,e_{i},e_{i+1}\}=\{y,g_{k},g_{k+1}\}$
is a triangle of $M_{0}$, since
$(e_{1},\ldots,e_{i-1},x,y,e_{i},\ldots,e_{n})$ is a fan in
$M_{0}\ba e$.
On the other hand, if $g_{k+1}=f_{j}$, then $i=n$, and
$\{e_{n-1},e_{n},f_{j}\}=\{g_{k-1},g_{k},g_{k+1}\}$ is a
triangle in $M_{0}/x\ba y$.
It cannot be a triangle in $M_{0}$ by orthogonality
with the triad $\{x,y,g_{k}\}$.
Therefore $\{g_{k-1},x,g_{k},g_{k+1}\}$ is a circuit in $M_{0}$.
Now $\{g_{k-1},x,y\}$ is a triangle of $M_{0}\ba e$, and
therefore of $M_{0}$.
Strong circuit-exchange between $\{g_{k-1},x,g_{k},g_{k+1}\}$
and $\{g_{k-1},x,y\}$ shows that there is a circuit in
$\{g_{k-1},y,g_{k},g_{k+1}\}$ that contains $y$.
Orthogonality shows that this circuit cannot contain $g_{k-1}$,
because $n\geq 3$, and $\{e_{n-2},e_{n-1},x\}=\{g_{k-2},g_{k-1},x\}$
is a triad of $M_{0}\ba e$.
Therefore $\{y,g_{k},g_{k+1}\}$ is a triangle in $M_{0}$ in any case.

Next we assume that $r\geq k+2$.
Then $\{g_{k},g_{k+1},g_{k+2}\}$ is a triad in $M_{0}/x\ba y$.
Assume that it is not a triad in $M_{0}$, so that
$\{y,g_{k},g_{k+1},g_{k+2}\}$ is a cocircuit.
If $n\geq i+2$, then
$\{e_{i},e_{i+1},e_{i+2}\}=\{g_{k},g_{k+1},g_{k+2}\}$ is a triad
in $M_{0}\ba e$, and $\{y,g_{k},g_{k+1},g_{k+2}\}$
is a union of cocircuits.
Therefore $\{y,g_{k},g_{k+1},g_{k+2}\}$ is a
$U_{2,4}$\dash corestriction of $M_{0}\ba e$.
As $\{y,g_{k},g_{k+1}\}$ is a triangle this is a contradiction
to \Cref{putty}, so $n<i+2$, meaning that $i>1$.
Now $\{e_{i-1},x,y\}=\{g_{k-1},x,y\}$ is a triangle of
$M_{0}\ba e$ that violates orthogonality with
$\{y,g_{k},g_{k+1},g_{k+2}\}$.
Hence $\{g_{k},g_{k+1},g_{k+2}\}$ is a triad in $M_{0}$.

Now we consider all the sets comprising three consecutive elements
in $(g_{k+1},\ldots,g_{r})$.
If such a set is a triangle in $M_{0}/x\ba y$, then it is a
triangle in $M_{0}$, by orthogonality with the triad
$\{x,y,g_{k}\}$.
If such a set is a triad in $M_{0}/x\ba y$, then it is a
triad in $M_{0}$ by orthogonality with the triangle
$\{y,g_{k},g_{k+1}\}$.
We have shown that $(x,y,g_{k},\ldots,g_{r})$
is a fan in $M_{0}$.

Assume that $k> 1$.
If $g_{k-1}=e_{i-1}$, then $\{g_{k-1},x,y\}$ is a triangle
in $M_{0}$, since $(e_{1},\ldots,e_{i-1},x,y,e_{i},\ldots,e_{n})$
is a fan of $M_{0}\ba e$.
Therefore we assume that $g_{k-1}=f_{j-1}$.
This means that $i=1$, which implies that
$\{g_{k},\ldots, g_{r}\}$ contains at least three elements.
Note $\{f_{j-1},e_{1},e_{2}\}=\{g_{k-1},g_{k},g_{k+1}\}$ is a
triangle in $M_{0}/x\ba y$, but not in $M_{0}$, by
orthogonality with the triad $\{x,y,g_{k}\}$.
Therefore $\{g_{k-1},x,g_{k},g_{k+1}\}$ is a circuit in $M_{0}$.
We perform strong circuit-exchange on $\{g_{k-1},x,g_{k},g_{k+1}\}$
and the triangle $\{y,g_{k},g_{k+1}\}$ to obtain a circuit
of $M_{0}$
contained in $\{g_{k-1},x,y,g_{k+1}\}$
that contains $y$.
Orthogonality with the triad $\{g_{k},g_{k+1},g_{k+2}\}$
shows that this circuit cannot contain $g_{k+1}$.
Therefore $\{g_{k-1},x,y\}$ is a triangle of $M_{0}$ in
any case.

Next we assume that $k> 2$.
We wish to show that $\{g_{k-2},g_{k-1},x\}$ is a triad
in $M_{0}$.
Now $\{g_{k-2},g_{k-1},g_{k}\}$ is a triad in $M_{0}/x\ba y$.
It is not a triad in $M_{0}$, by orthogonality with the triangle
$\{g_{k-1},x,y\}$, so $\{g_{k-2},g_{k-1},y,g_{k}\}$ is a cocircuit
of $M_{0}$.
By strong cocircuit-exchange with $\{x,y,g_{k}\}$, we find a
cocircuit, $C^{*}$, of $M_{0}$ contained in $\{g_{k-2},g_{k-1},x,g_{k}\}$
that contains $x$.
If $C^{*}$ does not contain $g_{k}$, then
$\{g_{k-2},g_{k-1},x\}$ is a triad, as desired.
Therefore we assume that $g_{k}\in C^{*}$.
Orthogonality with the triangle $\{g_{k-1},x,y\}$
shows that $g_{k-1}\in C^{*}$.
Because $\{x,y,g_{k}\}$ is a triad of $M_{0}$, it follows
that if $C^{*}=\{g_{k-1},x,g_{k}\}$, then
$C^{*}\cup y$ is a $U_{2,4}$\dash corestriction of $M_{0}$.
As $\{g_{k-1},x,y\}$ is a triangle, this contradicts \Cref{putty}.
Hence $C^{*}=\{g_{k-2},g_{k-1},x,g_{k}\}$.
It follows that $r=k$, for otherwise $C^{*}$ and $\{y,g_{k},g_{k+1}\}$
violate orthogonality.
From $r=k$, we deduce that $i=n$, and as
$n\geq 3$, it follows that $(g_{k-2},g_{k-1})=(e_{n-2},e_{n-1})$,
so $\{g_{k-2},g_{k-1},x\}$ is a triad in $M_{0}\ba e$.
Now $C^{*}$ is a union of cocircuits in $M_{0}\ba e$ that
contains the triad $\{g_{k-2},g_{k-1},x\}$, so $C^{*}$
is a $U_{2,4}$\dash corestriction in $M_{0}\ba e$.
Since $\{g_{k-1},x,y\}$ is a triangle, this is
impossible.
Therefore $\{g_{k-2},g_{k-1},x\}$ is a triad
in $M_{0}$, as we claimed.

Finally, we observe that any set of three consecutive elements
from $(g_{1},\ldots,g_{k-1})$ that is a triangle
in $M_{0}/x\ba y$ is a triangle in $M_{0}$, since
$\{x,y,g_{k}\}$ is a triad.
Any such set that is a triad in $M_{0}/x\ba y$
is a triad in $M_{0}$, as $\{g_{k-1},x,y\}$ is a triangle.

We have shown that
\begin{linenomath}
\begin{multline*}
(g_{1},\ldots, g_{k-1},x,y,g_{k},\ldots, g_{r})=\\
(f_{1},\ldots,f_{j-1},e_{1},\ldots,e_{i-1},x,y,e_{i},\ldots,e_{n},
f_{j},\ldots,f_{p})
\end{multline*}
\end{linenomath}
is a fan of $M_{0}$.
Now $M_{0}/x\ba y$ has a covering family containing
$F'=(f_{1},\ldots,f_{j-1},e_{1},\ldots,e_{n},f_{j},\ldots,f_{p})$, and
$M_{0}$ is obtained from $M_{0}/x\ba y$
by a fan-lengthening move on this fan.
Hence $M_{0}$ is a fan-extension of $N$.
This contradiction completes the proof of \ref{paper}.
\end{proof}

\begin{sublemma}
\label{brass}
If $e_{i}$ is a rim element of $(e_{1},\ldots,e_{n+2})$, and
there is a triad of $M_{0}\ba e$ contained in $\{e_{1},\ldots,e_{n+2}\}$
that does not contain $e_{i}$, then there is a triangle of $M_{0}$
containing $\{e,e_{i}\}$.
\end{sublemma}

\begin{proof}
Assume that the claim fails.
We will start from $(e_{1}',\ldots, e_{m}')$, and then use
\Cref{flame} to shift $x$ and $y$ appropriately.
By then possibly reversing, we can assume that
$(e_{1},\ldots,e_{i-1},x,y,e_{i},\ldots,e_{n})$ is a fan of
$M_{0}\ba e$, where $x$ is a rim element and $M_{0}\ba e/x\ba y$ has
$N$ as a minor;
moreover, we assume there is no triangle of $M_{0}$ containing $\{e,x\}$,
and $\{e_{j},e_{j+1},e_{j+2}\}$ is a triad of $M_{0}\ba e$,
for some $j\in \{i,\ldots,n-2\}$.
\Cref{shrub} implies that $M_{0}\ba e/x\ba y$ is $3$\dash connected.
Because there is no triangle containing $e$ and $x$ in $M_{0}$, it
follows that $M_{0}/x\ba y$ is
$3$\dash connected, and hence a fan-extension of $N$.
Let $F'$ be the fan in a covering family of $M_{0}/x\ba y$ such that
$F$ is consistent with $F'$.
If $F'$ contains $e$, then $e$ is a terminal element because
$M_{0}/x\ba y\ba e$ is $3$\dash connected, so
$F'-e$ is a fan of $M_{0}/x\ba y\ba e$.
Now $F$ is also consistent with $(e_{1},\ldots,e_{n})$, which is
a fan of $M_{0}/x\ba y\ba e$.
Since $(e_{1},\ldots,e_{n})$ and $F'-e$ have at least three
elements in common, including $e_{1}$ and $e_{n}$, it follows
from \Cref{unity} that the elements in $\{e_{1},\ldots,e_{n}\}$
form a contiguous subsequence of $F'-e$.
As $\{e_{j},e_{j+1},e_{j+2}\}$ is a triad in $M_{0}/x\ba y\ba e$,
it is a set of three consecutive elements in $F'$ by the dual of
\Cref{steal}.
It cannot be a triangle in $M_{0}/x\ba y$, for then it would
be a triad and a triangle in $M_{0}/x\ba y\ba e$.
Therefore $\{e_{j},e_{j+1},e_{j+2}\}$ is a triad in
$M_{0}/x\ba y$, but not in $M_{0}$ by \ref{paper}.
This shows that $y$ is in the coclosure of
$\{e_{i},\ldots,e_{n}\}$ in $M_{0}$, and hence in $M_{0}\ba e$.
It is in the closure of the same set because $\{y,e_{i},e_{i+1}\}$
is a triangle.
Thus $\{y,e_{i},\ldots,e_{n}\}$ is $2$\dash separating in
$M_{0}\ba e$, so the complement of $\{y,e_{i},\ldots, e_{n}\}$
contains at most one element.
\Cref{hocus} implies $M_{0}\ba e$ is a wheel or a whirl.
This contradiction completes the proof of \ref{brass}.
\end{proof}

\begin{sublemma}
\label{dance}
$n\leq 5$, and if $n=5$, then $e_{1}$ is a spoke element of
$(e_{1},\ldots, e_{n+2})$.
\end{sublemma}

\begin{proof}
If this statement is false, then, by starting from
$(e_{1}',\ldots, e_{m}')$, and applying \Cref{flame}
to shift $x$ and $y$ as necessary, we can let $(x_{1},\ldots,x_{5}, y,x)$
be either $(e_{1},\ldots, e_{5},y,x)$ or
$(e_{2},\ldots, e_{6},y,x)$, and assume that
$(x_{1},\ldots, x_{5},y,x)$ is a fan of $M_{0}\ba e$,
where $N$ is a minor of $M_{0}\ba e/x\ba y$, and
$x_{1}$ is a rim element of $(x_{1},\ldots, x_{5},y,x)$.
Since $\{x_{1},x_{2},x_{3}\}$, $\{x_{3},x_{4},x_{5}\}$,
and $\{x_{5},y,x\}$ are triads in $M_{0}\ba e$,
we apply \ref{paper} and \ref{brass} and see that
$\{e,x_{1},x_{2},x_{3}\}$, $\{e,x_{3},x_{4},x_{5}\}$,
and $\{e,x_{5},y,x\}$ are cocircuits in $M_{0}$, and that
$\{e,x_{1}\}$, $\{e,x_{3}\}$, $\{e,x_{5}\}$, and
$\{e,x\}$ are all contained in triangles of $M_{0}$.
Now it is easy to see, using orthogonality, that
$\{e,x_{1},x_{5}\}$ and $\{e,x_{3},x\}$ are triangles of $M_{0}$.
This implies that $x\in\cl_{M_{0}\ba e}(\{x_{1},\ldots, x_{5},y\})$.
Since $x$ is also in $\cl_{M_{0}\ba e}^{*}(\{x_{1},\ldots, x_{5},y\})$,
we see that $\{x_{1},\ldots, x_{5},y,x\}$ is $2$\dash separating in
$M_{0}\ba e$, and it follows that $M_{0}\ba e$ is a wheel or a
whirl.
This contradiction completes the proof.
\end{proof}

\begin{sublemma}
\label{crime}
$n\leq 4$.
\end{sublemma}

\begin{proof}
If $n>4$, then by \ref{dance}, $n=5$, and $e_{1}$ is a spoke
element.
Therefore $(e_{1},\ldots, e_{5},x,y)$ is a fan of $M_{0}\ba e$,
where $N$ is a minor of $M_{0}\ba e/x\ba y$.
As $\{e_{2},e_{3},e_{4}\}$ and $\{e_{4},e_{5},x\}$ are triads of
$M_{0}\ba e$, \ref{paper} and \ref{brass} imply that
$\{e,e_{2},e_{3},e_{4}\}$ and $\{e,e_{4},e_{5},x\}$ are cocircuits
of $M_{0}$, and $\{e,e_{2}\}$ and $\{e,x\}$ are contained
in triangles of $M_{0}$.
Assume $\{e,e_{2},x\}$ is not a triangle.
Then orthogonality requires that $\{e,e_{2}\}$ is in a triangle
with $e_{4}$ or $e_{5}$, and that $\{e,x\}$ is in a triangle with
$e_{3}$ or $e_{4}$.
This implies that $x$ is in the closure and the coclosure of
$\{e_{1},\ldots, e_{5}\}$ in $M_{0}\ba e$, and we arrive at the
contradiction that $M_{0}\ba e$ is a wheel or a whirl.
Therefore $\{e,e_{2},x\}$ is a triangle.

Assume $M_{0}\ba y$ is not $3$\dash connected.
Then there is a triad of $M_{0}$ that contains $y$.
By orthogonality with $\{e_{5},x,y\}$, $\{e_{3},e_{4},e_{5}\}$,
and $\{e,e_{2},x\}$, we see that $y$ is in the coclosure and
closure of $\{e_{1},\ldots, e_{5},x\}$ in $M_{0}$, and also in
$M_{0}\ba e$.
This leads to a contradiction as before, so $M_{0}\ba y$
is $3$\dash connected, and therefore a fan-extension of $N$.
This implies there is a fan, $F'$, of $M_{0}\ba y$
such that $F$ is consistent with $F'$ and 
$e_{1}$ and $e_{5}$ are terminal elements of $F'$.

\Cref{shrub} implies that $M_{0}\ba y\ba e/x$ is $3$\dash connected.
If neither $e$ nor $x$ is an internal element of $F'$, then
$F'-\{e,x\}$ is a fan of $M_{0}\ba y\ba e/x$ by the fact that
$M_{0}\ba y\ba e/x$ is $3$\dash connected.
Assume either $e$ or $x$ is an internal element in $F'$.
By applying \Cref{llama} to $M'=M_{0}\ba y$, we see that
$e$ and $x$ are consecutive in $F'$, and $x$ is a rim
element.
Then \Cref{cider} implies $F'-\{e,x\}$ is a fan of
$M_{0}\ba y\ba e/x$, so this is true in any case.
We note that $(e_{1},\ldots, e_{5})$ is also a fan in
$M_{0}\ba y\ba e/x$.
As $F'-\{e,x\}$ and $(e_{1},\ldots, e_{5})$ have at least three
elements in common, including $e_{1}$ and $e_{5}$,
\Cref{unity} shows that $\{e_{1},\ldots, e_{5}\}$ is contained
in $F'-\{e,x\}$.
Applying \Cref{steal} three times shows that
$(e_{1},\ldots, e_{5})$ is a contiguous subsequence of
$F'-\{e,x\}$.
Assume $(e_{2},e_{3},e_{4})$ is a contiguous subsequence
of $F'$.
Since $\{e_{2},e_{3},e_{4}\}$ is a triad in
$M_{0}\ba y\ba e/x$, it cannot be a triangle in $M_{0}\ba y$.
Hence it is a triad in $M_{0}\ba y$, but not in $M_{0}$.
This means $y$ is in the coclosure and
the closure of $\{e_{1},\ldots, e_{5},x\}$ in $M_{0}\ba e$.
This leads to a contradiction, so $(e_{2},e_{3},e_{4})$
is not a contiguous subsequence of $F'$.
This means that the elements $\{e,x,e_{3}\}$ are consecutive
in $F'$, so $\{e,x,e_{3}\}$ is a triad or a triangle in
$M_{0}\ba y$.
The former case is impossible, by orthogonality with
$\{e_{3},e_{4},e_{5}\}$.
Therefore $\{e,x,e_{3}\}$ is a triangle of $M_{0}\ba y$.
Since $\{e,x,e_{2}\}$ is also a triangle,
we see that $\{e,x,e_{2},e_{3}\}$ is a $U_{2,4}$\dash restriction
of $M_{0}\ba y$ that is contained in the fan $F'$.
Therefore it intersects a triad, and 
this contradiction to \Cref{putty} shows that $n\leq 4$.
\end{proof}

\begin{sublemma}
\label{leach}
$n=3$.
\end{sublemma}

\begin{proof}
If $n>3$, then by \ref{crime}, $n=4$.
If $e_{1}$ is a rim element, we let
$(x_{1},x_{2},x_{3},x_{4},x,y)$ be
$(e_{1},\ldots, e_{6})$.
Otherwise, we shift $x$ and $y$ using \Cref{flame}
so that $(y,x,e_{1},e_{2},e_{3},e_{4})$ is a
fan of $M_{0}\ba e$, and now we let
$(x_{1},x_{2},x_{3},x_{4},x,y)$ be
$(e_{4},e_{3},e_{2},e_{1},x,y)$.
In either case $(x_{1},x_{2},x_{3},x_{4},x,y)$ is a fan of
$M_{0}\ba e$ with $x_{1}$ as a rim element, and
$N$ is a minor of $M_{0}/x\ba y$.
Since $\{x_{1},x_{2},x_{3}\}$ and $\{x_{3},x_{4},x\}$ are
triads of $M_{0}\ba e$, we deduce from \ref{paper} and
 \ref{brass} that $\{e,x_{1},x_{2},x_{3}\}$ and
$\{e,x_{3},x_{4},x\}$ are cocircuits of $M_{0}$, and that
$\{e,x_{1}\}$ and $\{e,x\}$ are contained in triangles.
If $\{e,x_{1},x\}$ is not a triangle, then $x$
is contained in the closure and the coclosure of
$\{x_{1},x_{2},x_{3},x_{4}\}$ in $M_{0}\ba e$, exactly
as in the proof of \ref{crime}.
Because this leads to a contradiction, $\{e,x_{1},x\}$ is a
triangle of $M_{0}$.

If $M_{0}\ba y$ is not $3$\dash connected, then there is a
triad of $M_{0}$ containing $y$, and $y$ is in the coclosure
and the closure of $\{x_{1},x_{2},x_{3},x_{4},x\}$ in $M_{0}\ba e$,
as in the proof of \ref{crime}.
This gives a contradiction, so $M_{0}\ba y$ is $3$\dash connected,
and hence a fan-extension of $N$.
There is a fan, $F'$, in $M_{0}\ba y$ such that $F$ is consistent
with $F'$, and $x_{1}$ and $x_{4}$ are terminal elements
of $F'$.
If neither $e$ nor $x$ is an internal element of $F'$, then
$F'-\{e,x\}$ is a fan of $M_{0}\ba y\ba e/x$.
If either $e$ or $x$ is an internal element, then
\Cref{cider,llama} show
$F'-\{e,x\}$ is a fan of $M_{0}\ba y\ba e/x$.
As $(x_{1},x_{2},x_{3},x_{4})$ is also a fan in
$M_{0}\ba y\ba e/x$, the elements $\{x_{1},x_{2},x_{3},x_{4}\}$
form a contiguous subsequence of $F'-\{e,x\}$.

Assume $(x_{1},x_{2},x_{3},x_{4})$ is not a contiguous
subsequence of $F'-\{e,x\}$.
As $F$ is consistent with $F'-\{e,x\}$, it follows that,
up to reversing, $F$ is either $(x_{1},x_{2},x_{4})$ or
$(x_{1},x_{3},x_{4})$.
In this case $\{x_{1},x_{2},x_{3},x_{4}\}\subseteq E(N)$,
for otherwise $N$ contains a series or parallel
pair.
But then $(x_{1},x_{2},x_{3},x_{4})$ is a fan of $N$,
and so is either $(x_{1},x_{2},x_{4})$ or $(x_{1},x_{3},x_{4})$,
contradicting \Cref{steal}.
Therefore $(x_{1},x_{2},x_{3},x_{4})$ is a
contiguous subsequence of $F'-\{e,x\}$.
Assume $(x_{1},x_{2},x_{3})$ is a contiguous subsequence
of $F'$.
Then $\{x_{1},x_{2},x_{3}\}$ is a triad of $M_{0}\ba y$
but not of $M_{0}$, so $y$ is in the
coclosure and closure of $\{x_{1},x_{2},x_{3},x_{4},x\}$ in
$M_{0}\ba e$.
This leads to a contradiction, so
$(x_{1},x_{2},x_{3})$ is not a contiguous subsequence,
and thus $e$, $x$, and $x_{2}$ are consecutive in $F'$.
By orthogonality with $\{x_{2},x_{3},x_{4}\}$,
$\{e,x,x_{2}\}$ is not a triad in $M_{0}\ba y$, so
$\{e,x,x_{2}\}$ is a triangle of $M_{0}\ba y$.
Thus $\{e,x_{1},x_{2},x\}$ is a $U_{2,4}$\dash restriction
of $M_{0}\ba y$ that intersects a triad contained in $F'$.
This contradiction completes the proof.
\end{proof}

\begin{sublemma}
\label{queen}
$e_{1}$ is a spoke element.
\end{sublemma}

\begin{proof}
Assume that $e_{1}$ is a rim element.
Then $(e_{1},e_{2},e_{3},y,x)$ is a fan of $M_{0}\ba e$, and
$F$ is equal to $(e_{1},e_{2},e_{3})$ or its reversal.
As before, we see that $\{e,e_{1},e_{2},e_{3}\}$ and 
$\{e,e_{3},y,x\}$ are cocircuits in $M_{0}$, and $\{e,e_{1},x\}$ is
a triangle.

Assume $M_{0}\ba y$ is not $3$\dash connected, so that
$y$ is contained in a triad, $T^{*}$, of $M_{0}$.
Note $T^{*}$ is a triad of $M_{0}\ba e$ but
$T^{*}\ne \{e_{3},y,x\}$.
If $T^{*}\cap\{e_{3},x\}\ne \emptyset$, then
$T^{*}\cup\{e_{3},x\}$ is a $U_{2,4}$\dash corestriction in
$M_{0}\ba e$, and $\{e_{2},e_{3},y\}$ is a triangle.
This contradiction to \Cref{putty} shows that $T^{*}\cap \{e_{3},x\}=\emptyset$.
Orthogonality with $\{e_{2},e_{3},y\}$ now shows that $e_{2}$ is
in $T^{*}$.
Assume that we swap labels on $e_{3}$ and $x$ in
$M_{0}$, and then contract $x$.
The resulting matroid has $N$ as a minor, as
$\{e_{3},x\}$ is a series pair in $M_{0}\ba e\ba y$.
It also has the triad $T^{*}$, which contains the parallel pair
$\{e_{2},y\}$.
This contradiction shows that $M_{0}\ba y$ is $3$\dash connected,
and therefore a fan-extension of $N$.

Let $F'$ be a fan of $M_{0}\ba y$ such that
$(e_{1},e_{2},e_{3})$ is consistent with $F'$, and $e_{1}$
and $e_{3}$ are terminal elements.
Now $M_{0}\ba y\ba e/x$ is $3$\dash connected, and contains
the triad $\{e_{1},e_{2},e_{3}\}$.
As before, $F'-\{e,x\}$ is a fan of $M_{0}\ba y\ba e/x$.
It follows that $(e_{1},e_{2},e_{3})$ is a contiguous
subsequence of $F'-\{e,x\}$.
If $(e_{1},e_{2},e_{3})$ is a contiguous subsequence of
$F'$, then $\{e_{1},e_{2},e_{3}\}$ is a triad of $M_{0}\ba y$,
since it cannot be a triangle and a triad in $M_{0}\ba y\ba e/x$.
Therefore $\{e_{1},e_{2},e_{3}\}$ is a triad in $M_{0}\ba y$
but not in $M_{0}$.
This means $\{e_{1},e_{2},e_{3},y\}$
is $2$\dash separating in $M_{0}\ba e$, leading to
the contradiction that $M_{0}\ba e$ is a wheel or a whirl.
Therefore $(e_{1},e_{2},e_{3})$ is not a contiguous subsequence
of $F'$, so $\{e,x,e_{2}\}$ is a set of consecutive elements in $F'$.
If $\{e,x,e_{2}\}$ is a triad in $M_{0}\ba y$, then
$\{e,x,e_{2},y\}$ is a cocircuit in $M_{0}$, by orthogonality
with $\{e_{2},e_{3},y\}$.
Therefore $\{e_{2},y,x\}$ and $\{e_{3},y,x\}$ are both triads
in $M_{0}\ba e$, so $M_{0}\ba e$ has a $U_{2,4}$\dash corestriction
that intersects a triangle, a contradiction.
This shows that $\{e,x,e_{2}\}$ is a triangle in $M_{0}\ba y$, so
$\{e,x,e_{1},e_{2}\}$ is a $U_{2,4}$\dash restriction of
$M_{0}\ba y$ that intersects a triad contained in $F'$, a
contradiction.
\end{proof}

Now we know that $e_{1}$ is a spoke element and $(e_{1},e_{2},e_{3})$
or its reversal is in $\mcal{F}_{N}$.
By shifting $x$ and $y$, we can assume that $(e_{1},e_{2},y,x,e_{3})$
is a fan in $M_{0}\ba e$ with $e_{1}$ as a spoke element,
and $N$ is a minor of $M_{0}\ba e/x\ba y$.
It follows from \ref{paper} that $\{e,e_{2},y,x\}$ is a
cocircuit in $M_{0}$.
Assume that $y$ is in a triad, $T^{*}$, of $M_{0}$.
Then $T^{*}\ne \{e_{2},y,x\}$, and $T^{*}$ is a triad of $M_{0}\ba e$.
If $T^{*}\cap\{e_{2},x\}\ne \emptyset$, then $T^{*}\cup\{e_{2},x\}$
is a $U_{2,4}$\dash corestriction of $M_{0}\ba e$ that intersects
a triangle, which is impossible.
Therefore $T^{*}\cap\{e_{2},x\}=\emptyset$.
Orthogonality shows that $T^{*}=\{e_{1},y,e_{3}\}$.
This means $M_{0}/x$ contains a dependent triad, so we
have a contradiction.
We conclude that $y$ is in no triad in $M_{0}$, so
$M_{0}\ba y$ is $3$\dash connected, and is therefore a
fan-extension of $N$.

Let $F'$ be a fan of $M_{0}\ba y$ such that
$(e_{1},e_{2},e_{3})$ is a subsequence of $F'$ and
$e_{1}$ and $e_{3}$ are terminal elements.
As $\{e_{1},e_{2},e_{3}\}$ is a triangle in
$M_{0}\ba y\ba e/x$, we conclude as before that
$F'-\{e,x\}=(e_{1},e_{2},e_{3})$.
If $F'=(e_{1},e_{2},e_{3})$, then $\{e_{1},e_{2},e_{3}\}$
is a triangle in $M_{0}\ba y$.
As $\{e_{1},e_{2},y\}$ is a triangle in $M_{0}\ba e$,
we see that $\{e_{1},e_{2},e_{3},y\}$ is a
$U_{2,4}$\dash restriction in $M_{0}\ba e$, and
$\{e_{2},y,x\}$ is a triad, so we have a contradiction.
Now it follows that $F'\ne (e_{1},e_{2},e_{3})$,
so $e$ and $x$ are internal elements of $F'$.
By \Cref{llama}, $x$ is a rim element of $F'$,
and $x$ and $e$ are consecutive.
As $\{e_{1},e_{2},e_{3}\}$ is a triangle in
$M_{0}\ba y\ba e/x$, it follows from \Cref{cider} that
$e_{1}$ is a spoke element of $F'$.
It follows that $F'$ is either
$(e_{1},e_{2},e,x,e_{3})$ or $(e_{1},x,e,e_{2},e_{3})$.
In the first case, $\{e_{1},e_{2},e,y\}$ and $\{e_{3},e,x,y\}$
are $U_{2,4}$\dash restrictions of $M_{0}$.
If $r_{M_{0}}(\{e,x,y,e_{1},e_{2},e_{3}\})>2$, then submodularity
implies that $r_{M_{0}}(\{e,y\})\leq 1$, which contradicts
$3$\dash connectivity.
Therefore $r_{M_{0}\ba e}(\{x,y,e_{1},e_{2},e_{3}\})=2$, which
quickly leads to a contradiction.
We conclude that $F'=(e_{1},x,e,e_{2},e_{3})$, so
$\{e_{1},x,e\}$ and $\{e,e_{2},e_{3}\}$ are triangles of $M_{0}$.

Because $\{e,e_{1}\}$ and $\{e_{3},y\}$ are parallel pairs in
$M_{0}/x$, we will swap the labels on $e$ and $e_{1}$, and
on $e_{3}$ and $y$.
Let $M'$ be the resulting matroid, so
$N$ is a minor of $M'\ba e/x\ba y$, and
$M'$ has triangles $\{e,e_{2},e_{3}\}$,
$\{e_{3},x,y\}$, $\{e,e_{1},x\}$, and $\{e_{1},e_{2},y\}$,
as well as the cocircuit $\{e_{1},e_{2},e_{3},x\}$.
Assume that $e$ is in a triad, $T^{*}$, of $M'$.
Orthogonality implies that $T^{*}$ contains an element
from each of $\{e_{2},e_{3}\}$ and $\{e_{1},x\}$.
By orthogonality with $\{e_{1},e_{2},y\}$ and
$\{e_{3},x,y\}$, and the fact that $\{e_{1},e_{2}\}$
is not a series pair in $N$ (and therefore not in $M'\ba e$),
we see that $T^{*}=\{e,e_{3},x\}$.
Therefore $\{e_{3},x,y\}$ is a codependent triangle in
$M'\ba e$.
This contradiction shows that $M'\ba e$ is
$3$\dash connected, and is therefore a fan-extension of $N$.

Let $F'$ be a fan in $M'\ba e$ that contains
$(e_{1},e_{2},e_{3})$ as a subsequence, and that has
$e_{1}$ and $e_{3}$ as terminal elements.
By the same arguments as before, we see that
$F'-\{x,y\}$ is a fan in $M'\ba e/x\ba y$.
However, $\{e_{1},e_{2},e_{3}\}$ is a triangle in
$M'\ba e/x\ba y$, so 
$F'-\{x,y\}=(e_{1},e_{2},e_{3})$.
Assume $F'=(e_{1},e_{2},e_{3})$.
Because $\{e_{3},x,y\}$ is a triangle in $M'\ba e$,
orthogonality shows that $\{e_{1},e_{2},e_{3}\}$ is also a triangle.
Because $\{e,e_{2},e_{3}\}$,  $\{e_{1},e_{2},y\}$, and $\{e,e_{1},x\}$
are also triangles in $M'$, it follows that
$r_{M'}(\{e_{1},e_{2},e_{3},e,x,y\})=2$.
This means that $\{e_{1},e_{2},e_{3}\}$ is a parallel class
in $M'/x$, so we have a contradiction.
Therefore $F'$ is not $(e_{1},e_{2},e_{3})$, so
$\{x,y,e_{2}\}$ is a consecutive set in $F'$.
If $\{x,y,e_{2}\}$ is a triangle in $M'\ba e$, then
exactly as before, we see that 
$r_{M'}(\{e_{1},e_{2},e_{3},e,x,y\})=2$.
Therefore $\{x,y,e_{2}\}$ is a triad of $M'\ba e$,
and $\{e,e_{2},x,y\}$ is a cocircuit of $M'$, by
orthogonality with $\{e,e_{2},e_{3}\}$.
Because $\{e_{1},e_{2},e_{3},x\}$ is also a cocircuit,
we see that $\{e,e_{2},e_{3},x\}$ cospans
$\{e_{1},e_{2},e_{3},e,x,y\}$ in $M'$.
As $\{e,e_{3},y\}$ spans the same set, it follows that
$\lambda_{M'}(\{e_{1},e_{2},e_{3},e,x,y\})\leq 1$.
This means that $|E(M'\ba e)|\leq 6$, and
as $|F'|=5$, it follows from \Cref{hocus}
that $M'\ba e$ is a wheel or a whirl.
This contradiction completes the proof of \Cref{train}.
\end{proof}

In fact, we can strengthen \Cref{train} further,
and show that elements in $E(M_{0}\ba e)-E(N)$ are
terminal elements of fans in a covering family.

\begin{lemma}
\label{jewel}
Let $M_{0}$ be isomorphic to $M$.
Assume that $N$ is a minor of $M_{0}$, but that $M_{0}$ is not
a fan-extension of $N$.
Assume also that $M_{0}\ba e$ is
$3$\dash connected and has $N$ as a minor.
Let $\mcal{F}_{e}$ be a covering family of $M_{0}\ba e$, and
let $(e_{1},\ldots, e_{m})$ be a fan in $\mcal{F}_{e}$.
If $1<i<m$, then $e_{i}$ is in $E(N)$.
\end{lemma}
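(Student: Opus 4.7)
The plan is to mirror the structure of the proof of Lemma \ref{train}. By reversing the fan, it suffices to rule out $i=2$; suppose for contradiction that $e_2\notin E(N)$. When $m=3$, the fan $F_N\in\mathcal{F}_N$ consistent with $(e_1,e_2,e_3)$ must have underlying set $\{e_1,e_2,e_3\}$ (since $|F_N|\geq 3$ and the underlying set has exactly three elements), forcing $e_2\in E(N)$, a contradiction. So $m\geq 4$, and when $m\geq 5$ Lemma \ref{train} yields $e_3,\ldots,e_{m-2}\in E(N)$. By duality (replacing $M$, $M_0$, $N$, $\mathcal{M}$ with their duals) we may assume $e_2$ is a rim element of $(e_1,\ldots,e_m)$, so $\{e_1,e_2,e_3\}$ is a triad and $\{e_2,e_3,e_4\}$ is a triangle of $M_0\ba e$.

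Next I would handle the case $m\geq 5$. Here I would use Lemma \ref{flame} to swap labels on $e_1$ and $e_3$ in the fan $(e_1,e_2,e_3,e_4)$ of $M_0\ba e$, producing an isomorph $M'\cong M_0$ that still has $N$ as a minor and still fails to be a fan-extension of $N$. Iterating such swaps (in the style of the sequence of $M^{(j)}$ constructed in the proof of Lemma \ref{bliss}), one shifts the non-$E(N)$ element to a position with index $j\in\{3,\ldots, m-2\}$ in an equivalent labeling of the same fan, directly contradicting Lemma \ref{train}.

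The main obstacle is the boundary case $m=4$, where there is no room to shift $e_2$ inward and Lemma \ref{train} cannot be invoked. Here consistency of $F_N\in\mathcal{F}_N$ with $(e_1,e_2,e_3,e_4)$, together with $|F_N|\geq 3$ and $e_2\notin E(N)$, forces $F_N$ to have underlying set $\{e_1,e_3,e_4\}$; in particular, $\{e_1,e_3,e_4\}$ is either a triangle or a triad of $N$, and hence a circuit or cocircuit of $M_0$ (up to symmetric difference with non-$E(N)$ elements of $M_0$). Combining this with the triangle $\{e_2,e_3,e_4\}$ and triad $\{e_1,e_2,e_3\}$ of $M_0\ba e$ gives us a pair of $3$-element sets in $M_0\ba e$ whose union forms a $U_{2,4}$-restriction or $U_{2,4}$-corestriction that intersects a triad or triangle, which contradicts Corollary \ref{putty} or its dual. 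The remaining sub-case is when $e$ interacts nontrivially with these cocircuits/circuits; in that case one shows via orthogonality that $\{e_1,e_2,e_3,e_4\}$ and a suitable neighbor pin down the rank of a $2$-separator, and Proposition \ref{hocus} forces $M_0\ba e$ to be a wheel or a whirl, contradicting Proposition \ref{ether}.
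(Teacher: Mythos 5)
Your reduction to the case $i=2$ (via \Cref{train} and reversal) matches the paper, but the core of your argument for $m\geq 5$ has a genuine gap: \Cref{flame} does not apply in this situation. Its hypotheses require that $N$ be a minor of $M_{0}\ba e/e_{2}\ba e_{3}$ or of $M_{0}\ba e\ba e_{2}/e_{3}$, i.e.\ that both $e_{2}$ \emph{and} $e_{3}$ are removed in producing $N$. But when $m\geq 5$, \Cref{train} gives $e_{3}\in E(N)$, so no minor of $M_{0}\ba e$ with $e_{3}$ removed can have $N$ as a minor; the relevant removal pair here is $\{e_{1},e_{2}\}$, not $\{e_{2},e_{3}\}$, and the paper explicitly remarks that \Cref{flame} is not available for exactly this reason. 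Consequently you have no warrant for the claim that the matroid $M'$ obtained by swapping $e_{1}$ and $e_{3}$ is still not a fan-extension of $N$, and without that the plan of ``iterating swaps to push the non-$E(N)$ element inward and contradict \Cref{train}'' collapses. In the paper, the case where $M'$ fails to be a fan-extension is indeed dispatched instantly by \Cref{train} (the relabelled family is a covering family of $M'\ba e$ with $e_{1}$ internal); the entire difficulty, occupying most of the proof, is the other possibility, namely that $M'$ \emph{is} a fan-extension. Ruling that out requires the chain of sublemmas showing that $e_{3}$ lies in no fan of $\mcal{F}_{N}$ and in no fan of a covering family of $M'$, that some fan of that family contains both $e_{1}$ and $e_{2}$, that $\{e,e_{1},e_{2},e_{3}\}$ is a cocircuit of $M'$, and finally an orthogonality contradiction with the fan containing $e$. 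None of this is present in, or replaceable by, your outline. A further omission: before any swap one must know that $e_{1}\notin E(N)$ (the paper's first sublemma), both to guarantee that the relabelled matroid still has $N$ as a labelled minor and to force $m\geq 5$; you never address this.

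Your treatment of the boundary case $m=4$ is also shakier than necessary. The assertion that a triangle or triad of $N$ on $\{e_{1},e_{3},e_{4}\}$ is ``a circuit or cocircuit of $M_{0}$ up to symmetric difference with non-$E(N)$ elements'' is not a valid general principle, and the promised $U_{2,4}$-(co)restriction is not pinned down. In fact this case is much easier than you make it: if $e_{1}\in E(N)$ then $N$ must be a minor of $M_{0}\ba e/e_{2}$ (deleting $e_{2}$ leaves a codependent triangle or a series pair inside $E(N)$), and then $\{e_{1},e_{3}\}$ is a parallel pair of elements of $E(N)$, contradicting simplicity of $N$; while if $e_{1}\notin E(N)$ then $m=4$ is impossible outright, since the fan must still contain a three-element fan of $\mcal{F}_{N}$ inside $\{e_{3},e_{4}\}$.
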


\begin{proof}
Assume that the \namecref{jewel} fails.
By applying \Cref{train} and reversing as required, we 
can assume that $(e_{1},\ldots, e_{m})$ is in
$\mcal{F}_{e}$, and that $e_{2}$ is in $E(M_{0}\ba e)-E(N)$.

\begin{sublemma}
\label{creep}
$e_{1}$ is in $E(M_{0}\ba e)-E(N)$.
\end{sublemma}

\begin{proof}
If $e_{2}$ is a rim element in $(e_{1},\ldots, e_{m})$, then
we let $M'$ be $M_{0}\ba e$ and we let $N'$ be $N$.
Otherwise we let $M'$ be $(M_{0}\ba e)^{*}$ and we
let $N'$ be $N^{*}$.
Thus $M'$ is a fan-extension of $N'$ and $\mcal{F}_{e}$
is a covering family, while $e_{2}\in E(M')-E(N')$ is a rim
element in the fan $(e_{1},\ldots, e_{m})\in\mcal{F}_{e}$ of $M'$.

Assume for a contradiction that $e_{1}$ is in $E(N')$.
If $N'$ is a minor of $M'\ba e_{2}$, then
$m=4$, for otherwise $\{e_{3},e_{4},e_{5}\}$ is a codependent
triangle in $M'\ba e_{2}$, violating \Cref{lasso}.
But if $m=4$, then $\{e_{1},e_{3},e_{4}\}$ are all contained
in a fan in $\mcal{F}_{N}$, and
$\{e_{3},e_{4}\}$ is a series pair in $M'\ba e_{2}$,
which leads to a contradiction to the $3$\dash connectivity of $N'$.
Therefore $N'$ is a minor of $M'/e_{2}$.
If $m=4$, then $\{e_{1},e_{3},e_{4}\}\subseteq E(N')$, and
$\{e_{1},e_{3}\}$ is a parallel pair in $M'/x_{2}$.
This leads to a contradiction, so $m\geq 5$, meaning that
\Cref{train} implies $e_{3}\in E(N')$.
Now $\{e_{1},e_{3}\}$ is a parallel pair in $M'/e_{2}$,
and we again obtain a contradiction to the
$3$\dash connectivity of $N'$.
\end{proof}

It follows from \ref{creep} that $m\geq 5$.
If $e_{1}$ is a rim element of $(e_{1},\ldots, e_{m})$, then
it is easily checked that $N$ is a minor of $M_{0}\ba e/e_{1}\ba e_{2}$.
If $e_{1}$ is a spoke element, then $N$ is a minor of
$M_{0}\ba e\ba e_{1}/e_{2}$.
Let $M'$ be the matroid obtained from $M_{0}$ by swapping
the labels on $e_{1}$ and $e_{3}$.
If $e_{1}$ is a rim element, then $\{e_{1},e_{3}\}$ is a series
pair in $M_{0}\ba e\ba e_{2}$, so
$M'\ba e/e_{1}\ba e_{2}=M_{0}\ba e/e_{1}\ba e_{2}$.
If $e_{1}$ is a spoke element, then we can similarly show that
$M'\ba e\ba e_{1}/ e_{2}=M_{0}\ba e\ba e_{1}/e_{2}$.
Therefore $N$ is a minor of
$M'\ba e/e_{1}\ba e_{2}$ or $M'\ba e\ba e_{1}/e_{2}$.

Assume that $M'$ is not a fan-extension of $N$.
Since $e_{1}$ and $e_{2}$ are not in $E(N)$,
it is easy to see that
\[
(\mcal{F}_{e}-\{(e_{1},\ldots, e_{m})\})\cup
\{(e_{3},e_{2},e_{1},e_{4},\ldots, e_{m})\}
\]
is a covering family of $M'\ba e$.
But this contradicts \Cref{train}, as $e_{1}$ is not in
$E(N)$.
Therefore $M'$ is a fan-extension of $N$.
Note that this does not contradict \Cref{flame}, which would
apply if $N$ were a minor of $M_{0}\ba e/e_{2}\ba e_{3}$
or $M_{0}\ba e\ba e_{2}/e_{3}$.

Let $F_{N}$ be the fan in $\mcal{F}_{N}$ that is consistent
with $(e_{1},\ldots, e_{m})$.

\begin{sublemma}
\label{tiger}
$e_{3}$ is in $E(N)$, but not in $F_{N}$, and therefore
$e_{3}$ is contained in no fan in $\mcal{F}_{N}$.
\end{sublemma}

\begin{proof}
It follows immediately from \Cref{train} that $e_{3}$
is in $E(N)$.
Assume that \ref{tiger} fails, so that $e_{3}$ is in $F_{N}$.
Because $M'$ is a fan-extension of $N$, there is a
covering family, $\mcal{F}'$, of $M'$.
Let $F'$ be the fan in $\mcal{F}'$ such that $F_{N}$
is consistent with $F'$.
If $e$ is in $F'$, then it is a terminal spoke element,
as $M'\ba e$ is isomorphic to $M\ba e$, which is $3$\dash connected.
Therefore $F'-e$ is a fan in $M'\ba e$, and so is
$(e_{3},e_{2},e_{1},e_{4},\ldots, e_{m})$.
As these fans both contain the elements of $F_{N}$, it
follows from \Cref{unity} that the elements they
have in common form a contiguous subsequence in both
fans.
In particular, this means that $\{e_{3},e_{2},e_{1}\}$ is
contained in $F'-e$.
As this set is either a triad or a triangle in $M'\ba e$,
it forms a set of three consecutive elements in $F'-e$, and
hence in $F'$.
Let $F''$ be the sequence obtained from $F'$ by swapping the
positions of $e_{1}$ and $e_{3}$.
As $e_{1}$ and $e_{2}$ are not in $E(N)$, it follows that
$(\mcal{F}'-\{F'\})\cup\{F''\}$ is a covering family
of $M_{0}$.
This contradiction to \Cref{fever} completes the proof of
\ref{tiger}.
\end{proof}

\begin{sublemma}
\label{daisy}
Let $\mcal{F}'$ be a covering family of $M'$.
Then no fan in $\mcal{F}'$ contains $e_{3}$.
\end{sublemma}

\begin{proof}
Assume for a contradiction that $e_{3}$ is contained in a fan
in $\mcal{F}'$.
Let $\mcal{F}''$ be the family of fans in $M_{0}$
obtained from $\mcal{F}'$ by swapping the labels on $e_{1}$
and $e_{3}$.
Obviously there are $|\mcal{F}_{N}|$ pairwise
disjoint fans in $\mcal{F}''$.
Since $e_{3}$ is not contained in a fan in
$\mcal{F}_{N}$ (by \ref{tiger}), and the same statement
applies to $e_{1}$, it follows that every
fan in $\mcal{F}_{N}$ is consistent with a fan in
$\mcal{F}''$.
Moreover, every element in $E(M')-E(N)$ is contained
in a fan in $\mcal{F}'$, and since $e_{3}$ is contained in a
fan in $\mcal{F}'$, and $e_{3}\in E(N)$, we conclude that
every element in $E(M_{0})-E(N)$ is contained in a fan in
$\mcal{F}''$.
We have just shown that $\mcal{F}''$ is a covering family
of $M_{0}$, contradicting \Cref{fever}.
\end{proof}

Let $\{x,y\}=\{e_{1},e_{2}\}$, where $N$ is a minor of
$M\ba e/x\ba y$.

\begin{sublemma}
\label{camel}
Let $\mcal{F}'$ be a covering family of $M'$.
Then there is a fan in $\mcal{F}'$ that
contains $e_{1}$ and $e_{2}$.
\end{sublemma}

\begin{proof}
Let $\mcal{F}'$ be a covering family of $M'$.
Assume that \ref{camel} fails, so that there are distinct
fans, $F_{1}$ and $F_{2}$, in $\mcal{F}'$ that
(respectively) contain $e_{1}$ and $e_{2}$.
Note that $F_{1}$ and $F_{2}$ contain at least four elements
each.
If $e$ is contained in either $F_{1}$ or $F_{2}$, then it
is a terminal spoke element, as $M'\ba e$ is $3$\dash connected.
Therefore $F_{1}-e$ and $F_{2}-e$ are fans in $M'\ba e$,
and these fans must each contain at least four elements, as
they contain a fan in $\mcal{F}_{N}$, and an element
in $\{e_{1},e_{2}\}$.
Now $\{e_{1},e_{2},e_{3}\}$ is a triangle or a triad
in $M'\ba e$, and $F_{1}-e$ and $F_{2}-e$ are fans.
Since \ref{daisy} asserts that $e_{3}$ is in neither $F_{1}-e$ nor
$F_{2}-e$, orthogonality requires that either
$e_{1}$ and $e_{2}$ are both terminal spoke
elements of $F_{1}-e$ and $F_{2}-e$, or they are
both terminal rim elements.
In this former case, $M'\ba e/x$ contains a
triad that contains a parallel pair, and in the latter
case, $M'\ba e\ba y$ contains a triangle that
contains a series pair.
In either case we have a contradiction, so
\ref{camel} must hold.
\end{proof}

Let $\mcal{F}'$ be a covering family of $M'$, and let
$F'$ be a fan in $\mcal{F}'$ that contains
$e_{1}$ and $e_{2}$.
Then $F'$ contains at least five elements.
If $e$ is in $F'$, then it is a terminal spoke
element, as $M'\ba e$ is $3$\dash connected, so
$F'-e$ is a fan in $M'\ba e$.
Let $F'-e$ be $(f_{1},\ldots, f_{n})$.
Note that $n\geq 5$.
Now $\{e_{1},e_{2},e_{3}\}$ is a triangle or a triad
in $M'\ba e$ that intersects in $F'-e$ in exactly
the elements $e_{1}$ and $e_{2}$ (by \ref{daisy}).
We apply \Cref{ninny} or its dual.
Assume that statement (iii), (iv), or (v) holds.
Then either $\{e_{1},e_{2}\}=\{f_{1},f_{n}\}$,
or $n=5$ and $\{e_{1},e_{2}\}=\{f_{2},f_{4}\}$, and
in any case $e_{1}$ and $e_{2}$ are both
spoke elements in $F'-e$, or both are rim elements.
If both are spoke elements, $M'\ba e/x$ contains a triad that
contains a parallel pair, and if both are rim elements, then
$M'\ba e\ba y$ contains a triangle
that contains a series pair.
In either case we have a contradiction, so
statements (iii), (iv), and (v) in \Cref{ninny} cannot
hold.
Now, by reversing as necessary, we can assume that
$\{e_{1},e_{2}\}=\{f_{1},f_{2}\}$.

\begin{sublemma}
\label{stone}
$F'$ is either $(f_{1},\ldots, f_{n})$ or
$(f_{1},\ldots, f_{n},e)$.
\end{sublemma}

\begin{proof}
If $e$ is not contained in $F'$, then
$F'=F'-e=(f_{1},\ldots, f_{n})$, and we are done.
Therefore assume that $e$ is a terminal spoke element
of $F'$.
The only way \ref{stone} can fail to be true is if
$F'=(e,f_{1},\ldots, f_{n})$, so let us assume this is the
case.
Then $\{e,e_{1},e_{2}\}=\{e,f_{1},f_{2}\}$ is a triangle of
$M'$.
If $\{e_{1},e_{2},e_{3}\}$ is a triangle of $M'$, then
$\{e,e_{1},e_{2},e_{3}\}$
is a $U_{2,4}$\dash restriction, which is a contradiction,
as $\{f_{1},f_{2},f_{3}\}=\{e_{1},e_{2},f_{3}\}$ is a triad.
Similarly, if $\{e_{1},e_{2},e_{3}\}$ is a triad in $M'$, then
$\{e_{2},e_{1},e_{4}\}$ is a triangle, so
$\{e,e_{2},e_{1},e_{4}\}$ is a $U_{2,4}$\dash restriction.
This again leads to a contradiction.
Therefore \ref{stone} holds.
\end{proof}

\begin{sublemma}
\label{horse}
$\{e,e_{1},e_{2},e_{3}\}$ is a cocircuit in $M'$.
\end{sublemma}

\begin{proof}
Assume that $\{e_{1},e_{2},e_{3}\}$ is a triangle in $M'\ba e$,
and hence in $M'$.
If $\{f_{1},f_{2},f_{3}\}$ is a triangle in $M'$, then
$\{e_{3},f_{1},f_{2},f_{3}\}$ is a $U_{2,4}$\dash restriction
in $M'$, and $\{f_{2},f_{3},f_{4}\}$ is a triad.
As this is a contradiction, it follows that
$\{f_{1},f_{2},f_{3}\}$ is a triad in $M'$.
Let $F'+e_{3}$ be the sequence obtained from $F'$ by appending
$e_{3}$ to the beginning.
Then $F'+e_{3}$ is a fan in $M'$, and it follows from
\ref{tiger} and \ref{daisy} that
$(\mcal{F}'-\{F'\})\cup\{F'+e_{3}\}$ is a covering family of $M'$.
But this contradicts \ref{daisy}, so we conclude that
$\{e_{1},e_{2},e_{3}\}$ is a triad in $M'\ba e$.

To conclude the proof of \ref{horse}, assume that
$\{e_{1},e_{2},e_{3}\}$ is a triad in $M'$.
If $\{f_{1},f_{2},f_{3}\}$ is a triad in $M'$, then
$\{e_{3},f_{1},f_{2},f_{3}\}$ is a $U_{2,4}$\dash corestriction
in $M'$, which is a contradiction as $\{f_{2},f_{3},f_{4}\}$
is a triangle.
Therefore $\{f_{1},f_{2},f_{3}\}$ is a triangle in $M'$.
We again let $F'+e_{3}$ be obtained from $F'$ by appending
$e_{3}$ to the beginning.
This leads to a contradictory covering family, just
as in the previous paragraph.
Therefore $\{e,e_{1},e_{2},e_{3}\}$ is a cocircuit in $M$.
\end{proof}

As $e$ is in $E(M')-E(N)$, there is a fan, $F_{e}$,
in $\mcal{F}'$, that contains $e$.
Then $F_{e}$ contains at least four elements.
As $M'\ba e$ is $3$\dash connected, it follows that
$e$ is a terminal spoke element in $F_{e}$.
Let $\{e,u,v\}$ be the triangle contained in $F_{e}$
that contains $e$.
Orthogonality between the triangle $\{e,u,v\}$ and
the cocircuit $\{e,e_{1},e_{2},e_{3}\}$ shows that
either $e_{1}$, $e_{2}$, or $e_{3}$ is in $\{u,v\}$.
The last case is impossible by \ref{daisy},
so $e_{1}$ or $e_{2}$ is in $\{u,v\}$.
This means that $F_{e}$ and $F'$ cannot
be disjoint fans, so
$F'=(f_{1},\ldots, f_{n},e)$, by \ref{stone}.
However it now follows that
$\{e_{1},e_{2}\}=\{f_{1},f_{2}\}$,
and $\{u,v\}=\{f_{n-1},f_{n}\}$.
Thus $F'$ can contain at most four elements, which is
impossible, as it contains $e$, $e_{1}$, $e_{2}$, and
a fan in $\mcal{F}_{N}$.
This completes the proof of \Cref{jewel}.
\end{proof}

\Cref{bliss} supplies us with a matroid $M_{0}$, which we
now relabel as $M$, such that $M$ has $N$ as a minor,
but is not a fan-extension of $N$.
By replacing $M$, $N$, and \mcal{M} with their duals as
necessary, we can assume that $M\ba e$ is $3$\dash connected
and has $N$ as a minor.
The minimality of $M$ means that $M\ba e$ is a fan-extension
of $N$.

\begin{lemma}
\label{balsa}
Let $\mcal{F}_{e}$ be a covering family of $M\ba e$, and
assume that $(e_{1},\ldots,e_{m})$ is a fan in $\mcal{F}_{e}$.
Assume also that $e_{1}$ is not in $E(N)$.
If $e_{1}$ is a spoke element of $(e_{1},\ldots,e_{m})$,
then $M\ba e\ba e_{1}$ is $3$\dash connected and has $N$
as a minor.
If $e_{1}$ is a rim element, then $M\ba e/e_{1}$ is
$3$\dash connected and has $N$ as a minor.
\end{lemma}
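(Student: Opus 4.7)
By the duality observation that $M$ is obtained from $M'$ by a fan-lengthening move if and only if $M^{*}$ is obtained from $(M')^{*}$ by one, and since covering families dualise, it suffices to handle the case in which $e_{1}$ is a spoke element; the rim case follows by replacing $M$, $N$, and $\mcal{M}$ by their duals. So assume $\{e_{1},e_{2},e_{3}\}$ is a triangle of $M\ba e$.

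First I establish that $m\geq 4$ and $e_{2},e_{3}\in E(N)$. The fan $(e_{1},\ldots,e_{m})$ contains a fan from $\mcal{F}_{N}$ consistent with it, which lies in $E(N)$ and has at least three elements; since $e_{1}\notin E(N)$, this fan is contained in $\{e_{2},\ldots,e_{m}\}$, so $m\geq 4$. When $m\geq 5$ the conclusion $e_{2},e_{3}\in E(N)$ is immediate from \Cref{jewel}; when $m=4$, the consistent fan from $\mcal{F}_{N}$ must be $(e_{2},e_{3},e_{4})$ or its reverse, again putting $e_{2},e_{3}$ in $E(N)$. Next, writing $N=M\ba e/I\ba I^{*}$ with $I\cap I^{*}=\emptyset$, the element $e_{1}\notin E(N)$ lies in $I\cup I^{*}$. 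If $e_{1}\in I$, the triangle $\{e_{1},e_{2},e_{3}\}$ becomes the parallel pair $\{e_{2},e_{3}\}$ in $M\ba e/e_{1}$, with both elements in $E(N)$; this contradicts the fact that $N$ is simple. Hence $e_{1}\in I^{*}$, and $N$ is a minor of $M\ba e\ba e_{1}$.

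The remaining and principal step is to prove $M\ba e\ba e_{1}$ is $3$\dash connected. Assume not. Since $M\ba e$ is $3$\dash connected, deletion of $e_{1}$ creates no parallel pair; since $M\ba e\ba e_{1}$ has $N$ as a minor it is $3$\dash connected up to series and parallel sets by the standing hypothesis on $\mcal{M}$, so it contains a series pair, and therefore $e_{1}$ lies in a triad $T^{*}$ of $M\ba e$. \Cref{lasso} rules out $T^{*}=\{e_{1},e_{2},e_{3}\}$, and orthogonality with that triangle forces $T^{*}=\{e_{1},e_{j},x\}$ with $j\in\{2,3\}$ and $x\notin\{e_{1},e_{2},e_{3}\}$. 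By \Cref{ether}, $M\ba e$ is neither a wheel nor a whirl, so by \Cref{steal} any triad of $M\ba e$ contained in the fan $(e_{1},\ldots,e_{m})$ is of the form $\{e_{i},e_{i+1},e_{i+2}\}$ with $e_{i}$ a rim element, none of which contain the spoke $e_{1}$; hence $x\notin\{e_{1},\ldots,e_{m}\}$. The series pair $\{e_{j},x\}$ in $M\ba e\ba e_{1}$ with $e_{j}\in E(N)$ then forces $x\notin E(N)$, and $N$ is a minor of $M\ba e\ba e_{1}/x$.

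Applying \Cref{jewel} to $x$ places it as a terminal element of some fan $F_{x}=(f_{1},\ldots,f_{p})$ in $\mcal{F}_{e}$; I may take $x=f_{1}$. From here I would follow the strategy of the second half of the proof of \Cref{burst}: orthogonality of $T^{*}$ with the triangle or triad at $f_{1}$ puts either $e_{1}$ or $e_{j}$ into $\{f_{2},f_{3}\}$, and since any such element would be internal to $F_{x}$ (a short length-$3$ $F_{x}$ is impossible because it must contain a fan of $\mcal{F}_{N}$ disjoint from $x$), \Cref{jewel} rules out $e_{1}\in\{f_{2},f_{3}\}$. Hence $e_{j}\in F_{x}$, and $T^{*}$ together with the triangle $\{e_{1},e_{2},e_{3}\}$ and the triangle/triad at $f_{1}$ assembles the two fans $F_{x}$ and $(e_{1},\ldots,e_{m})$ into a single longer fan of $M\ba e$ that contains two fans from $\mcal{F}_{N}$. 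I then expect to use \Cref{flint} and \Cref{flame} to relabel across this merged fan, pushing $e$ into a position from which the resulting structure yields a covering family of $M$ itself, contradicting \Cref{fever}. The main obstacle is the bookkeeping in this final case analysis, particularly the subcases in which $p$ is small enough that the orthogonality constraints at the opposite terminal of $F_{x}$ become simultaneously active, and the subcases distinguished by \Cref{ninny} for how $T^{*}$ can attach to $F_{x}$.
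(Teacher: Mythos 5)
Your reductions up to the point where the triad appears are sound and essentially reproduce the paper's argument (the paper dualises so that $e_{1}$ is a rim element and works with $M'=(M\ba e)^{*}$ when necessary, but that is cosmetic): establishing that $N$ is a minor of $M\ba e\ba e_{1}$, that a failure of $3$\dash connectivity yields a triad $T^{*}=\{e_{1},e_{j},x\}$ with $j\in\{2,3\}$ and $x\notin\{e_{1},\ldots,e_{m}\}$ by \Cref{steal}, that $x\notin E(N)$, that $N$ is a minor of $M\ba e\ba e_{1}/x$, and that $x$ is a terminal element of a fan $F_{x}\in\mcal{F}_{e}$, all match the paper. The problem is the concluding step, which you leave as a plan, and whose pivotal claim is false. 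In your (spoke) orientation $x$ must be a terminal \emph{rim} element of $F_{x}$: if $x$ were internal, or a terminal spoke element, it would lie in a triangle contained in $F_{x}$, and that triangle would meet the triad $T^{*}$ in the single element $x$ (since $e_{1},e_{j}\notin F_{x}$, the fans of $\mcal{F}_{e}$ being disjoint). So the three-element set of $F_{x}$ at the end containing $x$ is a \emph{triad}, and orthogonality imposes no constraint between the two triads $T^{*}$ and $\{f_{1},f_{2},f_{3}\}$; there is no reason that $e_{1}$ or $e_{j}$ lies in $\{f_{2},f_{3}\}$, and no triangle is available to splice $F_{x}$ and $(e_{1},\ldots,e_{m})$ into one fan. (In \Cref{burst} the analogous triangle is obtained by running the whole argument dually under the assumption that no fan-shortening move exists; you have no corresponding hypothesis here to force a triangle through $x$.) So the merged fan containing two members of $\mcal{F}_{N}$ need not exist, and the subsequent appeal to \Cref{flint}, \Cref{flame}, and \Cref{fever} has nothing to act on.

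The paper finishes quite differently, splitting on $j$. If $T^{*}=\{e_{1},e_{2},x\}$ then $(x,e_{1},e_{2},\ldots,e_{m})$ is already a fan of $M\ba e$, and replacing $(e_{1},\ldots,e_{m})$ and $F_{x}$ by $(x,e_{1},\ldots,e_{m})$ and $F_{x}-x$ gives another covering family of $M\ba e$ in which $e_{1}$ is an internal element outside $E(N)$, contradicting \Cref{jewel} directly -- no merging with $F_{x}$ and no use of \Cref{fever} is needed. If $T^{*}=\{e_{1},e_{3},x\}$, then orthogonality with the triad $\{e_{3},e_{4},e_{5}\}$ forces $m=4$, so $(e_{2},e_{3},e_{4})$ or its reversal is in $\mcal{F}_{N}$; swapping the labels on $e_{3}$ and $x$ (legitimate because $\{e_{3},x\}$ is a series pair in $M\ba e\ba e_{1}$) produces a matroid with $N$ as a minor and fewer elements than $M$, hence a fan-extension by the minimality of $M$, and then \Cref{unity} together with repeated applications of \Cref{steal} show that the fan consistent with $(e_{2},e_{3},e_{4})$ must contain $(e_{3},e_{1},x,e_{2},e_{4})$ as a contiguous subsequence, which is incompatible with that consistency. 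The case analysis you defer as ``bookkeeping'' is exactly where the content of the lemma lies, and as proposed it does not go through.
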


\begin{proof}
If $e_{1}$ is a rim element, then let $M'=M\ba e$ and let $N'=N$.
Otherwise, let $M'=(M\ba e)^{*}$ and let $N'=N^{*}$.
Thus, in either case, we assume that $\mcal{F}_{e}$ is a
covering family of $M'$ relative to $N'$ and $\mcal{F}_{N}$,
that $(e_{1},\ldots,e_{m})$ is a fan in
$\mcal{F}_{e}$, and $e_{1}$ is a rim element that is not in $E(N')$.
Certainly $M'/e_{1}$ has $N'$ as a minor, or else $M'\ba e_{1}$
has $N'$ as a minor and $\{e_{2},e_{3},e_{4}\}$ as a
codependent triangle, contradicting \Cref{lasso}.
Assume $M'/e_{1}$ is not $3$\dash connected.
Then $e_{1}$ is contained in a triangle, $T$, of $M'$.
Orthogonality with the triad $\{e_{1},e_{2},e_{3}\}$ shows that $T$
contains $e_{2}$ or $e_{3}$.
Note that $T\nsubseteq \{e_{1},\ldots, e_{m}\}$, for otherwise
\Cref{steal} implies $T=\{e_{1},e_{2},e_{3}\}$, and
thus $T$ is a triad and a triangle in $M'$.
Let $x$ be the element in $T-\{e_{1},e_{2},e_{3}\}$.

Assume that $e_{2}$ is in $T$.
Since $e_{2}$ is in $E(N')$ by \Cref{jewel}, and $\{e_{2},x\}$ is
a parallel pair in $M'/e_{1}$, it follows that $N'$ is a minor of
$M'/e_{1}\ba x$.
The definition of a covering family means there is a fan
in $\mcal{F}_{e}$ that contains $x$.
Let $F_{x}$ be this fan.
Then $F_{x}$ contains at least four elements.
Orthogonality with $T$ shows that $x$ is a terminal
spoke element of $F_{x}$.
Since $x$ is not in $E(N')$, it now follows that
$(\mcal{F}_{e}-\{(e_{1},\ldots,e_{m}), F_{x}\})\cup
\{(x,e_{1},\ldots,e_{m}), F_{x}-x\}$
is a covering family in $M'$, and now we have a
contradiction to \Cref{jewel} as $x,e_{1}\notin E(N')$.
Therefore $T=\{e_{1},e_{3},x\}$.

Because $x$ is not in $(e_{1},\ldots, e_{m})$,
we see that $m=4$, as otherwise we violate orthogonality
between $T$ and $\{e_{3},e_{4},e_{5}\}$.
Hence $(e_{2},e_{3},e_{4})$ or its reversal is
in $\mcal{F}_{N}$.
Let $M''$ be obtained from $M'$ by swapping labels on $e_{3}$ and $x$.
Thus $(e_{3},e_{1},x,e_{2},e_{4})$ is a fan of $M''$.
As $\{e_{3},x\}$ is a parallel pair in $M'/e_{1}$,
we see that $M''$ has $N'$ as a minor.
Since $|E(M'')|=|E(M')|<|E(M)|$, it follows that $M''$ is a
fan-extension of $N'$.
Therefore there is a fan of $M''$ that contains $(e_{2},e_{3},e_{4})$
as a subsequence.
Let this fan be $F$.
By comparing $F$ with $(e_{3},e_{1},x,e_{2},e_{4})$ in $M''$
using \Cref{unity}, we see that there is a contiguous subsequence
of $F$ using the elements $\{e_{3},e_{1},x,e_{2},e_{4}\}$.
We apply \Cref{steal} and its dual to $\{e_{3},e_{1},x\}$,
$\{e_{1},x,e_{2}\}$, and $\{x,e_{2},e_{4}\}$ and see that $F$
contains $(e_{3},e_{1},x,e_{2},e_{4})$ as a contiguous subsequence,
which contradicts the definition of $F$.
Therefore $M'/e_{1}$ is $3$\dash connected, and this
completes the proof.
\end{proof}

\begin{lemma}
\label{hovel}
Let $\mcal{F}_{e}$ be a covering family of $M\ba e$, and
assume that $(e_{1},\ldots,e_{m})$ is a fan in $\mcal{F}_{e}$.
Assume also that $e_{1}$ is not in $E(N)$.
If $e_{1}$ is a rim element of $(e_{1},\ldots,e_{m})$,
then assume that there is no triangle of $M$ that contains
$\{e_{1},e\}$.
Let $(e_{s},\ldots,e_{t})$ be a minimal contiguous subsequence
of $(e_{1},\ldots,e_{m})$ such that a fan, $F_{N}\in\mcal{F}_{N}$
is consistent with $(e_{s},\ldots,e_{t})$.
Then $2 \leq s <s+2\leq  t \leq m$ and
$F_{N}$ contains $e_{s}$ and $e_{t}$.
Moreover, $F_{N}$ is equal to $(e_{s},\ldots,e_{t})$ or its
reversal, and is a fan of $M$.
However, $(e_{1},\ldots,e_{t})$ is not a fan in $M$.
\end{lemma}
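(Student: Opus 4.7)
The plan is to prove the three assertions — that $F_N$ equals $(e_s,\ldots,e_t)$ (up to reversal), that this sequence is a fan of $M$, and that $(e_1,\ldots,e_t)$ is not a fan of $M$ — in that order, each building on the previous.

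For the first assertion, I would begin by verifying $e_s,\ldots,e_t\in E(N)$: the elements $e_s,\ldots,e_{m-1}$ are internal to $(e_1,\ldots,e_m)$ and so lie in $E(N)$ by \Cref{jewel}, while $e_t\in F_N\subseteq E(N)$ handles the boundary case $t=m$. Because $(e_s,\ldots,e_t)$ is a contiguous subfan of the fan $(e_1,\ldots,e_m)$ of $M\ba e$ whose triples consist of $E(N)$\dash elements, each of its triangles and triads descends to a circuit or cocircuit of $N$ of the same size — a shorter circuit or cocircuit would violate the $3$\dash connectivity of $N$. Hence $(e_s,\ldots,e_t)$ is a fan of $N$. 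Now both $F_N$ and $(e_s,\ldots,e_t)$ are fans of $N$ sharing at least three elements, so \Cref{unity} forces the shared elements (exactly those of $F_N$) to form a contiguous subsequence of $(e_s,\ldots,e_t)$, and since $F_N$ contains the endpoints $e_s$ and $e_t$, it must equal $(e_s,\ldots,e_t)$ or its reversal.

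For the second assertion, the triangles in $(e_s,\ldots,e_t)$ are circuits of $M\ba e$ and therefore of $M$, so the only possible failure is a triad $\{e_i,e_{i+1},e_{i+2}\}$ of $M\ba e$ that lifts in $M$ to a $4$\dash element cocircuit $\{e,e_i,e_{i+1},e_{i+2}\}$ rather than remaining a triad. I would rule this out by an orthogonality and cocircuit-exchange analysis: if two adjacent triads $\{e_i,e_{i+1},e_{i+2}\}$ and $\{e_{i+2},e_{i+3},e_{i+4}\}$ of the fan both lifted to $4$\dash cocircuits through $e$, then cocircuit exchange at $e$ would yield a cocircuit of $M$ supported on fan elements alone, which by orthogonality with the flanking triangles would force a small $\lambda$\dash value on $\{e_1,\ldots,e_m\}\cup\{e\}$ in $M$ and descend to a wheel or whirl conclusion for $M\ba e$ via \Cref{hocus}, contradicting \Cref{ether}. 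Edge cases (a single lifted triad, or one adjacent to the endpoints of $(e_s,\ldots,e_t)$) are handled similarly using the remaining triangles and triads of $(e_1,\ldots,e_m)$ in $M\ba e$, along with the hypothesis that $\{e_1,e\}$ lies in no triangle of $M$ in the rim case to control configurations at the $e_1$ end.

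For the third assertion, suppose for contradiction that $(e_1,\ldots,e_t)$ is a fan of $M$. \Cref{balsa} supplies that $M\ba e\ba e_1$ (spoke case) or $M\ba e/e_1$ (rim case) is $3$\dash connected with $N$ as a minor; combined with the hypothesis in the rim case (and a short orthogonality argument in the spoke case that any triad of $M$ through $e_1$ would contain $e$ and yield a $2$\dash cocircuit in the $3$\dash connected $M\ba e$), this upgrades to $M\ba e_1$ or $M/e_1$ being $3$\dash connected with $N$ as a minor. Call this matroid $M_1$; by the minimality of $M$ it is a fan-extension of $N$ and admits a covering family $\mcal{F}_1$. The fan $(e_2,\ldots,e_t)$ of $M_1$ shares $F_N$ with the fan $F_1\in\mcal{F}_1$ consistent with $F_N$, so \Cref{unity} places $(e_2,\ldots,e_t)$ inside (an appropriate extension of) $F_1$ as a contiguous subsequence. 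Prepending $e_1$ via the fan-lengthening move dictated by the assumed fan structure on $(e_1,\ldots,e_t)$ then yields a fan of $M$, and replacing $F_1$ by this extended fan in $\mcal{F}_1$ produces a covering family of $M$. By \Cref{court}, $M$ is then a fan-extension of $N$, contradicting our choice of $M$.

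The principal obstacle is the second assertion: ruling out every way in which a triad of $M\ba e$ in $(e_s,\ldots,e_t)$ could lift to a $4$\dash cocircuit through $e$ requires tracking how multiple potential such cocircuits interact along the fan, and the boundary sub-cases (where only one flanking triangle is available inside $(e_s,\ldots,e_t)$) are the most delicate. A secondary obstacle in the third assertion lies in aligning $F_1$ with $(e_2,\ldots,e_t)$: one must verify, using \Cref{unity} and \Cref{jewel}, that $F_1$ genuinely extends $(e_2,\ldots,e_t)$ in the direction needed to permit prepending $e_1$, so that the resulting fan of $M$ is simultaneously the fan-lengthening of a fan of $M_1$ and a legitimate member of a covering family of $M$.
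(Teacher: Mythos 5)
Your first assertion is argued exactly as in the paper, but your second assertion rests on a step that fails. You try to show $(e_s,\ldots,e_t)$ is a fan of $M$ by ruling out, through orthogonality and cocircuit exchange, every triad of $M\ba e$ inside $(e_s,\ldots,e_t)$ that lifts to a four-element cocircuit $\{e,e_i,e_{i+1},e_{i+2}\}$ of $M$. The single-lifted-triad case, which you dismiss as ``handled similarly,'' is the crux and cannot be handled this way: a cocircuit $\{e,e_i,e_{i+1},e_{i+2}\}$ meets each flanking triangle of the fan in two elements, so it is orthogonality-compatible with everything in sight, and one such cocircuit on its own forces no small $\lambda$-value. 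Indeed such cocircuits genuinely occur along $(e_1,\ldots,e_t)$ --- that is precisely why the last assertion of the lemma holds, and \Cref{chaff} later exploits exactly such a cocircuit $\{e,e_j,e_{j+1},e_{j+2}\}$ with $j\leq p-1$. Note also that when $F_N$ has only three elements, $(e_s,\ldots,e_t)$ contains at most one triad, so your two-adjacent-triads argument never even engages. The paper's mechanism is global rather than local: it forms $M'=M\ba e_1$ (or $M/e_1$), shows $M'$ is $3$-connected with $N$ as a minor, invokes the minimality of $M$ to conclude $M'$ is a fan-extension, and uses \Cref{unity} to place $(e_s,\ldots,e_t)$ as a contiguous subsequence of a fan of $M'$, hence a fan of $M'$. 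Since $M'$ still contains $e$, the only way $(e_s,\ldots,e_t)$ can fail to be a fan of $M$ is via a cocircuit (or circuit) through $e_1$, and that puts $e_1$ in both the closure and coclosure of $\{e_2,\ldots,e_m\}$ in $M\ba e$, making $M\ba e$ a wheel or whirl. Without this detour through $M'$ your second assertion does not go through.

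Two further points on your third assertion. First, the alignment you flag as a ``secondary obstacle'' is a genuine piece of work, not a verification: one must show that $(e_2,\ldots,e_t)$ sits at the \emph{end} of some fan in some covering family of $M'$ (the paper's sublemma~\ref{blast} does this by an extremal choice of covering family, reshuffling terminal elements between fans, and a $U_{2,4}$-(co)restriction argument to force $j=1$); without it, prepending $e_1$ need not produce a member of a covering family. Second, you cannot close by citing \Cref{court}: that corollary carries the hypothesis that no fan of $N$ contains two distinct fans of $\mcal{F}_N$, which is not assumed in \Cref{arrow}. The correct conclusion is immediate from \Cref{pinch}: $M$ is obtained from the fan-extension $M'$ by a fan-lengthening move on a fan in a covering family of $M'$, so $M$ is a fan-extension.
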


\begin{proof}
Since $(e_{1},\ldots,e_{m})$ is a fan in a covering family, there is a
fan in $\mcal{F}_{N}$ that is consistent with $(e_{1},\ldots,e_{m})$.
Let $e_{s}$ be the first element in this fan, and let $e_{t}$ be the last.
Then $2\leq s<s+2\leq t\leq m$ because each fan contains at least three
elements, and $e_{1}\notin E(N)$.
By \Cref{jewel} it follows that every element in
$\{e_{s},\ldots,e_{t}\}$ is in $E(N)$.
Since $N$ is $3$\dash connected, $(e_{s},\ldots,e_{t})$ is a fan
of $N$.
As $F_{N}$ is also a fan of $N$, we apply \Cref{unity} and
deduce that $F_{N}$ and $(e_{s},\ldots,e_{t})$ use the same
set of elements.
Since $F_{N}$ is consistent with $(e_{s},\ldots,e_{t})$, this
means that $F_{N}$ is equal to $(e_{s},\ldots,e_{t})$ or its
reversal.

If $e_{1}$ is a spoke element of $(e_{1},\ldots,e_{m})$,
let $M'$ be $M\ba e_{1}$.
In this case, $M'$ is $3$\dash connected, since
$M'\ba e$ is $3$\dash connected by \Cref{balsa}.
If $e_{1}$ is a rim element, then let $M'$ be $M/e_{1}$.
In this case also, $M'\ba e$ is $3$\dash connected, and the
hypotheses imply that $e$ is not in a parallel pair in $M'$.
Therefore $M'$ is $3$\dash connected in either case, and
$M'$ has $N$ as a minor.
As $|E(M')|<|E(M)|$, it follows that $M'$ is a fan-extension
of $N$.

Let $F$ be a fan of $M'$ such that $(e_{s},\ldots,e_{t})$
is consistent with $F$.
If $e$ is in $F$, then it is a terminal spoke element,
as $M'\ba e$ is $3$\dash connected, so in this case
$F-e$ is a fan in $M'\ba e$.
In fact, $F-e$ is a fan of $M'\ba e$ whether or not $e$ is in $F$.
Since $(e_{s},\ldots,e_{t})$ is also a fan in $M'\ba e$,
by \Cref{unity}, there is a contiguous subsequence of $F-e$
using the elements $\{e_{s},\ldots,e_{t}\}$.
Since $(e_{s},\ldots,e_{t})$ is consistent with $F-e$, this
means $(e_{s},\ldots,e_{t})$ is a contiguous subsequence of $F-e$.
Hence $(e_{s},\ldots,e_{t})$ is a fan in $M'$.

Assume $(e_{s},\ldots,e_{t})$ is not a fan in $M$.
If $e_{1}$ is a spoke element in $(e_{1},\ldots, e_{m})$, then
$M'=M\ba e_{1}$, so there is a rim element, $e_{i}$, of
$(e_{s},\ldots,e_{t})$, such that $s\leq i\leq t-2$, and
$\{e_{1},e_{i},e_{i+1},e_{i+2}\}$ is a cocircuit of $M$.
But in this case $e_{1}$ is in the closure and
coclosure of $(e_{2},\ldots, e_{m})$ in $M\ba e$.
Hence $(e_{1},\ldots, e_{m})$ is $2$\dash separating in
$M\ba e$, so \Cref{hocus} implies $M\ba e$ is a wheel or a
whirl, a contradiction.
The argument when $e_{1}$ is a rim element in
$(e_{1},\ldots, e_{m})$ is similar:
In this case $M'=M/e_{1}$, so there
is a spoke element, $e_{i}$, in
$(e_{s},\ldots,e_{t})$, such that 
$\{e_{1},e_{i},e_{i+1},e_{i+2}\}$ is a circuit of $M$.
Thus $e_{1}$ is in the closure and coclosure of
$(e_{2},\ldots, e_{m})$ in $M\ba e$, so we can deduce that
$M\ba e$ is a wheel or a whirl, contradicting \Cref{ether}.
From this contradiction we see that
$(e_{s},\ldots,e_{t})$ is a fan of $M$.

To complete the proof, we will assume that $(e_{1},\ldots,e_{t})$
is a fan of $M$, and deduce a contradiction.
This assumption means that $(e_{2},\ldots,e_{t})$ is a fan in $M'$.

\begin{sublemma}
\label{blast}
There is a covering family of $M'$ containing a fan
that has $(e_{2},\ldots,e_{t})$ as a contiguous subsequence.
\end{sublemma}

\begin{proof}
Let \mcal{F} be a covering family of $M'$.
Then \mcal{F} contains a fan, $F$, such that $(e_{s},\ldots,e_{t})$
is consistent with $F$.
As $(e_{s},\ldots, e_{t})$ is a fan in $M'$,
we can use \Cref{unity} to show that $(e_{s},\ldots, e_{t})$
is a contiguous subsequence of $F$.
Assume that $F$ contains $(e_{i},\ldots,e_{t})$ as a
contiguous subsequence, where $2\leq i\leq s$, and
\mcal{F} and $F$ have been chosen so that $i$ is as small as
possible.
If $i=2$, then \ref{blast} is already proved, so we
assume $2 < i$.
Let $F=(f_{1},\ldots,f_{n})$, where
$(e_{i},\ldots,e_{t})=(f_{j},\ldots,f_{j+t-i})$.

Assume $j>1$.
Then $f_{j-1}\ne e_{i-1}$, for otherwise
$(e_{i-1},\ldots,e_{t})$ is a contiguous subsequence of $F$,
and the minimality of $i$ is contradicted.
Since $\{f_{j},f_{j+1},f_{j+2}\}=\{e_{i},e_{i+1},e_{i+2}\}$
is not a triad and a triangle, $f_{j}=e_{i}$ is a
spoke element in both $F$ and $(e_{2},\ldots,e_{m})$, or a
rim element in both.
Now $\{f_{j-1},f_{j},f_{j+1}\}$ and $\{e_{i-1},e_{i},e_{i+1}\}$
are distinct triads or distinct triangles that intersect in two
elements, and their union is a $U_{2,4}$\dash corestriction or
restriction of $M'$ that intersects a triangle or triad.
This contradiction shows that $j=1$.

Now $(e_{i-1},f_{1},\ldots,f_{n})$ is a fan of $M'$.
Let $F'$ be this fan.
Note that because $i-1<s$, the element $e_{i-1}$
is not contained in the fan $(e_{s},\ldots, e_{t})$, and
since it is contained in $(e_{1},\ldots, e_{m})\in \mcal{F}_{e}$,
the definition of a covering family tells us that
$e_{i-1}$ is contained in no fan in $\mcal{F}_{N}$.
If $e_{i-1}$ is in no fan in \mcal{F}, then
$(\mcal{F}-\{F\})\cup\{F'\}$ is a covering family in $M'$.
If $e_{i-1}$ is in a fan, $F''\in \mcal{F}$, then it is a
terminal element of $F''$ by orthogonality with
$\{e_{i-1},e_{i},e_{i+1}\}$, so 
$(\mcal{F}-\{F,F''\})\cup\{F',F''-e_{i-1}\}$
is a covering family of $M'$.
In either case we have constructed a covering family of
$M'$ that contains the fan $F'$, so the minimality of $i$ is
contradicted.
This completes the proof of \ref{blast}.
\end{proof}

Now we assume \mcal{F} is a covering family of $M'$,
and \mcal{F} contains a fan, $F$, that has
$(e_{2},\ldots,e_{t})$ as a contiguous subsequence.
Let $F=(f_{1},\ldots,f_{n})$, where
$(e_{2},\ldots,e_{t})=(f_{j},\ldots,f_{j+t-2})$.
Assume $j>1$.
If $e_{1}$ is a rim element of $(e_{1},\ldots, e_{m})$,
then $\{e_{2},e_{3},e_{4}\}$ is a triangle in $M$, and hence
in $M'=M/e_{1}$.
Therefore $e_{2}$ is a spoke element in
$(f_{j},\ldots, f_{j+t-2})$, so
$\{f_{j-1},f_{j},f_{j+1}\}$ is a triad of $M'$ and therefore
in $M$.
Now $\{e_{1},f_{j-1},f_{j},f_{j+1}\}$ is a $U_{2,4}$\dash corestriction
of $M$ that intersects a triangle.
If $e_{1}$ is a spoke element, we reach a similar contradiction.
Therefore $j=1$.
But this means $(e_{1},f_{1},\ldots,f_{n})$ is a fan of $M$, and
$M$ is obtained from $M'$ by a fan-lengthening move on $F$.
This implies $M$ is a fan-extension of $N$, so we have a contradiction
that completes the proof of \Cref{hovel}.
\end{proof}

Now we fix $\mcal{F}_{e}$ to be a covering family of $M\ba e$.
Since we have assumed $|E(M)|-|E(N)|>2$, it follows that
$M\ba e\ne N$.
Any element in $E(M\ba e)-E(N)$ is a terminal element of
a fan in $\mcal{F}_{e}$, by \Cref{jewel}.

\begin{lemma}
\label{chaff}
If $(e_{1},\ldots,e_{m})$ is a fan in $\mcal{F}_{e}$ with the
property that $e_{1}\notin E(N)$, then
$e_{1}$ is a rim element, and $\{e_{1},e\}$ is contained in a triangle
of $M$.
\end{lemma}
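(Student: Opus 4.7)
The proof is by contradiction. Suppose the conclusion fails, so that either (A) $e_{1}$ is a spoke element of $(e_{1},\dots,e_{m})$, or (B) $e_{1}$ is a rim element and no triangle of $M$ contains $\{e_{1},e\}$. The strategy is, in each case, to exhibit a $3$\dash connected single-element minor $M'$ of $M$ that is strictly smaller than $M$ (hence a fan-extension of $N$ by the minimality of $M$), and then combine this with \Cref{hovel} to force $M$ itself to be a fan-extension of $N$, contradicting the choice of $M$.

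In case (A) I take $M'=M\ba e_{1}$. Its $3$\dash connectivity follows from the $3$\dash connectivity of $M\ba e\ba e_{1}$ given by \Cref{balsa}: any triad of $M$ containing $e_{1}$ must, by the $3$\dash connectivity of $M\ba e$, avoid $e$, and would therefore become a series pair in $M\ba e\ba e_{1}$, a contradiction. In case (B) I take $M'=M/e_{1}$. It is $3$\dash connected up to series and parallel sets since it has $N$ as a minor; any triangle $\{e_{1},x,y\}$ of $M$ must, by the case (B) assumption, avoid $e$, so it yields a parallel pair $\{x,y\}$ in $M\ba e/e_{1}$, contradicting the $3$\dash connectivity of $M\ba e/e_{1}$ given by \Cref{balsa}; and any series pair in $M/e_{1}$ must come from a triad $\{a,b,e_{1}\}$ of $M$ (a cocircuit $\{a,b\}$ of $M$ being ruled out by the $3$\dash connectivity of $M$), and would similarly produce a series pair in $M\ba e/e_{1}$. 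In each case $M'$ is $3$\dash connected, has $N$ as a minor, and $|E(M')|<|E(M)|$, so the minimality of $M$ gives that $M'$ is a fan-extension of $N$; fix a covering family $\mcal{F}'$ of $M'$.

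Let $F_{N}\in\mcal{F}_{N}$ be the fan consistent with $(e_{1},\dots,e_{m})$, and let $s,t$ be, respectively, the smallest and largest indices such that the elements of $F_{N}$ lie in $\{e_{s},\dots,e_{t}\}$. Since $e_{1}\notin E(N)$, we have $2\leq s<t\leq m$, and $F_{N}$ is consistent with $(e_{s},\dots,e_{t})$. The hypothesis of \Cref{hovel} is satisfied in both cases (in case (A) because $e_{1}$ is a spoke, in case (B) because of the no-triangle assumption), so \Cref{hovel} yields: $F_{N}$ equals $(e_{s},\dots,e_{t})$ up to reversal, $(e_{s},\dots,e_{t})$ is a fan of $M$, and $(e_{1},\dots,e_{t})$ is not a fan of $M$.

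The main obstacle is to use these constraints to realise $M$ as arising from $M'$ via a single fan-lengthening move, contradicting the non-fan-extension status of $M$. I plan to analyse the fan $F^{*}\in\mcal{F}'$ consistent with $F_{N}$. In case (A), the triangle $\{e_{1},e_{2},e_{3}\}$ of $M$ should let $e_{1}$ be attached as a new terminal spoke to $F^{*}$, possibly after a local label-swap in the spirit of \Cref{flint,flame}, so that the resulting extended fan is a fan of $M$; in case (B), the triad structure near $\{e_{1},e_{2},e_{3}\}$ in $M$ (either a triad of $M$ itself, or a $4$\dash element cocircuit $\{e,e_{1},e_{2},e_{3}\}$ of $M$) should analogously let $e_{1}$ be attached as a new terminal rim. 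Verifying that the attachment is valid will rely on orthogonality between the triangles and cocircuits of $M$ already identified, together with the structural constraints on fans and covering families provided by \Cref{steal,unity,jewel}.
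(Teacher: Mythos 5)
Your setup — arguing by contradiction, splitting into the spoke case and the rim-with-no-triangle case, taking $M'=M\ba e_{1}$ or $M/e_{1}$, establishing its $3$\dash connectedness via \Cref{balsa}, and invoking \Cref{hovel} to get that $(e_{s},\ldots,e_{t})$ is a fan of $M$ while $(e_{1},\ldots,e_{t})$ is not — matches the opening of the paper's proof. (Minor quibble: a triad $\{a,b,e_{1}\}$ of $M$ does not yield a series pair in $M/e_{1}$; series pairs of $M/e_{1}$ are $2$\dash element cocircuits of $M$ avoiding $e_{1}$, so that part of your connectivity check is both slightly wrong and unnecessary.) But from that point on you have only a plan, not a proof, and the plan as stated cannot work. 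You propose to attach $e_{1}$ to the fan $F^{*}$ of a covering family of $M'$ and realise $M$ as a single fan-lengthening of $M'$; yet the conclusion of \Cref{hovel} that you have just cited — $(e_{1},\ldots,e_{t})$ is \emph{not} a fan of $M$ — is exactly the obstruction to this. Since $(e_{1},\ldots,e_{t})$ is a fan of $M\ba e$ but not of $M$, there is a cocircuit $\{e,e_{j},e_{j+1},e_{j+2}\}$ of $M$ with $j\leq s-1$ destroying the triad needed for the attachment, and indeed the whole second half of the proof of \Cref{hovel} (sublemma~\ref{blast}) is precisely the argument that such an attachment would make $M$ a fan-extension, which is why its conclusion is negative. "Local label-swaps in the spirit of \Cref{flint,flame}" do not rescue this: those lemmas have specific minor hypotheses ($N$ a minor of particular deletions/contractions) that you have not verified and that generally fail here.

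The actual argument has to go a different way, and your proposal contains none of it. The paper uses the standing assumption $|E(M)|-|E(N)|>2$ — which your proposal never invokes — to produce a \emph{second} element $u\in E(M'\ba e)-E(N)$, either lying in a fan of $\mcal{F}_{e}$ disjoint from $(e_{1},\ldots,e_{m})$ or equal to $e_{m}$, giving a fan $(x_{1},\ldots,x_{n})$ with $x_{1}\notin E(N)$. It then shows $(x_{s},\ldots,x_{t})$ is a fan of $M$ but $(x_{1},\ldots,x_{t})$ is not, extracts a cocircuit $C_{x}^{*}=\{e,x_{k},x_{k+1},x_{k+2}\}$ of $M$ surviving into $M'$, and analyses the fan $F_{e}$ containing $e$ in a covering family of $M'$: orthogonality forces its triangle $\{e,x,y\}$ and triad $\{x,y,z\}$ to satisfy $\{x,y,z\}=\{x_{i},x_{i+1},x_{i+2}\}$ with $\{x,y\}=\{x_{i},x_{i+2}\}$ and $1<i<n-2$, whence $F_{e}=(e,x_{i},x_{i+2},x_{i+1},x_{i\pm 3})$ or $(e,x_{i+2},x_{i},x_{i+1},x_{i-1})$, and no fan of $\mcal{F}_{N}$ can be consistent with such a sequence — the contradiction. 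Without this second element and the analysis of $F_{e}$ (or some replacement for them), your proof is incomplete, and the step you defer is the entire difficulty of the lemma.
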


\begin{proof}
If the \namecref{chaff} fails, then there is a fan
$(e_{1},\ldots,e_{m})$ in $\mcal{F}_{e}$ such that $e_{1}$
is not in $E(N)$, and either $e_{1}$ is a spoke element, or
$e_{1}$ is a rim element of $(e_{1},\ldots,e_{m})$ and
there is no triangle of $M$ containing $\{e_{1},e\}$.

There are indices $p$ and $q$ such that
$2\leq p<p+2\leq q\leq m$, and there is a fan in $\mcal{F}_{N}$
consistent with $(e_{p},\ldots, e_{q})$ that contains
$e_{p}$ and $e_{q}$.
Now \Cref{hovel} applies, so $(e_{p},\ldots, e_{q})$
or its reversal is in $\mcal{F}_{N}$, and is a fan of
$M$.
However, $(e_{1},\ldots, e_{q})$ is not a fan of $M$.
Since it is a fan in $M\ba e$, there is some
$j\in \{1,\ldots, p-1\}$ such that
$\{e,e_{j},e_{j+1},e_{j+2}\}$
is a cocircuit of $M$.

Let $M'$ be $M\ba e_{1}$ if $e_{1}$ is a spoke element, and
let it be $M/e_{1}$ otherwise.
\Cref{balsa} implies that $M'\ba e$ is
$3$\dash connected with $N$ as a minor.
Because the hypotheses imply $M'$ has no parallel pair
containing $e$, we now see that $M'$ is $3$\dash connected.
Therefore $M'$ is a fan-extension of $N$.

As $|E(M)|-|E(N)|>2$, we can let $u$ be an element in
$E(M'\ba e)-E(N)$.
Either $u$ belongs to a fan in $\mcal{F}_{e}$ that is distinct,
and hence disjoint, from $(e_{1},\ldots,e_{m})$, or, by
\Cref{jewel}, $u=e_{m}$.
The analyses in the two cases are similar, so we combine them.
If necessary we can reverse the fan that contains $u$,
and assume that we are in one of the following situations.
\begin{enumerate}[label=\textup{(\Roman*)}]
\item $(x_{1},\ldots,x_{n})$ is a fan in $\mcal{F}_{e}$ that
is disjoint from $(e_{1},\ldots,e_{m})$, and $x_{1}$ is not
in $E(N)$.
\item $e_{m}$ is not in $E(N)$.
In this case we let $(x_{1},\ldots,x_{n})$ be $(e_{m},\ldots,e_{2})$.
\end{enumerate}
Note that in either of these cases, $(x_{1},\ldots,x_{n})$ is a fan in
$M\ba e$ and in $M'\ba e$, and $x_{1}$ is not in $E(N)$.
By taking a minimal contiguous subsequence of $(x_{1},\ldots, x_{n})$
such that a fan in $\mcal{F}_{N}$ is consistent with the subsequence, we
see there are integers $s$ and $t$ such that $2\leq s < s+2\leq t\leq n$
and such that some fan in $\mcal{F}_{N}$ contains
$x_{s}$ and $x_{t}$ and is consistent with $(x_{s},\ldots,x_{t})$.

\begin{sublemma}
\label{rupee}
Either $(x_{s},\ldots, x_{t})$ or its reversal is in
$\mcal{F}_{N}$.
Moreover, $(x_{s},\ldots, x_{t})$ is a fan in $M$, but
$(x_{1},\ldots, x_{t})$ is not.
\end{sublemma}

\begin{proof}
We first show that $(x_{s},\ldots, x_{t})$ is a fan in $M$.
In Case~(II), $(x_{s},\ldots, x_{t})$ is equal to
$(e_{q},\ldots, e_{p})$, and we have already noted this is a
fan in $M$.
Therefore we assume Case~(I) holds.
We can apply \Cref{hovel} to $(x_{1},\ldots, x_{n})$,
unless $x_{1}$ is a rim element and $\{x_{1},e\}$ is contained
in a triangle of $M$.
If \Cref{hovel} does apply, then it tells us that
$(x_{s},\ldots, x_{t})$ is a fan of $M$.
Therefore we assume that $x_{1}$ is a rim element, and
$\{x_{1},e\}$ is contained
in a triangle.
Orthogonality with the cocircuit $\{e,e_{j},e_{j+1},e_{j+2}\}$
shows that $\{x_{1},e,e_{k}\}$ is a triangle of $M$
for some $k\in\{j,j+1,j+2\}$.
Because $(x_{s},\ldots, x_{t})$ is a fan in $M\ba e$, it follows that
if it is not a fan of $M$, then there is a
cocircuit of $M$ that contains $e$ and three consecutive
elements from $(x_{s},\ldots, x_{t})$.
This violates orthogonality with $\{x_{1},e,e_{k}\}$ as $s\geq 2$.
Hence $(x_{s},\ldots, x_{t})$ is a fan of $M$ in any case.

Next we show that $(x_{s},\ldots, x_{t})$ or its reversal is in
$\mcal{F}_{N}$.
Since $x_{s}$ and $x_{t}$ are in $E(N)$, \Cref{jewel}
implies $\{x_{s},\ldots, x_{t}\}\subseteq E(N)$.
It follows that $(x_{s},\ldots, x_{t})$ is a fan of $N$.
Some fan in $\mcal{F}_{N}$ is consistent with
$(x_{s},\ldots, x_{t})$ and contains $x_{s}$ and $x_{t}$.
It follows easily from \Cref{unity} that this fan must be
either $(x_{s},\ldots, x_{t})$ or its reversal.

Finally we show that $(x_{1},\ldots, x_{t})$ is not a fan of $M$.
We can apply \Cref{hovel}, unless $x_{1}$ is a
rim element of $(x_{1},\ldots, x_{n})$
and $\{x_{1},e\}$ is contained in a triangle of $M$.
If we can apply \Cref{hovel}, then it tells us that
$(x_{1},\ldots, x_{t})$ is not a fan in $M$, as desired.
Therefore we assume $x_{1}$ is a rim element, and
$\{x_{1},e\}$ is contained in a triangle, $T$, of $M$.
If $\{x_{1},x_{2},x_{3}\}$ is not a triad of $M$, then
$(x_{1},\ldots, x_{t})$ is not a fan of $M$, so we are done.
Therefore we assume $\{x_{1},x_{2},x_{3}\}$ is a triad of $M$.
Orthogonality shows that $T$ is $\{e,x_{1},x_{2}\}$ or
$\{e,x_{1},x_{3}\}$.

If Case~(I) holds, then we have a violation of orthogonality
between $T$ and the cocircuit $\{e,e_{j},e_{j+1},e_{j+2}\}$.
Therefore Case~(II) holds, so
$(x_{1},x_{2},x_{3})=(e_{m},e_{m-1},e_{m-2})$.
Orthogonality between $\{e,e_{j},e_{j+1},e_{j+2}\}$
and $T$ requires that $j+2\geq m-2$.
Recall that $j\leq p-1$.
Certainly $p+2\leq q$, as $(e_{p},\ldots, e_{q})$
or its reversal is in $\mcal{F}_{N}$.
Because $e_{q}$ is in $E(N)$, but $e_{m}$ is not, it follows
that $q\leq m-1$.
Putting these together, we see that
$j+2\leq p+1\leq q-1\leq m-2$, so $j+2=m-2$, and equality
holds throughout this expression.
Again applying orthogonality between
$\{e,e_{j},e_{j+1},e_{j+2}\}$ and $T$, we see that
$T=\{e,x_{1},x_{3}\}=\{e,e_{m},e_{m-2}\}$.
As $p=m-3$ and $q=m-1$, this shows that
$(x_{s},\ldots, x_{t})=(e_{m-1},e_{m-2},e_{m-3})$
and $(x_{2},x_{3},x_{4})=(e_{m-1},e_{m-2},e_{m-3})$ or
its reversal is in $\mcal{F}_{N}$.
If $(x_{1},\ldots, x_{t})=(e_{m},e_{m-1},e_{m-2},e_{m-3})$
is not a fan in $M$, then we are done, so we
assume it is a fan in $M$.
Therefore $(x_{1},x_{2},x_{3},x_{4})$ is a fan in $M'$
also.

Because $x_{1}$ is not in $E(N)$, it follows easily that
$N$ is a minor of $M'\ba e/x_{1}$.
Let $M''$ be the matroid obtained from $M'$ by
swapping labels on $e$ and $x_{3}$.
Since $\{e,x_{3}\}$ is a parallel pair in $M'/x_{1}$,
$M''$ has $N$ as a minor.
Moreover $M''$ is $3$\dash connected since $M'$ is,
and $|E(M'')|=|E(M')|<|E(M)|$, so $M''$ is a fan-extension
of $N$.
Let $F$ be a fan of $M''$ such that $(x_{2},x_{3},x_{4})$
is consistent with $F$.
Because $M''\ba x_{3}$ is isomorphic to
$M'\ba e$, and is therefore $3$\dash connected,
$x_{3}$ is contained in no triad in $F$.
If $F$ contains at least four elements, then
this means that $x_{3}$ is a terminal element of $F$,
which is impossible as $(x_{2},x_{3},x_{4})$
is consistent with $F$.
Therefore $F$ contains exactly three elements, so
$\{x_{2},x_{3},x_{4}\}$ is a triangle of $M''$.
As $\{x_{1},x_{2},e\}$ is a triad of $M''$, we have a
contradiction to orthogonality.
\end{proof}

Applying \ref{rupee}, we let $C_{x}^{*}$ be a cocircuit of $M$
of the form $\{e,x_{k},x_{k+1},x_{k+2}\}$, where $x_{k}$ is a rim
element of $(x_{1},\ldots,x_{t})$ in $M\ba e$, and $1\leq k\leq s-1$.

\begin{sublemma}
\label{anise}
$C_{x}^{*}$ is a cocircuit of $M'$.
\end{sublemma}

\begin{proof}
Assume Case~(I) holds.
If $M'=M/e_{1}$, then $C_{x}^{*}$ is a cocircuit in $M'$,
as $e_{1}$ is not in $C_{x}^{*}$.
On the other hand, if $M'=M\ba e_{1}$, then $\{e_{1},e_{2},e_{3}\}$
is a triangle disjoint from $\{x_{1},\ldots,x_{n},e\}$, so
$C_{x}^{*}$ is a cocircuit of $M'$.
Now assume Case~(II) holds.
Note
$C_{x}^{*}-e\subseteq \{x_{1},\ldots,x_{s+1}\}\subseteq
\{e_{m},\ldots,e_{3}\}$, so
$e_{1}\notin C_{x}^{*}$.
Therefore, if $M'=M/e_{1}$, then $C_{x}^{*}$ is a cocircuit
of $M'$.
If $M'=M\ba e_{1}$, then $C_{x}^{*}$ is a cocircuit of $M'$,
for otherwise $e_{1}$ is in the coclosure and closure of
$\{e_{2},\ldots,e_{m}\}$ in $M\ba e$.
This implies $\{e_{1},\ldots,e_{m}\}$ is $2$\dash separating in
$M\ba e$, so $M\ba e$ is a wheel or whirl, a contradiction.
\end{proof}

Recall that $M'$ is a fan-extension of $N$.
Let $F_{e}$ be the fan containing $e$ in a covering family of $M'$.
As $M'\ba e$ is $3$\dash connected, $e$ is a terminal spoke
element of $F_{e}$.
Let $(e,x,y,z)$ be the initial four elements of $F_{e}$, so
$\{e,x,y\}$ is a triangle of $M'$ and $\{x,y,z\}$ is a triad.

\begin{sublemma}
\label{scope}
$\{x,y,z\}\subseteq E(N)$.
\end{sublemma}

\begin{proof}
This follows from \Cref{jewel} if $|F_{e}|>4$, and from
the fact that $F_{e}$ contains $e$ and a fan in
$\mcal{F}_{N}$ otherwise.
\end{proof}

Now $\{e,x,y\}$ is a triangle in $M'$, and $C_{x}^{*}$ is a cocircuit,
so orthogonality requires that $(C_{x}^{*}-e)\cap \{x,y\}\ne\emptyset$.
Thus $\{x,y,z\}$ is a triad of $M'\ba e$ that intersects the
fan $(x_{1},\ldots,x_{n})$.

As $\{x,y,z\}$ is contained in $E(N)$ and $x_{1}$ is not,
we see that $x_{1}\notin \{x,y,z\}$.
Furthermore, any element in $C_{x}^{*}\cap\{x,y\}$ is not
equal to $x_{n}$, since $C_{x}^{*}-e$ is contained in
$\{x_{1},\ldots,x_{s+1}\}\subseteq\{x_{1},\ldots,x_{n-1}\}$.
This means that if $\{x,y,z\}$ intersects $\{x_{1},\ldots,x_{n}\}$
in exactly one element, then it is a triad of $M'\ba e$ that
intersects the fan $(x_{1},\ldots,x_{n})$ in a single,
internal, element, violating orthogonality.
Therefore $|\{x,y,z\}\cap \{x_{1},\ldots,x_{n}\}|\geq 2$.

\begin{sublemma}
\label{devil}
$|\{x,y,z\}\cap \{x_{1},\ldots,x_{n}\}|\ne 2$.
\end{sublemma}

\begin{proof}
Assume $|\{x,y,z\}\cap \{x_{1},\ldots,x_{n}\}|= 2$.
We apply the dual of \Cref{ninny} in $M'\ba e$
to $\{x,y,z\}$ and the fan $(x_{1},\ldots, x_{n})$.
Since $x_{1}$ is not in $\{x,y,z\}$, statement~(i)
or~(iii) in \Cref{ninny} cannot apply.
By the same reasoning, if statement~(v) holds, then $n=5$.
In summary, either $\{x,y,z\}$ intersects $\{x_{1},\ldots,x_{n}\}$ in
$\{x_{n-1},x_{n}\}$, or $n\leq 5$, $x_{1}$ is a rim element,
and $\{x,y,z\}$ intersects $\{x_{1},\ldots,x_{n}\}$ in $\{x_{2},x_{4}\}$.

Assume $\{x,y,z\}\cap \{x_{1},\ldots,x_{n}\}=\{x_{n-1},x_{n}\}$.
Then \Cref{ninny} implies $x_{n}$ is a spoke element in $M'\ba e$.
Since $C_{x}^{*}-e=\{x_{k},x_{k+1},x_{k+2}\}$ contains $x$ or
$y$, and $x_{k+2}$ is a rim element, it follows that
$C_{x}^{*}-e=\{x_{n-3},x_{n-2},x_{n-1}\}$.
This implies $s=n-2$, so $(x_{n-2},x_{n-1},x_{n})$ or its
reversal is in $\mcal{F}_{N}$.
Since $F_{e}$ contains $(e,x,y,z)$ as a contiguous subsequence,
it therefore contains an element of $(x_{n-2},x_{n-1},x_{n})$.
As $F_{e}$ is in a covering family of $M'$, this means that
$(x_{n-2},x_{n-1},x_{n})$ is consistent with $F_{e}$.
However, $x_{n-2}\notin \{x,y,z\}$, and
$x_{n-1}\in\{x,y\}$ as $(C_{x}^{*}-e)\cap\{x,y\}\ne \emptyset$.
Since $(x_{n-2},x_{n-1},x_{n})$ is consistent with $F_{e}$,
we are forced to the conclusion that $x=x_{n}$ and $y=x_{n-1}$.
Now $\{e,x_{n},x_{n-1}\}$ and $\{x_{n},x_{n-1},x_{n-2}\}$
are triangles in $M'$, and $\{x,y,z\}=\{x_{n},x_{n-1},z\}$
is a triad, which leads to a contradiction to \Cref{putty}.

Next we consider the case that $n\leq 5$, and
$\{x,y,z\}\cap\{x_{1},\ldots, x_{n}\}=\{x_{2},x_{4}\}$.
If $s=2$, then $(x_{2},x_{3},x_{4})$ must be consistent with $F_{e}$,
but $(e,x,y,z)$ contains $x_{2}$ and $x_{4}$ and not $x_{3}$.
If $s\ne 2$, then $s=3$, $n=5$, and $(x_{3},x_{4},x_{5})$ or
its reversal is in $\mcal{F}_{N}$.
However, $(e,x,y,z)$ contains $x_{4}$ and neither $x_{3}$
nor $x_{5}$, so it is impossible for $(x_{3},x_{4},x_{5})$ to be
consistent with $F_{e}$.
\end{proof}

Now we know that
$\{x,y,z\}\subseteq \{x_{1},\ldots,x_{n}\}$.
Applying the dual of \Cref{steal} to $\{x,y,z\}$ and
$(x_{1},\ldots,x_{n})$ in $M'\ba e$, we see that
$\{x,y,z\}=\{x_{i},x_{i+1},x_{i+2}\}$, where $x_{i}$ is a
rim element of $(x_{1},\ldots, x_{n})$, and $i\leq n-2$.

\begin{sublemma}
\label{fairy}
$1<i<n-2$.
\end{sublemma}

\begin{proof}
Since $x_{1}\notin E(N)$ and $\{x,y,z\}\subseteq E(N)$
it follows that $x_{1}\notin \{x,y,z\}$, so
$i>1$.
Assume that $i+2=n$.
Then $\{x_{n-2},x_{n-1},x_{n}\}$ is a triad in $M'$.
As $C_{x}^{*}$ is a cocircuit in $M'$ by \Cref{anise}, it follows
that $C_{x}^{*}-e\ne \{x_{n-2},x_{n-1},x_{n}\}$.
As $C_{x}^{*}$ contains at least one of
$x$ or $y$, we see that $C_{x}^{*}-e=\{x_{n-4},x_{n-3},x_{n-2}\}$.
As $(x_{s},\ldots, x_{t})$ is a fan in $M$, it follows that
$s\geq n-3$, so some three- or four-element contiguous
subsequence of $(x_{n-3},x_{n-2},x_{n-1},x_{n})$
is in $\mcal{F}_{N}$ (up to reversing).
If $\{x,y\}=\{x_{n-1},x_{n}\}$, then the triangle
$\{e,x_{n-1},x_{n}\}$ violates orthogonality with
$C_{x}^{*}$ in $M'$.
Assume that $\{x,y\}=\{x_{n-2},x_{n-1}\}$.
Now $\{x_{n-3},x_{n-2},x_{n-1}\}$ is a triangle in $M\ba e$,
and hence in $M'$.
As $\{e,x,y\}$ is also a triangle in $M'$, it follows that
$\{e,x_{n-3},x_{n-2},x_{n-1}\}$ is a $U_{2,4}$\dash restriction
in $M'$ that intersects the triad $\{x,y,z\}$.
This is a contradiction, so $\{x,y\}=\{x_{n-2},x_{n}\}$.
Therefore $(e,x,y,z)$ is $(e,x_{n-2},x_{n},x_{n-1})$ or
$(e,x_{n},x_{n-2},x_{n-1})$.
This means any three- or four-element contiguous
subsequence of $(x_{n-3},x_{n-2},x_{n-1},x_{n})$
contains elements from $(e,x,y,z)$, but cannot be
consistent with $F_{e}$, contradicting the fact that
$F_{e}$ is in a covering family of $M'$.
\end{proof}

\begin{sublemma}
\label{gnome}
$\{x,y\}=\{x_{i},x_{i+2}\}$, and therefore $z=x_{i+1}$.
\end{sublemma}

\begin{proof}
Assume that $\{x,y\}=\{x_{i},x_{i+1}\}$.
Then $\{e,x_{i-1},x_{i},x_{i+1}\}$ is a $U_{2,4}$\dash restriction
in $M'$ that intersects the triad $\{x,y,z\}$, a contradiction.
Next assume that $\{x,y\}=\{x_{i+1},x_{i+2}\}$.
Then $\{e,x_{i+1},x_{i+2}\}$ and $\{x_{i+1},x_{i+2},x_{i+3}\}$
are triangles of $M'$.
We again find that $M'$ has a $U_{2,4}$\dash restriction that
intersects a triad.
\end{proof}

Note that $F_{e}$ contains $e$ and a fan in $\mcal{F}_{e}$, so it
contains at least four elements.

\begin{sublemma}
\label{pearl}
$|F_{e}|>4$.
\end{sublemma}

\begin{proof}
If $F_{e}$ contains exactly four elements, then $(x,y,z)$
or its reversal is in $\mcal{F}_{N}$.
However, $(x,y,z)$ is $(x_{i},x_{i+2},x_{i+1})$ or
$(x_{i+2},x_{i},x_{i+1})$, implying that $(x,y,z)$
is not consistent with $(x_{1},\ldots, x_{n})$, although
$\{x,y,z\}\subseteq \{x_{1},\ldots, x_{n}\}$.
This contradicts the fact that either
$(x_{1},\ldots, x_{n})$ or
$(x_{1},\ldots, x_{n},e_{1})=(e_{m},\ldots, e_{1})$
is in the covering family $\mcal{F}_{e}$
(up to reversing).
\end{proof}

Let $(e,x,y,z,v)$ be the first five elements of $F_{e}$.
If $(y,z)=(x_{i},x_{i+1})$, then $v=x_{i-1}$, for otherwise
$\{v,x_{i-1},x_{i},x_{i+1}\}$ is a $U_{2,4}$\dash restriction
in $M'$, and $\{x_{i},x_{i+1},x_{i+2}\}$ is a triad.
A similar argument shows that if $(y,z)=(x_{i+2},x_{i+1})$,
then $v=x_{i+3}$, since otherwise 
$\{v,x_{i+1},x_{i+2},x_{i+3}\}$ is a $U_{2,4}$\dash restriction
in $M'$.

Assume that $F_{e}$ contains more than five elements.
Let $(e,x,y,z,v,w)$ be the first six elements of $F_{e}$.
Then $\{z,v,w\}$ is a triad of $M'\ba e$.
The previous paragraph shows that $\{z,v,w\}$ cannot be a
set of three consecutive elements in $(x_{1},\ldots, x_{n})$,
so the dual of \Cref{unity} shows that $w$ is not in
$\{x_{1},\ldots, x_{n}\}$.
Because $z$ and $v$ are not consecutive in $(x_{1},\ldots, x_{n})$,
it follows from \Cref{ninny} that $n\leq 5$.
Now \ref{fairy} implies that $n=5$, and $i=2$, so $z=x_{3}$.
But now the triad $\{z,v,w\}$ and the triangle
$\{x_{1},x_{2},x_{3}\}$ violate orthogonality.
Hence $F_{e}=(e,x,y,z,v)$.

Either $(e,x,y,z,v)$ is
$(e,x_{i},x_{i+2},x_{i+1},x_{i+3})$, or it is
$(e,x_{i+2},x_{i},x_{i+1},x_{i-1})$.
Now some three- or four-element subsequence of
$(x_{i},x_{i+2},x_{i+1},x_{i+3})$ or
$(x_{i+2},x_{i},x_{i+1},x_{i-1})$ is in $\mcal{F}_{N}$
(up to reversing).
But this subsequence must be equal to
$(x_{s},\ldots, x_{t})$ for some values of $s$ and $t$.
It is clear that no such subsequence exists, so
this completes the proof of \Cref{chaff}.
\end{proof}

By reversing as necessary, we can assume that
$(e_{1},\ldots,e_{m})$ is a fan in $\mcal{F}_{e}$ and
$e_{1}\notin E(N)$.
\Cref{chaff} tells us that $e_{1}$ is a rim element, and
$\{e,e_{1}\}$ is contained in a triangle, $T_{e}$, of $M$.
By \Cref{balsa}, $M\ba e/e_{1}$ is $3$\dash connected and
has $N$ as a minor.
As $|E(M)|-|E(N)|>2$, we let
$x$ be an element in $E(M\ba e/e_{1})-E(N)$.
First we assume that the fan in $\mcal{F}_{e}$ that contains $x$
is disjoint from $(e_{1},\ldots,e_{m})$.
Let $(x_{1},\ldots,x_{n})$ be this fan, where we can reverse
as necessary and assume that $x=x_{1}$.
Then $x_{1}$ is a rim element of $(x_{1},\ldots, x_{n})$ and
$\{e,x_{1}\}$ is contained in a triangle, $T_{x}$.
Note that $N$ is a minor of $M\ba e/e_{1}/x_{1}$, for
$\{x_{2},x_{3},x_{4}\}$ is a codependent triangle in
$M\ba e/e_{1}\ba x_{1}$.
This means $T_{e}\ne T_{x}$, for otherwise $e$ is a loop
in $M/e_{1}/x_{1}$, and this matroid has $N$ as a minor,
so is connected.
Let $C$ be a circuit of $M\ba e$ contained in
$(T_{e}\cup T_{x})-e$ that contains $e_{1}$.
If $|C|=3$, then $M\ba e/e_{1}$ contains a parallel pair,
contradicting its $3$\dash connectivity,
so $C=(T_{e}\cup T_{x})-e$, and therefore $x_{1}\in C$.
By orthogonality with $\{e_{1},e_{2},e_{3}\}$
we deduce that $C$ contains an internal element from
$(e_{1},\ldots, e_{m})$ (note $m\geq 4$ as
$(e_{1},\ldots, e_{m})$ contains $e_{1}$ and a fan from
$\mcal{F}_{N}$), and this element is in $E(N)$ by \Cref{jewel}.
By orthogonality with $\{x_{1},x_{2},x_{3}\}$, we see that
$C$ also contains an element from $(x_{2},\ldots, x_{n})$
that is in $E(N)$.
Thus $C-\{e_{1},x_{1}\}$ is a parallel pair in $M\ba e/e_{1}/x_{1}$,
and in $N$, a contradiction.

Now the only elements in $E(M\ba e/e_{1})-E(N)$ belong to the same
fan of $\mcal{F}_{e}$ as $e_{1}$.
By \Cref{jewel,chaff}, we can assume $(e_{1},\ldots,e_{m})$ is a fan in
$\mcal{F}_{e}$ such that $e_{1}$ and $e_{m}$ are rim elements
not in $E(N)$, and that $N=M\ba e/e_{1}/e_{m}$.
Let $T_{1}$ and $T_{m}$ be triangles of $M$ such that
$\{e,e_{1}\}\subseteq T_{1}$ and $\{e,e_{m}\}\subseteq T_{m}$.
Note that $T_{1}\ne T_{m}$, as
$M/e_{1}/e_{m}$ has $N$ as a minor, and is therefore connected.
Let $C$ be a circuit of $M\ba e$ contained in
$(T_{1}\cup T_{m})-e$ that contains $e_{1}$.
Then $|C|=4$, or else $M\ba e/e_{1}$ contains a parallel pair.
Hence $e_{m}\in C$.
Therefore $C-\{e_{1},e_{m}\}$ is a parallel pair in
$M\ba e/e_{1}/e_{m}=N$, a contradiction.
Now we have completed the proof of \Cref{arrow}.

\section{Acknowledgements}

We thank Geoff Whittle for valuable discussions
and the referee for a careful reading.


\end{document}